\documentclass{amsart}
\usepackage{amscd,graphicx}
\usepackage[latin1]{inputenc}
\numberwithin{equation}{section}
\makeatletter
\@namedef{subjclassname@2010}{%
  \textup{2010} Mathematics Subject Classification}

\textwidth=15cm
\textheight=20cm
\parindent=16pt
\oddsidemargin=-0.7cm
\evensidemargin=-0.7cm
\topmargin=-0.5cm





\newcommand{\teq}{\arabic{section}.\arabic{equation}}

\newcommand{\teql}{\Alph{section}.\arabic{equation}}






\newcommand{\sqr}[2]{{\vcenter{\vbox{\hrule height.#2pt\hbox{\vrule width.#2pt
height#1pt \kern#1pt\vrule width.#2pt}\hrule height.#2pt}}}}

\newcommand{\ssquare}{{\qquad\hfill$\square$}}


\newcounter{eqcount}
\newcounter{ttopic}

\renewcommand{\labelenumi}{{{\rm (\teq \alph{enumi})}}} 
\newenvironment{edesc}{\refstepcounter{equation}\begin{enumerate}}%
{\end{enumerate}}

\newcommand{\ring}[1]{{\mathbb #1}}
\newcommand\bZ{{\ring{Z}}}

\newcommand\bC{{\ring{C}}} \newcommand\bR{{\ring{R}}}
\newcommand\bF{{\ring{F}}} \newcommand\bQ{{\ring{Q}}}
\newcommand\bH{{\ring{H}}}
\newcommand{\csp}[1]{{\mathbb #1}}
\newcommand{\tsp}[1]{{\mathcal #1}}

\newcommand{\esp}[1]{{\mathcal #1}}

\newcommand{\prP}{\csp{P}}

\newcommand{\sA}{{\esp{A}}} 
 
\newcommand{\sO}{{\tsp{O}}} 
\newcommand{\sQ}{\tsp{Q}}
\newcommand{\sP}{{\tsp {P}}} 
\newcommand{\sL}{{\tsp {L}}} 
\newcommand{\sT}{{\tsp {T}}} \newcommand{\sH}{{\tsp {H}}}
 
\newcommand{\sM}{{\tsp {M}}} 
\newcommand{\sG}{{\tsp {G}}}

\newcommand{\bT}{{\csp {T}}} 


\newcommand{\eql}[2]{{\rm (\ref{#1}\ref{#2})}} 

\newcommand{\vect}[1]{{\pmb #1}} 
\newcommand{\ba}{{\vect{a}}} \newcommand{\bg}{\vect{g}}
\newcommand{\bd}{{\vect{d}}} 
\newcommand{\bp}{{\vect{p}}} \newcommand{\bx}{{\vect{x}}}
\newcommand{\bv}{{\vect{v}}} \newcommand{\bw}{{\vect{w}}}
 \newcommand{\bz}{{\vect{z}}}
\newcommand{\bh}{{\vect{h}}}  
\newcommand{\row}[2]{{#1_1,\ldots,#1_{#2}}}

\newcommand{\smatrix}[4]{{\big(\begin{array}{cc}
\!\lower2pt\hbox{$\scriptstyle#1$} &\lower2pt\hbox{$\scriptstyle#2$}\!
\\\! \raise2pt\hbox{$\scriptstyle#3$} &\raise2pt\hbox{$\scriptstyle#4$}
\!\end{array}\big)}}
\newcommand{\col}[2]{{\big(\begin{array}{c}
\!\lower2pt\hbox{$\scriptstyle#1$}  \!
\\\! \raise2pt\hbox{$\scriptstyle#2$}
\!\end{array}\big)}}


\newcommand{\texto}[1]{{\textr{#1}}}
\newcommand{\GL}{\texto{GL}} \newcommand{\SL}{\texto{SL}}
 
\newcommand{\PSL}{\texto{PSL}} \newcommand{\PGL}{\texto{PGL}}
 
 \renewcommand{\ni}{\texto{Ni}}
 \newcommand{\Pic}{\texto{Pic}}
\newcommand{\textr}[1]{{\text{\rm #1}}}
\newcommand{\tr}{\textr{tr}} \newcommand{\ord}{\textr{ord}}
\newcommand{\abs}{\textr{abs}}  
 
 \newcommand{\inn}{\textr{in}}
 \newcommand{\Aut}{\textr{Aut}}
\newcommand{\pr}{\textr{pr}}

\newcommand{\norm}{{\triangleleft\,}}

\newcommand{\BCL}{{\text{\rm BCL}}}

\newcommand{\rd}{\texto{rd}}

\newcommand{\tG}[1]{{}_{#1}\tilde G}

\newcommand{\sph}{{\vphantom 1}}

\newcommand{\textb}[1]{{\text{\bf #1}}}
\newcommand{\bfC}{{\textb{C}}}
\newcommand{\longmapright}[2]{\smash{\mathop{\hbox to
#2pt{\rightarrowfill}}\limits^{#1}}}
\newcommand{\Longmapright}[2]{\smash{\mathop{\hbox to
#2pt{\Rightarrowfill}}\limits^{#1}}}
\newcommand{\longmapleft}[2]{\smash{\mathop{\hbox to
#2pt{\leftarrowfill}}\limits^{#1}}}
\newcommand{\mapdown}[1]{\Big\downarrow\rlap{$\vcenter{\hbox{$\scriptstyle#1$}}
$}}

\newcommand{\np}{{+}}   \newcommand{\nm}{{-}}

\newcommand{\lrang}[1]{{\langle #1\rangle}}

\newcommand{\eqdef}{\stackrel{\text{\rm def}}{=}}






\newfont{\sevenrm}{cmr7}
\newfont{\bsevenrm}{cmbx7}
\newfont{\mathseven}{cmsy7}
\newfont{\bigmath}{cmsy10 scaled 1200}
\newfont{\fiverm}{cmr5}
\newfont{\bfiverm}{cmbx5}
\newfont{\hel}{cmbx10 scaled 1400}
\newfont{\eu}{eufb10}
\newfont{\sseu}{eufm5}
\newfont{\seu}{eufm7}
\newfont{\Cal}{cmmib10}
\newfont{\sCal}{cmmib7}
\newfont{\zch}{eusb10}



\theoremstyle{plain}
\newtheorem{thm}{Theorem}[section] 
\newtheorem{lem}[thm]{Lemma}
\newtheorem{princ}[thm]{Principle}

\newtheorem{prop}[thm]{Proposition}
\newtheorem{cor}[thm]{Corollary}

 %


\theoremstyle{definition}
\newtheorem{defn}[thm]{Definition}
\newtheorem{exmp}[thm]{Example}
\newtheorem{guess}[thm]{Conjecture}

\newtheorem{prob}[thm]{Problem}

\theoremstyle{remark}
\newtheorem{rem}[thm]{Remark}

\newcommand{\xs}{\times^s\!}
\newcommand{\wsp}{{$\,$---$\,$}} 




\def\pic #1 by #2 (#3){\vbox to #2{\hrule width 
#1 height 0pt depth 0pt\vfill\special{picture #3}}}
\def\scaledpicture#1
by #2 (#3 scaled #4){{\dimen0=#1 \dimen1=#2\divide\dimen0 by
 1000 \multiply\dimen0 by #4\divide\dimen1 by 1000 \multiply\dimen1 
by #4\pic \dimen0 by \dimen1 (#3 scaled #4)}}

\newcommand{\sJ}{{\text{\bf \sl J}}}
 
\newcommand{\comm}[1]{{}}
\setcounter{tocdepth}{2}

\newcommand{\Symm}{{\rm Symm}}

\newcommand{\sB}{{\tsp {B}}}
\newcommand{\bP}{{\tsp {P}}}

\newcommand{\Id}{{\text{\rm Id}}}

\renewcommand{\phi}{\varphi}
\newcommand{\Fr}{\text{Fr}}
\newcommand{\MT}{\text{\bf MT}}

\newcommand{\HIT}{\text{\bf HIT}}
\newcommand{\Hi}{\text{\bf Hi}}

\newcommand{\rest}{\text{\rm rest}}

\newcommand{\HM}{\text{\bf HM}}
\newcommand{\Cu}{\text{\rm Cu}}

\newcommand{\OIT}{\text{\bf OIT}}
\newcommand{\ST}{\text{\bf ST}}
\renewcommand{\BCL}{\text{\bf BCL}}

\newcommand{\Cen}{\text{\rm Cen}}

\newcommand{\pu}{\text{pu}}
\newcommand{\lcm}{\text{\rm lcm}}

\newcommand{\C}{{\text{\rm C}}}
\newcommand{\CM}{\text{\bf CM}}
\newcommand{\cmb}[1]{\bar #1}

\newcommand{\SM}{\text{\rm SM}}
\newcommand{\RH}{\text{\rm RH}}
\newcommand{\Inn}{\text{\rm Inn}}

\newcommand{\one}{{\pmb 1}}

\newcommand{\bm}{{\pmb m}}
\newcommand{\ab}{{{}_{\text{\rm ab}}}}
\newcommand\cO{{{}_\text{c}\sO}}
\newcommand{\geng}{{{\text{\bf g}}}}
\newcommand{\sh}{{{\text{\bf sh}}}}
\newcommand{\br}{{{\text{\rm br}}}}
\newcommand{\red}{{{\text{\rm rd}}}}

\newcommand{\by}{{\pmb y}}

\newenvironment{exmpl}{\begin{exmp}}{\hfill $\triangle$ \end{exmp}}

\newcommand{\Ct}[1]{{\bfC_{{3^{#1}}}}}


\newcommand{\mpr}{{\text{\bf mp}}} 
\newcommand{\wid}{{\text{\bf wd}}}
\newcommand{\g}{\text{\bf g}}

\newcommand{\fplus}[4]{{\hbox{\lower.3ex\hbox{$\scriptscriptstyle #1$}} \atop \hbox{\raise.3ex\hbox{$\scriptscriptstyle
#2$}}}\!\!\!\!+\!\!\!\!{\hbox{\lower.3ex\hbox{$\scriptscriptstyle #3$}} \atop \hbox{\raise.3ex\hbox{$\scriptscriptstyle #4$}}}}

\newcommand{\hplus}[2]{{{\scriptstyle #1}\hbox{\raise.2 ex\hbox{$\scriptscriptstyle|$}}{\scriptstyle #2}}}
  
\newcommand{\vplus}[2]{{\hbox{\lower.3ex\hbox{$\scriptscriptstyle #1$}} \atop \hbox{\raise.3ex\hbox{$\bar {\scriptscriptstyle #2}$}}}}


\newcommand{\DI}{\text{\bf DI}}


\begin{document}
\baselineskip=17pt
\hoffset.75in

\title[Hurwitz space components]{Hurwitz space components; and \\ the Coleman-Oort Conjecture}

\author[M.~D.~Fried]{Michael
D.~Fried}
\address{Emeritus, UC Irvine \\ 1106 W 171st Ave, Broomfield CO, 80023}
\email{michaeldavidfried@gmail.com, mfried@math.uci.edu} 

\begin{abstract}  Hurwitz spaces are moduli of isotopy classes of covers. A specific space is formed from a finite group $G$ and  $\bfC$, $r$ of its conjugacy classes: $\sH(G,\bfC)^\dagger$ with ${}^\dagger$ an equivalence relation. Components, $\sH'$, of $\sH(G,\bfC)^\dagger$  interpret as a {\sl braid orbits\/} on {\sl Nielsen classes}, $\ni(G,\bfC)^\dagger$.   

\cite{FrV91} related absolute ($\dagger={}^\abs$, corresponding to a permutation representation, $T$, of $G$) and inner ($\dagger={}^\inn$) equivalence classes. It noted two situations producing multiple components:
\begin{itemize} \item[1.] the action of a normalizer subgroup  from $T$  on components; and 
\item[2.] distinct components from the Schur multiplier of $G$ (the Fried-Serre {\sl lift invariant\/}).
\end{itemize}  \cite{FrV92}  applied these to a general Inverse Galois Problem application. Here we consider components of type \#1 and \#2 under one umbrella using a definition in \cite{GoH92} (with more clarity in \cite{GhT23}) and so generalize these papers.  

Our applications use {\sl Modular Towers\/} to generalize Serre's Open Image Theorem. That distinguishes two types of decomposition groups -- designated $\GL_2$ and $\CM$ -- that occur on towers of modular curves, for groups $G$ related to dihedral groups. Our generalization, natural -- with mild constraints -- for any pair $(G,\bfC)$, generalizes modular curve towers to what we call {\sl Modular Towers\/}. It uses the arithmetic properties of Jacobian varieties to connect Hilbert's Irreducibility theorem to the Coleman-Oort conjecture.  

Our examples emphasize tools to make computations, using the lift invariant, and the shift-incidence  pairing on cusps lying on {\sl reduced\/}  Hurwitz spaces.

\end{abstract}  

\maketitle 
\setcounter{tocdepth}{3} 
\tableofcontents
\listoffigures

\section{Invariants separating moduli space components}  

Our categories are (moduli) families of compact Riemann surfaces covering the Riemann sphere, $\prP^1_z$. We compare two papers, \cite{GoH92} and \cite{FrV91}, using \cite{GhT23}, that start with Galois covers but draw conclusions on more general families. The precise topic is connected components of two related families computed from this initial data:  $(G,\bfC, T)$, 
\begin{edesc} \label{indata} \item \label{indataa}  $G$ is the Galois closure group of covers of degree $n$ of a (faithful, transitive) permutation representation $T$, with 
\item \label{indatab} the covers having branch cycles in $r=r_\bfC$ conjugacy classes, $\bfC$, of $G$.\footnote{In most of our examples, $r\ge 4$ where there is a serious moduli space.} \end{edesc}  \S\ref{OS} gives notation to introduce the objects we study, components of Hurwitz spaces, and briefly goes through the examples we use to show the types of components that arise and how we detect them.  \S\ref{statements} describes the two layers of our main Theorem, based on homeomorphisms of covers of the projective line, $\prP^1_z$,  and how that puts structure in the different types of components that arise. Then, \S\ref{braiding} uses the braid group to construct the spaces and {\sl braid orbits\/} on Nielsen classes to distinguish the components.  \S\ref{totspaces}  reminds of the key tools for describing these components effectively -- the lift invariant and moduli definition fields -- allowing these spaces to support generalizing the Open Image Theorem. 

Serre's case, referred to as \OIT\ (Open Image Theorem) has $G=(\bZ/\ell)^2\xs \bZ/2$, $\ell\ne 2$, (Lem.~\ref{JacCond} uses it as dihedral related) with $\bfC$ four repetitions of the involution conjugacy class. In our notation, Serre's $\GL_2(\bZ_\ell)$ case is called \HIT\ (Hilbert's Irreducibility Theorem) because, assuming a certain property of the tower of spaces -- it is eventually $\ell$-Frattini -- with a conclusion that is a precise version of what is expected from applying Hilbert's Theorem, with a conspicuous exception (called \CM, for complex multiplication), you get an open subgroup of the whole arithmetic monodromy group of the tower fibers. We concentrate on the production of the analog towers, called Modular Towers (\MT s) and the role of the {\sl lift invariant\/} for their existence and properties (beyond the use of that tool in \cite{FrV91}) using example groups $G$ for which our computations can be explained with basic linear algebra. 

Our first example is an addition to Serre's, showing the lift invariant appearing as a substitute for conclusions from the Weil pairing. Our other two examples have $G$ run (respectively) over alternating groups and $(\bZ/\ell)^2\xs \bZ/3$, $\ell\ne 3$. Both have serious literature precedents. Ex.~\ref{COvsAO} concludes with a statement to show how we use the Jacobians of curves occurring in Hurwitz spaces to form spaces, based on using braid action on Nielsen classes and the lift invariant, akin to those Serre used to see if his conclusion holds in far greater generality, reflecting on a range of problems far outside what would come from considering Siegel space and variants as Shimura did. 

\subsection{Objects of Study} \label{OS}  \S\ref{PN} gives the notation to display the spaces and components.   \S\ref{ES} summarizes the main properties of these objects, as in \S\ref{HSTowers}, which places Hurwitz spaces in towers comparable to modular curve towers. The examples section \S\ref{examples} shows the relation of these towers to properties of Jacobians (as in the Andr\'e-Oort conjecture), Weil's $\ell$-adic pairing, and  Serre's Open Image Theorem. Jacobian varieties are the semi-linear objects attached to curves. Here, we utilize them to interpret major unsolved problems regarding families of covers of the Riemann sphere and their interrelationships.\footnote{\'Evariste Galois's death (1832) in approaching 200 years ago, shows how unlikely that someone will magically (and usefully) pluck solutions to the regular inverse Galois problem with some perspicacious trick. Better to limit its scope, keeping connection to significant problems -- Serre's \OIT, versions of Andr\'e-Oort, Complex Multiplication -- that reveal why the full problem has eluded serendipity. \cite{FrBG}, with a prelude on polarizations, elaborates on what tethers finite groups and spaces whose points provide structure to these problems.} 

\subsubsection{Preliminary Notation} \label{PN} 
Denote automorphisms of $G$ by $\Aut(G)$; those -- keeping multiplicity of appearance the same --- permuting classes of $\bfC$ by $\Aut(G,\bfC)$.  Automorphisms associated with \eqref{indata} are the subgroup of $\Aut(G,\bfC)$ of the normalizer, $N_{S_n}(G,\bfC)=K$,  in $S_n$ of $G$.  
	
\begin{lem} \label{transfaith} A transitive representation, $T$,  acts on the (we take right) cosets of the stabilizer, $G(T,1)$, of an integer in the representation: $g\mapsto$ effect of right multiplication on $G(T,1)g_i$ with each $g_i$ chosen to map 1 to $i$,  $i=1,\dots, n$. Then, $T$ is faithful if and only if $\cap G(T,i)=\{1_G \}$. \end{lem}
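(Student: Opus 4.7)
The plan is to exhibit the coset action explicitly, identify its kernel with the intersection of point stabilizers, and then read off the faithfulness criterion. I will work throughout with the permutation representation $T: G \to S_n$ on the set $\{1,\dots,n\}$ and the point stabilizers $G(T,i) = \{g \in G : T(g)(i) = i\}$.

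First I would exploit transitivity of $T$ to choose, for each $i$, an element $g_i \in G$ with $T(g_i)(1) = i$ (taking $g_1 = 1_G$). The orbit-stabilizer relation gives $[G : G(T,1)] = n$, and the $n$ right cosets $G(T,1)g_i$ are pairwise distinct, since $G(T,1)g_i = G(T,1)g_j$ would force $g_jg_i^{-1} \in G(T,1)$, contradicting $T(g_jg_i^{-1})(i) = j$. Thus $i \mapsto G(T,1)g_i$ is a bijection between $\{1,\dots,n\}$ and the right cosets of $G(T,1)$. Under this bijection, the action $g \mapsto T(g)$ on $\{1,\dots,n\}$ corresponds to right multiplication by $g$ on cosets: if $T(g)(i) = j$ then $T(g_i g)(1) = j$, so $g_i g \in G(T,1) g_j$, i.e.\ $G(T,1)g_i \cdot g = G(T,1)g_j$. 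This establishes the first sentence of the lemma.

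Next I would observe that conjugation gives $G(T,i) = g_i^{-1} G(T,1) g_i$, since $T(g_i^{-1} h g_i)(1) = T(g_i^{-1})(T(h)(i))$, which equals $1$ precisely when $T(h)(i) = i$. Consequently an element $g \in G$ lies in $\bigcap_{i=1}^n G(T,i)$ if and only if $T(g)$ fixes every point in $\{1,\dots,n\}$, which is the defining condition for $g \in \ker T$.

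Finally, the equivalence follows immediately: $T$ is faithful by definition iff $\ker T = \{1_G\}$, and by the previous step $\ker T = \bigcap_{i=1}^n G(T,i)$. There is no genuine obstacle here; the only point requiring care is the left/right convention in the coset bijection, so I would double-check the direction of multiplication (right cosets paired with right multiplication) to make sure the identification of $G(T,i)$ as a conjugate of $G(T,1)$ comes out correctly, since the rest of the argument is a direct reading.
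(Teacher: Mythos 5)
The paper gives no proof of Lemma \ref{transfaith}; it is stated as a standard fact from permutation group theory, and your argument is exactly the standard one: identify the coset action, identify the kernel of $T$ with $\bigcap_i G(T,i)$, and conclude. The overall structure is correct.

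One small slip that is worth tidying up, and it is precisely the convention issue you flagged. Your conjugation computation uses the left-composition convention $T(ab)(x)=T(a)\bigl(T(b)(x)\bigr)$, so that $T(g_i^{-1}hg_i)(1)=T(g_i^{-1})\bigl(T(h)(i)\bigr)$, and this equals $1$ iff $T(h)(i)=i$. But the conclusion you then draw is $G(T,i)=g_i^{-1}G(T,1)g_i$, whereas what your computation actually shows is $\{h: g_i^{-1}hg_i\in G(T,1)\}=g_iG(T,1)g_i^{-1}$; so with your convention the conjugate is $g_iG(T,1)g_i^{-1}$, not $g_i^{-1}G(T,1)g_i$. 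Separately, your first computation ($T(g_ig)(1)=T(g)(T(g_i)(1))$) implicitly uses the opposite, right-composition convention, which is the one the paper's ``right cosets with right multiplication'' phrasing is designed around, and under that convention $G(T,i)=g_i^{-1}G(T,1)g_i$ is the correct form. So the two halves of your proof silently use opposite conventions; neither error propagates to the final conclusion that $\ker T=\bigcap_i G(T,i)$, since that set is convention-independent, but you should fix one convention and redo both displayed computations in it before calling the verification routine.
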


Denote the configuration space of $r$ distinct points in $\prP^r(\bC)$ by $U_r$. Our spaces are all moduli or $r$-branched covers of the projective line, $\prP^1_z$, uniformized by the standard complex variable $z$, and they will naturally map to either $U_r$, or for {\sl reduced\/} Hurwitz spaces to $U_r/\PSL_2(\bC)$ with $\PSL_2(\bC)$ M\"obius transformations. The distinction doesn't change the description of components since $\PSL_2(\bC)$ is connected. Denote the normalizer of $G$ in $S_n$ by $N_{S_n}(G)$ and $N_{S_n}(G)\cap \Aut(G,\bfC)$ by $N_{S_n}(G,\bfC)$. For $T$ the {\sl regular representation\/}, then  $N_{S_n}(G,\bfC)=\Aut(G,\bfC)$, but that is rarely our best choice of $T$ (there may  be several). 

Here is how the pairs arise.  The first space is $\sH(G,\bfC)^K\eqdef \sH(G,\bfC)^\abs$, with $K=N_{S_n}(G,\bfC)$, the space of $\deg(T)$ covers, up to the usual equivalence (called absolute).  The second space is $\sH(G,\bfC)^\inn$: Galois closures of covers in $\sH(G,\bfC)^K$, modulo inner equivalence.\footnote{Algebraic number theory assumes that all field extensions occur inside a fixed algebraic closure of the base field $F$. Therefore, the Galois closure of an extension of $F$ in that field is well-defined. For several reasons, that is not a valuable assumption. So,  \S\ref{fibGalClos} considers carefully the fiber product construction of the Galois closure.} This uses the Hurwitz space version of the fiber product construction of Galois closures of covers. 
Thm.~\ref{cosetbr} sets up the dichotomy from using $T$ based on this {\sl Galois Closure Principle}: 
\begin{edesc} \label{gcp} \item  \label{gcpa} Components of $\sH(G,\bfC)^K$ are  homeomorphism-separated;  and
\item   \label{gcpb} components of $\sH^\inn$ above a given $\sH(G,\bfC)^K$ component are automorphism-separated. \end{edesc} 

Example: \eql{gcp}{gcpb} says, if $\sH_j^\inn\to U_r$, $j=1,2$, are components from braid orbits on $\ni(G,\bfC)^\inn$, lying above the same component, $\sH'$, of $\sH^K$, then their braid orbits (in $\ni(G,\bfC)^\inn$) differ by a non-braidable $\alpha\in K=N_{S_n}(G,\bfC)$. From  Cor.~\ref{autocomps},   $\sH_j\to \sH'$, $j=1,2$, are equivalent as covers, though they support different families of Galois covers of $\prP^1_z$.   

We usually assume $T$ is understood. {\sl Nielsen classes\/} (Def.~\ref{HNC}) associated to each of these two types, respectively $\ni(G,\bfC)^\abs$ and $\ni(G,\bfC)^\inn$,  allow making computations of their properties. The covers in each family have a genus -- with resp.~notation like $\geng_\abs$ or $\geng_\inn$ -- computed from Riemann-Hurwitz. Don't confuse this, when $r=4$, with the genus \eqref{RHOrbitEq} of the reduced Hurwitz space (a nonsingular projective curve) attached to each space. 

We use the following notation for these families: 
\begin{edesc} \label{hurnot} \item \label{hurnota} $\sH(G,\bfC,T)\eqdef \sH(G,\bfC)^\abs\eqdef \sH(G,\bfC,T)^{N_T}$, meaning, equivalence these $\deg(T)$ covers when their branch cycles differ by the action of $N_{S_n}(G,\bfC)$;  and 
\item \label{hurnotb} $\sH(G,\bfC)^\inn$, the family of covers given by taking the Galois closure of the covers in \eql{hurnot}{hurnota}, modulo conjugation by $G$ (inner equivalence).\end{edesc}  

\S\ref{statements} describes the classification of components and states brief versions of the paper's results about them. \S\ref{isotopy-Hur} gives the tools for getting the properties of the Hurwitz spaces.  With a given permutation representation $T$ of $G$, Thm.~\ref{cosetbr} divides consideration of components into two steps: first listing components of the absolute space (homeomorphism-separated), and then organizing components of the inner Hurwitz space above a given absolute component (automorphism-separated). Thus, this part improves on \cite{GoH92}, \cite{FrV91} and \cite{GhT23}. 

\subsubsection{Serre's Case and our examples} \label{ES} 
Our examples follow a pattern of generalizing Serre's case. We refer to Serre's case as the Open)I(mage)T(heorem) (or \OIT).  That started by looking at modular curves as Hurwitz spaces \cite[Introduction]{Fr95}.  Roughly speaking, the generalization, based on the notation $(G,\bfC)$ from Serre goes from $G$ related to dihedral groups and $\bfC$ four repetitions of the involution conjugacy class -- producing sequences of modular curves -- to where $G$ is a general finite group and $\bfC$ is chosen to assure the production of non-trivial spaces.   

Serre's program for  modular curve towers $\{X(\ell^{k\np1})\}_{k\ge 0}$ compared these groups:
\begin{edesc} \label{projlim} \item \label{projlima} the projective limit of decomposition groups of a projective sequence of points above a particular $j_0\in \prP^1_j$ (the $j$-line) with;  
\item \label{projlimb} the projective sequence of monodromy groups, arithmetic and geometric (esp.~$\GL_2(\bZ_\ell)$ and $\SL_2(\bZ_\ell)$) of the components over the $j$-line. \end{edesc} 

\S\ref{homeoa} has the important basic definitions we use repeatedly for group covers. One is especially important, allowing constructing the towers of spaces generalizing those used by Serre in his \OIT:  \begin{defn} \label{deffrat} A profinite cover $\psi: H\to G$  is {\sl Frattini\/} if, for any $H^*\le H$ with $\psi(H^*)=G$, then $H^*=H$. It is central (resp.~$\ell$-Frattini) if $\ker(\psi)$ is in the center of $H$ (resp.~an $\ell$ group), etc.
\end{defn}  

\S\ref{bkliftinv} applies the universal abelianized $\ell$-Frattini cover of $G$ to form the spaces that generalize the framework for Serre's \OIT. The existence of a nonempty sequence of irreducible components of the spaces at level $k\ge 0$ has one potential obstruction. The check for its vanishing is our most sophisticated use of the lift invariant. By applying T. Weigel's generalization, Thm.~\ref{WeigelThm}, of Serre's use of an $\ell$-Poincar\'e duality group, we give an if and only if criterion for this. This includes there is no obstruction whenever the $\ell$ part of the Schur multiplier of $G$ is trivial. 

 Prop.~\ref{HIT2} connects the whole project to \HIT\ by giving the criterion that, general decomposition groups on a \MT  are open subgroups  of the \MT\ imonodromy if it is eventually $\ell$-Frattini. 

\S\ref{HSJac} returns to Serre's case to interpret with Jacobians of the level 0 curves  how to compare extension of constants and the moduli definition field of a \MT. We remind of Shimura's generalization of complex multiplication points to consider -- comparing with \HIT -- how to distinguish level 0 points of a \MT\ with radical differences between their corresponding decomposition groups. 

\S\ref{cmabelvar} summarizes the Shimura-Taniyama notation of \CM\ (or \ST) points on Siegel space, emphasizing this is about the corresponding abelian variety. Our main comparison is with the conjecture of Coleman-Oort, since our questions concern the Jacobians associated with the curve covers on a Hurwitz space.  Many Hurwitz spaces include as covers almost every curve of genus $\geng$, for example \cite[Thm.~6.15]{Fr10}  with Nielsen classes of odd order branching and the corresponding questions about nontrivial $\theta$-nulls and their connection to Hilbert's original paper on \HIT. 

\S\ref{weilpairing} warms up using the Fried-Serre lift invariant (\S\ref{l'lift}), applying the Hurwitz space interpretation to relate to the Weil pairing, and the moduli definition field. The two \OIT\ cases: 
\begin{edesc} \label{oit} \item \label{oita} CM: $j_0$ is a complex multiplication point; and the decomposition group, an open subroup of $\hat \bZ_\ell$, identifies as the group of the maximal abelian $\ell$-adic extension of $\bQ(j_0)$; and \item \label{oitb} $\GL_2$: In the Hurwitz space interpretation, an open subgroup of $\GL_2(\bZ_\ell)$. \end{edesc}  \cite[\S 2]{Fr78} took the case $G=\bZ/\ell^{k\np1}\xs \bZ/2$ ($\ell\ne 2$), a dihedral group and $\bfC=\bfC_{2^4}$ four repetitions of the involution class.  This recasts Serre's \CM\  case as generalizing a famous conjecture of Schur from its statement about polynomials to rational functions.\footnote{Describing prime-squared degree exceptional rational functions is equivalent to Serre's $\GL_2$-case of  as in \cite[\S6.1Ð6.3]{Fr05b} which also documents the result of \cite{GSM03}: All other degrees of indecomposable exceptional rational functions are sporadic (fall in finitely many Nielsen classes).}

Then, \S\ref{absinnAn} with $G=A_n$ and $\bfC$ consisting of odd order conjugacy classes engages (with elements of collaboration with Serre) has results that tie together a sizable literature. \S\ref{Anabs} gives collections where the Lift Hypothesis holds \eqref{schursep}, and when, if it doesn't, to producing situations -- called pure-cycle -- to generalize the result on irreducible components first produced by \cite{LO08} for which I use an interpretation of \cite{Se90} (or \cite[\S 2.2]{Fr10}).  \S\ref{absinnAn} has this special case: 
\begin{thm}  \label{LO08Se90Fr10case} With $G=A_n$, $n\ge 4$, $T$ the standard degree $n$ representation and $\sH(G,\bfC)^\abs$ is any genus 0 Nielsen class with $\bfC$ any $2'$ classes, $\sH(G,\bfC)^\abs$ has precisely one component.  \end{thm}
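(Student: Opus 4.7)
The plan is to convert the problem to counting $N_{S_n}(A_n,\bfC)$-braid orbits on $\ni(A_n,\bfC)^\abs$ (which by Thm~\ref{cosetbr} coincide with components of $\sH(A_n,\bfC)^\abs$) and to prove the Fried--Serre lift invariant is both the only braid-orbit invariant and forced to be trivial under the genus~$0$ hypothesis. Since each class $C\in\bfC$ consists of odd-order elements, every $g\in C$ has a unique same-order preimage $\tilde g$ in the spin double cover $\tilde A_n\to A_n$; the product $s(g_1,\ldots,g_r)\eqdef\prod_i\tilde g_i\in\ker(\tilde A_n\to A_n)$ is a well-defined function on $\ni(A_n,\bfC)^\abs$ and, by naturality of the spin cover under braiding, a braid invariant (\S\ref{bkliftinv}).

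Next I would show the genus~$0$ hypothesis forces $s\equiv 1$. Combining the Riemann--Hurwitz equality $\sum_i \text{ind}(g_i)=2n-2$ with Serre's closed cycle-length formula for the spin invariant on $A_n$ \cite{Se90}, the lift invariant reduces mod~$2$ to a combination of the cycle-length data of the $g_i$ that is controlled precisely by $\sum_i\text{ind}(g_i)$; the genus~$0$ equality kills it. For $n\in\{6,7\}$, where the Schur multiplier carries an additional $\bZ/3$ piece, the same analysis applies using the associated triple cover.

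The central remaining step is braid-transitivity, given trivial lift invariant. Liu--Osserman \cite{LO08} handle the pure-cycle case directly by degenerations of branch points. For general $2'$ entries, the strategy from \cite[\S 2.2]{Fr10} (itself deriving from \cite{Se90}) is to apply braid moves that split a non-single-cycle entry $g_i$ into its cycle factors by conjugating through a neighbor, absorbing the resulting defect elsewhere in the tuple, while preserving both the product relation $\prod_i g_i=1$ and the generation of $A_n$. Iterating reduces any Nielsen class representative to a pure-cycle representative in the same braid orbit; \cite{LO08} then closes the argument.

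The main obstacle is this cycle-splitting reduction: one must verify that for every admissible $2'$ cycle type the required braid moves stay in $\ni(A_n,\bfC)^\abs$ and preserve the lift invariant. The bookkeeping is delicate because splitting one entry alters its neighbors, so one re-normalizes iteratively; the genus~$0$ constraint guarantees enough $3$-cycles in the tuple to carry out these moves without losing either generation or membership in the Nielsen class. This is the combinatorial content that generalizes \cite{LO08} via the interpretation of \cite{Se90} given in \cite[\S 2.2]{Fr10}.
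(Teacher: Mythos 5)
Your outline has the right shape (braid invariance of $s$, constancy, a reduction to \cite{LO08}'s pure-cycle result), but two of the steps as written are wrong, and both gaps are at exactly the points where the paper's argument actually does its work.

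\textbf{The lift invariant is constant, not trivial.} The claim that genus~$0$ forces $s\equiv 1$ is false, and the mechanism you offer for it does not exist. Serre's formula (Thm.~\ref{invariance}) is $s_\bg=\sum_i(-1)^{\omega(g_i)}$, where $\omega(g)$ is the sum of $\frac{u^2-1}{8}\bmod 2$ over cycle lengths $u$ of $g$; this is \emph{not} a function of the total index that Riemann--Hurwitz pins at $2n-2$. A single $5$-cycle and a disjoint pair of $3$-cycles both have index $4$, but $\omega\equiv 1$ and $\omega\equiv 0\pmod 2$ respectively, so ``the genus~$0$ equality kills it'' cannot be right. The paper's own example inside Thm.~\ref{invariance} makes the point: $\ni(A_n,\bfC_{3^{n-1}})^\abs$ has genus-$0$ covers with $s_\phi\equiv n-1\pmod 2$, nonzero for even $n$. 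What genus~$0$ buys (via Lem.~\ref{canpure} plus \cite{LO08}) is only that $s$ is \emph{constant} on the Nielsen class; the uniqueness of the component then has to come from Schur-separation (Def.~\ref{schur-sep}, Cor.~\ref{LO08Se90Fr10}), not from triviality of $s$.

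\textbf{The ``cycle-splitting braid move'' is not a braid move.} Splitting a non-pure-cycle entry $g_i$ into its disjoint cycle factors changes $r$ and so lands in a different Nielsen class, while braid moves preserve $r$. Hence ``iterating reduces any Nielsen class representative to a pure-cycle representative in the same braid orbit'' is incoherent: $\bg$ and its pure-cycle version $\bg_\pu$ live on different Hurwitz spaces. That passage $\bg\mapsto\bg_\pu\in\ni(A_n,\bfC_\pu)^\abs$ is exactly what the paper's Lem.~\ref{canpure} defines, but it is used there only to \emph{transfer the lift invariant and the genus}; combined with \cite{LO08} on the pure-cycle class it yields that $s$ has a single value on $\ni(A_n,\bfC)^\abs$ (the second proof of Thm.~\ref{invariance}). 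The one-component statement itself is then Cor.~\ref{LO08Se90Fr10} together with the Schur-separation assertion of Rem.~\ref{purecycfail}, and supplying that last step is precisely the hard content your cycle-splitting paragraph gestures at and does not provide. Your citation of \cite[\S 2.2]{Fr10} is also misplaced here: that section gives a short proof of Serre's lift-invariant formula, i.e.\ it belongs to your Step~2, not to braid transitivity. Finally, your one-line treatment of the $\bZ/3$ piece of $\SM_{A_n}$ for $n=6,7$ via ``the associated triple cover'' ignores that $2'$ classes need not be $3'$, so the unique same-order lift underlying the definition of that invariant need not exist.
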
  

\S\ref{heiscase} is our major example with $G=(\bZ/\ell)^2\xs \bZ/3$. Notationallly, it resembles \S\ref{weilpairing} with $G=(\bZ/\ell)^2\xs \bZ/2$, but into territory beyond the \OIT, so our computations use $2\times 2$ matrices.  It illustrates all aspects of Thm.~\ref{cosetbr}, including computing the lift invariant explicitly. 

\S\ref{totspaces}  starts the arithmetic of the Galois closure process applied to covers and their moduli. While \cite{FrV91} used the lift invariant to delineate components of Hurwitz spaces given by the parameters $(G,\bfC)$, we assumed the multiplicity of the classes appearing in $\bfC$ is large. That would do nothing for generalizing the \OIT. \cite{BFr02} developed Modular Towers (\MT, the projective sequence of spaces generalizing modular curve towers) beyond \cite{Fr95} and showed how it applied to  $G=A_n$ for $n=5$ and four repetitions of 3-cycle classes. 

\subsubsection{Three uses of the lift invariant} \label{highmultsec}  The first use of the lift invariant is the division of Thm.~\ref{cosetbr} into two levels of component types: absolute spaces and above these  inner spaces based on taking Galois closures. The second use is to form the towers (\MT s, Def.~\ref{def-MT}) of inner moduli spaces of curves that generalize how Serre used modular curves. Third: Sometimes the lift invariant helps us determine the moduli definition field of inner space components.

The example of \S\ref{heiscase}  displays all three of these lift invariant uses. This allows comparing expectations with formulations of others (Rem.~\ref {AOB}) based on the Siegal Upper half space and complex multiplication.\footnote{\cite{Fr10} shows I have nothing against Siegel space, but curves and their arithmetic are the tougher nonlinear case for which Jacobian varieties are an aid.} 

\begin{defn} \label{highmult} By increasing the multiplicity of {\sl each\/}  conjugacy class in $\bfC$ -- refer to this as {\sl high multiplicity\/} -- \eql{liftinvuse}{liftinvuseb}  shows the configuration of components in Thm.~\ref{cosetbr} simplifies.\footnote{The Ex.~\ref{HM-separated} result is explicit on {\sl high multiplicity}. To keep the result of applying the \BCL\ Thm.~\ref{bcl} the same, increase the multiplicity of classes in $\bfC$ so the cyclotomic action on the new $\bfC$ doesn't change.}  \end{defn} Our examples have $r=r_\bfC=4$, so high multiplicity doesn't hold. Even in the most intricate cases, the structure of Thm.~\ref{bcl} clearly displays the components, separating out the most serious arithmetic and identifying the moduli definition fields of \HM\ components. 

Def.~\ref{liftinv} gives the formula for the lift invariant, $\hat \bg\in \ni(G,\bfC)^\inn\mapsto s_{\hat\bg}$.  Our examples satisfy $(\ell, N_{\bfC})=1$. Then,  $s_{\hat \bg}$  is always an element in the $\ell$ part, $\SM_{G,\ell}$,  of the Schur multiplier of $G$. It's a braid invariant, constant on any braid orbit. We give an explicit formula for it in our examples. 

There is a natural action of $N_T/G$ (Def.~\ref{NormactonLiftInv}) on the lift invariants attached to the components of $\sH(G,\bfC)^\inn$ lying over a component $\sH'\le \sH(G,\bfC)^\abs$. Property \eql{liftinvuse}{liftinvusea}, follows from  Main Thm.~\ref{cosetbr}.    
\begin{defn} \label{NormactonLiftInv} With $\sH'$ corresponding to the braid orbit of $\bg\in \ni(G,\bfC)^\abs$ and $\hat \bg\in \ni(G,\bfC)^\inn$ lying over $\bg$,  $\alpha\in N_T/G: s_{\hat \bg} \to s_{\hat \bg^\alpha}$. \end{defn}

\begin{edesc} \label{liftinvuse}  \item \label{liftinvusea}  The components of $\sH(G,\bfC)^\inn$ lying over a component $\sH'\le \sH(G,\bfC)^\abs$, correspond to elements of an orbit of $N_T/G$ on $s_{\hat \bg}$. 

 \item \label{liftinvuseb} With high multiplicity, each $s'\in \SM_{G,\ell}$ will have the form $s_{\hat \bg}$ for some $\hat \bg\in \ni(G,\bfC)^\inn$ and components of $\sH(G,\bfC)^\abs$ correspond one-one to orbits of $N_T/G$ on $\SM_{G,\ell}$. \end{edesc}
 
 Comments on \eqref{liftinvuse}: 
 \begin{edesc} \label{liftinvusecom} 
\item The description of components in \eql{liftinvuse}{liftinvusea} is independent of the representative $\bg$. 
\item Rem.~\ref{notlperf} give the formula for $s_{\hat \bg}$ without the $(\ell, N_{\bfC})=1$ assumption, adding only a slight complication to Def.~\ref{NormactonLiftInv}. 
\item For $N_T(G)/G$ an $\ell'$ group, its action can be expressed in a less mysterious form (Cor.~\ref{compmult2}) using the universal abelianized $\ell$-Frattini cover of $G$. \item \label{liftinvused}  \eql{liftinvuse}{liftinvusea} was started in \cite{FrV91}, assuming High Multiplicity in $\bfC$; it didn't show how that affected the two-sequence result of  Thm.~\ref{cosetbr}. \end{edesc}  

The production of the Schur multiplier at all levels of the \MT\ and the explicit computation of the lift invariant (as was done in the Alternating group case above at level $k=0$) allows comparing with the \OIT\ case. Example \S\ref{heiscase} has Hurwitz spaces $\sH((\bZ/\ell^{k\np1})^2\xs \bZ/3, \bfC_{\pm 3})$ and as with Serre's case, we eventually go to reduced Hurwitz spaces by modding out by $\PSL_2(\bC)$. \S\ref{nonbraidcomps} shows the superficial resemblance of this to Serre's case but in this case finding projective sequences of components must deal with potentially obstructed components, coming from the lift invariant, to ensure the possibility of taking projective sequences of points. 

Thm.~\ref{WeigelThm}, Weigel's generalization of Serre's oriented $ p$-Poincar\'e duality group, handles this, except here we have an extension, $\sL\to \hat G_\ell\to G$, of $G$ by an $\ell$-adic lattice, $\sL$ defined in \S\ref{l'lift}.  This gives a sequence of Frattini covers with abelian $\ell$-group kernels $G_{k\np 1} \to G_k$, $k\ge 0$, $G_0=G$, and given our conjugacy classes a tower of Hurwitz spaces $\{\sH(G_k,\bfC)\}_{k=0}$. 
The topic of obstructed components and the construction of \MT s  first arose in \cite[Obst.~Comp.~Lem. 3.2]{FrK97} to give an if and only if criterion that all tower levels are nonempty.  Princ.~\ref{liftinvprinc} gives the main theorem -- a lift invariant criterion -- for the existence of an abelianized \MT\ through a specific component at a specific level, which requires only a check at level 0.  

\begin{princ} \label{liftinvprinc} There exists $k_0$ with $\psi_{k_0}: G_{k_0}\to G$, the $\ell$-Frattini cover above, factors through an $\ell$-reptresentation cover  $H\to G$. Then, the spaces above form a non-empty \MT\  over a component corresponding to a braid orbit in $O\le  \ni(G,\bfC)$ if and only if there is $\bg_{k_0} \in \ni(G_{k_0},\bfC)$ over $\bg\in O$. 

This obstruction interprets as saying, in generalization to the lift inv. notation above, that $s_{H,\bg_{k_0}}=0$. In particular, this holds if the $\ell$ part of the Schur multiplier of $G$ is trivial. \end{princ} 

 As with \eqref{oit}, generalizing what arose in Serre's case (especially the idea of an {\sl eventually Frattini\/} projective sequence of finite groups), allows generalizing Hilbert's Irreducibility Theorem. The first result, Thm.~\ref{HIT2}, describes when, for general points on a \MT,  the analog of \eql{projlim}{projlima} is an open subgroup of the analog of \eql{projlim}{projlimb}.

\subsection{Results and homeomorphisms of covers} \label{statements}  
 \S\ref{comptypes} emphasizes Thm.~\ref{cosetbr},  in terms of what we know about component types. It displays how the list \eqref{liftinvuse} (with corresponding comments \eqref{liftinvusecom}) works based on the natural map from components of  $\sH(G,\bfC)^\inn$ to those of $\sH(G,\bfC)^\abs$ in \eqref{hurnot}.  \S\ref{cenprob} defines the {\sl moduli definition field\/} and the key problems on components: the $G_\bQ$ action on them, and finding the correct field over which a point on a component represents a cover. 
 
 \begin{rem}[Warning!] As Ex.~\ref{davprs} -- from solving Davenport's problem\footnote{That should set straight any misunderstanding that definition fields for all reasonable moduli spaces are $\bQ$.} -- shows 
 the moduli definition field, in general,  is a proper extension of the definition field of the moduli space component with its map to the configuration space. \end{rem} 
 
\subsubsection{Types of components} \label{comptypes} Components correspond to braid orbits on a {\sl Nielsen class\/} (Def.~\ref{HNC}).  Improving the main result of \cite{GoH92} and \cite{GhT23}, they distinguish Nielsen class components. Suppose $\sH_i$, $i=1,2$, are inner space components, both over the same absolute component, $\sH'$. 
\begin{edesc} \label{nonbrauto} \item  Then, each cover in $\sH_1$ is homeomorphic (Def.~\ref{homeogal}) to a cover in $\sH_2$, the homeomorphism commuting  between the covering maps to $\prP^1_z$ inducing $\alpha\in \Aut(G,\bfC)$, but  
\item $\alpha$ is {\sl non-braidable\/} (Def.~\ref{braidable}). \end{edesc}  

\S\ref{isotopy-Hur} reminds us of isotopy classes of covers and how to compute components and their properties using an explicit quotient of the braid group. Suppose $\sH'$ and $\sH^{''}$ are distinct components of $\sH(G,\bfC)^\abs$. We call them homeomorphism-separated. We don't yet know exactly {\sl what\/} distinguishes homeomorphism-separated components,  yet most \S\ref{examples} examples of homeomorphism-separated components, $\sH'$, of $\sH(G,\bfC)^\abs$ have this {\sl Schur-separation\/} property using  the collection, $S_{\sH'}$, of lift invariants of inner components above $\sH'$ (Def.~\ref{NormactonLiftInv} or Def.~\ref{schur-sep}): 
\begin{equation} \label{schursep} \text{ $S_{\sH'}$ determines $\sH'$ uniquely.}\end{equation} 
Exceptions often have multiple Harbater-Mumford (Def.~\ref{HM}, lift invariant 0) components.

\subsubsection{Moduli definition problem} \label{cenprob}  Denote the least common multiple of the order of elements in $\bfC$ by $N_\bfC$. Given $\sigma\in G_\bQ$, its restriction to the cyclotomic numbers gives $n_\sigma\in (\bZ/N_{\bfC})^*$ (Def.~\ref{ratclass}). Given $\psi: X\to \prP^1_z$ representing  $\bp\in \sH(G,\bfC)^\dagger(\bar \bQ)$, $\dagger=\inn$ or $\abs$, denote its conjugate by applying $\sigma$ by $\psi^\sigma$. Here is the first corollary of the {\sl Branch Cycle Lemma\/} \S\ref{bclmodel} (\BCL\ of \cite{Fr77}). 
  
\begin{cor}  \label{GQaction} Then, $\psi^\sigma$ is  a representative of $\bp^\sigma\in \sH(G,\bfC^{n_\sigma})^\dagger(\bar \bQ)$.  \end{cor}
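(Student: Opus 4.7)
The plan is to reduce Corollary~\ref{GQaction} to a local computation at each branch point, where inertia generators are canonically identified with roots of unity, and then to apply the cyclotomic character $\sigma \mapsto n_\sigma$. Since the assertion concerns $\bar\bQ$-points, I would first fix a number field $K \subset \bar\bQ$ over which the cover $\psi: X \to \prP^1_z$ representing $\bp$ is defined, with branch points $z_1,\dots,z_r$ and a branch cycle description $\bg = (g_1,\dots,g_r)$, $g_i \in C_i$. Applying $\sigma$ to the defining equations yields $\psi^\sigma: X^\sigma \to \prP^1_z$, defined over $\sigma(K)$, with branch locus $(z_1^\sigma,\dots,z_r^\sigma)$. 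Functoriality of the Hurwitz moduli space --- valid equally for $\dagger=\abs$ and $\dagger=\inn$, by construction of the covering maps to $U_r$ --- guarantees that $\psi^\sigma$ represents $\bp^\sigma$, so the task reduces to identifying the $G$-conjugacy classes of its branch cycles.

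Second, I would carry out the standard local analysis. Fix $i$ and a small analytic disk $\Delta$ around $z_i$. The restriction of $\psi$ over $\Delta \setminus \{z_i\}$ is a disjoint union of cyclic covers $w \mapsto w^{e_{i,j}}$ with ramification indices $e_{i,j} \mid N_\bfC$. The class $C_i$ is determined by picking a standard counterclockwise loop around $z_i$, lifting it to the Galois closure, and identifying the resulting inertia generator with multiplication by the canonical primitive root of unity $\zeta_{e_{i,j}} = \exp(2\pi\sqrt{-1}/e_{i,j})$ on a local uniformizer $w$. In this way $C_i$ is tagged by a coherent choice of primitive roots of unity of orders dividing $N_\bfC$.

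The decisive third step is to rerun the same recipe on $\psi^\sigma$. The algebraic local structure at $z_i^\sigma$ is the $\sigma$-conjugate of that at $z_i$, and elements of $G$, being abstract, transport naturally across the two covers. But the canonical root of unity used to pin down the branch cycle is acted on by $\sigma$: $\sigma(\zeta_N) = \zeta_N^{n_\sigma}$ for every $N \mid N_\bfC$. Relative to the same loop orientation and the same canonical primitive roots $\zeta_{e_{i,j}}$ on the conjugated local uniformizer, the inertia generator at $z_i^\sigma$ must therefore equal $g_i^{n_\sigma}$: this is the unique element of $G$ whose transported action matches multiplication by $\zeta_{e_{i,j}}$ after the cyclotomic twist has been absorbed. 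Assembling the local contributions over $i=1,\dots,r$, the branch cycle description of $\psi^\sigma$ lies in the classes $\bfC^{n_\sigma}$, so $\bp^\sigma \in \sH(G, \bfC^{n_\sigma})^\dagger(\bar\bQ)$.

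The main obstacle I expect is making the identification of inertia generators with roots of unity intrinsic enough to commute with the algebraic Galois action, and checking that neither $\abs$ nor $\inn$ equivalence obstructs the conclusion. The second concern is immediate: both equivalences are given by group-theoretic operations ($G$-conjugation for $\inn$, the $N_{S_n}(G,\bfC)$-action for $\abs$) that commute with the $\sigma$-twist of the tagging root of unity, so the twisted branch cycle class is well-defined independent of the representative chosen in the Nielsen class. The first concern reduces to the observation that the canonical inertia-to-roots-of-unity identification depends only on a topological choice (the loop orientation around each $z_i$) that is fixed throughout the argument; once this orientation is fixed, the $\sigma$-action on roots of unity translates verbatim into the $n_\sigma$-twist on branch cycles, yielding the claim.
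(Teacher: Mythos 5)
Your approach --- local analysis at each branch point, the canonical identification of tame inertia generators with roots of unity, and the action of the cyclotomic character --- is precisely the strategy of Fried's original Branch Cycle Lemma argument in \cite[\S 5.1]{Fr77}, which the paper cites for this corollary without reproducing a proof. So the route is the right one and essentially the same as the (implicit) route in the paper.

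The one place that needs to be tightened is the decisive third step, which is currently a hand-wave at exactly the spot where a sign subtlety lives. You write that the inertia generator at $z_i^\sigma$ ``must therefore equal $g_i^{n_\sigma}$: this is the unique element of $G$ whose transported action matches multiplication by $\zeta_{e_{i,j}}$ after the cyclotomic twist has been absorbed.'' Carry the computation out rather than absorbing it: if $\iota(g_i)$ acts on a local uniformizer $w$ (with $w^e = z-z_i$) by $w\mapsto \zeta_e w$, then the $\sigma$-transported automorphism $\iota^\sigma(g_i)$ acts on the local uniformizer of $\psi^\sigma$ by $w'\mapsto \sigma(\zeta_e)\,w' = \zeta_e^{n_\sigma}\, w'$. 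The branch cycle of $\psi^\sigma$ at $z_i^\sigma$ is, by the fixed analytic convention, the element whose transported action is $w'\mapsto \zeta_e\, w'$; since $\iota^\sigma(g_i^{m})$ acts as $\zeta_e^{m n_\sigma}$, that element is $g_i^{n_\sigma^{-1}}$, not $g_i^{n_\sigma}$. So a careful local computation delivers $\bfC^{n_\sigma^{-1}}$ if $n_\sigma$ is defined by $\sigma(\zeta)=\zeta^{n_\sigma}$ as in the text preceding the corollary. For the downstream use --- the moduli definition field is the fixed field of the subgroup $M_{\bfC,\dagger}$ of Def.~\ref{ratclass}, and $M_{\bfC,\dagger}$ is closed under inversion --- the discrepancy is harmless, and it may simply reflect a convention mismatch between this paper's statement and the definition of $n_\sigma$. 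But the step should not be ``absorbed'': write out which element of $G$ acts as $\zeta_e$ after transport, so the exponent (and which side of the inversion you are on) is pinned down rather than asserted.
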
 The \BCL\ gives much more: For example, under the assumption that \begin{equation} \label{BCLassump} \text{$\sH(G,\bfC)^\dagger$ is irreducible and has fine moduli,}\end{equation}  it gives the precise (minimal) {\sl cyclotomic field}, $\bQ_{\sH^\dagger}$, for which  $\bp\in\sH(G,\bfC)^\dagger$ has a representing cover over $ \bQ_{\sH^\dagger}(\bp)$. Assuming \eqref{BCLassump}, this makes $\bQ_{\sH^\dagger}$ the  {\sl moduli definition field\/} (Def.~\ref{mdf}) of the Hurwitz space.  When the Hurwitz space has more than one component, we consider the moduli definition field, $\bQ_{\sH^\dagger}$, for a component $\sH^\dagger$.  

That  definition uses the {\sl total space\/} over $\sH^\dagger$:  $\sT^\dagger\to \sH^\dagger\times \prP^1_z$, with fibers, $\sT^\dagger_\bp\to \bp\times \prP^1_z$ for $\bp\in \sH^\dagger$,  representing covers. (We also use the reduced space version.) 
\S\ref{mdfsec}  does better -- a reason for choosing $T$ carefully when possible -- effectively generalizing the \BCL\  assuming:    
\begin{equation} \label{gcpadd}  \text{we know $\bQ_{\sH'}$ ($\sH'\le \sH(G,\bfC)^K$); and  \eqref{BCLassump} holds for $\sH^*\le \sH(G,\bfC)^\inn$ above $\sH'$.}\end{equation}

A general result for Schur-separated absolute components \eqref{schursep} with {\sl cyclic\/} (or trivial) Schur multiplier gives the moduli definition field that suffices for the \S\ref{examples} examples. That excludes the case of multiple \HM\ components in \S\ref{heiscase}.  Going beyond condition \eqref{gcpadd} is under the heading of {\sl extension of constants\/},  starting in \eqref{extconstants} and taking off in \S\ref{fibGalClos}. This abstracts the central mystery in using {\sl Hilbert's Irreducibility Theorem\/}, generalizing how \cite{Fr78} viewed \cite{Se68}.

\begin{prob} \label{unirationality}  Unirationality question:  In the cases \cite{GoH92} and \cite{GhT23} give, where the spaces equivalence {\sl  all\/} covers if they are conjugate by $\Aut(G)$, are the moduli spaces unirational?\end{prob} By computing some genuses of reduced spaces when $r=4$, we show the answer is \lq\lq No!\rq\rq\ These examples illustrate our main Thm.~\ref{cosetbr} on components and give the significance of finding $G_\bQ$ orbits and -- more strongly --  moduli definition fields. 

 \S\ref{absinnAn} with $G$ an alternating group, generalizes results of Fried, Liu-Osserman, and Serre. Computing moduli definition fields for components reverts to finding an easily stated property of discriminants of genus 0 covers over $\bQ$.   \S\ref{heiscase} is our main case for the full force of Thm.~\ref{cosetbr}  to handle the configuration of components and their moduli definition fields. It has $G=(\bZ/\ell^{k\np1})^2\xs \bZ/3$, $\ell$ a prime, $k\ge 0$ as an example extending Serre's Open Image Theorem (\OIT). This is a case of \MT s developed to handle the simplest unanswered example for any $\ell$-perfect group $G$: 
\begin{equation} \label{IGPstatement} \begin{array}{c} \text{Assuming the regular inverse Galois is correct (say, over $\bQ$),}\\  \text{where are the regular realizations of $\ell$-Frattini covers of $G$?}\end{array}  \end{equation} If the main conjecture for \MT s is correct -- Rem.~\ref{A5MT} reminds of evidence for it as a generalizaiton generalization of Faltings Theorem -- then, the appearance of those regular realizations requires rational points on a sequence of Hurwitz spaces of  unbounded dimension.\footnote{That applies to the case $G$ is a dihedral group, putting generalizations of Mazur's modular curve result as a particular case \cite{DFr94}.}   

Our examples use spaces four branch point covers whose reduced versions are (therefore) upper half-plane quotients \cite[\S 2.10]{BFr02}.  Though these aren't modular curves, we can still explicitly compute their genuses (answering a question in \cite{GhT23} negatively). The main computation is computing the orders of cusps (points over $j=\infty$). This gives one tool for verifying that they aren't modular curves. Proposition~\ref {A4L0} makes that computation in a particular case.

\cite{GhT23}  wanted total spaces.\footnote{Even with $G$ abelian (so fine moduli doesn't hold).} Their approach differed from \cite{FrV91}, and their total spaces may have several repeats of the same cover.  \S\ref{withoutfinemod} contrasts this with the {\sl Grothendieck nonabelian cohomology\/} approach of \cite{Fr77}. 

\begin{rem}[Main \MT\ Conj.] \label{A5MT} \cite{BFr02} proved the Main \MT\ conjecture -- high tower levels have no rational points --  for the \MT\ with $n=5$, $r=4$, $\ell=2$, of Ex.~\ref{Anexmp}. That explicitly showed, by level 2, the genus of the reduced components  -- using a version of \eqref{RHOrbitEq} -- exceeds 1.  Applying Faltings' inner Hurwitz space tower levels $k\ge 2$  have only finitely many rational points over a fixed number field, $F$.  Rem.~\ref{twotychonof}  now gives this case of the Main \MT\ conjecture. \end{rem}

\begin{rem}[Other uses of the lift invariant\footnote{Uses of the Tychonoff Theorem came together in different papers at different times.}]  \label{twotychonof} The conclusion (for $r=4$ over a number field $F$) of Rem.~\ref{A5MT} used Weil's Theorem on the Frobenius action and a reduction theorem of Grothendieck, Falting's Theorem and the Tychonoff Theorem to show a \MT, with reduced Hurwitz space components of genus $> 1$, could have $F$ points off the cusps at only finitely many levels. 

Otherwise, they would produce an $\ell$-adic representation on the Jacobian of a particular cover in the Nielsen class over a finite field, with trivial $G_F$ action.  The Falting's part is not explicit, but the level of the high genus result is.  The hardest case of the Main \MT\ Conj.~(for any $r$) is when there is a uniform bound on the moduli definition field of the tower levels. Ex.~\ref{HM-separated} has examples of explicit $(G,\bfC,r=r_{\bfC})$ with $r_\bfC>4$ for which this holds.  \end{rem}

\subsection{Isotopies and braidable automorphisms}  \label{braiding} \S\ref{3tools} explains three main tools: 
 \begin{edesc} \label{tools} \item \label{toolsa} using pairs of related cover types described by corresponding {\sl Nielsen classes}; 
 \item \label{toolsb}  recognizing homeomorphic covers that differ by nonbraidable automorphisms; and 
 \item \label{toolsc}  classifying covers that aren't homeomorphic, though they are in the same Nielsen class. 
\end{edesc} 
Our model for \eql{tools}{toolsa} comes from classical pairs of modular curves. Using it, Thm.~\ref{cosetbr} effectively separates components of type \eql{tools}{toolsb} -- covers in different components might be homeomorphic, but differ by a non-braidable automorphism -- from those of type  \eql{tools}{toolsc}.\label{isotopy-Hur} 

\S\ref{dragbybps} defines isotopy of covers using \lq\lq dragging a cover by its branch points,\rq\rq\ and so the Hurwitz monodromy group, $H_r$. With this, we can compute the components of a natural space of such covers using Nielsen classes. In $(\prP^1_z)^r$,  the {\sl fat diagonal\/}, $\Delta_r$, consists of points with two or more coordinates equal. Denote the quotient result, $\Delta_r/S_n$, on coordinates by $D_r$. This sits inside  $(\prP^1_z)^r/S_r=\prP^r$, projective $r$-space. 

The collection of possible, unordered, and distinct branch points for an $r$-branched cover of $\prP^1_z$ is given by $U_r\eqdef \prP^r\setminus D_r$. Consider $U_\bz\eqdef \prP^1_z\setminus \{\bz\}$. For $z'$ distinct from any of the coordinates of $\bz$, form  $ \pi_1(U_{\bz})^\inn$ by modding out by inner automorphisms on $\pi_1(U_{\bz},z')$. \S\ref{draggivesMainT} gives Main Thm.~\ref{cosetbr}, dividing Hurwitz space components into a heirarchy of types. 

\subsubsection{Tools}  \label{3tools} The symbol $\prP^1$ denotes the Riemann sphere.  (Nonsingular, ramified) covers of it here are compact Riemann surfaces $X$ with a nonconstant morphism $\phi: X\to \prP^1$. Until we get to examples and comparison with classical constructions, we use the notation $\prP^1_z$ (and its like) to mean $z$ is an explicit (inhomogeneous) uniformizing variable (as in 1st-year complex variables). 

\S\ref{classgens}  describes {\sl classical generators\/} $\sP$ of the (fundamental group of the) $r$-punctured sphere,  
$\pi_1(U_{\bz},z_0)$ with the punctures at $\bz=\row z r$, and $U_\bz\eqdef \prP^1_z\setminus \{\bz\}$ and $z_0$ distinct from any entries of $\bz$. Given $(\sP,z_0)$, a cover $\phi$ -- with a fixed naming of the points, $\phi^{-1}(z_0)$, above $z_0$ -- with branch points $\bz$ is analyticially  determined by the branch cycles $\bg$ computed from $(\sP,z_0)$. 

\eqref{hurnot} references covers as given by branch cycles and absolute and inner equivalences of covers using branch cycles.   
Given any such $\phi$ by its branch cycles $\bg$, elements in $S_n$, with $n=\deg(T)$, we can always reference the Galois closure, $\hat \phi: \hat X\to \prP^1_z$ which has group $G=\lrang{\bg}$. Several possible branch cycles, $\hat \bg$, associated to $\hat \phi$, differ by actions of $N_T$ fixed on $\bg$.  \S\ref{fibGalClos} reminds of our construction, including for families of covers. \"Mobius transformations of $\prP^1_z$ act on such covers:\footnote{Our examples will illustrate the equivalences on branch cycles from applying this action.}  \begin{equation} \label{PSLact} \beta\in \PSL_2(\bC): \phi\to \beta\circ \phi. \text{ This action on spaces of covers forms their {\sl reduced versions}.} \end{equation}   

\cite{GoH92} starts with a pair, $(\hat  X_1,G)$, 
\begin{edesc} \label{startpt} \item  \label{startpta} $\hat \phi_1: \hat X_1\to \prP^1_z$, a Galois cover with group $G$, and then considers 
\item  \label{startptb} all homeomorphic Galois covers, $\hat X\to \prP^1_z$, by $\hat \theta: \hat X_1\to \hat X_2$ (Def.~\ref{homeogal}) with group $G$. 
\end{edesc}  

\begin{defn} \label{homeogal} For covers, $\phi_i: X_i\to \prP^1_z$, $i=1,2$,  a homeomorphism $\theta$ between them is a homeomorphism $\theta:  X_1\to X_2$ that {\sl preserves fibers\/}: maps a fiber $\phi_1^{-1}(z_1)$ of $\phi_1$ to a fiber of $\phi_2$. So, it is also a homeomorphism on $\prP^1_z$. We say the {\sl covers\/} are {\sl homeomorphic\/}. \end{defn} 

By contrast, \cite{FrV91} starts with a group $G$ and $\bfC=\{\row \C r\}$, a collection of conjugacy classes in $G$. Then, it has two related approaches. 
\begin{edesc} \label{FrVstartpt} \item \label{FrVstartpta}  Consider all Galois covers, $\hat \phi: \hat X\to \prP^1_z$, with group $G$, having branch cycles, \\ $\bg=(\row g r)$, for the cover in the classes $\bfC$ (with the same multiplicity).  
\item \label{FrVstartptb} For a given (usually faithful and transitive) permutation representation $T:G\to S_n$,  consider all covers $\phi: X \to \prP^1_z$ with Galois closures given by \eql{FrVstartpt}{FrVstartpta}. 
\end{edesc} 

In both cases of \eqref{FrVstartpt}, we say $\bg\in \bfC$.  Def.~\ref{HNC} has mandatory {\sl product-one\/} and {\sl generation\/} conditions for elements of $\bg$. This defines {\sl Nielsen classes}, $\ni(G,\bfC)$, with which we can be explicit about these objects and refer to $\bg \in \ni(G,\bfC)$: 
\begin{edesc} \item equivalences of covers;  
\item connected components of families of those covers up to one of those equivalences;
\item a braid group action for computing those components; and properties of covers by which we can recognize those components.\end{edesc}  

Applications rarely require naming points in $\phi^{-1}(z_0)$. Equivalences change this naming, starting with equivalencing $\bg$ and $h\bg h^{-1}\in \ni(G,\bfC)$, for $h\in G$: they differ by inner automorphisms, $\Inn(G)$, of $G$. Denote $\pi_1(U_{\bz_0},z_0)$ 
 mod inner automorphisms by $\pi_1(U_{\bz_0})$. 
\begin{edesc} \label{modbyinner} \item \label{modbyinnera}   Inner equivalence  for covers of $\prP^1_z$ relative to a given set of classical generators, $\sP$, around $\bz_0$ implies a representation $\pi_1(U_{\bz_0},z_0)$ for inner equivalence factors through $\pi_1(U_{\bz_0})$.
\item  \label{modbyinnerb} We can always braid inner automorphisms \cite[Lem.~3.8]{BiFr82}. Using such an equivalence class doesn't change finding the components we are after.\footnote{Indeed, not using inner equivalence would make many applications untenable.}  
 \end{edesc} 

Def.~\ref{HNC}, gives the first topological invariant preserved by a homeomorphism of covers associated with the same permutation representation $T: G\to S_n$.

\begin{defn} \label{HNC} Consider a subgroup, $\Inn(G)\le K\le \Aut(G,\bfC)$.  This gives $K$-{\sl Nielsen classes}:  
$$\ni(G,\bfC)^K\eqdef \{\bg\in \bfC \mid \prod_{i=1}^r \row g r=1 \text{ (product-one) and } \lrang{\bg}=G \text{ (generation)}\}/K .$$ 
Denote the special case $K=\Inn(G)$ by $\ni(G,\bf C)^\inn$. From Prop.~\ref{innerbraiding}, $K$-Nielsen classes make sense. With $K=N_{S_n}(G,\bfC)$, and $T$ understood, call these {\sl absolute classes}, $\ni(G,\bfC)^\abs$.\end{defn} 

Then,  denote the Nielsen classes with $K=N_{S_n}(G,\bfC)$ by $\ni(G,\bfC)^\abs$ when $T$ is understood; these  Nielsen class elements characterize the usual equivalence of covers of $\prP^1_z$ of degree $\deg(T)$. 

Equivalence of covers $f: \prP^1_w\to \prP^1_z$, with $f: w \to f(w)=z$ a rational function, is usually absolute equivalence. From Prop.~\ref{innerbraiding}, $\ni(G,\bfC)/K$ (with $\Inn(G)\le K\le \Aut(G,\bfC)$), {\sl $K$-Nielsen classes}, makes sense.\footnote{There are other -- beyond quotienting by $K$ as here -- useful equivalences on Nielsen classes (as used in, say, \cite{BiFr82}). This paper only uses these.} 

Def.~\ref{HM} gives Nielsen classes representatives that arise often. Ex.~\ref{HM-separated}  uses them to produce abundant components of absolute spaces with trivial lift invariant and these properties: 
\begin{edesc} \item they are homeomorphism-separated from all other  components; 
\item  they have only one inner component above them. \end{edesc}  

\begin{defn}[\HM\ reps]  \label{HM} $\bg=(g_1,g_1^{-1},\dots g_s,g_s^{-1})\in \ni(G,\bfC)$ (so $2s=r$) is called a Harbater-Mumford rep. Its braid orbit (or its component) is  \HM. \end{defn}  

Many classical generators are based at $(\bz,z_0)$. Variations of them --  in the process of \lq\lq dragging a cover, up to inner equivalence,  by its branch points\rq\rq\  \eqref{drag} -- produces the braid action for computations in this paper (as in \eqref{iso}).  For now, fix classical generators $\sP_{\bz_0,z_0}$ based at $z_0$, with all covers in Lem.~\ref{homcovers} branched at $\bz_0$ and branch cycles computed from them.  

\begin{lem}  \label{homcovers} Take $\theta: \phi_1 \to \phi_2$,  a  homeomorphism of  covers, with branch cycles $\bg_i\in \ni(G,\bfC)^\abs$, $i=1,2$. Then, $\phi_i$ has a Galois closure cover $\hat\phi_i$ with branch cycles $\hat\bg_i\in \ni(G,\bfC)^\inn$, $i=1,2$, and an extending homeomorphism $\hat \theta: \hat \phi_1 \to \hat \phi_2$. Further, there is $\alpha\in N_{S_n}(G,\bfC)$ with $\hat \bg_1^\alpha=\hat \bg_2$.  \end{lem}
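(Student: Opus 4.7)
The plan is to construct the Galois closures $\hat\phi_i$ by the fiber-product construction recalled in \S\ref{fibGalClos}, show that a fiber-preserving homeomorphism $\theta$ of the $\phi_i$ canonically extends to one of the $\hat\phi_i$, and then translate this extension into a comparison of branch cycles relative to the fixed classical generators $\sP_{\bz_0,z_0}$. The choice of labeling of $\phi_2^{-1}(z_0)$ will provide the permutation $\alpha \in S_n$, and the constraints $\bg_2 \in \bfC$ and $\lrang{\bg_2} = G$ will then pin $\alpha$ down to $N_{S_n}(G,\bfC)$.

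First I would build $\hat\phi_i$ using the fiber-product construction of \S\ref{fibGalClos}: a connected component of the $n$-fold self fiber product $X_i^{(n)} := X_i \times_{\prP^1_z} \cdots \times_{\prP^1_z} X_i$ that is preserved by the diagonal $G$-action inside $S_n$ yields $\hat\phi_i : \hat X_i \to \prP^1_z$ with group $G$. Since $\theta$ commutes with projection to $\prP^1_z$, the $n$-fold product $\theta^{(n)}$ is a fiber-preserving homeomorphism $X_1^{(n)} \to X_2^{(n)}$ equivariant for the $S_n$-action on coordinates, and so it carries the chosen component of $X_1^{(n)}$ to a component of $X_2^{(n)}$ that is itself a Galois closure of $\phi_2$. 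This gives the extension $\hat\theta : \hat X_1 \to \hat X_2$ covering $\theta$, and places $\hat\phi_i$ in some inner Nielsen class $\ni(G,\bfC)^\inn$ with representative $\hat\bg_i$ obtained from $\sP_{\bz_0,z_0}$ together with a chosen point of $\hat\phi_i^{-1}(z_0)$.

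Next I would read off the branch cycles. After an isotopy of $\prP^1_z$ that affects the classical generator choice only by an inner automorphism of $\pi_1(U_{\bz_0},z_0)$ -- harmless after passing to $\ni(G,\bfC)^\inn$ by \eql{modbyinner}{modbyinnerb} -- I may assume $\theta_*$ carries $z_0$ to $z_0$ and $\bz_0$ setwise to $\bz_0$. Transport the chosen labeling of $\phi_1^{-1}(z_0)$ to a labeling of $\phi_2^{-1}(z_0)$ via $\theta$; comparison with the originally chosen labeling of $\phi_2^{-1}(z_0)$ is a permutation $\alpha \in S_n$. Relative to the transported labeling the branch cycles of $\phi_2$ equal $\bg_1$, so relative to the original labeling they equal $\bg_1^\alpha$. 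The equality $\bg_2 = \bg_1^\alpha$ in $\ni(G,\bfC)^\abs$, together with $\bg_1, \bg_2 \in \bfC$ and $\lrang{\bg_1} = \lrang{\bg_2} = G$, forces $\alpha \in N_{S_n}(G,\bfC)$. The identical construction applied to $\hat\theta$, using a lift of each labeling to $\hat\phi_i^{-1}(z_0)$, yields $\hat\bg_1^\alpha = \hat\bg_2$ in $\ni(G,\bfC)^\inn$, since the choice of lift is precisely the $\Inn(G)$-ambiguity that has been modded out.

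The main technical point will be justifying the preliminary isotopy: I must check that $\theta$ can be deformed through fiber-preserving homeomorphisms so that its induced base action fixes $z_0$ and $\bz_0$ setwise, all while changing the resulting branch cycles only by an inner automorphism of $G$. This is exactly where the classical fact that isotopies of $\prP^1_z \setminus \bz_0$ cover inner automorphisms on fibers -- already invoked in \eql{modbyinner}{modbyinnerb} -- is doing the work, and it is what keeps the argument honest inside $\ni(G,\bfC)^\inn$ rather than at the level of pointed fundamental-group data.
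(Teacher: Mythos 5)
Your proposal follows the same route as the paper's own proof: construct the Galois closures via the $n$-fold fiber product of \S\ref{fibGalClos}, extend $\theta$ to fiber-product components by its coordinate-wise action (equivariantly for $S_n$), and read $\alpha$ off from the resulting identification of the two copies of $G$ --- equivalently, from the $\theta$-transported labeling of the fiber over $z_0$, which is your phrasing. Your write-up is more explicit about the labeling comparison, about why $\alpha$ lands in $N_{S_n}(G,\bfC)$ rather than just $N_{S_n}(G)$, and about the preliminary normalization of the base homeomorphism $\bar\theta$; the paper leaves these implicit. The one point you flag but do not fully close --- that even after normalizing $\bar\theta$ to fix $z_0$ and $\bz_0$ setwise it may still be a nontrivial mapping class of $(\prP^1,\bz_0)$, so that $\bar\theta_*$ acts on $\pi_1(U_{\bz_0},z_0)$ as a braid $q$ and one really obtains $((\hat\bg_1)q)^\alpha=\hat\bg_2$ rather than $\hat\bg_1^\alpha=\hat\bg_2$ on the nose --- is equally implicit in the paper's proof, and is harmless where the lemma is used (Lem.~\ref{checkbz} and Thm.~\ref{cosetbr} compare braid orbits, and \eql{bract}{bractb} absorbs $q$).
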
 

\begin{proof} \cite[\S 3.1.3]{BFr02} gives the fiber product description of the Galois closure of $\phi_1$. We have added details for our application in  Prop.~\ref{inn-absmdf} for the fiber product construction for a family of covers. Use here \eqref{fiberGalclos} for constructing an individual cover in the family. 

For $\phi_1$, the Galois closure is {\sl a\/} component, $\hat \phi_1: \hat X_1 \to \prP^1_z$, of the fiber product of $\phi_1$ taken $n$ times with the fat diagonal removed. The subgroup of the natural $S_n$ action fixing $\hat X_1$ identifies with the group of the Galois closure.\footnote{It is the Galois group because it has as many elements as the degree of the cover.} 

The chosen group is $G$, but if the group $G\not = S_n$, then the complete set of components (off the fat diagonal) comes by applying coset reps of $G$ in $S_n$ to the given component. As families of covers of $\prP^1_z$, covers in these components have Galois groups identified as a conjugate in $S_n$ of $G$.  

\begin{center} We want components  with covers having groups identified precisely with $G$. \\ \bf Inner components with that property differ by conjugating\\ by  representatives of cosets of $G$ in $N_{S_n}(G,\bfC)$.\end{center} 

Do the same for $\phi_2$, and apply $\theta$ to the fiber product construction. It will map $\hat X_1$ to a component, $\hat \phi_2: \hat X_2\to \prP^1_z$, of the fiber product for $\phi_2$, which also has $G$ as the group of its projection to $\prP^1_z$. Refer to the extension of $\theta$ to those components as $\hat \theta$. This induces a morphism between the respective groups (both of which are $G$) that we denote by $\alpha=\alpha_{\hat \theta} \in N_{S_n}(G)$. The induced map on branch cycles for $\hat \phi_2$ is given by conjugating by $\alpha$ on the branch cycles $\bg_1$.\end{proof} 

\subsubsection{Dragging a cover by its branch points} \label{dragbybps} 
\cite{Fr77} (also \cite[\S2.2.1]{Fr20}) calls the following process {\sl dragging a cover, $\phi_0$, by its branch points\/} along a path $\bar P$, $z_{\bar P}: [0,1] \to U_r$ in $U_r$ starting at $\bz_0$.   

Choose  $z_t\in U_{\bz_t}$ continuously with $z_t$ distinct from entries in $z_{\bar P}(t)$. Take classical generators $\sP_{\bz_0,z_0}$ (above Lem.~\ref{homcovers}). For a cover, $\phi_0$,   branched at $\bz_0$: 
\begin{edesc}  \label{drag} 
\item \label{dragb}  $\sP_{\bz_0,z_0}$ canonically defines $\bg_0\in \ni(G,\bfC)^\inn$ and  {\sl dragging\/} $\sP_{\bz_0,z_0}$ along $\bar P$ gives classical generators $\sP_{\bz_t,z_t}$ on $U_{\bz_t}$ based at $z_{t}$. 
\item \label{dragc} This produces a path of homeomorphic covers, $\phi_t: X_t\to \prP^1_z$,  with (the same) branch cycles $\bg$ relative to $(\sP_{\bz_t},z_t)$, for all $t\in [0,1]$.\end{edesc}  
\cite[Lem.~1.1]{Fr77}  shows the independence of the basepoint in this process and the representative $z_{\bar P}$ of its homotopy class. 

\begin{defn} \label{covisotopy}  The cover $\phi_1: X_1\to\prP^1_z$ is the {\sl isotopy\/} of $\phi_0$ along $\bar P$.  For $\bar P\in \pi_1(U_r,\bz_0)$ a closed path, and $\phi_1: X_1\to \prP^1_z$ the cover at the end of the path, define $\bg_1$ to be branch cycles for $\phi_1$ {\sl relative to\/} $\sP_0$. Then the {\sl braid action\/}, $q_{\bar P}$, of $\bar P$ is given as $\bg\mapsto (\bg)q_{\bar P}=\bg_1$. 
This works equally well as a braid action on any $K$-Nielsen class elements. \end{defn} 

\begin{center} \bf Unless otherwise said, assume the transitive permutation \\ representation $T$ is given, and $N_{S_n}(G,\bfC)\eqdef N_T$.\end{center}

This leads to the following ingredients for describing isotopies of covers parametrized by paths in $U_r$, up to homotopy classes of $\pi_1(U_r,\bz_0)$. The following statements are documented in \cite{BiFr82} and \cite{Fr77} (with expositions in \cite{V96} and  \cite[\S2.2]{Fr20}).

\begin{edesc}  \label{iso} \item   \label{isoa}  Identification of  $\pi_1(U_r,\bz_0)$ with the {\sl Hurwitz monodromy group\/},  $H_r$. 
\item  \label{isob}  With $\Inn(G)\le N_T\le \Aut(G)$, the $H_r$ action on $\ni(G,\bfC)^{N_T}$ has two generators:\footnote{Conjugating $q_2$ by the $i$th power of $\sh$ gives the $(i\np 2)$-twist $q_{i\np 2}$, $-1\le i\le r\nm1$.} 
$$ \begin{array}{c} \text{The 2-twist }q_2:  \bg  \mapsto (g_1,g_2g_{3}g_2^{-1},g_2,g_{4},g_5,\dots);\\ 
\text{The {\sl shift }} \sh:  \bg\mapsto (g_2,g_3,\dots,g_r,g_1). 
\end{array}$$   
\end{edesc} 

Def.~\ref{braidable} is the key for Thm.~\ref{cosetbr}, for which we consider a braid orbit $\sO^\inn$ in $\ni(G,\bfC)^\inn$. 
\begin{defn} \label{braidable} An $\alpha\in N_T$ is {\sl braidable on $\sO^\inn$\/} if for $\bg\in \sO^\inn$, $(\bg)^\alpha \in \sO^\inn$. Denote the subgroup of $N_T$, of braidable elements on $\sO^\inn$, by $N_T^\br$ (or with related appropriate decoration).  
\end{defn} 

\begin{lem} \label{innerbraiding}   \lq\lq Dragging\rq\rq\ corresponds each element of  $\ni(G,\bfC)^{N_T}$ to a representative cover -- up to isotopy -- branched over any choice of $\bz_0\in U_r$ with classical generators, $\sP$, based at $z_0\not \in \{\bz_0\}$. 

From  \eql{bract}{bracta}, up to $G$ inner action, a Def.~\ref{covisotopy} isotopy is independent of the choice of $z_t$. 
\begin{edesc} \label{bract} \item \label{bracta}  For $h\in G$ and $\bg\in \ni(G,\bfC)$, there is $q\in H_r$ with $(\bg)q=h\bg h^{-1}$. 
\item \label{bractb} Conjugating $\bg\in\ni(G,\bfC)^\inn$  by $\alpha\in N_T$  commutes with the action of $H_r$. 
\item \label{bractc} Elements of $N_T/\Inn(G)$ permute the braid orbits of $H_r$ on $\ni(G,\bfC)^\inn$. \end{edesc}
\end{lem}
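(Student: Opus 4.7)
The plan is to prove the assertions in logical dependency order: establish (bracta) first, then use it to derive the independence-of-$z_t$ claim and the dragging-correspondence, and finally deduce (bractb) and (bractc) as formal consequences.

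For (bracta), the strategy follows \cite[Lem 3.8]{BiFr82}. Fix $\bg=(g_1,\ldots,g_r)\in\ni(G,\bfC)^\inn$ and $h\in G$; by the generation condition in Def.~\ref{HNC}, $h$ is expressible as a word in $g_1,\ldots,g_r$. Using the explicit formulas for $q_i$ and $\sh$ in \eqref{iso}, I would construct $q$ as a product of $q_i$'s and their conjugates by powers of $\sh$, with each factor chosen so that its cumulative effect is conjugation by one letter of the word for $h$ on \emph{every} entry of $\bg$. The product-one relation $g_1g_2\cdots g_r=1$ is the crucial ingredient that forces the naively-constructed braid to act uniformly across all $r$ components rather than only on those directly touched by the chosen $q_i$'s. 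The verification is by induction on the length of the word for $h$.

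Granted (bracta), the independence of a dragging isotopy from the auxiliary continuous choice $z_t$ (up to $G$-inner action) is immediate: two admissible choices $z_t$ and $z_t'$ are joined at each $t$ by a path in $U_{z_{\bar P}(t)}$, and this deformation changes the basepoint of the classical generators, re-expressing the branch cycles by conjugation with some $h\in G$ (the value of the monodromy along the basepoint-changing path); by (bracta) this is realized inside the $H_r$-action. The cover-by-dragging correspondence for arbitrary $(\bz_0,z_0,\sP)$ then combines Riemann's existence theorem (producing a cover from $\bg$) with the basepoint-independence of the dragging construction recorded in \cite[Lem 1.1]{Fr77}.

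Assertion (bractb) is formal: the generators $q_i$ and $\sh$ in \eqref{iso} are built from $\bg$ using only group multiplication, inversion, and permutation of positions, each of which commutes with componentwise application of any $\alpha\in\Aut(G,\bfC)\supseteq N_T$. Hence the $N_T$-action and the $H_r$-action commute on $\ni(G,\bfC)^\inn$. Finally, (bractc) combines (bractb) (so $N_T$ permutes the braid orbits) with (bracta) (so $\Inn(G)\le N_T$ preserves every braid orbit setwise), yielding a well-defined action of $N_T/\Inn(G)$ on the set of braid orbits. The main obstacle is producing and verifying the explicit braid word in (bracta): the combinatorics of the $q_i$ acting on $\bg$ is delicate, and the product-one relation must be invoked precisely to upgrade \lq\lq local\rq\rq\ conjugations near adjacent positions to simultaneous conjugation on all $r$ entries. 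Everything else is formal once (bracta) is in place.
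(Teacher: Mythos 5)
Your proof is correct and takes essentially the same route as the paper: it cites \cite[Lem.~3.8]{BiFr82} for \eql{bract}{bracta}, performs the formal commutation check on the generators $q_2$ and $\sh$ for \eql{bract}{bractb}, derives \eql{bract}{bractc} from the previous two, and uses the dragging process (with the basepoint-independence from \cite[Lem.~1.1]{Fr77}) for the opening correspondence. Your elaboration of the mechanism behind the BiFr82 citation — expressing $h$ via generation and building $q$ from twists and shifts — is a reasonable unpacking of the reference the paper leaves terse, though I would be a little careful about your claim that product-one is what forces uniformity of the conjugation: the full-shift braid $q_1\cdots q_{r-1}$ already conjugates every entry by $g_1$, and product-one enters elsewhere (e.g.\ in the sphere-relation structure of $H_r$), not as the uniformizing mechanism.
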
 

\begin{proof} The first sentence follows from the description of the \lq\lq dragging\rq\rq\ process \eqref{drag}. 

\cite[Lem.~3.8]{BiFr82} shows \eql{bract}{bracta}. An explicit check on generators of $H_r$ in \eql{iso}{isob} gives \eql{bract}{bractb}.  Then, \eql{bract}{bractc} follows from the previous statements. \end{proof}

 \begin{rem} \label{HMrepuse} From \eql{bract}{bractb}, you can test if $\alpha\in N_T$ is braidable on just one element of $\sO^\inn$. That has often been used effectively  (e.g.~in \cite{FrV91}) on Harbater-Mumford braid orbits (Def.~\ref{HM}).  \end{rem}

\subsubsection{Dragging gives Thm.~\ref{cosetbr}}  \label{draggivesMainT} 
From covering space theory, the permutation action of $H_r$ on $\ni(G,\bfC)^{N_T}$ defines a cover $\Psi\eqdef \Psi_{\phi_0,\sP}: \sH(G,\bfC)^{N_T}\to U_r$. It can have more than one component. One is $\sH_{\phi_0,\sP}$, defined by the orbit, $O_{\phi_0}$, of $H_r$ on $\bg_0\in \ni(G,\bfC)^K$ corresponding to $\phi_0$. 

List the braid orbits on $\ni(G,\bfC)^{N_T}$ as orbit collections denoted $\row {\sO^{N_T}} u$, $1\le i\le u$.  Consider $\bg\in \sO^{N_T}_{i}$. Thm.~\ref{cosetbr} compares the braid orbits of $\sH(G,\bfC)^{N_T}$  with the braid orbits of $\ni(G,\bfC)^\inn$.  Each of the latter lies above a unique braid orbit of the former.

\begin{edesc} \label{homeoassumps} \item \label{homeoassumpsa} Assume $u=1$ and denote this unique braid orbit by $\sO^{N_T}$. 
\item \label{homeoassumpsb}  If $u>1$, all the  $\row {\sO^{N_T}} u$ are homeomorphism-separated. 
 \end{edesc}  

\begin{lem}[Check at $\bz_0$]  \label{checkbz} To check the division of braid orbits on $\sH(G,\bfC)^\inn$, for the situations listed in \eqref{homeoassumps}, it suffices to choose any $\sP$ classical generators based at any choice of $(\bz_0,z_0)$. Then, compute covers representing isotopy classes by their corresponding branch cycles. \end{lem}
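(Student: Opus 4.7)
\medskip

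\textbf{Proof plan for Lemma~\ref{checkbz}.} The plan is to show that both the set of braid orbits on $\ni(G,\bfC)^\inn$ (respectively $\ni(G,\bfC)^{N_T}$) and the fibration relating them are canonically attached to the topology of $U_r$, independent of the auxiliary data $(\bz_0,z_0,\sP)$. Once that is in hand, the partition into homeomorphism-separated components in \eqref{homeoassumps} can be read off at any basepoint.

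First I would fix two choices $(\bz_0,z_0,\sP)$ and $(\bz_0',z_0',\sP')$ and construct a comparison map between the resulting branch-cycle descriptions. Choose a path $\bar P:[0,1]\to U_r$ from $\bz_0$ to $\bz_0'$ together with a continuous choice $z_t$ lying in $U_{\bz_t}\setminus\{z_{\bar P}(t)\}$ connecting $z_0$ to $z_0'$. Dragging $\sP$ along $\bar P$ as in \eqref{drag} produces classical generators $\sP_{\bz_0',z_t(1)}$ at $\bz_0'$, and the dragging process of \eql{drag}{dragc} canonically identifies isotopy classes of covers branched at $\bz_0$ with those branched at $\bz_0'$ preserving the associated branch cycles in $\ni(G,\bfC)^\inn$. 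By Lem.~\ref{innerbraiding} the ambiguity in the choice of $z_t$ is absorbed into $G$-inner conjugation, which is already quotiented out at the inner level (and is braidable by \eql{bract}{bracta}), so the identification on Nielsen classes is well defined. Comparing the dragged generators with $\sP'$ at $(\bz_0',z_0')$ shows they differ by an element of $\pi_1(U_{\bz_0'})$, which by \eqref{modbyinner} and again \eql{bract}{bracta} induces a bijection on $\ni(G,\bfC)^\inn$ that lies in a single braid orbit of each element.

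Next I would verify the equivariance of the braid action under this comparison. The fundamental groups $\pi_1(U_r,\bz_0)$ and $\pi_1(U_r,\bz_0')$ are identified by conjugation along $\bar P$, so their canonical quotients onto $H_r$ (under \eql{iso}{isoa}) are identified as abstract groups acting on the common set $\ni(G,\bfC)^\inn$. The key point -- which I expect to be the main technical obstacle -- is to check that dragging a loop based at $\bz_0'$ produces the same permutation of branch-cycle descriptions as dragging its $\bar P$-conjugate based at $\bz_0$. This is the assertion that isotopy along concatenations of paths composes correctly, and it follows from \cite[Lem.~1.1]{Fr77} (cited below Def.~\ref{covisotopy}) which asserts precisely the independence of basepoint and path representative in the dragging construction. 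Hence the braid orbit decomposition of $\ni(G,\bfC)^\inn$ is identical at the two choices of data.

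Finally I would deduce the two conclusions of the lemma. The statement that the braid orbits $\row{\sO^{N_T}}u$ on $\ni(G,\bfC)^{N_T}$ are homeomorphism-separated is a statement about the topological cover $\Psi:\sH(G,\bfC)^{N_T}\to U_r$, which by the preceding paragraphs does not depend on $(\bz_0,z_0,\sP)$. Likewise, the map from braid orbits on $\ni(G,\bfC)^\inn$ to those on $\ni(G,\bfC)^{N_T}$ is obtained by quotienting the common Nielsen set by $N_T/\Inn(G)$, an operation insensitive to the choice of basepoint. Therefore the partition into the situations \eql{homeoassumps}{homeoassumpsa} and \eql{homeoassumps}{homeoassumpsb} can be detected from branch-cycle computations performed with any single admissible choice of $(\bz_0,z_0,\sP)$, and we are free to pick the most convenient one for computations in the examples that follow. \qed
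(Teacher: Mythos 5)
Your proof is correct and takes essentially the same approach as the paper's: both rely on the dragging process \eqref{drag} to transport branch-cycle descriptions between basepoints and on the commutativity of the braid action with conjugation by $\alpha\in N_T$ (\eql{bract}{bractb}, and \cite[Lem.~1.1]{Fr77} for basepoint/path independence). The paper's version is much terser -- it starts from two homeomorphic covers branched at $\bz_1$ with branch cycles differing by $\alpha\in K$, drags them to $\bz_0$, and observes the $\alpha$-relation is preserved -- whereas you spell out the intermediate basepoint-comparison and the $H_r$-equivariance in more detail, but the mechanism is the same.
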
 

\begin{proof} If two covers $\phi_i\to \prP^1_z$ are homeomorphic, and are branched at $\bz_1$, then they have Galois closure with branch cycles $\hat \bg_i$ computed relative to $\sP'$ related by $(\hat \bg_1)\alpha=\hat \bg_2$, for some $\alpha\in K$. 

Apply the \lq\lq dragging\rq\rq\ process to drag them back to $\bz_0$ and compute their branch cycles relative to $\sP$, etc. Since the braid action commutes with the action of $\alpha$, \eql{bract}{bractb}, this proves the lemma. 
\end{proof} 

Thm.~\ref{cosetbr} runs through  \eqref{homeoassumps} by applying Lem.~\ref{checkbz} on branch cycles in $\ni(G,\bfC)^\inn$. Lem.~\ref{homcovers} says if two (not necessarily Galois) covers are homeomorphic, so are their Galois closures. 

\begin{thm} \label{cosetbr}  Assume \eql{homeoassumps}{homeoassumpsa} with  braid orbits in $\ni(G,\bfC)^\inn$ above $\sO^{N_T}$ listed as $\row {\sO^\inn} v$. With $\bg\in \ni(G,\bfC)^{N_T}$,  $\hat \bg\in \sO^\inn_1$ above it, denote braidable elements of $N_T$ on $\sO^\inn_1$ by $N^\br_1$. 

Then, $v=(N_T:N^\br_1)$. With $\{\alpha_j\mid j=1,\dots,v\}$ coset representives, 
\begin{edesc}  \label{homeoorbitsu=1} \item \label{homeoorbitsu=1a} $\{(\bg)\alpha_j\}$ are branch cycle reps. of covers in each braid orbit on $\ni(G,\bfC)^\inn, j=1,\dots,v$.
\item \label{homeoorbitsu=1b} The degree of the Hurwitz space component $\sH_{\sO^\inn_1}$ over $\sH_{\sO^{N_T}}$ is $(N^\br_1:\Inn(G))$. \end{edesc} 

Now consider the case  $\ni(G,\bfC)^{N_T}$ has $u> 1$ braid orbits as in \eql{homeoassumps}{homeoassumpsb}. Then, no two are automorphism-separated. List the braid orbits $\sO^\inn_{1_i},\dots,\sO^\inn_{v_i}$ in $\ni(G,\bfC)^\inn$ above $\sO_i^{N_T}$. Denote the braidable $\alpha$s on $\sO^\inn_{1_i}$ by $N^\br_{1_i}$, $1\le i\le u$. Then, juxtupose the braid orbits on $\ni(G,\bfC)^\inn$ by, running over $i$, replacing $v=(N_T:N^\br_1)$ by $v_i=(N_T:N_{1_i}^\br)$, etc.  
 \end{thm}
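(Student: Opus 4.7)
The plan is to reduce the theorem to an orbit--stabilizer analysis of the $N_T/\Inn(G)$-action on braid orbits in $\ni(G,\bfC)^\inn$, building entirely on Lemma \ref{innerbraiding} together with the basepoint reduction of Lemma \ref{checkbz}. The setup of \eqref{iso} realizes the covers $\sH_{\sO^\dagger}\to U_r$ as monodromy covers from $H_r$-orbits on Nielsen classes, so all component and degree statements can be read off group-theoretically.

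First I would invoke \eql{bract}{bractb}--\eql{bract}{bractc}: conjugation by $\alpha\in N_T$ commutes with the $H_r$-action on $\ni(G,\bfC)^\inn$, so $N_T/\Inn(G)$ acts as a permutation group on the set of inner braid orbits, and this action is manifestly compatible with the projection $\ni(G,\bfC)^\inn \twoheadrightarrow \ni(G,\bfC)^{N_T}$. Two inner orbits lie above the same absolute orbit exactly when some $\alpha\in N_T$ carries one onto the other, so under \eql{homeoassumps}{homeoassumpsa} the induced action on $\{\sO^\inn_1,\dots,\sO^\inn_v\}$ is transitive. The setwise stabilizer of $\sO^\inn_1$ is $N^\br_1$ directly from Definition \ref{braidable}, and \eql{bract}{bracta} shows $\Inn(G)\subseteq N^\br_1$. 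Orbit--stabilizer then yields $v=(N_T:N^\br_1)$, and picking coset representatives $\alpha_1,\dots,\alpha_v$ of $N^\br_1$ in $N_T$ and applying them to a fixed lift $\hat\bg\in\sO^\inn_1$ of $\bg$ produces exactly one branch-cycle representative in each orbit above $\sO^{N_T}$, since each $\alpha_j$ fixes the absolute class of $\bg$; this gives \eql{homeoorbitsu=1}{homeoorbitsu=1a}.

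For the degree \eql{homeoorbitsu=1}{homeoorbitsu=1b}, I would use the monodromy description: the fiber of $\sH_{\sO^\inn_1}\to\sH_{\sO^{N_T}}$ over a generic point $\bg\in\sO^{N_T}$ is the set of inner lifts of $\bg$ that land in $\sO^\inn_1$. The full set of inner lifts of $\bg$ forms the $N_T/\Inn(G)$-orbit of any chosen $\hat\bg$, and by the braidability characterization the subset in $\sO^\inn_1$ is precisely $\{(\hat\bg)\alpha : \alpha\in N^\br_1\}/\Inn(G)$, of generic size $(N^\br_1:\Inn(G))$. Lemma \ref{checkbz} certifies that this count, performed at the basepoint $(\bz_0,z_0)$ via branch cycles, is the degree of the component cover. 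For the case $u>1$, the same analysis applies verbatim on each absolute orbit $\sO^{N_T}_i$ separately: above each, the inner orbits form a transitive $N_T/\Inn(G)$-set with stabilizer $N^\br_{1_i}$, giving $v_i=(N_T:N^\br_{1_i})$ and component degree $(N^\br_{1_i}:\Inn(G))$. Juxtaposing across $i$ gives the list of all inner braid orbits, and the homeomorphism-separation hypothesis \eql{homeoassumps}{homeoassumpsb} guarantees the packets indexed by distinct $i$ are not conflated by any $N_T$-automorphism.

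The main obstacle is the bookkeeping interplay between the $\Inn(G)$-quotient (producing inner Nielsen classes) and the $N_T$-quotient (producing absolute Nielsen classes). In particular, one must verify that the set $N^\br_1$ of elements braidable on $\sO^\inn_1$ is independent of the representative $\hat\bg\in\sO^\inn_1$ used to test the condition; this follows from \eql{bract}{bractb} combined with \eql{bract}{bracta}, since inner translates of $\hat\bg$ lie in the same braid orbit. A secondary subtlety is that the generic-degree count for the component cover is only immediate on the locus where fine moduli applies (elements with trivial centralizer); this is the dense open locus in $\sH_{\sO^{N_T}}$, which suffices to read off the cover degree.
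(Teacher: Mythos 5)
Your orbit–stabilizer treatment of the counting statement and the degree formula tracks the paper's argument exactly: both rely on \eql{bract}{bracta}--\eql{bract}{bractc}, Lemma \ref{checkbz}, and Definition \ref{braidable}, and you are simply more explicit than the paper in verifying transitivity of the $N_T/\Inn(G)$-action on the set of inner braid orbits over $\sO^{N_T}$. The representative-independence check for $N^\br_1$ and the fine-moduli caveat on the generic fiber count are correctly flagged and correctly dispatched.

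There is, however, a genuine gap in the $u>1$ case. The theorem does not merely assume \eql{homeoassumps}{homeoassumpsb}; it asserts, and the paper's proof establishes, that whenever $\ni(G,\bfC)^{N_T}$ has more than one braid orbit, those orbits are automatically homeomorphism-separated (equivalently, no two are automorphism-separated). You treat this as an imported hypothesis (``the homeomorphism-separation hypothesis \eql{homeoassumps}{homeoassumpsb} guarantees \ldots''), but it is part of what must be shown. The missing argument is short and well within the machinery you already invoke: if $\bg_1,\bg_2\in\ni(G,\bfC)^{N_T}$ lay in distinct braid orbits but the corresponding covers were homeomorphic, then Lemma \ref{homcovers} would produce lifts $\hat\bg_1,\hat\bg_2\in\ni(G,\bfC)^\inn$ related by some $\alpha\in N_T$ together with a braid; since the absolute Nielsen class is by definition the quotient by $N_T$ and braiding commutes with conjugation by $\alpha$ (\eql{bract}{bractb}), $\bg_1$ and $\bg_2$ would then lie in the same absolute braid orbit, contradicting $u>1$. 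You should supply this derivation rather than take its conclusion as given; once added, your proof is complete and agrees with the paper's.
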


\begin{proof}  Choose a representative $\bg\in \ni(G,\bfC)^{N_T}$ with $\hat \bg\in \ni(G,\bfC)^\inn$ lying over it. The branch cycles of covers over the cover represented by $\bg$ are of the form $(\hat \bg)\alpha$ with $\alpha$ in the cosets of $G$ in $N_{S_n}(G,\bfC)$. Two belong in the same braid orbit if $\alpha$ is braidable. The expressions of \eqref{homeoorbitsu=1} make explicit the degrees of inner and absolute covers using braidable vs non-braidable automorphisms. That handles case \eql{homeoassumps}{homeoassumpsa}.  

Suppose $u>1$. Here is why \eql{homeoassumps}{homeoassumpsb} holds. If $\bg_1,\bg_2\in \ni(G,\bfC)^{N_T}$ are in separate braid orbits but not homeomorphism-separated, then above them are, respectively, $\hat \bg_1,\hat \bg_2\in \ni(G,\bfC)^\inn$ that are automorphism-separated by an element in $N_T$. Since $\bg_1,\bg_2$ are obtained from $\hat \bg_1 ,\hat \bg_2$ by modding out by $N_T$, modulo a braid, $\bg_1 $ and $\bg_2$ are in the same braid orbit, contrary to our assumption. 

Consider \eql{homeoassumps}{homeoassumpsb} and how to count braid orbits by dividing the branch cycles in  $\ni(G,\bfC)^\inn$ according to the braid orbits of $\ni(G,\bfC)^{N_T}$ they lie over. Then, taking representatives of these, apply the naming we have given by using which automorphisms are braidable as in \eql{homeoassumps}{homeoassumpsa}. 
\end{proof}

\subsection{More on Thm.~\ref{cosetbr}} \label{totspaces}  Def.~\ref{liftinv} defines the  lift invariant for use in two ways that never made an appearance in \cite{FrV91}, though it did in subsequent papers, especially \cite{Fr95} and \cite{BFr02}. Refer to the statements of \eqref{liftinvuse}: Thm.~\ref{cosetbr} immediately gives \eql{liftinvuse}{liftinvusea}. 

The appendix of \cite{FrV91} (for general Nielsen classes in \cite{Fr10}) says, assuming high multiplicity,  lift invariants determine inner Hurwitz space components. Also, for absolute spaces, $N_T$ orbits on lift invariants collect the inner spaces above a given absolute component. That gives \eql{liftinvuse}{liftinvuseb}.

Having {\sl one\/} absolute component \eql{homeoassumps}{homeoassumpsa} arises for Hurwitz space variants of classical spaces, say, as interpreting problems related to hyperelliptic jacobians, for example \S\ref{lpreliminary}. Indeed, all our examples play on this. 
 Cor.~\ref{autocomps} is almost immediate from Thm.~\ref{cosetbr}, using  $N_T$ orbits. 

    \begin{cor} \label{autocomps} With ${}_j\sH^\inn$ spaces corresponding to ${}_j\sO^\inn$, $j=1,2$, etc.,  $\Phi_j: {}_j\sH^\inn\to\sH_{\sO^{N_T}}$, $j=1,2$ are equivalent covers of $\sH_{\sO^{N_T}}$. The degree of the Hurwitz space component $\sH_{\sO^\inn_1}$ over $\sH_{\sO^{N_T}}$ is $(N^\br_1:\Inn(G))$ with $N^\br_1$ the braidable elements on $\sO^{N_T}$.\footnote{That $\Phi_{j}$, $j=1,2$ are equivalent covers does not mean that the families of covers corresponding to the spaces are the same: for that you must include the total families \eqref{totfam} and their moduli definition fields.} \end{cor}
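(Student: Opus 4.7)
The plan is to treat the degree claim as immediate from Thm.~\ref{cosetbr}\eql{homeoorbitsu=1}{homeoorbitsu=1b} and devote the argument to establishing equivalence of the $v$ covers $\Phi_j: {}_j\sH^\inn \to \sH_{\sO^{N_T}}$. My strategy is to translate the cover equivalence question into a question about $H_r$-sets via the covering space correspondence (components of Hurwitz spaces over $U_r$ correspond to $H_r$-orbits on Nielsen classes, and restrictions over the absolute component correspond to $H_r$-equivariant maps of the underlying braid orbits), then exhibit the required equivariant bijections using the coset representatives supplied by Thm.~\ref{cosetbr}.

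First I would fix a representative $\hat\bg \in {}_1\sO^\inn$ and coset representatives $\{\alpha_j\}_{j=1}^v$ of $N^\br_1$ in $N_T$. By Thm.~\ref{cosetbr}\eql{homeoorbitsu=1}{homeoorbitsu=1a}, $(\hat\bg)\alpha_j \in {}_j\sO^\inn$ for each $j$, and right-multiplication by $\alpha_j$ defines a map $\mu_j: {}_1\sO^\inn \to {}_j\sO^\inn$. This map descends to \emph{inner} Nielsen classes because $\Inn(G) \subseteq N^\br_1$: Lem.~\ref{innerbraiding}\eql{bract}{bracta} shows every $G$-conjugation is braidable, so inner automorphisms always sit inside the braidable subgroup. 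A representative of the inverse coset supplies a two-sided inverse, so $\mu_j$ is a bijection.

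Next, by Lem.~\ref{innerbraiding}\eql{bract}{bractb}, the action of $\alpha_j \in N_T$ commutes with the $H_r$-action, so $\mu_j$ is $H_r$-equivariant. Moreover $\mu_j$ covers the identity on $\sO^{N_T}$ because in the further quotient $\ni(G,\bfC)^\inn/N_T = \ni(G,\bfC)^{N_T}$ the element $\alpha_j$ acts trivially. Under the covering space dictionary identifying Hurwitz components with $H_r$-sets over $U_r$, this descends to an isomorphism ${}_1\sH^\inn \xrightarrow{\sim} {}_j\sH^\inn$ commuting with $\Phi_1$ and $\Phi_j$, which is the required equivalence of covers of $\sH_{\sO^{N_T}}$. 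For the degree, $\Phi_1$ corresponds to the $H_r$-set ${}_1\sO^\inn$ fibered over $\sO^{N_T}$, whose fiber size $(N^\br_1 : \Inn(G))$ is read off from Thm.~\ref{cosetbr}\eql{homeoorbitsu=1}{homeoorbitsu=1b}.

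The main obstacle is conceptual rather than computational: one must resist promoting the equivalence $\Phi_1 \cong \Phi_j$ to an equivalence of the underlying \emph{families} of covers of $\prP^1_z$. Because $\alpha_j$ is non-braidable, it reshuffles the identification of the Galois group with $G$, so while the Hurwitz covers themselves are isomorphic over $\sH_{\sO^{N_T}}$, the two total spaces $\sT^\inn_j \to {}_j\sH^\inn \times \prP^1_z$ and their moduli definition fields may genuinely differ. Writing the proof so that this distinction is visible -- the equivalence is only on the level of the abstract cover, not the enriched family -- is exactly the content the corollary's footnote flags and the phenomenon exploited in the later examples.
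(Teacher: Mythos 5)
Your proof is correct and follows essentially the same route as the paper's: both rest on the commutativity of the $N_T$-conjugation with the $H_r$-action (Lem.~\ref{innerbraiding}\eql{bract}{bractb}) to transport the monodromy data of ${}_1\sH^\inn\to\sH_{\sO^{N_T}}$ to that of ${}_j\sH^\inn\to\sH_{\sO^{N_T}}$ -- you phrase it as an $H_r$-equivariant bijection of the full braid orbits, while the paper phrases it via the stabilizer of a base point acting on the fibers $S_1, S_2$, which is the same covering-space dictionary in a different guise. One small note: the justification you give for $\mu_j$ being well-defined on inner Nielsen classes ($\Inn(G)\subseteq N^\br_1$) is aimed at the wrong fact; what matters is $\Inn(G)\norm N_T$, so conjugation by $\alpha_j$ preserves inner equivalence (already in Lem.~\ref{innerbraiding}), but this does not affect the validity of your argument.
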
 

\begin{proof} Consider $\bg'\in \sO^{N_T}$ lying under ${}_1\bg\in {}_1\sO^\inn$.  The cover ${}_1\sH^\inn\to \sH_{\sO^{N_T}}$ 
is determined by the action of the subgroup of the braid group stabilizing ${}_1\bg$ acting on the elements, $S_1$, of ${}_j\sO^\inn$ lying over $\bg'$. The action of the braid group commutes with the action of $\alpha$. Therefore, applying $\alpha$ to the elements of $S_1$ gives $S_2$ with compatible braid actions on the Nielsen classes in ${}_2\sO^\inn$. This makes the corresponding covers equivalent.  \end{proof} 

\S\ref{homeoa} defines what it means that the components of $\sH(G,\bfC)^{N_T}$ are Schur-separated.  It starts the connection between representation covers and the Schur multiplier in the context of Frattini covers of a finite group. These connections show why Modular Towers and the lift invariant fit together. \S\ref{homeoc} shows how Thm.~\ref{cosetbr} strengthens \cite{GoH92} and \cite{GhT23} for situations of \eqref{homeoassumps} that arise in practice. \S\ref{homeod} distinguishes the geometric and arithmetic monodromy of covers.  Thm.~\ref{cosetbr} is a statement on the geometric monodromy groups of components of $\sH(G,\bfC)^{N_T}\to  U_r$. Interpreting the {\sl moduli definition fields\/} (Def.~\ref{mdf}, in particular, definition fields)  of these and the components of $\sH(G,\bfC)^\inn$ is the significant addition. \S\ref{homeod} does the first step in using {\sl Hilbert's Irreducibility Theorem\/} as a tool for the $G_\bQ$ action on Hurwitz space components.\footnote{\HIT\ has always been underappreciated, but \cite{Se68}, and the related \cite{Fr78} show their fascination with enhancing it. \cite[\S 5.1]{Se97} \cite[Chaps.~13 and 14]{FrJ86}$_4$ give more extensive references in support of that.} We abbreviate reference to it by the acronym \HIT. 
 
\subsubsection{Schur-separated definitions} \label{homeoa}  A representation cover, $\hat \psi: \hat G \to G$, is a Frattini central extension of $G$ whose kernel -- the Schur multiplier of $G$ -- is $\SM_{G}$. As with all Frattini covers, we can write this as the fiber product over $G$ of $\ell$-Frattini covers $\hat \psi_\ell: \hat G_\ell \to G$ (an $\ell$-representation cover of $G$) for which the kernel is the $\ell$ part, $\SM_{G,\ell}$ of $\SM_G$.  Our examples have these conditions:
\begin{edesc} \label{easySchur} \item \label{easySchura} The $\ell'$ condition, $(N_{\bfC},\ell)=1$, on $\bfC$ holds and there is only one prime $\ell$ dividing $\SM_G$. 
\item \label{easySchurb} From \eql{easySchur}{easySchura}, and Schur-Zassenhaus we interpret the classes of $\bfC$ uniquely as classes in the representation cover. 
\item \label{easySchurc} The $\ell$-representation cover, $\tilde \psi_\ell$, is $\ell$-perfect (has no $\bZ/\ell$ quotient).\footnote{Therefore, the $\ell$-representation cover is uniquely defined and is a characteristic quotient of the universal $\ell$-Frattini cover of $G$.}
\end{edesc}

\begin{lem} \label{perfnongen} A profinite group of order divisible by $\ell$ is $\ell$-perfect if and only if it has generators among its $\ell'$ elements. \end{lem}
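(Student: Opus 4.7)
The plan is to handle the two implications separately; the backward direction ($\ell'$-generation $\Rightarrow$ $\ell$-perfect) is immediate, while the forward direction requires a lifting argument through finite quotients.

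For the backward implication, I would argue by contradiction. Suppose $G$ is topologically generated by $\ell'$ elements but admits a continuous surjection $\phi:G\twoheadrightarrow\bZ/\ell$. For each $\ell'$ element $g\in G$, the closed procyclic subgroup $\overline{\langle g\rangle}$ is pro-$\ell'$, so its image in the pro-$\ell$ group $\bZ/\ell$ is trivial. Hence $\phi$ vanishes on a topological generating set of $G$, so $\phi\equiv 0$, contradicting surjectivity.

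For the forward implication, let $H$ denote the closed subgroup of $G$ topologically generated by all $\ell'$ elements of $G$; it is normal because the $\ell'$ property is preserved by conjugation. I would prove $H=G$ by verifying $HN=G$ for every open normal $N\leq G$. Three steps assemble the argument. First, each finite quotient $G/N$ is itself $\ell$-perfect, since any continuous surjection $G/N\twoheadrightarrow\bZ/\ell$ pulls back to one from $G$. Second, a finite $\ell$-perfect group $F$ is generated by its $\ell'$ elements: writing $g\in F$ with $|g|=\ell^a m$, $(m,\ell)=1$, Bezout expresses $g$ as a product of powers of $g^{\ell^a}$ (order $m$, hence $\ell'$) and $g^m$ (order $\ell^a$), so the normal subgroup $O^\ell(F)$ generated by $\ell'$ elements has $\ell$-group quotient, forced trivial by $\ell$-perfectness. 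Third, every $\ell'$ element $\bar g\in G/N$ lifts to an $\ell'$ element of $G$.

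The main obstacle is the third step, since a finite-order lift of an $\ell'$ element need not exist set-theoretically in nice form. My approach is to take any lift $\tilde g\in G$ of $\bar g$ and exploit the continuous $\hat\bZ$-module structure on the closed procyclic subgroup $\overline{\langle\tilde g\rangle}$. Using the topological ring splitting $\hat\bZ=\bZ_\ell\times\prod_{p\ne\ell}\bZ_p$ with corresponding idempotents $e_\ell,e_{\ell'}$, define $\tilde g_\ell=\tilde g^{e_\ell}$ and $\tilde g_{\ell'}=\tilde g^{e_{\ell'}}$, so that $\tilde g=\tilde g_\ell\,\tilde g_{\ell'}$ in the abelian group $\overline{\langle\tilde g\rangle}$. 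Then $\tilde g_{\ell'}$ generates a pro-$\ell'$ subgroup (so is an $\ell'$ element), while $\tilde g_\ell$ is pro-$\ell$ and hence maps trivially into the cyclic $\ell'$-order image $\langle\bar g\rangle\subset G/N$; therefore $\tilde g_{\ell'}$ projects to $\bar g$. Combining, $HN/N$ contains every $\ell'$ element of $G/N$ by the lift, and those generate $G/N$ by the first two steps, so $HN=G$. Since open normal subgroups form a neighborhood basis at $1$, $H=G$.
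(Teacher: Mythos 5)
Your proof is correct and is essentially the paper's own argument made fully explicit: the paper likewise passes to the closed normal subgroup $H$ generated by all $\ell'$ elements and asserts, as ``easy to see,'' that $H=G$ if and only if $G$ is $\ell$-perfect. Your inverse-limit reduction to open normal $N$, the Bezout/$O^\ell$ step for finite quotients, and the idempotent splitting of $\hat\bZ$ to lift an $\ell'$ element of $G/N$ to an $\ell'$ element of $G$ are exactly the details the paper elides, and they are sound.
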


\begin{proof} The subgroup, $H$, of $G$ generated by all its $\ell'$ elements is a normal subgroup of $G$. It is easy to see that $H=G$ if and only if $G$ is $\ell$-perfect.\end{proof}

Def.~\ref{liftinv} is the formula for the lift invariant when  the $\ell'$ condition holds. 
\begin{defn}[Lift invariant]  \label{liftinv}   For $O$ a braid orbit on $\ni(G,\bfC)$, and $\bg\in O$,  as in \eql{easySchur}{easySchurc} $$\begin{array}{c}\text{the lift invariant is } s_{\bg}(O)\eqdef s_{\tilde \psi_\ell}=\prod_{i=1}^r \tilde g_i . \text{ More generally, }\\
\text{for $\psi_{H/G}: H\to G$ an $\ell$-central Frattini cover, define $s_{\psi_{H/G}}$  using $\tilde\bg\in \bfC\cap H$ over $\bg$.}  
\end{array} $$ It is an elementary exercise that \HM\ elements (Def.~\ref{HM}) always have trivial lift invariant; as do Nielsen classes with the generalizing form of \eql{steppingup}{steppingupd}. 
\end{defn} 

Recall \eqref{NormactonLiftInv} $\alpha\in N_T$ acting on a lift invariant: $s_{\hat \bg}\mapsto  s_{\hat \bg^\alpha}$. 
\begin{defn} \label{schur-sep} To a braid orbit $\sO'\le \ni(G,\bfC)^\abs$, attach the collection $S_{\sO'}$ of lift invariants running over braid orbits of $\sH(G,\bfC)^\inn$ above $\sO'$. 
Components corresponding to braid orbits $\sO'$ and $\sO{''}$ in $\ni(G,\bfC)^\abs$ are {\sl Schur-separated\/} if the $S_{\sO'}$ and $S_{\sO^{''}}$ are distinct. \end{defn}  
Lem \ref{liftinvlem} shows Schur-separated components have different moduli properties. Thus, their topological separation. 
\newcommand{\Li}{{\bf Li}} 
In our examples, $\SM_{G,\ell}$ -- always abelian -- will be cyclic. That allows determining the moduli definition field of $\sH(G,\bfC)^{N_T}$ components. 
From the $\ell'$ condition on $\bfC$, with  $H\to G$ an $\ell$-Frattini cover,  The notation $\tilde \bg \in \bfC\cap H$ as lying over $\bg\in \ni(G,\bfC)$ now makes sense. \S\ref{l'lift} puts this in the context of the Universal $\ell$-Frattini cover of $G$ when $\ell||G|$. 

If we can decide what values of the lift invariant are achieved,  this reduces finding moduli definition fields to finding them for automorphism-separated $\sH(G,\bfC)^\inn$ components.  Assume an $\ell$-representation cover $\hat \psi: \hat G_\ell\to G$ satisfies $\ell$-perfect condition \eql{easySchur}{easySchurc}.  

This holds in its purest form in the \OIT-related \S\ref{weilpairing} example:  We explicitly compute the (distinct) lift invariants of the $\sH(G,\bfC)^{N_T}$  components with $T$ the coset representation of the class of involutions in $(\bZ/\ell^{k\np1})^2\xs \bZ/2$, and $\bfC=\bfC_{2^4}$, four repetitions of the involution class.  Above each $\sH(G,\bfC)^{N_T}$ component there is only one $\sH(G,\bfC)^\inn$ component. One corollary: The lift invariant gives the Weil pairing -- giving the moduli definition field --  on modular curves classically denoted $X_n$),  extending our interpretation of the modular curves $X_0(\ell^{k\np1})$ as Hurwitz spaces. 

\S\ref{absinnAn} (resp.~\S\ref{heiscase}) applies Thm.~\S\ref{cosetbr} when $G=A_n$ (resp.~$G=(\bZ/\ell^{k\np1})^2\xs \bZ/3$). \S\ref{ES} discussed the former in detail. For the latter, which we denote as $G_{\ell,0,3}$ ($k=0$ indicating the group at level 0), we encounter some of the problems that we haven't resolved in this paper. We divided the Hurwitz space structure into two types of components, whose union forms $\sH_{\HM-DI}$. Applying the lift invariant implies only the \HM\ components give \MT s. Further, at each level, there are several \HM\ components, so these -- with lift invariant 0 -- are not Schur-separated. 

Qualitative description of the geometric and arithmetic monodromy groups of the corresponding \MT s generalizes Serre's \OIT. \S\ref{HIT-ST} uses conjectures, Andr\'e-Oort and Coleman-Oort, in particular, to compare the nature of the arising of $\ell$-adic representations (on Tate modules) and decomposition groups as the image of $G_K$, $K$ a number field, from: 
\begin{edesc} \label{compladic} \item \label{compladica} Serre's representations of $G_K^\ab$, the Galois group of the abelian closure of $K$;
\item \label{compladicb} Shimura-Taniyama (\ST) abelian varieties in Siegel Space; and 
\item \label{compladicc} \MT\ Jacobian fibers from $(G,\bfC)$, distinguishing between the Shimura-Taniyama case and when the fiber decomposition is open in the arithmetic monodromy of the \MT. \end{edesc} 

Things to keep in mind: \eql{compladic}{compladicc} meshes Coleman-Oort, Hilbert's irreducibility theorem and Serre compatible with many of Serre's related papers (e.g.~\cite{Se-Cheb}). While Serre's characterization \eql{compladic}{compladica} is explicit, the gadgets he uses (e.g. Weil's clever restriction of scalars \eqref{restscalars}) are used by few mathematicians, and they don't produce $\ell$-adic representations that we know how to relate to $\ell$-adic cohomology, much less to abelian varieties. Indeed, the closest we come to explicitly knowing $G_F$ images is when they are abelian.  

Yet, having geometric objects representing the \MT s has graphic representation, especially from  \sh-incidence cusp pairing diagrams from which we can apply our main tool, the braid action (to test the target property, that the \MT\ is eventually $\ell$-Frattini, Def.~\ref{evenfratt}). We allude to these only twice in this paper; our preoccupation was on the lift invariant, but  \cite{FrBG} and \cite{Fr26}, corresponding to our two main examples \S\ref{absinnAn} and \S\ref{heiscase} have more complete diagrams.\footnote{The \sh-incidence pairing gives matrix blocks corresponding to components, for all values of $r$, but only gives a symmetric matrix  if $r=4$).}

Dispensing with the distinction between arithmetic and geometric monodromy isn't the complete story, but Ex.~\ref{HM-separated} gives \MT s, starting with any group $G$, where each level has a Schur-separated component, giving levels defined over $\bQ$  using the argument of \cite[Thm.~3.21]{Fr95}.\footnote{The value of $r=r_\bfC$ is explicit, but $>> 4$. Thus, it is beyond my hand-calculational ability. I could use a computer programmer here to apply GAP, say, to compute \sh-incidence matrices.} 

\subsubsection{Thm.~\ref{cosetbr} strengthens \cite{GoH92} and \cite{GhT23}} \label{homeoc}  
 \cite{FrV91} primarily concentrated on Schur-Separated components, mostly by changing $\bfC$ so there was just one componant. Our examples show that it doesn't suffice in practical applications. 
As corollaries, \cite{FrV91} used Automorphism-separated components in \cite{FrV92}, though without recognizing the key definition of homeomorphic covers (Def.~\ref{homeogal}) for which a version dominates \cite{GoH92} and \cite{GhT23}. For those two papers, $T$ is the regular representation.  \footnote{I was unaware of \cite{GoH92} until 2022, while refereeing \cite{GhT23}. I thought those authors were unaware of \cite{FrV91}, but they list  a 1991 V\"olklein paper in their references (without citing it in the paper). \cite{FrV91} and \cite{FrV92} were written and sent to journals while the authors worked together at the University of Florida, 1986-1989.} 

The main result of \cite{GoH92} is the connectedness of the space of covers in $\sH(G,\bfC)^\Aut(G,\bfC)$ ($\Aut=\Aut(G,\bfC$) homeomorphic to a particular cover $\phi_0$ they select at the beginning. They use the connectedness of a Teichm\"uller ball. Thus, avoiding Teichm\"ulller theory, \cite{GhT23} rightfully claims an easier proof. Ours is easier still, using  \lq\lq dragging\rq\rq\ covers from \cite{Fr77}. Here, we add distinguishing between Automorphism-separated and Homeomorphism-separated components for comparison with the Schur-separated components, as in \S\ref{homeoa}. 

For Riemann surface covers, selecting one cover for comparison with all others lacks a moduli interpretation of the isotopy classes of cover collections. Usually, there are several possible permutation representations for $G$, and therefore different possibilities for $K$. In comparison with, say, \cite{GhT23}, we might want $K=\Aut(G)$ for application in the genus formula \eqref{RHOrbitEq}. It is optional to take the regular representation for this. \eqref{tchoice} gives reasons to choose $T$ thoughtfully.
\begin{edesc} \label{tchoice} \item \label{tchoicea} Doing so can produce $\sH(G,\bfC)^K\,$s (and reduced versions) as classical moduli, as in \S \ref{absinnAn}). 
\item \label{tchoicec} Having fine moduli is valuable, rare for  $\sH(G,\bfC)^K$ when $T$ is the regular rep. (Rem.~\ref{notfineabs}). \end{edesc} 

\begin{rem}[Automorphisms not preserving $\bfC$] \label{autononpres} An $\alpha\in \Aut(G)$ (or $N_{S_n}(G)$) not preserving $\bfC$, would not be braidable. Yet,  the equivalences used in \cite{GoH92} and \cite{GhT23} would still have included it.  Applying $\alpha$ to $\ni(G,\bfC)^\dagger$ would map it into another Nielsen class, $\ni(G,(\bfC)\alpha)^{(\dagger)\alpha}$ where $\alpha$ might even change the permutation representation. We excluded this consideration. 

Still: Components corresponding by $\alpha$ on $\ni(G,\bfC)^\dagger$ and $\ni(G,(\bfC)\alpha)^{(\dagger)\alpha}$ would give (as in Cor.~\ref{autocomps}) equivalent covers of $U_r$ (or of $J_r$). Although the Nielsen classes differ for these components, we can ask if some $\sigma\in G_\mathbb{Q}$ conjugates the total spaces over these components. Ex.~\ref{davprs} has a moduli definition field larger than the definition field of the configuration space cover. \end{rem}

\subsubsection{Geometric vs Arithmetic Monodromy of covers} \label{homeod}  
Throughout we apply \cite{Gr-Re57} -- an analytic cover, $\phi: Y\to X$, (of normal varieties) of an open subset of a quasiprojective variety is algebraic -- as did  \cite{Fr77}, \cite{FrV91}, \cite{GhT23}, etc. This allows:
\begin{edesc} \label{consfinemod}  \item \label{consfinemoda} taking function fields of our main spaces over a defining field; and 
\item \label{consfinemodb} having a well-defined field generated by coordinates of a point on a {\sl fine\/} moduli space.\end{edesc}
The braid calculations of Thm.~\ref{cosetbr} give us (geometric) components of the spaces $\sH(G,\bfC)^\inn$ and $\sH(G,\bfC)^{N_T}$. 
We use moduli interpretations of the definition field of a cover $\hat \phi_{\hat \bp}: \hat X_{\hat \bp} \to \prP^1_z$,  the Galois closure of $\phi_\bp: X_\bp\to \prP^1_z$;  both are fibers in total spaces over $\sH(G,\bfC)^\inn$ and $\sH(G,\bfC)^{N_T}$:  
\begin{edesc} \label{moddefcomps} \item  \label{moddefcompsa} the coordinates of $\hat \bp\in \sH(G,\bfC)^\inn$ lying over $\bp\in \sH(G,\bfC)^{N_T}$ {\sl and\/};  \item \label{moddefcompsb}  definition fields of total spaces over components containing those points: as in Cor.~\ref{corinn-absmdf}.  \end{edesc}
Although \eql{moddefcomps}{moddefcompsb} does not appear explicitly in some classical moduli results, it is necessary.\footnote{Ex.~\ref{davprs}, implicit in the solution of Davenport's Problem, was put here explicitly to clarify that.}  

\cite[\S 0.C]{Fr77} has the details for considering finite/flat morphisms of normal varieties, giving the Grothendieck definition of (ramified) covers by quoting \cite{Mu66}. Except Mumford has everything over an algebraically closed field, inappropriate in our applications. 

The following, expressed in function fields, for the cover of normal, absolutely irreducible varieties $\phi: X\to Y$ with definition field $F$, appears in \cite[(2.2)]{Fr77}. For simplicity, assume $F\le \bC$, and $X$ is absolutely irreducible. 
We use it for defining the moduli definition field (Def.~\ref{mdf}), especially when Hurwitz spaces have more than one component. 

\begin{center} \bf Extension of Constants Diagram \end{center} 

With $\widehat{F(X)}$ a Galois closure of $F(X)/F(Y)$, denote the constants of $\widehat{F(X)}$, the {\sl extension of constants field},\footnote{See \eqref{hurnot} on the branch cycle view of choices.}  by $\hat F$. Rest. denotes restriction of automorphisms of $\widehat{F(X)}$ to $\hat F$, surjective in \eqref{extconstants} because $\overline{F} \cap F(\varphi) = F$.
The following sequence of groups is exact.   
\begin{equation} \label{extconstants} 
1 \rightarrow G\eqdef G(\widehat{F(X)}/\widehat{F}(Y)) \rightarrow G(\widehat{F(X)}/F(Y)) \xrightarrow{\rest.} G(\widehat{F}/F) \rightarrow 1
\end{equation}
The middle (resp.~first) term of \eqref{extconstants} is the
\emph{arithmetic} (resp.~\emph{geometric})  monodromy of the 
extension $F(X)/F(Y)$. The diagram produces $\hat F$ by applying {\sl Hilbert's Irreducibility Theorem\/} (often aiming for $\hat F=F$; so realizing $G$ as a Galois group over $F$). 

Prop.~\ref{inn-absmdf} applies it to $\phi_{\bp}: X_\bp\to \prP^1_z$, with $\bp \in \sH'$ a component of $\sH(G,\bfC)^{N_T}$, to compare with $\hat \bp\in \sH(G,\bfC)^\inn$  over $\bp$, to find the correct field over which a Galois cover of $\prP^1_z$ represents $\hat \bp$.  

We connect \HIT\ and the {\sl Coleman-Oort conjecture\/} (Rem.~\ref{AOB}) as about decomposition groups on towers of moduli spaces. \S\ref{totalspaces} reminds of total families of Hurwitz spaces. 

\S\ref{HSTowers} forms the generalization of modular curve towers on which we can formulate a result comparing the decomposition groups in the tower with the monodromy groups of the towers. This is the deepest place for the lift invariant: ensuring the existence of the tower using a generalization of a classical notion called $\ell$-Poincar\'e duality. Using Serre's \OIT\ as a guide, we introduce the two types of decomposition groups -- \HIT\ and \ST, respectively generalizing $\GL_2$ and $\CM$. 

The Coleman-Oort conjecture\footnote{Often it is the Andr\'e-Oort conjecture that is mentioned, but that is purely about points on Siegel space, and has none of the refinement of differentiating what happens for special curve locii.} concentrates on the locus of Jacobians of curves in Siegel space (and their variants). If true, it says that Serre's \OIT\ generalizes in a surprising way. Our goal, using examples, illustrates its relevance to modern problems: First, showing the relationship between the lift invariant applied to Serre's \OIT\ for the cyclotomic definition fields usually arising from the Weil pairing; and then two general cases where the group theory is modest, but gives dramatic Hurwitz space component results. 

\section{Total  Spaces} \label{totalspaces} A {\sl total space\/} -- the topic of \S\ref{fmc} -- over a component of $\sH(G,\bfC)^\dagger$ is given by  
\begin{equation} \label{totalspace}\begin{array}{c}  \text{$\Phi: \sT^\dagger \to \sH(G,\bfC)^\dagger\times \prP^1_z$ for which the fiber, $\sT_\bp \to \bp\times\prP^1_z$, } \\ \text{ over $\bp\in \sH(G,\bfC)^\dagger$ represents the cover corresponding to $\bp$.}\end{array}\end{equation}  

For spaces reduced by the action of $\PSL_2(\bC)$, the target is more complicated (Rem.~\ref{redtarget}). \S\ref{finered} gives everything required for reduced spaces to generalize the goals of \cite{GoH92} and \cite{GhT23}. 

\begin{prob}[$G_\bQ$ Goal] \label{GQgoal} Give the action of $G_\bQ$ on total spaces over components of $\sH(G,\bfC)^\dagger$.  \end{prob}   

Suppose $\sH'$ is a component of $\sH(G,\bfC)^\dagger$.  Then, the total space over $\sH'$ defines the {\sl moduli definition field\/} (Def.~\ref{mdf}), $\bQ_{\sH'}$,  of the component. Given $\bp\in \sH'(\bar \bQ)$, $\bQ_{\sH'}(\bp)$ is the minimal definition field of a representing cover corresponding to $\bp$. Even if $\bQ(\bp)=\bQ$, this will be a larger field if $[\bQ_{\sH'}:\bQ]>1$. \S\ref{mdfsec} thus gives structure to answer Prob.~\ref{GQgoal}. 

The {\sl branch cycle lemma\/},  Prop.~\ref{bcl}, is our model for computing the moduli definition field. It gives $\sQ_{\sH^\dagger}$ explicitly (with $\dagger=\inn$ or $\abs$)   when the Hurwitz space is absolutely irreducible.  

 Rem.~\ref{compmult2} answers Prob.~\ref{GQgoal} when we only have components defined by topological separation from Schur-separation, and $G$ has a cyclic Schur multiplier. 

\subsection{Fine moduli conditions} \label{fmc}  Again, $T$ is transitive and faithful. In Lem.~\ref{finemodabsinn}, denote $G(T,1)$ -- the stabilizer of 1 in the representation $T$ -- as $G(1)$. \S\ref{fineinnabs} gives the conditions for fine inner and fine absolute moduli corresponding to the parameters $(G,\bfC,T)$. \S\ref{withoutfinemod} compares the different approaches of \cite{Fr77} and \cite{GhT23} to forming total spaces without having fine moduli. 

\subsubsection{Fine inner and absolute moduli} \label{fineinnabs} 
Lem.~\ref{finemodabsinn} improves \cite[Prop.~2.2]{Fr77} by simplifying the relation between absolute and inner fine moduli, interpreting both on Nielsen classes. This enhancement relates fine absolute and fine inner moduli of Hurwitz spaces. \cite{FrV91}, and its corollary paper \cite{FrV92}, often assumed fine absolute, so, automatically,  fine inner moduli. 

Lem.~\ref{PDTR} tightens \cite[Lem.~2.2]{Fr77}.  Use the notation of the Extension of Constants diagram \eqref{extconstants} for a cover $\phi: X\to Y$ with $G=G(\widehat{F(X)}/\hat F(Y))$, $\hat F$ the constants of $\widehat{F(X)}$.    

\begin{lem}\label{PDTR} The normalizer of $G(1)=G(\widehat{F(X)}/\hat F(X))$ in $G$, $N_G(G(1))/G(1)$, identifies with $\Aut(X/Y, F)$. 
\footnote{As noted in \cite[Lem.~2.2]{Fr77},  in particular, if $T$ is  
primitive (meaning no groups properly between $G$ and $G(1)$; e.g., doubly transitive), and $G$ is not a cyclic group 
of prime degree, then $\Aut(X/Y, F)=\{\Id.\}$.}
\end{lem}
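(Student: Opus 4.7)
The plan is to reduce Lem.~\ref{PDTR} to the fundamental theorem of Galois theory applied to the geometric Galois extension $\widehat{F(X)}/\hat F(Y)$ with group $G$, supplemented by a descent check that the resulting $\hat F$-automorphisms of $\hat F(X)$ are defined over $F$.

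First, standard Galois theory: because $\hat F(X)$ is the fixed field of $G(1)$ inside $\widehat{F(X)}/\hat F(Y)$, the group of $\hat F(Y)$-algebra automorphisms of $\hat F(X)$ is canonically $N_G(G(1))/G(1)$, since $\tau\in G$ carries $\hat F(X)$ to itself iff $\tau\, G(1)\, \tau^{-1}=G(1)$, and two such $\tau$ induce the same automorphism of $\hat F(X)$ iff they differ by an element of $G(1)$. This reduces the lemma to identifying $\Aut(\hat F(X)/\hat F(Y))$ with $\Aut(X/Y,F)$.

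The forward direction is straightforward: an $F$-rational cover automorphism $\sigma$ of $X$ over $Y$ gives an $F$-automorphism of $F(X)$ fixing $F(Y)$, and base change by $\hat F$ yields an $\hat F$-automorphism of $\hat F(X)/\hat F(Y)$. Injectivity uses $\hat F\cap F(X)=F$, so that $\hat F(X)=\hat F\cdot F(X)$ and $\sigma$ is recovered by restriction to $F(X)$.

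The main obstacle is surjectivity: starting from $\tau\in N_G(G(1))/G(1)$, descending it to an $F$-rational automorphism. I would argue from \eqref{extconstants}: any lift $\tilde\tau$ to the arithmetic monodromy group $G(\widehat{F(X)}/F(Y))$ is conjugated by $G(\hat F/F)$ within $G$ (since $G$ is normal), and the subgroup $G(1)$ is preserved under this conjugation because it is the stabilizer of the $F$-rational subvariety $X$. Hence the coset $\tau\, G(1)$ is $G(\hat F/F)$-stable, and Galois descent produces an $F$-rational cover automorphism inducing $\tau$. The parenthetical remark then follows: if $T$ is primitive and $G$ is not cyclic of prime degree, $G(1)$ is a non-normal maximal subgroup of $G$, forcing $N_G(G(1))=G(1)$ and hence $\Aut(X/Y,F)=\{\Id\}$.
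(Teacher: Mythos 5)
Your reorganization — step (i) identifying $\Aut(\hat F(X)/\hat F(Y))$ with $N_G(G(1))/G(1)$ by Galois theory, step (ii) comparing with $\Aut(X/Y,F)$ — is cleaner than the paper's presentation; the paper instead starts from $\beta\in\Aut(X/Y,F)$, encodes it by the polynomial $m(x)\in F(Y)[x]$ with $m(x^{(1)})=\beta(x^{(1)})$, and extends to $\beta^*\in G$ normalizing $G(1)$. Both routes produce the same injection $\Aut(X/Y,F)\hookrightarrow N_G(G(1))/G(1)$, and your forward map with its injectivity (via $\hat F\cap F(X)=F$) is correct.

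The genuine gap is in your descent paragraph. You correctly note that choosing lifts in $\hat G(1)=G(\widehat{F(X)}/F(X))$ — which surjects onto $G(\hat F/F)$ because $\hat F$ and $F(X)$ are linearly disjoint over $F$ — preserves $G(1)$ under conjugation, so $G(\hat F/F)$ acts on $N_G(G(1))/G(1)$. But the inference ``hence the coset $\tau\,G(1)$ is $G(\hat F/F)$-stable'' is a non sequitur: stabilizing the subgroup $G(1)$ itself gives no control over the other cosets. What descent actually yields is that $\tau$ comes from an $F$-rational automorphism exactly when $\tau G(1)$ is \emph{fixed} by this action, i.e.\ $\Aut(X/Y,F)\cong\bigl(N_G(G(1))/G(1)\bigr)^{G(\hat F/F)}$, in general a proper subgroup. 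Concretely take $F=\bQ$, $\phi:\prP^1_y\to\prP^1_z$ given by $z=y^3$: here $G=\bZ/3$, $G(1)=\{1\}$, $\hat F=\bQ(\zeta_3)$, so $N_G(G(1))/G(1)=\bZ/3$, yet $\Aut(X/Y,\bQ)=\{\Id\}$ because $G(\bQ(\zeta_3)/\bQ)$ acts on $\bZ/3$ by inversion and fixes only the identity. So your ``Galois descent produces an $F$-rational cover automorphism inducing $\tau$'' needs the extra hypothesis that $\tau$ is a fixed point, which is not automatic. (Note the paper's own surjectivity — ``automorphisms of $F(X)/F(Y)$ correspond to automorphisms of $\hat F(X)/\hat F(Y)$'' — is asserted from absolute irreducibility alone and runs into the same example; the statement is safe in the footnote's primitivity setting, or when $\hat F=F$, where both sides are trivially equal.)
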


\begin{proof}
Let $x=x^{(1)}$ be a primitive generator of $F(X)/F(Y); x^{(1)},\ldots,x^{(n)}$ 
the conjugates of $x$ over $F(Y)$.  A $\beta \in \Aut(X/Y, F)$, induces a field
automorphism of $F(Y)(x^{(1)})$  determined by a polynomial 
$m(x) \in F(Y)[x]$.  Take $m(x) \in F(Y)[x]$ where $m(x)$ is the unique 
polynomial of degree at most $n-1$ with coefficients in $F(Y)$ with $m(x^{(1)})=\beta(x^{(1)})$.  Since $X$ is absolutely irreducible, automorphisms of $F(X)/F(Y)=F(Y,x^{(1)})/F(Y)$ correspond to automorphisms of $\hat F(X)/\hat F(Y)$. 

A fundamental lemma of Galois theory says any such automorphism extends to an automorphism $\beta^*$ of $\widehat{F(X)}/\hat F(Y)$, that maps $\hat F(Y,x^{(1)})$ into itself. Therefore, for $g\in G(1)$ -- fixed on $F(Y,x^{(1)})$ -- so is $(\beta^*)^{-1} g \beta^*$: $\beta^*$ normalizes $G(1)$. Thus, 
automorphisms of  $F(Y)(x^{(1)})/F(Y)$ identify with $N_G(G(1))/G(1)$.  \end{proof}

Denote the centralizer of $G$ in $S_n$ by $\Cen_{S_n}(G)$. It is a normal in $N_{S_n}(G)$. Prop.~\ref{finemodabsinn} puts fine inner and absolute moduli on par with a Nielsen class interpretation, making the former a natural result of the latter. 
\begin{prop} \label{finemodabsinn} Elements of $G$ that permute the right cosets of $G(1)$ by action on the left are in $N_{G}(G(1))$; distinct actions are given by  $N_{G}(G(1))/G(1)$. Then, $\Cen_{S_n}(G)$ 
is isomorphic to $N_{G}(G(1))/G(1)$ and so to the automorphisms, $\Aut(X/Y, F)$, of $X$ over $Y$, defined over $F$.
\begin{edesc} \label{fmod} \item Elements of $\Cen_{S_n}(G)$ also permute the (right) cosets of $G(1)$ by action on the left. 
\item  \label{fmodb} $N_{G}(G(1))/G(1) \cong \Cen_{S_n}(G)$, and if the former is trivial, then $G$ has no center. \end{edesc} 

Fine moduli for  $\sH(G,\bfC)^\inn$ (resp.~$\sH(G,\bfC)^{N_T}$) is that the center of $G$ (resp.~$\Cen_{S_n}(G$)) is trivial.\footnote{Referred to as the {\sl self-normalizing\/} condition.} So, the latter implies the former. \end{prop}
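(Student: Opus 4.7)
\textbf{Proof proposal for Proposition \ref{finemodabsinn}.}

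The plan is to handle the three claims in order: the description of left-multiplication actions on right cosets, the isomorphism $\Cen_{S_n}(G)\cong N_G(G(1))/G(1)$, and the consequence about centers and fine moduli. First I would verify the opening sentence: for $g\in G$ and a right coset $G(1)h$, left multiplication sends $G(1)h\mapsto gG(1)h$, and this is again a right coset of $G(1)$ precisely when $gG(1)=G(1)g$, i.e.\ $g\in N_G(G(1))$. Two such elements $g_1,g_2$ induce the same permutation of right cosets iff $G(1)g_1=G(1)g_2$, that is, iff they lie in the same left coset of $G(1)$ inside $N_G(G(1))$. This gives an injective homomorphism
\[
\Lambda: N_G(G(1))/G(1)\longrightarrow S_n,
\]
where $S_n$ acts on the $n$ right cosets of $G(1)$.

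Next I would show that the image of $\Lambda$ lies in $\Cen_{S_n}(G)$ and is in fact equal to it. Via the coset description in Lem.~\ref{transfaith}, the action of $g\in G$ on cosets is right multiplication by $g$. Hence the action of $\Lambda(g^\ast)$ is left multiplication by $g^\ast\in N_G(G(1))$, and these two actions commute by associativity, so $\Lambda(g^\ast)\in \Cen_{S_n}(G)$. Conversely, given $\sigma\in \Cen_{S_n}(G)$, let $G(1)g^\ast$ be the coset to which $\sigma$ sends $G(1)$. Commutation with all right multiplications forces $\sigma(G(1)h)=G(1)g^\ast h$ for every $h$; well-definedness of this formula on cosets forces $g^\ast G(1)g^{\ast-1}\subseteq G(1)$, i.e.\ $g^\ast\in N_G(G(1))$, and $\sigma=\Lambda(g^\ast)$. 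Combining with Lem.~\ref{PDTR} identifies all three groups: $N_G(G(1))/G(1)\cong \Cen_{S_n}(G)\cong \Aut(X/Y,F)$. This establishes \eql{fmod}{fmodb}, and \eql{fmod}{} (the first bullet) is a byproduct, since every $\sigma\in\Cen_{S_n}(G)$ has just been exhibited as a left-multiplication on right cosets.

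For the implication \lq\lq $N_G(G(1))/G(1)$ trivial $\Rightarrow$ $Z(G)=\{1\}$,\rq\rq\ I would use faithfulness of $T$. Clearly $Z(G)\subseteq N_G(G(1))$, so $Z(G)$ maps into $N_G(G(1))/G(1)$ via the natural quotient. The kernel of this map is $Z(G)\cap G(1)$. If $z\in Z(G)\cap G(1)$, then for every coset representative $g_i$ one has $g_i^{-1}zg_i=z\in g_i^{-1}G(1)g_i$; by Lem.~\ref{transfaith}, faithfulness of $T$ gives $\bigcap_i g_i^{-1}G(1)g_i=\{1\}$, so $z=1$. Therefore $Z(G)$ injects into $N_G(G(1))/G(1)$, and triviality of the latter forces triviality of the former.

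Finally, for the fine moduli statement I would invoke the standard Hurwitz-space dictionary: a point $\bp\in\sH(G,\bfC)^\inn$ has nontrivial automorphisms as an inner-equivalence class exactly when $Z(G)\neq\{1\}$ (the center of $G$ is the generic group of deck-translates commuting with the $G$-action on a Galois cover), while $\bp\in\sH(G,\bfC)^{N_T}$ has nontrivial automorphisms of its representing cover $X\to Y$ exactly when $\Aut(X/Y,F)\neq\{1\}$, equivalently $\Cen_{S_n}(G)\neq\{1\}$ by the just-proved identification. Hence fine inner moduli is the condition $Z(G)=\{1\}$ and fine absolute moduli is $\Cen_{S_n}(G)=\{1\}$; by the kernel calculation above, the latter implies the former. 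The main obstacle is simply bookkeeping the left-versus-right multiplication conventions on cosets carefully enough that the homomorphism $\Lambda$ is unambiguously defined and both injectivity and surjectivity into $\Cen_{S_n}(G)$ are transparent; everything else is a direct consequence of Lem.~\ref{PDTR} and faithfulness of $T$.
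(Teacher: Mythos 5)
Your argument follows the paper's own approach essentially line for line: the same identification of left multiplication on right cosets of $G(1)$ with $N_G(G(1))/G(1)$, the same observation that this lands in and fills $\Cen_{S_n}(G)$, and the same reading of fine moduli. You do add two worthwhile details the paper leaves implicit. First, you establish surjectivity of $\Lambda$ directly, by starting from $\sigma\in\Cen_{S_n}(G)$, pulling out $g^\ast$ from $\sigma(G(1))=G(1)g^\ast$, and noting that well-definedness of $G(1)h\mapsto G(1)g^\ast h$ forces $g^\ast\in N_G(G(1))$; the paper instead constructs $h'_j$ from each $h_j$ and then asserts that both groups have cardinality $k$, leaving the bijectivity to the reader. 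Second, you actually prove the clause ``if $N_G(G(1))/G(1)$ is trivial then $G$ has no center,'' which the paper states but does not argue in its proof: you observe $Z(G)\subseteq N_G(G(1))$, that the kernel of $Z(G)\to N_G(G(1))/G(1)$ is $Z(G)\cap G(1)$, and that any central $z\in G(1)$ lies in every $g_i^{-1}G(1)g_i=G(T,i)$, whence $z=1$ by the faithfulness intersection of Lemma \ref{transfaith}. Two minor remarks. When you write ``iff $G(1)g_1=G(1)g_2$, that is, iff they lie in the same \emph{left} coset of $G(1)$,'' you silently use that left and right cosets of $G(1)$ coincide inside $N_G(G(1))$; harmless, but worth flagging. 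And for the fine-moduli statement you invoke a ``standard Hurwitz-space dictionary,'' whereas the paper spells this out concretely: it reduces fine moduli to uniqueness of the element of $N_T$ (resp.\ $\Inn(G)$) renaming the fiber over $z_0$ after dragging, which is uniqueness iff the centralizer of $\lrang{\bg}=G$ in $N_T$ (resp.\ $\Inn(G)$) is trivial, i.e.\ $\Cen_{S_n}(G)$ (resp.\ $Z(G)$) is trivial. Your invocation is correct but is a placeholder for that argument rather than a different one.
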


\begin{proof}  A $g\in G$ normalizes $G(1)$ if and only $g G(1)g_i= gG(1)g^{-1}gg_i=G(1)g_{\alpha_j}$ for some $\alpha_j$. Therefore, those  $g\in G$ that permute these cosets under multiplication on the \emph{left}  are exactly the elements of $N_{G}(G(1))$. Elements that stabilize all these left 
cosets are the elements of $G(1)$.  This identifies $N_{S_n}(G(1))/G(1)$ as acting faithfully by multiplication on the left of these cosets.  

List the elements of $h\in N_{G}(G(1))/G(1)$ as $\{\row h k \}$. Write $h_j G(1)=G(1)g_{\alpha_j}$, $1\le j\le k$ with the equation for $h'\in S_n$ centralizing $G$: 
\begin{equation} \label{commute} h' \circ T(g) = T(g) \circ h' \text{ for }  \text{ for each } g \in G. 
\end{equation}

We will form an $h'_j$ in $\Cen_{S_n}(G)$ that starts with $h'_j: 1 \mapsto \alpha_j$.  
\begin{center} Apply both sides of \eqref{commute}  to 1 with $g=g_i$: $(\alpha_j)T(g_i)=(i)h'_j$.\end{center}  This only depends on the coset $G(1)g_i$ and not on $g_i$ since the right side has the same image on 1.  

Running over coset representatives, $g_i$ determines $h'_j$ as a permutation that commutes with $G$.  Therefore, the orders of $\Cen_{S_n}(G)$ and $N_{G}(G(1))/G(1)$ both equal $k$. 

Now we interpret fine moduli for the spaces $\sH(G,\bfC)^\inn$ and $\sH(G,\bfC)^{N_T}$. Start with the latter. List an element of  $\ni(G,\bfC)^\abs=\ni(G,\bfC)^{N_T}$ as the set $\bar \bg\eqdef \{\alpha \bg\alpha^{-1}\}_{\alpha\in N_{S_n}(G)}$. Suppose $\sH'$ is a component of $\sH(G,\bfC)^{N_T}$ and at $(\bz_0, z_0)$ we have, relative to classical generators, $\sP_{\bz_0,z_0}$, as used in \S\ref{isotopy-Hur}, chosen branch cycles $\bg$ for a cover with given labeling of the points over $z_0$. Suppose $\bg'\in \bar\bg$ are branch cycles of the path dragged to the end point of $\bar P$ relative to $\sP_{\bz_0,z_0}$. 

Fine moduli over $\sH'$ is equivalent, for every event described above, to uniquely picking out an isomorphism between the original cover given by $\bg$ and the new cover given by $\bg'$. This isomorphism comes from choosing an element in $N_T$ to rename the points over $z_0$. For fine moduli, we need constraints to make this choice unique. This happens if and only if conjugation by only one element of $N_T$ gives the same branch cycles. This is equivalent to $\Cen_{S_n}(G)$ is trivial. 

The same argument applies to a component of $\sH(G,\bfC)^\inn$, except here, instead of $N_{S_n}(G)$, we must verify the conclusion for $\Inn(G)$. That is, if and only if the center of $G$ is trivial. 
\end{proof}

The conclusion of fine moduli with $\dagger=\inn$ or $\abs$ (\cite[\S5]{Fr77} or \cite[Main Theorem]{FrV91}) is the existence of a   unique total family over $\sH(G,\bfC)^\dagger$ (as in \eqref{totalspace}): 
\begin{equation} \label{totfam} \Phi_{G,\bfC,\dagger}: \sT^\dagger\to \sH(G,\bfC)^\dagger\times\prP^1\longmapright {\pr\times \Id} {40} U_r\times \prP^1_z.\end{equation}  

\begin{rem} From Prop.~\ref{finemodabsinn} the conclusions of Lem.~\ref{PDTR} can be stated as $\Cen_{S_n}(G)$ 
is isomorphic to the automorphisms, $\Aut(X/Y, F)$, of $X$ over $Y$, defined over $F$. As in a footnote above, if $T$   
primitive and $G$ not cyclic group 
of prime degree, then $\Aut(X/Y, F)=\{\Id.\}$.
\end{rem} 

\subsubsection{Interpreting Prop.~\ref{finemodabsinn} without fine moduli} \label{withoutfinemod} 
Don't assume any fine moduli conditions. Start from any point $\bz_0\in U_r$, with a cover $\phi_0\in \sH(\bfC,G)^\abs$. Then, the \lq\lq dragging\rq\rq\ process \S\ref{dragbybps}  combined with the fiber product Galois closure construction (proof of Lem.~\ref{homcovers}), allows forming both an inner and absolute (total) space of covers locally over a neighborhood $D_{\bz_0}$ of $\bz_0$ in $U_r$: 
\begin{equation} \begin{array}{c}\Phi^\inn_{D_{\bz_0}}: \sT^\inn_{D_{\bz_0}}\to D_{\bz_0}\times\prP^1_z  \text{ and }
\Phi^\abs_{D_{\bz_0}}: \sT^\abs_{D_{\bz_0}}\to D_{\bz_0}\times\prP^1_z, \text{ and} \\
\text{ map between them giving $\hat\phi^\inn_{\bz} \to \phi^\abs_{\bz}$ on each fiber over $\bz\in D_{\bz_0}$.}\end{array} \end{equation}

The proof of Prop.~\ref{finemodabsinn} shows this suffices to form Hurwitz spaces and the families locally over them. What \cite[p.~57-58]{Fr77} did has two parts.
\begin{edesc} \label{tothomfam} \item \label{tothomfama} Form total families over obvious affine pieces of $U_r$ (noting, without fine moduli, they don't patch together uniquely). 
\item \label{tothomfamb} Use Grothendieck's non-abelian $H^1$ set and his $H^2$ with coefficients in the center sheaf with stalks $\Cen(G)$, applied to \eql{tothomfam}{tothomfama} to form the set of total families. 
\end{edesc} 

\cite{GhT23} notes the dichotomy between three cases for forming such a family: 
\begin{edesc} \label{totfamprop} \item \label{totfama}  $G$ centerless, where such a family is unique over $\sH(G,\bfC)^\inn$; 
\item \label{totfamb}  $G$ is abelian, where such a family exists over $U_r$ though it is not unique; and 
\item \label{totfamc} $G$ has a center, but is not abelian, the \cite{GhT23} construction forms a canonical system of families with natural maps between them over finite covers of $\sH(G,\bfC)^{\Aut(G)}$. \end{edesc}

\cite[p.~3]{GhT23}: \lq\lq For general $G$, one cannot pick out a distinguished choice in a canonical way. This refers to  \cite[p.~57-58]{Fr77} where a cohomological
interpretation of this difficulty is given. They want to form a total family of covers,  doing it in all cases at the loss of getting several copies of the same cover in the family.  \eqref{tothomfam} has each representative cover appear in the total family just once.        

\begin{prob} Find a common framework for \eqref{tothomfam} and \eqref{totfam}. Keep in mind,  Prop.~\ref{finemodabsinn} contains a comparison of the absolute and inner Hurwitz spaces, while \eqref{totfam} does not. \end{prob} 

\subsection{Reduced spaces and a genus formula} \label{finered}  Consider the space from the reduction action of $\PSL_2(\bC)$ (as in \eqref{PSLact}) on the spaces $\sH(G,\bfC)^\dagger$ with $\dagger=\abs$ or $\inn$. Denote the resulting reduced space $\sH(G,\bfC)^{\dagger,\rd}$. Since $\PSL_2(\bC)$ is connected, components of $\sH(G,\bfC)^\dagger$ and $\sH(G,\bfC)^{\dagger,\rd}$ will correspond one-one. \S\ref{redfinmod} gives fine moduli conditions for each corresponding reduced space. 

Our goal is to identify properties separating distinct components. Initially deal with $\sH(G,\bfC)^\dagger$. Then, quotient out by $\PSL_2(\bC)$, reducing the complex dimension of the spaces by three. For $r=4$, normalizing reduced spaces gives a nonsingular cover of the $j$-line ramified only over $\{0,1,\infty\}$. 

Continuing \S\ref{redfinmod}, \S\ref{genformr=4}  uses the induced $H_4$ \eqref{iso} action on {\sl reduced Nielsen classes\/} (Def.~\ref{redclasses}) in particular showing how to compute components, cusps and genuses of these $j$-line coverings.  \S\ref{examples} use the rubric of Prop.~\ref{cosetbr} to make these computations on examples. 

\subsubsection{Reduced inner and absolute spaces} \label{redfinmod}  Using reduced Nielsen classes, we can make computations on reduced Hurwitz spaces. 

 \begin{defn} \label{redclasses} For $r=4$, the Klein 4-group $K_4=\sQ''\eqdef \lrang{\sh^2,q_1q_3^{-1}}$ is the {\sl reduction group\/} and  $\Cu_4=\lrang{q_2,\sQ''}$ is the {\sl cusp group}. Then, for  $\dagger=\abs$ or $\inn$ equivalence, and $r=4$, \begin{center} the {\sl reduced  $\dagger$ Nielsen class\/} is $\ni(G,\bfC)^{\dagger}/\sQ''\eqdef \ni(G,\bfC)^{\dagger,\red}$. \end{center} \end{defn} 

For $r=4$, define $M_4$  to be the quotient of the braid group $B_4$  (with classical generators denoted $Q_1,Q_2,Q_3$) with these extra relations: 
$$\tau_1=(Q_3Q_2)^3=1,\ \tau_2=Q_1^{-2}Q_3^2=1,\ \tau_3=(Q_2Q_1)^{-3}=1\text{ and } \tau=(Q_3Q_2Q_1)^4=1.$$ \cite[Lem.~2.10]{BFr02}  shows adding these relations to $B_4$  is 
equivalent to adding $q_1^2q_3^{-2}=1$ to $H_4$. This produces new equations: 
\begin{equation} \label{basM4} 
q_1q_2q_1^2q_2q_1=(q_1q_2q_1)^2=(q_1q_2)^3=1.\end{equation} 
 With $\sQ =\lrang{(q_1q_2q_3)^2,q_1q_3^{-1}}$, \cite[Thm.~2.9]{BFr02} says the following. 
 \begin{edesc} \label{sQact} \item \label{sQacta}  $\sQ\norm H_4$ is the quaternion group of order 8; it contains the one nontrivial involution, $z=(q_1q_3^{-1})^2$ in $H_4$, generating its center, and acting trivially on inner Nielsen classes.  
 \item \label{sQactb}   So, $\sQ$ acts on all our Nielsen classes through $\sQ''$. 
\item  \label{sQactd}  $\bar M_4=H_4/\sQ$ acts on {\sl reduced\/} Nielsen classes as $\PSL_2(\bZ)$, making the induced $U_j$ cover a natural upper half-plane quotient.  
\end{edesc} 

 From \eql{sQact}{sQactb}, $H_4$ action on reduced Nielsen classes factors through the relation $q_1q_3^{-1}=1$ (\cite[\S3.7]{BFr02} and 
\cite[Prop.~3.28]{BFr02}).  Our actions will all be on Nielson classes, modulo inner action. So, rather than expliciting forming $H_4/\lrang{z}$ to get to its action on reduced classes, as in \eql{sQact}{sQactd}, we abuse notation slightly and refer to $\bar M_4$ in \eql{sQact}{sQactd} as $H_4/\sQ''$ acting.  

When $r=4$, fine moduli of reduced spaces divides into three conditions \cite[Prop.~4.7]{BFr02} on ${}^\dagger$ 
(${}^\abs$ or ${}^\inn$) equivalence classes. For a braid orbit, $\sO\le \ni(G,\bfC)^\dagger$, define the {\sl reduced  braid orbit\/} to be the $H_4/\sQ''$ orbit on $\sO/\sQ''=\sO^\rd$
\begin{edesc} \label{redfinemod} \item \label{redfinemod0}  Before reduction, the Hurwitz space, $\sH(G,\bfC)^\dagger$, has fine moduli (Prop.~\ref{finemodabsinn}). \item \label{redfinemoda}   {\sl b-fine moduli}\footnote{Stands for {\sl birational\/} fine moduli.}: The Klein 4-group, $K_4$, through which the reduction group, $\sQ''$ maps,  acts faithfully on $\sO$: all orbits have length 4.  
\item \label{redfinemodb} Given \eql{redfinemod}{redfinemoda}, the actions of  $\gamma_0\eqdef q_1q_2\!\! \mod \sQ''$ and $\gamma_1\eqdef q_1q_2q_1\!\!\mod \sQ''$ (the elliptic point branch cycles) on $\sO^\rd$ have no fixed points.   \end{edesc}  When there are several components (braid orbits), the conditions \eql{redfinemod}{redfinemoda} and \eql{redfinemod}{redfinemodb} may vary from component to component.  Fine moduli for  $\sO^\rd$ is equivalent to these two conditions.  

\begin{exmpl}[Not fine reduced moduli] \label{nofine} Consider $D_{\ell^{k\np1}}$ (dihedral group of order $2\cdot \ell^{k\np1}$) with $\ell$ odd, and absolute equivalence the standard degree $\ell^{k\np1}$ representation on the (unique) conjugacy class, $\C$, of involutions. \cite{Fr95} opens with showing that the compactifications of $\sH(D_{\ell^{k\np1}},\bfC_{2^4})^{\dagger,\rd}$, $\dagger =\abs$ and $\inn$ with $\bfC_{2^4}$ four repetitions of the involution class, $k\ge 0$, over $\prP^1_j$  identify with the respective modular curves $X_0(\ell^{k\np1})$ and $X_1(\ell^{k\np1})$. 

\S\ref{weilpairing}, in relating to Serre's \OIT\ program takes a related Nielsen class, $\ni((\bZ/\ell)^2\xs \bZ/2, \bfC_{2^4})$. Here are the respective genuses of covers in the absolute and inner families.
\begin{edesc} \item Points of $\sH((\bZ/\ell^{k\np1})^2\xs \bZ/2,\bfC_{2^4})^\abs$ correspond to covers of genus $\geng_\abs$: $$2((\ell^{k\np 1})^2\np \geng_\abs \nm1)=4\frac{((\ell^{k\np 1})^2-1)}2\text{ or }\geng_\abs=0.$$ 
\item Points of $\sH((\bZ/\ell^{k\np1})^2\xs \bZ/2,\bfC_{2^4})^\inn$ correspond to  covers of genus $\geng_\inn$: $$2((2\ell^{k\np 1})^2\np \geng_\abs \nm1)=4\frac{2(\ell^{k\np 1})^2}2\text{ or }\geng_\inn=1.$$\end{edesc} 

Formula \eqref{RHOrbitEq} gives a non-classical computation of the respective genuses of the reduced spaces as $j$-line covers.  The Hurwitz spaces (absolute and inner) for both families of covers have fine moduli (Prop.~\ref{finemodabsinn}), but the reduced spaces don't. For example, when the group is $D_{\ell^{k\np1}}$, there is one braid orbit containing an \HM\ rep. (Def.~\ref{HM}). Easily compute that $\sQ''$ stabilizes it. This shows the necessity of having the \HM\ rep. not be given by involutions in Lem.~\ref{K4faithful}. 

In both cases, the absolute spaces are spaces of covers, $\phi: X\to \prP^1_z$, with $X$ of genus 0, and the Galois closure a genus one curve above 1. Since the degree of $\phi$ is odd, $\ell^{k\np1}$, over any field of definition of the cover, we can take $X$ a copy of $\prP^1_z$. The map from the inner space covering to the absolute space lies over the identity on $U_j=\prP^1_j\setminus \{\infty\}$ \eql{sQact}{sQactd} making the genus 1 curve a homogenous space for an elliptic curve.  
\end{exmpl} 
 
\begin{rem}[Fine ${}^\abs$ vs fine ${}^\inn$ moduli]   \label{notfineabs} Prop.~\ref{finemodabsinn}, since $\Cen(G)\le \Cen_{S_n}(G)$, says fine absolute moduli implies fine inner moduli. 
The former holds if there is no proper group between $G$ and $G(1)$ (primitivity; implied by double transitivity) and $G$ is not cyclic of prime order. It can, though happen that $\Cen_{S_n}(G)$ is not trivial, but $\Cen(G)$ is. For example, for $T$, the regular representation of a centerless $G$, $\Cen_{S_n}(G)$ is isomorphic to $G$ with the opposed multiplication. 
\end{rem} 

\begin{rem}[Reduced fine moduli for $r\ge 5$] \label{Jr} We don't use reduced fine moduli for $r\ge 5$ in this paper. Still, for completeness, there is no group like that $\sQ''$ as in \eql{redfinemod}{redfinemoda} to worry about. That is, b-fine moduli holds automatically, assuming $\sH(G,\bfC)^\dagger$ has fine moduli. Suppose, however, $\alpha\in \PSL_2(\bC)$ has a fixed point on $U_r$.  The analog of not satisfying  \eql{redfinemod}{redfinemodb}  arises when a  point $J_0\in J_r$ is fixed by $\alpha\in \PSL_2(\bC)$ and the reduced Hurwitz space has a singular point $\bp$ above $J_0$. That corresponds to $\bg\in \ni(G,\bfC)^\dagger$ fixed by $\alpha$, with $\bp=\bp_\bg$ the corresponding cover. \end{rem}

\begin{rem}[What $\bp^\rd\in \sH(G,\bfC)^{\dagger, \rd}$ represents] \label{redtarget} Consider $\bp\in \sH(G,\bfC)^\dagger$ -- with fine moduli -- represented by $\phi: X\to \prP^1_z$. For $\bp^\rd\in  \sH(G,\bfC)^{\dagger, \rd}$, the image of $\bp$, there may be no cover $X\to \prP^1$ over the coordinates for $\bp$ representing it. \cite[Reduced Cocycle Lemma 4.11]{BFr02}   gives the precise cohomological condition for a target isomorphic to $\prP^1$ over the coordinates of $\bp$.\end{rem} 

\subsubsection{A genus formula when $r=4$} \label{genformr=4}  As usual $\sH(G,\bfC)^\dagger$ has $\dagger=\inn$ or $\abs$ Nielsen classes.  Using reduced Hurwitz spaces  compares \cite{FrV91} to \cite{GoH92} and \cite{GhT23}. For $r=4$, reduced spaces are upper half-plane quotients ramified over the expected $j$-line places, but they {\sl aren't\/} modular curves except as variants on the case $G$ is a dihedral group case.  See Ex.~\ref{nofine}. 

\begin{prob} \label{mmtc}[Main \MT\ conj.] Starting over a particular number field $K$, show high tower levels -- $\sH(G_k,\bfC)^{\inn,\rd}$, $k>>0$, have no $K$ points. For $r=4$, the explicit approach has been to use Falting's Thm.~and show the genus of all components goes up with $k$. In proving Prob.~\ref{mmtc} for $\ell= 2$, 3 and 5 when $G=A_5$, \cite{BFr02} followed this procedure. 

 \end{prob} 
 
Then, from \eqref{basM4}, respectively denote the images of $q_1q_2$, $q_1q_2q_1$ and $q_2$ in $\bar M_4$ acting on reduced Nielsen classes as 
$\gamma_0$, $\gamma_1$ and $\gamma_\infty$. These satisfy product-one (as in Def.~\ref{HNC}): 
\begin{equation} \label{prodOneM4} 
\gamma_0\gamma_1\gamma_\infty=1.\end{equation}

The upper half-plane appears as a classical ramified 
Galois cover of 
the $j$-line minus $\infty$.  The elements $\gamma_0$ and $\gamma_1$ in $\bar 
M_4$ generate the
local monodromy of this  cover around 0 and 1 \cite[\S4.2]{BFr02}.  

Denote $q_1q_2q_3$ as 
$\sh$, the
shift from \eql{iso}{isob}. From the above,  $\sh$ and 
$\gamma_1$ are the same in $\bar M_4$.  Denote $\prP^1_j\setminus\{\infty\}$ by $U_\infty$. Prop.~\ref{j-Line} is \cite[Prop.~4.4]{BFr02}. 
\begin{prop}[$j$-line branch cycles] \label{j-Line}  Therefore $H_4$ acts on reduced Nielsen classes 
(as used in Prop.~\ref{j-Line} given by \eql{sQact}{sQactd}) through $\bar M_4$.  
Then, $\bar M_4$ orbits on 
$\ni(G,\bfC)^{\dagger,\rd}$  correspond one-one to  $H_4$ orbits on $\ni(G,\bfC)^\dagger$.  

For $\sO'$, a reduced orbit corresponding to a Nielsen class orbit $\sO$, orbits of the cusp group, $\Cu_4$ give the cusps. Denote the  
respective  actions of 
$\bar\gamma=(\gamma_0, \gamma_1,\gamma_\infty)$ by $\bar\gamma'=(\gamma_0', \gamma_1',\gamma_\infty')$. \begin{center} Then, $\sO'$ corresponds to a cover of $\beta_{\sO'}: \sH^\rd_{\sO'}\to U_\infty$ \\ with  $\bar\gamma'$ a  branch cycle description of its compactification over $\prP^1_j$. \end{center}   
\end{prop}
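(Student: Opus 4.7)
\smallskip
\noindent\textbf{Proof plan for Proposition~\ref{j-Line}.}

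\emph{Step 1 (Action descends to $\bar M_4$).} First I would show that the $H_4$-action on $\ni(G,\bfC)^\dagger$ descends to an action of $\bar M_4=H_4/\sQ$ on the reduced classes $\ni(G,\bfC)^{\dagger,\rd}=\ni(G,\bfC)^\dagger/\sQ''$. By \eql{sQact}{sQacta}, $\sQ\norm H_4$ is quaternion of order $8$ whose central involution $z=(q_1q_3^{-1})^2$ acts trivially on inner (and, by the same calculation on branch cycles, on absolute) Nielsen classes; by \eql{sQact}{sQactb}, all of $\sQ$ then acts through its quotient $\sQ''$. Consequently $\sQ$ acts trivially on $\ni^{\dagger,\rd}$ and the $H_4$-action factors through $\bar M_4$.

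\emph{Step 2 (Orbit correspondence).} Since $\sQ''$ is normal in $H_4$ (it is the image of the normal subgroup $\sQ$ under the map to the group acting effectively), every $H_4$-orbit on $\ni^\dagger$ is $\sQ''$-saturated, so its image in $\ni^{\dagger,\rd}$ is a single $\bar M_4$-orbit. Conversely, the preimage of a $\bar M_4$-orbit $\sO'$ is a union of $H_4$-orbits, but since $H_4$ surjects onto $\bar M_4$ and $\sQ$ acts within each fibre of $\ni^\dagger\to\ni^{\dagger,\rd}$, the preimage is a single $H_4$-orbit $\sO$. This gives the bijection $\sO\leftrightarrow\sO'$.

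\emph{Step 3 (Realising $\sH^\rd_{\sO'}$ as a $j$-line cover).} By \eql{sQact}{sQactd}, $\bar M_4\cong\PSL_2(\bZ)$ acts on reduced Nielsen classes through its modular action on the upper half-plane $\mathfrak H$, whose quotient is $U_\infty=\mathfrak H/\PSL_2(\bZ)$. Hence the component $\sH^\rd_{\sO'}$ of the reduced Hurwitz space is, by covering-space theory, the étale cover of $U_\infty$ corresponding to the transitive permutation representation of $\bar M_4$ on $\sO'$; explicitly, if $\Gamma_{\sO'}\le\bar M_4$ is the stabilizer of a chosen point, then $\sH^\rd_{\sO'}=\mathfrak H/\Gamma_{\sO'}\to\mathfrak H/\PSL_2(\bZ)=U_\infty$ of degree $|\sO'|$, so one gets $\beta_{\sO'}$.

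\emph{Step 4 (Branch cycles of the $\prP^1_j$-compactification).} From \eqref{basM4} the images $\gamma_0=q_1q_2$ and $\gamma_1=q_1q_2q_1$ in $\bar M_4$ satisfy $\gamma_0^3=\gamma_1^2=1$, matching the two orders of ramification of $\mathfrak H\to U_\infty$ over $j=0$ and $j=1$; the shift $\sh=q_1q_2q_3$ coincides with $\gamma_1$ in $\bar M_4$, and $\gamma_\infty=q_2$ is the parabolic generator. Then \eqref{prodOneM4} is precisely $\gamma_0\gamma_1\gamma_\infty=1$. Passing to the compactification over $\prP^1_j$ adds finitely many points over $j=\infty$; by Nielsen's classical description of ramified covers of the punctured sphere, the cycle structure of $\gamma_\infty'$ on $\sO'$ is the list of ramification indices over $\infty$, so the cusps of $\sH^\rd_{\sO'}$ correspond bijectively to the orbits of $\lrang{\gamma_\infty}$, equivalently of $\Cu_4=\lrang{q_2,\sQ''}$, on $\sO'$ (and hence on $\sO$). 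The triple $\bar\gamma'=(\gamma_0',\gamma_1',\gamma_\infty')$ is therefore a branch cycle description of $\bar\beta_{\sO'}\colon \overline{\sH^\rd_{\sO'}}\to\prP^1_j$ relative to the classical generators of $\pi_1(\prP^1_j\setminus\{0,1,\infty\})$ furnished by $\mathfrak H$.

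\emph{Main obstacle.} The only genuinely delicate point is the identification of the abstract elements $q_1q_2,\,q_1q_2q_1,\,q_2$ in $H_4/\sQ$ with the \emph{standard} generators of $\PSL_2(\bZ)$ around $0,\,1,\,\infty$ together with the correct matching of ramification behaviour under $\mathfrak H\to U_\infty$; this is the content of \cite[\S 4.2]{BFr02} (see also \eql{sQact}{sQactd}), which I would invoke rather than re-derive. Once that identification is in hand, Steps~1--4 assemble into the statement.
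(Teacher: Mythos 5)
Your proof is correct, and it is essentially the argument that \cite[Prop.~4.4]{BFr02} records; the paper itself supplies no proof and simply cites that reference, so you are reconstructing what is behind the citation rather than diverging from the paper. Steps 1--2 are right but lean a little on the paper's own abuse of notation: as a subgroup of $H_4$, $\lrang{\sh^2,q_1q_3^{-1}}$ is the quaternion group $\sQ$, while $\sQ''$ is its Klein-four image in $\text{Perm}(\ni(G,\bfC)^\dagger)$; the normality you need is really that of $\sQ\norm H_4$, which then descends, and your parenthetical remark gets this right. Your extension of the ``$z$ acts trivially'' observation from inner to absolute classes is justified because the $N_T$-action used to form absolute classes commutes with braiding (Lem.~\ref{innerbraiding}\eql{bract}{bractb}), and you correctly flag it. Steps 3--4 are standard covering-space/branch-cycle bookkeeping, and you are right to single out the identification of $H_4/\sQ$ with $\PSL_2(\bZ)$ and the matching of $\gamma_0,\gamma_1,\gamma_\infty$ with the generators around $0,1,\infty$ as the one ingredient (from \cite[\S4.2]{BFr02}) not worth re-deriving. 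The bijection between $\Cu_4$-orbits on $\sO$ and $\lrang{\gamma_\infty}$-orbits on $\sO'$ in Step~4 follows exactly as in your Step~2 with $\Cu_4=\lrang{q_2,\sQ''}$ replacing $H_4$.
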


\newcommand{\Stab}{\texto{Stab}}

Suppose $\{\bg,(\bg)q_2,(\bg)q_2^2,...\}$ is the orbit of $\bg=(g_1,g_2,g_3,g_4)$ under $q_2$. For $\bg^*$ any element in the orbit, the product of its 2nd and 3rd entries is always $g_2g_3=g$; denote $\ord(g)$ by $o$ (called the {\sl middle product\/} or $\mpr$). Below, denote the orbit length (or width) by $\wid(\bg)$,  of  $q_2$ on $\bg$.\footnote{That is interpreted as the ramification index of the cusp over its image in $j=\infty$.} With actual numbers in Prop.~\ref{gamOrbit} we indicate the pair $(\mpr(\bg),\wid(\bg))$   by $(u,v)$ and refer to this as its {\sl orbit type}. With the center of $\lrang{g_2,g_3}$ denoted $\Cen(g_2,g_3)$, the following is \cite[Prop.~2.17]{BFr02}. 

\begin{prop} \label{gamOrbit}  If $g_2=g_3$, then $u=v=1$. With $g_2\ne g_3$, $g=g_2g_3$ and $g'=g_3g_2$:  
\begin{edesc} \label{midprod}  \item 
$u=\ord(g_2g_3)/|\lrang{g_2g_3}\cap \Cen(g_2,g_3)|$.  Also,  $v=2\cdot u$, unless, 
\item  \label{midprodb} with $x=(g)^{(u-1)/2}$ and $y=(g')^{(u-1)/2}$ (so $g_2y=xg_2$ and  $yg_3=g_3x$),
\begin{center} $u$  is odd, and  $yg_3$ has order 2. Then,  $v=u$.\end{center}  \end{edesc}  \end{prop}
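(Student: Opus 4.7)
The goal is to compute the orbit length $v$ of $q_2$ on $\bg=(g_1,g_2,g_3,g_4)$ by reducing to a closed-form recursion on the middle pair. First I would observe that $q_2$ fixes $g_1$ and $g_4$ and preserves the product $g_2g_3=g$, so the orbit is controlled by the pair $(g_2,g_3)$. Writing $(a_k,b_k)$ for the middle pair after $k$ applications of $q_2$, one has $b_{k+1}=a_k$ and $a_{k+1}=a_kb_ka_k^{-1}=ga_k^{-1}$. Iterating gives $a_{k+2}=ga_kg^{-1}$, whence
\[
a_{2m}=g^mg_2g^{-m},\qquad a_{2m+1}=g^{m+1}g_2^{-1}g^{-m}.
\]

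Next I would read off the two possible return conditions. The width $v$ is the smallest $k>0$ with $a_k=g_2$ (since then $b_k=a_k^{-1}g=g_3$ follows automatically). For even $k=2m$, $a_{2m}=g_2$ is equivalent to $g^m\in\Cen(g_2)$, which together with $g^m\in\Cen(g)$ is the same as $g^m\in\Cen(g_2,g_3)$; the least positive such $m$ is $u=\ord(g)/|\langle g\rangle\cap\Cen(g_2,g_3)|$, so the minimal even return occurs at $k=2u$. For odd $k=2m+1$, the condition $a_{2m+1}=g_2$ rearranges to $g^{m+1}=g_2g^mg_2$, equivalently $g^mg_2g^{-m}=g_3$: the element $x=g^m$ conjugates $g_2$ to $g_3$.

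The dichotomy in the proposition now follows. If no odd return exists, $v=2u$. If some odd $k_0$ returns, then $v\mid k_0$ forces $v$ odd; combined with $v\mid 2u$ this gives $v\mid u$, and conversely $a_{2v}=g_2$ implies $g^v\in\Cen(g_2,g_3)$, so $u\mid v$. Therefore $v=u$ and $u$ is necessarily odd. The degenerate case $g_2=g_3$ gives a trivial orbit, so $v=1=u$.

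To obtain the testable criterion I would recast the odd-return condition as the assertion that $yg_3$ has order $2$. The identities $g'g_3=g_3g$ and $g_2g'=gg_2$ (immediate from $g=g_2g_3$, $g'=g_3g_2$), together with an easy induction, yield $yg_3=g_3x$ and $g_2y=xg_2$. Under the hypothesis $xg_2x^{-1}=g_3$ we have $xg_3x^{-1}=g_3^{-1}g$ (since $x\in\langle g\rangle$ commutes with $g$), and so
\[
(yg_3)^2=(g_3x)^2=g_3(xg_3x^{-1})x^2=g_3\cdot g_3^{-1}g\cdot x^2=gx^2=g\cdot g^{u-1}=g^u.
\]
Hence, in the generic setting $\langle g\rangle\cap\Cen(g_2,g_3)=\{1\}$ where $g^u=1$, the order-$2$ test on $yg_3$ is equivalent to the odd-return condition, and the proposition's exception $v=u$ is triggered exactly when $u$ is odd and $yg_3$ is an involution. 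The main obstacle is bookkeeping around this central ambiguity: the pure odd-return condition already forces $v=u$, but the clean \emph{order-$2$} criterion additionally absorbs the vanishing of the central correction $g^u$, so one must carry the formal identity $(yg_3)^2=g^u$ throughout and verify that the application of interest sits in the regime where it collapses to the stated test.
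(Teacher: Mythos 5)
Your closed-form iterates $a_{2m}=g^m g_2 g^{-m}$ and $a_{2m+1}=g^{m+1}g_2^{-1}g^{-m}$ are correct, the identification of the least even return at $k=2u$ with $u=\ord(g)/|\lrang{g}\cap\Cen(g_2,g_3)|$ is correct, and the divisibility sandwich ($v\mid k_0$ odd and $v\mid 2u$ give $v\mid u$, while $a_{2v}=g_2$ gives $u\mid v$) cleanly forces $v\in\{2u,u\}$ with $v=u$ exactly when an odd return exists. The paper only cites \cite[Prop.~2.17]{BFr02} without reproducing an argument, so there is nothing internal to compare against; what you have written is a complete, self-contained derivation by the natural route.

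The caveat you raise at the end should be promoted from bookkeeping to the main content. What your calculation actually proves is that, for $u$ odd, the odd-return condition $xg_2x^{-1}=g_3$ is \emph{equivalent} to $(yg_3)^2=g^u$ (the displayed chain of equalities is reversible), and hence the exceptional case is $v=u$ with $u$ odd and $(yg_3)^2=g^u$. The printed test ``$yg_3$ has order $2$'' coincides with this precisely when $g^u=1$, i.e.\ when $\lrang{g}\cap\Cen(g_2,g_3)=\{1\}$ --- equivalently, precisely when the two competing uses of $u$ in the surrounding text (as $\mpr(\bg)=\ord(g)$ in the paragraph before the proposition, and as the quotient $\ord(g)/|\lrang{g}\cap\Cen(g_2,g_3)|$ inside it) agree. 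If $u$ is odd and $g^u\ne 1$ then the literal ``order $2$'' criterion is strictly stronger than the odd-return condition, and either implication in the stated dichotomy can fail. So the correct and provable form of the criterion is $(yg_3)^2=g^u$; the proposition's wording should be read as shorthand valid under the implicit hypothesis $\lrang{g}\cap\Cen(g_2,g_3)=\{1\}$, or one should check whether the original in \cite{BFr02} carries the central correction $g^u$ explicitly.
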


Denote a $q_2$ orbit with type $(u,v)$ by $\cO(u,v)$. For $\bg\in \cO(u,v)$, $$\text{use $\Stab_{\sQ''}(\bg)$ (resp.~$\Stab_{\sQ''}(\cO(u,v))$}$$ for the stabilizer in $\sQ''$ of $\bg$ (resp.~the subgroup of $\sQ''$ mapping $\bg$ into $\cO(u,v)$). Since $\sQ''\norm \Cu_4$, $|\Stab_{\sQ''}(\bg)|$ and $|\Stab_{\sQ''}(\cO(u,v))|$ depend only on $\cO(u,v)$. 

\begin{defn}[Reduced orbit length] \label{cardorbsh} The reduced orbit factor associated to $\cO(u,v)$ is  $$f_{u,v}=|\Stab_{\sQ''}(\cO(u,v))/\Stab_{\sQ''}(\bg)|. \text{ An $f_{u,v}\ne 1$ gives {\sl orbit shortening}.}$$ \end{defn} 

With an actual cusp computation, several $\gamma'_\infty$ orbits may have the same $(u,v)$. Use a peripheral symbol $a$ to distinguish them. Riemann-Hurwitz then 
gives the genus
$g_{\sO'}$ of the reduced Hurwitz space component $\sH_{\sO'}^\rd$ corresponding to the reduced braid orbit $\sO'$ as  
\begin{equation} \label{RHOrbitEq} 2(|\sO'|+g_{\sO'}-1)=\frac {2 (|\sO'|-\tr(\gamma_0'))}3 
+
\frac{|\sO'|-\tr(\gamma_1')} 2 + \sum_{\cO'(u,v;a)\subset \sO'}\frac{v} {f_{u,v}}-1. \end{equation} 

Lem.~\ref{K4faithful}, rephrases \cite[Lem.~7.5]{BFr02}, assuring b-fine moduli on some of our examples.

\begin{lem} \label{K4faithful} Assume $r=4$,  $G$ centerless, and $\sO$ a braid orbit in $\ni(G,\bfC)^\dagger$ containing an \HM\ rep. $\bg=(g_1,g_1^{-1}, g_2,g_2^{-1})$. Then the $K_4$ action is faithful unless $g_1$ and $g_2$ are involutions. \end{lem}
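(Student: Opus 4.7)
The plan is to argue by contrapositive: suppose some nontrivial $\alpha \in K_4 = \sQ''$ lies in the kernel of the $K_4$-action on $\sO$, so in particular $\alpha$ stabilizes the HM rep $\bg = (g_1, g_1^{-1}, g_2, g_2^{-1})$ in $\ni(G,\bfC)^\dagger$ and stabilizes every other element of $\sO$; I will show both $g_1$ and $g_2$ must be involutions. First, using the generator formulas \eql{iso}{isob} together with the identities $q_1^2(\bg) = q_3^2(\bg) = \bg$ special to HM form, I compute the three nontrivial elements of $K_4$ applied to $\bg$:
$$
\sh^2(\bg) = (g_2, g_2^{-1}, g_1, g_1^{-1}),\quad q_1 q_3^{-1}(\bg) = (g_1^{-1}, g_1, g_2^{-1}, g_2),\quad \sh^2 q_1 q_3^{-1}(\bg) = (g_2^{-1}, g_2, g_1^{-1}, g_1).
$$
The hypothesis $\alpha(\bg) \sim \bg$ then produces $h \in G$ (inner case) or $h \in N_T$ (absolute case) realizing the corresponding componentwise conjugation. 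In every case, squaring the relations gives $h^2 \in \Cen_G(g_1) \cap \Cen_G(g_2) = \Cen_G(\langle g_1, g_2\rangle) = \Cen(G) = \{1\}$ by the HM generation hypothesis $\langle g_1, g_2\rangle = G$ and the centerless hypothesis, so $h$ is an involution (possibly trivial).

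Next I eliminate the two \emph{swap} cases $\alpha \in \{\sh^2,\ \sh^2 q_1 q_3^{-1}\}$. In each, taking $h = 1$ forces $g_1 = g_2$ or $g_1 = g_2^{-1}$, collapsing $\langle g_1, g_2\rangle$ to a cyclic subgroup, which is impossible for a nontrivial centerless $G$. For $h \ne 1$, the inner automorphism $\iota_h$ swaps $g_1 \leftrightarrow g_2$ (resp.\ $g_1 \leftrightarrow g_2^{-1}$). Since $\alpha$ is in the kernel of the action on $\sO$, it must also stabilize every braid-translate $\beta(\bg)$. Translating by $\beta = q_2$, the stabilizing element of $\sQ''$ at $q_2(\bg)$ is $q_2^{-1}\alpha q_2$, whose corresponding conjugator $h_{q_2} \in G$ satisfies its own system; combining the two systems using the HM structure yields an element of $G$ that centralizes both $g_1$ and $g_2$, hence lies in $\Cen(G) = \{1\}$, and back-substituting forces $h = 1$, contradicting $h \ne 1$.

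The remaining case is $\alpha = q_1 q_3^{-1}$. For $h=1$ one reads off $g_1 = g_1^{-1}$ and $g_2 = g_2^{-1}$ directly, giving the desired conclusion. The main obstacle is ruling out $h \ne 1$ here, i.e.\ excluding the possibility of a nontrivial involution of $G$ simultaneously inverting both HM generators. My strategy is the same as in the swap cases: exploit that $\alpha$ stabilizes the \emph{entire} orbit and that the $H_4$-conjugation action on $\sQ''$ is non-trivial (it factors through $\mathrm{Aut}(K_4) = S_3$), so translating by $q_2$ produces a second conjugation relation involving $g_2 g_3 g_2^{-1}$ whose composition with the original inversion relation lands in $\Cen(G) = \{1\}$; the delicate point is that this computation uses the precise shape of $q_2(\bg)$ and the product-one relation. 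In the absolute setting the argument runs identically with $h \in N_T$, since the centerless hypothesis embeds $G$ into $N_T$ as a normal subgroup with trivial centralizer, so the squaring step and the kernel-closure step go through verbatim. The prototype of the exceptional case (both involutions) is precisely the dihedral/modular-curve situation of Ex.~\ref{nofine}, where $K_4$ genuinely fails to act freely.
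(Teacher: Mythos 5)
The skeleton you set up is reasonable: compute the three nontrivial elements of $K_4$ applied to $\bg$, translate \lq\lq $\alpha$ fixes $\bg$\rq\rq\ into a conjugation system, and square to get $h^2\in\Cen(G)=\{1\}$.  But the proof is not finished: the $h\neq 1$ subcases, which you yourself call \lq\lq the delicate point,\rq\rq\ are never actually carried out.  These cases are not vacuous --- for instance in the generalized dihedral groups $(\bZ/n)^2\xs\bZ/2$ with $n$ odd, an HM rep $\bg=(g_1,g_1^{-1},g_2,g_2)$ with $g_1$ of odd order and $g_2$ an involution is fixed by $q_1q_3^{-1}$ with $h=g_2\neq 1$ even though the lemma's conclusion (both $g_1,g_2$ involutions) fails --- so it really does take an argument to show that such an $\alpha$ cannot lie in the \emph{kernel} of the action on all of $\sO$.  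The cleanest route, which you gesture at but do not exploit, is that $\sQ''\norm H_4$ implies the kernel of the $K_4$-action on a braid orbit is stable under the $S_3\cong H_4/\sQ''$ action on $K_4$; since $K_4$ has no proper nontrivial $S_3$-invariant subgroup, a nontrivial kernel is all of $K_4$, so \emph{both} $\sh^2$ and $q_1q_3^{-1}$ fix $\bg$ simultaneously, and one should derive the conclusion from that stronger pair of conjugation systems (or from a class-incompatibility argument applied to $\sh(\bg)$, which is how the dihedral-type counterexample above escapes).  Right now that derivation is missing.

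Your treatment of the absolute case also contains an outright error: you write that \lq\lq the centerless hypothesis embeds $G$ into $N_T$ as a normal subgroup with trivial centralizer,\rq\rq\ but $\Cen(G)=\{1\}$ does \emph{not} force $\Cen_{N_T}(G)$ (let alone $\Cen_{S_n}(G)$) to be trivial; Rem.~\ref{notfineabs} explicitly notes that for the regular representation of a centerless $G$ one has $\Cen_{S_n}(G)\cong G^{\rm op}\neq\{1\}$.  So the step $h^2\in\Cen_{N_T}(G)=\{1\}$ does not transfer verbatim to $\dagger=\abs$ and needs its own argument (or an added hypothesis such as fine absolute moduli, i.e.\ $\Cen_{S_n}(G)=\{1\}$).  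The paper itself cites \cite[Lem.~7.5]{BFr02} as the source; the details you are missing are presumably there, but as written your proposal has two genuine gaps, one you acknowledged and one you did not.
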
 

\subsection{Moduli Definition Fields: Part I}   \label{mdfsec} For a field $F$, a variety $V$ defined over $F$, and $\bp\in V$, $F(\bp)$ denotes the field generated over $F$ by the coordinates of $\bp$.  Suppose $\sH'$ is a component of $\sH(G,\bfC)^\dagger$. Usually assuming $\sH'$ has fine moduli, we seek a field $\bQ_{\sH'}$ with the following property, 
\begin{defn}[Moduli definition field] \label{mdf}  For $\bp\in \sH'(\bar \bQ)$ there will be a representative cover $\phi^\dagger: X^\dagger\to \prP^1_z$ with equations defined over $\bQ_{\sH'}(\bp)$, and any other cover representing $\bp$ will be equivalent to $\phi^\dagger$ over some extension of $\bQ_{\sH'}(\bp)$. \end{defn}   

In lieu of Thm.~\ref{cosetbr}, \S\ref{bclmodel} improves the original Branch Cycle Lemma (\BCL) as a model for Def.~\ref{mdf}. \S\ref{configspnotmdf} (Ex.\ref{davprs}) is an explicit example that came from the solution of Davenport's problem. It shows the moduli definition field is {\sl not\/} always the definition field of the moduli space with its map to its configuration space.  S\ref{fibGalClos} deals with Galois closures of covers. 

\begin{rem} \label{mfdef} Our concentration on points on fine moduli spaces, combined with our use of Grauert-Remmert, allows a fairly uniform approach. There are, however, places where one must pause. \begin{edesc} \label{2moddefs} \item \label{2moddefsa} We sometimes, as in \S\ref{weilpairing}, use spaces that don't have fine moduli (in going from Hurwitz spaces to reduced Hurwitz spaces); and 
\item \label{2moddefsb} in comparing points on a \MT\ with points on a Jacobian variety, as in \S\ref{cmabelvar}, on Shimura-Taniyama \CM\ varieties, the definitions of moduli fields aren't tranparently compatible.\end{edesc} Using remarks in \S\ref{heiscase}, our approach works because we selected limited examples to apply Thm.~\ref{cosetbr}\end{rem} 

\subsubsection{The \BCL\ as a model} \label{bclmodel} 
Denote the least common multiple of elements in $\bfC$ by $N_\bfC$. Recall the elements $\Aut(G,\bfC)$ preserving $\bfC$, and the corresponding subgroup of $N_{S_n}(G,\bfC)$ (\S\ref{statements}). 
 \begin{defn} \label{ratclass} With  $\zeta_{N_{\bfC}}$ a primitive $N_{\bfC}$th root of 1, 
Consider these subgroups of $G(\bQ(\zeta_{N_{\bfC}})/\bQ)$: $$\begin{array}{c} M_{\bfC,\inn}\eqdef \{u\in \bZ/N_{\bfC}\mid (u,N_\bfC)=1 \text{ and }\bfC^u=\bfC\} \\ \text{ and }
M_{\bfC,\abs}\eqdef \{u\in \bZ/N_{\bfC}\mid (u,N_\bfC)=1 \text{ and }\bfC^u=\bfC \mod N_{S_n}(G,\bfC)\}. \end{array} $$ 
We say $\bfC$ is a {\sl rational union\/} if $M_{\bfC,\inn}=(\bZ/N_{\bfC})^*$. \end{defn}

Assuming fine moduli, Cor.~\ref{GQaction} says $\sigma\in G_\bQ$ maps a representative of $\bp\in \sH(G,\bfC)^\dagger(\bar\bQ)$ to a representative of $\bp^\sigma\in \sH(G,\bfC^{n_\sigma})^\dagger$ with $n_\sigma$ the cyclotomic integer associated to $\sigma$. Compatible with Prop.~\ref{bcl}, Def.~\ref{mdfvar} is a variant on Def.~\ref{mdf}. 
\begin{defn} \label{mdfvar} Replace  $\sH(G,\bfC)^\dagger$ (with fine moduli) with  an absolutely irreducible component, $\sH'$. Its moduli definition field, $\bQ_{\sH'}$, give the minimal subfield (of $\bar \bQ$) satisfying \eqref{moddefprop}. \end{defn} 
\begin{edesc} \label{moddefprop} \item \label{moddefpropa} The fiber over $\bp'\times \prP^1_x$ in the unique total representing family $\Psi_{\sH'}: \sT\to \sH'\times \prP^1_x$ gives a cover representing $\bp'$ over $\bQ_{\sH'}(\bp')$.
\item  \label{moddefpropb}  Applying $\sigma\in G_{\bQ}$ to $\phi_{\bp'}: X_{\bp'} \to \prP^1_x$ -- giving $\phi_{\bp'}^\sigma: X_{\bp'}^\sigma \to \prP^1_x$ -- represents a cover in $\sH'$ if and only if $\sigma\in G_{\bQ_{\sH'}}$.\end{edesc} 

Assuming fine moduli and {\sl irreducibility\/} for the Hurwitz space $\sH(G,\bfC)^\dagger$, the {\sl Branch Cycle Lemma\/} (\BCL\ of \cite[\S5.1]{Fr77}) gives 
\begin{equation} \label{oldbcl} \begin{array}{c}\text{the moduli definition field for $\sH(G,\bfC)^\dagger$ is   the fixed field of $M_{\bfC,\dagger}$ in Def.~\ref{ratclass},} \\ 
\text{an explicit cyclotomic field,  depending only on $\bfC$ and the equivalence $\dagger$.} \end{array} \end{equation} 

In lieu of Thm.~\ref{cosetbr}, we don't need $\sH(G,\bfC)^K$ to be irreducible. Replace that by \eqref{pullbackabsirr}.    
\begin{edesc} \label{pullbackabsirr} \item Assume we know $\bQ_{\sH'}$, with $\sH'\le \sH(G,\bfC)^\abs$, classes of covers that are an orbit for lift invariants under $N_{S_n}(G,\bfC)$. 
\item For the inner Hurwitz space: Replace absolute irreducibility of $\sH(G,\bfC)^\inn$ by there is one absolutely irreducible component of $\sH\le \sH(G,\bfC)^\inn$ above $\sH'$.
\end{edesc} 

\begin{prop}[extended \BCL] \label{bcl} Assume \eqref{pullbackabsirr} holds. Then $\bQ_{\sH}$ exists and is $\bQ_{\sH'}$ with the fixed field, $F_{\bfC,\inn}$, of $M_{\bfC,\inn}$ adjoined.\footnote{When I left my tenured position at Stony Brook in 1974 for a full professorship at UC Irvine, I was given the least experienced typist. That typist couldn't produce a copy of \cite{Fr77} that looked good photocopied into print. It could use a redo for typos and updates for placement on the arXiv and the proof of this extension.} 

 \end{prop}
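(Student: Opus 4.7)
The plan is to reduce Proposition~\ref{bcl} to a stabilizer calculation inside $G_\bQ$ acting on irreducible components of $\sH(G,\bfC)^\inn$, and to apply the original \BCL\ of \cite{Fr77} at two places: once (implicitly, via the hypothesis) to the absolute component $\sH'$, and once cyclotomically to pin down how $G_\bQ$ permutes inner components above $\sH'$. Under fine inner moduli (Prop.~\ref{finemodabsinn}) together with Grauert--Remmert, identifying the moduli definition field $\bQ_\sH$ reduces to computing the fixed field of the stabilizer $G_\sH\le G_\bQ$ of the component $\sH$ under the natural $G_\bQ$-action on the set of absolutely irreducible components of $\sH(G,\bfC)^\inn(\bar\bQ)$, together with their total-space data.

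First I would record the action on components. By Cor.~\ref{GQaction}, $\sigma\in G_\bQ$ sends a representative $\phi\colon X\to \prP^1_z$ of $\bp$ to a representative $\phi^\sigma$ of $\bp^\sigma\in \sH(G,\bfC^{n_\sigma})^\dagger(\bar\bQ)$, and on Nielsen classes this is $\bg\mapsto \bg^{n_\sigma}$. Therefore $\sigma$ preserves the ambient space $\sH(G,\bfC)^\inn$ as a set of components if and only if $\bfC^{n_\sigma}=\bfC$ modulo $\Inn(G)$, i.e., $n_\sigma\in M_{\bfC,\inn}$, equivalently $\sigma\in G_{F_{\bfC,\inn}}$; similarly $\sigma$ preserves $\sH(G,\bfC)^\abs$ setwise if and only if $\sigma\in G_{F_{\bfC,\abs}}$. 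This is just the content of the original \BCL\ rephrased as a statement about preservation of the full Hurwitz space, before pulling back to a component.

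Next I would decompose the stabilizer. The forgetful map $\sH\to \sH'$ is $G_\bQ$-equivariant, so $\sigma(\sH)$ lies above $\sigma(\sH')$. Hence $\sigma\in G_\sH$ forces both $\sigma(\sH')=\sH'$, i.e., $\sigma\in G_{\bQ_{\sH'}}$, and also preservation of the ambient inner space, i.e., $\sigma\in G_{F_{\bfC,\inn}}$. Conversely, suppose $\sigma\in G_{\bQ_{\sH'}}\cap G_{F_{\bfC,\inn}}$. Then $\sigma(\sH)$ is an absolutely irreducible component of $\sH(G,\bfC)^\inn$ lying above $\sigma(\sH')=\sH'$; by the uniqueness clause in \eqref{pullbackabsirr}, $\sigma(\sH)=\sH$. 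Thus $G_\sH=G_{\bQ_{\sH'}}\cap G_{F_{\bfC,\inn}}$, whose fixed field is the compositum $\bQ_{\sH'}\cdot F_{\bfC,\inn}$.

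Finally, to upgrade this definition-field statement into the moduli definition field of Def.~\ref{mdf}, I would invoke fine inner moduli (Prop.~\ref{finemodabsinn}) together with the descent argument of \cite[\S5.1]{Fr77}: the unique total family $\Phi_{G,\bfC,\inn}\colon \sT^\inn\to\sH^\inn\times\prP^1_z$ restricted to $\sH$ descends to $\bQ_{\sH'}\cdot F_{\bfC,\inn}$, and its fiber over any $\bp\in \sH(\bar\bQ)$ represents the cover corresponding to $\bp$ over $\bQ_\sH(\bp)$, with no smaller field working by the Galois calculation just performed. The main obstacle I expect is the converse inclusion $G_{\bQ_{\sH'}}\cap G_{F_{\bfC,\inn}}\subseteq G_\sH$: it is exactly the uniqueness hypothesis in \eqref{pullbackabsirr} that prevents a nontrivial $N_T/G$-permutation on inner components above $\sH'$ from introducing a further cyclotomic twist. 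A secondary subtlety, to be handled carefully in the write-up, is that the hypothesis bundles $\bQ_{\sH'}$ with the lift-invariant orbit structure, so one must verify that the stated $\bQ_{\sH'}$ is genuinely the moduli definition field of $\sH'$ and not merely the definition field of its image in $U_r$ (cf.~Ex.~\ref{davprs}).
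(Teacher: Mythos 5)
Your argument is correct in substance, and it is worth saying clearly that the paper itself does not contain a proof of Prop.~\ref{bcl}: the footnote defers to an unwritten update of \cite{Fr77}, and the closest the paper comes to giving the ingredients is in Prop.~\ref{inn-absmdf}, Cor.~\ref{corinn-absmdf} and Prop.~\ref{weiluse}. Your route is the ``top-down'' one --- compute the stabilizer $G_\sH\le G_\bQ$ of the component-with-family, show $G_\sH = G_{\bQ_{\sH'}}\cap G_{F_{\bfC,\inn}}$, then pass to fixed fields. The machinery the paper assembles is instead ``bottom-up'': build the fiber-product Galois closure in families over $\sH'$, record which field its upper row is defined over, and read off the extension of constants from the $N_{S_n}(G,\bfC)/G$ action. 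Both routes meet at the same nontrivial step: with fine inner moduli ($G$ centerless, Prop.~\ref{finemodabsinn}) the total family over $\sH$ is unique, so $\sigma(\sT_\sH)=\sT_\sH$ supplies a canonical cocycle, and Weil descent (as in Prop.~\ref{weiluse}) converts the stabilizer computation into the existence of a representing family over the fixed field. Your decomposition has the advantage of isolating exactly how the two hypotheses in \eqref{pullbackabsirr} enter --- knowledge of $\bQ_{\sH'}$ handles one factor of the intersection, uniqueness of the inner component over $\sH'$ kills the possibility that a non-braidable $N_T/G$-twist enlarges the field beyond the cyclotomic piece --- which is obscured in the constructive approach.

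Two small points to nail down in a final write-up. First, in the forward inclusion you pass from ``$\sigma$ fixes $\sH$'' to ``$\sigma\in G_{\bQ_{\sH'}}$''; strictly this needs that fixing the inner family over $\sH$ forces fixing the absolute family over $\sH'$, which holds because the latter is recovered as the $G(1)$-quotient of the former and both have fine moduli. Second, your last-paragraph caution is already discharged by \eqref{pullbackabsirr} as stated: there $\bQ_{\sH'}$ is assumed to be the \emph{moduli} definition field of $\sH'$, not merely a definition field of the map $\sH'\to U_r$ (the distinction illustrated by Ex.~\ref{davprs}), so no further verification is required beyond citing the hypothesis.
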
  

\cite[Lem.~2]{FrV91} showed how to replace $(G,\bfC)$ -- when these did not have fine moduli -- with an explicit, not canonical -- \lq\lq covering\rq\rq\ $(G^*,\bfC^*)$ with fine moduli, sufficing for some applications. 

\subsubsection{Example moduli definition field} \label{configspnotmdf} 

The \BCL\ arose in solving Davenport's problem \cite{Fr73}. Ex.~\ref{davprs} explicitly displays a moduli definition field  that is {\sl not\/} the definition field of the Hurwitz space component over the configuration space. Davenport's problem was $L=\bQ$ in Prob.~\ref{davprob}. 
\begin{prob} \label{davprob} Describe all pairs of (inequivalent) genus 0 indecomposable covers $\phi_j: X_j\to \prP^1_z$, $j=1,2$ covers, with total ramification over $\infty$ (polynomial covers), defined over a number field $L$, for which a certain arithmetic property holds.\footnote{For almost all primes of $L$, the covers have identical ranges on the residue class fields. This turned out to be equivalent to Schinzel's problem: Among polynomials pairs $f_1,f_2$, with $f_1$ indecomposable, find those for which $f_1(x)-f_2(y)$ is (nontrivially) reducible.} \end{prob} 
\begin{exmpl}[Davenport pairs] \label{davprs}  Let $T_j$ be the representation of $\phi_j$ in Prob.~\ref{davprob}. \cite{Fr73} gives this corollary of \cite{Fr70}:  indecomposable over $\bC$ for the polynomials in Prob.~\ref{davprob} is the same as indecomposable over $L$,  the Galois closures of the two covers are the same \cite[Prop.~3]{Fr73}, and $\tr(T_1(g))=\tr(T_2(g))$ for $g\in G$ ($\tr$ the trace) is equivalent to the arithmetic statement.  

The latter implies $\deg(T_1)=\deg(T_2)\eqdef n$. We now see the representations are doubly transitive \cite[Lem.~2]{Fr73}.  These being inequivalent polynomial covers implies only one class in ${}_j\bfC$ is an $n$-cycle, and covers have at {\sl most\/} 3 finite places  that are ramified \cite[Thm.~1]{Fr73}. Denote one conjugacy class of $n$-cycles by $\C_\infty$, and ${}_j\C_\infty$ the resp.~$n$-cycle classes for $T_j$, $j=1,2$. 

A classical theory -- {\sl difference sets\/} -- suited the branch cycle lemma implying all other $n$-cycle classes have the form $\C_\infty^u$, $(u,n)=1$ and the classes ${}_j\bfC$, $j=1,2$ differed only in their $n$-cycles. Finding those $u$ values was the hard group theory. 

One more general conclusion. \cite[Lem 5]{Fr73}: The cyclotomic field given by the \BCL\ for the moduli definition field of these covers is the fixed field of \eql{multiplier}{multipliera}. \begin{edesc} \label{multiplier} \item \label{multipliera} $\sM_{G,{}_j\bfC}\eqdef \{u\in (\bZ/n)^*\mid {}_j\C_\infty^u={}_j\C_\infty\}.$ Further,  $-1\not \in \sM_{G,{}_j\bfC}$ and ${}_1\C_\infty^{-1}={}_2\C_\infty$. 
\item \label{multiplierb} An $\alpha\in \Aut(G)$, as in Rem.~\ref{autononpres}, maps ${}_1\bfC$ to ${}_2\bfC$; the argument of Cor.~\ref{autocomps} applies. 
\item \label{multiplierc} From \eql{multiplier}{multiplierb},  Hurwitz spaces for $T_1$ and $T_2$ are equivalent covers of $U_4$. 
\end{edesc} 
From \eql{multiplier}{multipliera}, the moduli definition field here is not $\bQ$, giving the result -- no such polynomial pairs -- over $\bQ$ that Davenport expected.  For general number fields $L$, work explicitly with Nielsen classes by noting these gave $G$ closely related to $\PGL_k(\bF_q)$. The two different permutation representations are on points and hyperplanes of projective space.\footnote{There was also an exceptional degree 11 case. This was before the classification of finite simple groups, but eventually, it was shown these were all cases.}   \cite[\S5]{Fr12} lists possible Nielsen classes and outcomes from these calculations:  

\begin{edesc} \label{concdav}
\item There is just one braid orbit on either Nielsen class, and the cyclotomic definition field from the \BCL\ is given explicitly. 
\item There were only finitely many corresponding Nielsen classes (or degrees), and so only finitely many Davenport polynomial pairs, no matter what is $L$.\footnote{Again, this uses that polynomials give genus 0 covers.} 
\item \label{concdavc} Those with $r=4$ correspond to degrees $n=7, 13$ and 15. 
 \item \label{concdavd}  Reduced $j$-line covers from \eql{concdav}{concdavc} have genus 0 (from \eqref{RHOrbitEq} \cite[\S6]{Fr12}).  \end{edesc} 
The punchline: From \eql{concdav}{concdavd}, the Hurwitz spaces as $j$-line covers are explicitly isomorphic to $\prP^1$ over $\bQ$, so $\bQ$ points are dense in the Hurwitz spaces of \eql{concdav}{concdavd}.  You {\sl must\/} adjoin the moduli definition field ($\not= \bQ$) to get actual polynomial pairs:    \cite[\S 9.2]{Fr99} for a complete exposition. 

Explicit {\bf PARI} generated equations of  \cite[\S5.4]{CaCo99} display the essential parameter; \cite[\S7.2.2]{Fr12} notes their dependence on  \cite{Fr73}.  \cite[Thm.~6.9]{Fr12}. shows all this with reduced spaces of $r=4$ branch point covers as a case of genus formula \eqref{RHOrbitEq}. 
\end{exmpl}  

\subsubsection{Galois closure} \label{fibGalClos}   Consider the extension of constants diagram \eqref{extconstants} from $F(V)/F(W)$, an absolutely irreducible extension defined over $F$, as coming geometrically from 
$\Psi: V\to W$, a finite flat, degree $n$, morphism of normal, absolutely irreducible varieties
over a field $F$. Below, we use the permutation representation $T_\Psi$ attached to $\Psi$. 

Construct the $n$-fold fiber product of $\Psi$:   \begin{equation} \label{fiberprod} \{(\row v n)\in V^n\mid
\Psi(v_i)=\Psi(v_j)\}=V'.\end{equation}  As in the proof of Lem.~\ref{homcovers}, remove the fat diagonal and normalize what remains of $V'$  to form $\bar \Psi: \bar  V\to W$. 
Take a base point $w\in W(\bar\bQ)$ with no singular points of $\bar  V$ over it and $F(w)$ and $\hat F$ disjoint fields over $F$.\footnote{Apply Hilbert's irreducibility to a projection of $W\to U$ defined over $F$,  with $U$ an open subset of some projective space, to get such a point $w$, lying over a point of $U(\bQ)$.}   

A $\pi'\in S_n$ maps $(\row v n)=\bv_1\mapsto \bv_{\pi'}\eqdef (v_{(1)\pi'},\dots, v_{(n)\pi'})$, inducing $\pi': \bar V \to \bar V$ permutating (absolutely) irreducible components and determined by what it does on elements in $\bar \Psi^{-1}(w)$. Therefore, below, we assume $\bv_{\pi'}$ is in the fiber over $w$. We only have to go up to $\hat F$ to get coefficients for the equations of absolutely irreducible components.

\begin{lem}  \label{compcosets} Assume, as above, that $\Psi$ is defined over $F$. Take $\bar V_1$, an absolutely irreducible component of $\bar V$. For $\bv_1\in \bar V_1$, identify $G$ with $G_1\eqdef \{g\in S_n \mid \bv_g\in \bar V_1\}$. For $\pi'\in S_n$, representing a {\sl right\/} coset of $G$ in $S_n$, consider \begin{equation} \label{defVpi'} \bar V_{\pi'}=\{\bv_{g\pi'}=\bv_{\pi'\cdot (\pi')^{-1}\g\pi'}\mid \bv_g\in \bar V_1\}. \end{equation}  
\begin{edesc}  \label{Snact} \item \label{Snacta}  Each $\bar V_{\pi'}$ is a Galois closure of $\Psi$ over $\hat F$ with group $G_{\pi'}=\{(\pi')^{-1}g\pi'\in S_n\mid g \in G\}$.  
\item \label{Snactb}  The $\pi'$ for which  $G_{\pi'}=G$ are those cosets represented by $\pi'\in N_{S_n}(G)$. \eql{Snact}{Snacta} gives the quotients of $G_{\pi'}$ that factor through a particular copy of $V$ in the symmetric product. 
\item  \label{Snactc}   The resulting $G_{\pi'}$, the group of the fiber over $w$ of $\bar V_{\pi'}$, is independent of the choice of $\bv_1\in \bar V_1$; it depends only on the coset of $\pi'$. 
\item \label{Snactd} The set of Galois closure components  over $\bar F$ that lie on $\bar V$ that factor through $\Psi$ is closed under the action of $G_{\bar F}$ acting on these components through the group $N_{S_n}(G)$.  \end{edesc} 
Each $G_F$ orbit in \eql{Snact}{Snactd} is represented by a subgroup of $N_{S_n}(G)/G$.

 \end{lem}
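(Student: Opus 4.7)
The plan is to establish parts (a)--(c) by tracking how the natural $S_n$-action on the $n$-fold fiber product descends to the normalization $\bar V$, and then to handle part (d) together with the closing assertion by invoking the Extension-of-Constants diagram \eqref{extconstants} applied to $\Psi$.

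First I would observe that $S_n$ acts on $V^n$ by permuting coordinates, preserves both the fiber-product subvariety \eqref{fiberprod} and the fat diagonal, and does so by $F$-morphisms commuting with the projection to $W$. By the universal property of normalization, this action lifts uniquely to $\bar V$ and permutes its absolutely irreducible components, which are defined over $\hat F$. By definition, $G$ is the setwise stabilizer of $\bar V_1$. For (a), the set $\bar V_{\pi'}$ of \eqref{defVpi'} is the $S_n$-translate of $\bar V_1$ by $\pi'$; it is therefore an irreducible component of $\bar V$ over $\hat F$ of degree $|G|$ over $W$, hence a Galois closure of $\Psi$. To identify the Galois group inside $S_n$ via its permutation of the fiber $\{v_1,\dots,v_n\}$ of $V$ over $w$, a direct index calculation -- using precisely the reparametrization $\bv_{g\pi'} = \bv_{\pi'\cdot(\pi')^{-1}g\pi'}$ emphasized in \eqref{defVpi'} -- shows that $h\in S_n$ stabilizes $\bar V_{\pi'}$ iff $(\pi')^{-1}h\pi'\in G$, yielding $G_{\pi'}=(\pi')^{-1}G\pi'$.

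Statement (b) is then formal: $G_{\pi'}=G$ holds exactly when $\pi'\in N_{S_n}(G)$; the map $\pi'\mapsto G_{\pi'}$ is constant on right cosets $G\pi'$ since $(g\pi')^{-1}G(g\pi')=(\pi')^{-1}G\pi'$ for $g\in G$, and two right cosets give the same subgroup iff their representatives lie in a common right coset of $N_{S_n}(G)$. For (c), $G$ was defined as the setwise stabilizer of the component $\bar V_1$, not of any specific fiber point; replacing $\bv_1$ by $\bv_1'=g_0\cdot \bv_1$ for $g_0\in G$ replaces $\{g\in S_n : g\cdot \bv_1\in \bar V_1\}$ with $\{g : gg_0\cdot \bv_1\in \bar V_1\}=Gg_0^{-1}=G$, giving the same group, and the analogous statement for each $\bar V_{\pi'}$ follows identically.

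For (d) and the closing statement: because $V$, $W$, and $\Psi$ are defined over $F$, so is $\bar V$, and $G_F=G_{\bar F}$ permutes its components (which are defined over $\hat F$). The property of being a Galois closure of $\Psi$ factoring through the first projection is $F$-rational, so the subset of such components is $G_F$-stable. The Extension-of-Constants diagram \eqref{extconstants} applied to $\Psi$ identifies the induced permutation of the fiber $\{v_1,\dots,v_n\}$ of $V$ over $w$ as a homomorphism $G_F\to N_{S_n}(G)$ landing in the arithmetic monodromy of $\Psi$, since conjugation by a Galois element must normalize the geometric monodromy $G$. Combining with (b), the action on the components with $G_{\pi'}=G$ -- which are indexed by $N_{S_n}(G)/G$ -- factors through $N_{S_n}(G)/G$, and picking one representative per $G_F$-orbit yields the closing assertion. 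The main obstacle I anticipate is bookkeeping the left/right index conventions so that the Galois group calculation produces $(\pi')^{-1}G\pi'$ rather than its inverse conjugate, together with specifying precisely what \emph{factoring through $\Psi$} means after the fat diagonal has been deleted, so that the $G_F$-stability of the distinguished set of components is transparent.
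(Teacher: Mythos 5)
Your proof is correct and follows essentially the same route as the paper's: translate $\bar V_1$ by $\pi'$ under the $S_n$-action, compute the stabilizer of $\bar V_{\pi'}$ as $(\pi')^{-1}G\pi'$, observe the degree-equals-automorphism-count characterization of a Galois cover for (a), formal coset arithmetic for (b) and (c), and $F$-rationality of $\bar V$ and $\Psi$ to get $G_F$-stability in (d). The only cosmetic differences are that you vary the fiber point $\bv_1$ in (c) where the paper varies the coset representative $\pi'$ (both verify the same conjugation identity), and you invoke the extension-of-constants diagram explicitly where the paper leaves that appeal implicit in the phrase "the action of $G_F$ factors through the decomposition group of the collection of components."
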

 
 \begin{proof} Proof of \eql{Snact}{Snacta} From \eqref{defVpi'},  the elements $\psi_{\pi'}: g\mapsto (\pi')^{-1}g\pi'$ map  $\bar V_1$ to the elements in $\bar V_{\pi'}$. Thus $G_{\pi'}$ maps $\bar V_{\pi'}$ into $\bar V_{\pi'}$. This component has $|G_{\pi'}|$ elements and is Galois over $W$ from Galois theory: It has precisely as many automorphisms as the degree of the cover over $W$. Also, $\psi_{\pi'}$  maps the subgroups $G(i)<G$ defining the permutation represention $T=T_1$ to  subgroups $G_{\pi'}(i)$ defining the permutation representation $T_{\pi'}$. 
 
 The first sentence of \eql{Snact}{Snactb} is obvious from the definitions; they define the elements in $S_n$ that normalize $G$. Now, $\bar V_{\pi'}$ maps through $V$ if and only if $\bar V_{\pi'}$ has a quotient $V'$ whose fiber over $w$ is the same as the fiber of $V=V_1$ over $w$. Then  \eql{Snact}{Snacta} shows this happens if only if $\pi'$ is a coset of $N_{S_n}(G)$ in $S_n$. This shows \eql{Snact}{Snactb}.  

For \eql{Snact}{Snactc}, replace $\pi'$ by $g\pi'$ with $g\in G$. Then, $\bv_{\pi'}\mapsto (\bv_g)\pi'$, changing $\bv_1$ to $\bv_g$ in the fiber of $\bar V_1$ over $w$, and $G_{\pi'} \mapsto (\pi')^{-1}(g^{-1}Gg)\pi'=G_{\pi'}$. 

Each absolutely irreducible component of $\bar V$ is determined, as an algebraic set, by its fibers over $w$ indicated by the coset of $G$ in $S_n$ defining it, and $\bar V$ is defined over $F$. Since $\Psi$ is defined over $F$, any conjugate of a $\bar V$ component that factors through $\Psi$ also factors through $\Psi$. This shows  \eql{Snact}{Snactd}, and since the action of $G_F$ will factor through the decomposition group of the collection of components, this also shows the last sentence of the lemma. 
 \end{proof} 

According to \eql{Snact}{Snactd} (and the following sentence), we  can divide the components of $\bar V$ that factor through $\Psi$ into $F$-components. We regard Prop.~\ref{weiluse} as a precise version of \HIT. With no loss, assume there is one $F$-component, denoted $\sA(\bar V_1)$, on which $G_F$ acts through a  transitive permutation representation $T^*$ on $\sA$. Prop.~\ref{weiluse} applies the Weil co-cycle condition; we are not after just the definition field of a variety but the definition field of a Galois cover. The conclusion says coefficients of the components generate the constants in the Galois closure of $\Psi$ in $\sA(\bar V_1)$. 

\begin{prop} \label{weiluse} Assume $G$ is centerless. With the assumption above, we may assume $T^*$ is faithful on the collection of Galois closures of $V\to \prP^1_z$ that factor through $\Psi$. Therefore, if $G_F$ is fixed on the unique equivalence classes of covers,  $G$ is regularly realized as a Galois group over $F$. \end{prop}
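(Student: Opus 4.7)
The plan is to split the argument into (i) a reduction justifying ``$T^*$ is faithful'', and (ii) a descent from $\bar F$ down to $F$ using the centerless hypothesis and the Weil co-cycle criterion to build a regular Galois extension realizing $G$.

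For (i), consider the kernel $N^*\le G_F$ of the permutation action $T^*$ on $\sA(\bar V_1)$. By Galois theory, $N^*$ is closed and normal and corresponds to an intermediate constant extension $F_1=\bar F^{N^*}$, with $T^*$ descending to a faithful action of $G(F_1/F)$. In the vocabulary of the Extension-of-Constants Diagram \eqref{extconstants}, $F_1$ is exactly the portion of $\hat F$ that is genuinely forced by the separation of absolutely irreducible components in $\sA(\bar V_1)$; reading this already in the setup allows us to assume $T^*$ is faithful without modifying $\Psi$.

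For (ii), assume $T^*$ is faithful and pick a representative $\bar V_1$ of the unique $W$-isomorphism class of covers in $\sA(\bar V_1)$ preserved by $G_F$. For each $\sigma\in G_F$, the conjugate $\bar V_1^\sigma$ lies in $\sA$ and is $W$-isomorphic to $\bar V_1$, so a $W$-isomorphism $f_\sigma\colon \bar V_1\to \bar V_1^\sigma$ exists. Because $G$ is centerless, Prop.~\ref{finemodabsinn} gives $\Cen_{S_n}(G)=\{1\}$, and Lem.~\ref{PDTR} then yields $\Aut(\bar V_1/W)=\{1\}$, so each $f_\sigma$ is canonically unique. This uniqueness automatically upgrades $\{f_\sigma\}_{\sigma\in G_F}$ to a co-cycle: both $f_{\sigma\tau}$ and $f_\sigma^{\,\tau}\circ f_\tau$ are $W$-isomorphisms $\bar V_1\to \bar V_1^{\sigma\tau}$, hence must coincide. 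Weil descent then produces a model $Y/F$ with an isomorphism $Y\otimes_F\bar F\cong \bar V_1$ respecting the maps to $W$; since $\bar V_1$ is absolutely irreducible and Galois over $W$ with group $G$, so is $Y/W$ over $F$, giving a regular realization of $G$ via the extension $F(Y)/F(W)$.

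The main obstacle is making Weil descent operate on the full structure simultaneously -- the underlying variety, the $G$-action, and the structure morphism to $W$ -- so that the output is a \emph{Galois} cover in the strict sense rather than merely a twisted form of one. Centerlessness, routed through Prop.~\ref{finemodabsinn} and Lem.~\ref{PDTR}, is exactly what promotes $\{f_\sigma\}$ from ``exists'' to ``canonical'', but one must dovetail this carefully with the reduction in (i) so that the descended model sits over $F$ itself (with $F$ algebraically closed in $F(Y)$) and not over the larger $F_1$; that is where the regularity assertion really lives and must be verified.
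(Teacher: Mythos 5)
Your step (ii) has a genuine gap.  You claim that because $G$ is centerless, Prop.~\ref{finemodabsinn} gives $\Cen_{S_n}(G)=\{1\}$, and that Lem.~\ref{PDTR} then yields $\Aut(\bar V_1/W)=\{1\}$.  Both inferences fail.  First, Prop.~\ref{finemodabsinn} records the implication in the \emph{other} direction -- triviality of $N_G(G(1))/G(1)\cong\Cen_{S_n}(G)$ forces $Z(G)=\{1\}$, not conversely -- and the paper's Rem.~\ref{notfineabs} explicitly gives a counterexample (for the regular representation of a centerless $G$, $\Cen_{S_n}(G)\cong G^{\mathrm{op}}$).  Second, and more seriously, $\bar V_1\to W$ is the \emph{Galois closure}, so $\Aut(\bar V_1/W)\cong G$, never trivial (Lem.~\ref{PDTR} computes $\Aut(X/Y,F)$ for the degree-$n$ cover $X$, not for its Galois closure).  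So the isomorphism $f_\sigma\colon\bar V_1\to\bar V_1^\sigma$ over $W$ is far from unique -- there are $|G|$ of them -- and your ``automatic cocycle'' argument collapses.

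What centerlessness actually buys is that the automorphism group of the \emph{pair} consisting of $\bar V_1\to W$ together with the chosen identification $G\xrightarrow{\ \sim\ }\Aut(\bar V_1/W)$ is $\Cen_{\Aut(\bar V_1/W)}(G)=Z(G)=\{1\}$.  It is this trivial automorphism group that forces uniqueness of the descent datum for the full Galois-cover-with-$G$-structure and lets the Weil cocycle condition go through.  The paper routes this exactly through the citation to \cite[Prop.~2]{Fr77} rather than through Prop.~\ref{finemodabsinn}/Lem.~\ref{PDTR}, and that is not a cosmetic choice: the latter two statements govern automorphisms of the degree-$n$ cover $X\to W$, not of the Galois object you are descending.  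Your part (i), the reduction to a faithful $T^*$ via the kernel and its fixed field, matches the paper's opening move (its $G^*$, $F^*$), so that half is fine; the repair needed is to replace the $\Aut(\bar V_1/W)=\{1\}$ step with the $Z(G)=\{1\}$ argument for rigidity of the pair, after which the transport of $V_1^*$ across $G(F^*/F)$-conjugates proceeds as in the paper.
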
 

\begin{proof} Consider the (normal) subgroup, $G^*$, of $G(\hat F/F)$ that leaves each element of $\sA(\bar V_1)$ fixed (as an algebraic set). The fixed field, $F^*$,  of $G^*$ in $\hat F$ is Galois over $F$. With the centerless assumption, \cite[Prop.~2]{Fr77} shows there is an algebraic set $V_1^*$ such that $V_1^*\otimes \hat F$ is $\bar V_1$, with $V_1^*$ defined over $F^*$. Now apply $G(F_1^*/F)$ to transport $V_1^*$ to the algebraic sets of the other $\sA(\bar V_1)$ components. It is clear now that $G(F^*/F)$ has faithful action on these transported components. \end{proof} 

Def.~\ref{normcomps} gives the fiber product components analogous for one cover of $\prP^1_z$ of the components of Hurwitz space components of $\sH(G,\bfC)^\inn$ that lie over a given component of $\sH(G,\bfC)^\abs$.\footnote{The point: With a number theory tool like HIT, deal with a Nielsen class rather than one cover at a time. }

\begin{defn}[Normalizer components] \label{normcomps} Denote the union of the components associated with the cosets of $G$ in $N_{S_n}(G)$ as in \eql{Snact}{Snactb} by  $\sA(N_{S_n}(G))$. This contains $\sA(\bar V_1)$ from above Prop.~\ref{weiluse}.  \end{defn}

\subsection{Moduli Definition fields: Part II}  \label{HITNC}  Start from an absolute Nielsen class $\ni(G,\bfC)^\abs$. We run over components of $\sH(G,\bfC)^\inn$ using \S\ref{fibGalClos}.  \S\ref{HSfiberprod} is the Galois closure fiber product construction applied to  Hurwitz spaces. This produces the moduli definition field for an inner component from the moduli definition of an absolute component below it using the division of Thm.~\ref{cosetbr} into homomorphism and automorphism-separated components, and {\sl Weil's cocycle condition\/} applied to (Galois covers) inner moduli. 
\S\ref{fineinn-notfiineabs} assumes only fine inner moduli,  not fine absolute moduli.

\subsubsection{Fiber product applied to Hurwitz spaces} \label{HSfiberprod} 
Suppose $\sH'$ (resp.~$\sH$) is a component on a particular Hurwitz space, $\sH(G,\bfC)^\abs$ (resp.~$\sH(G,\bfC)^\inn$) with $\sH$ lying over $\sH'$,  $\bQ_{\sH'}$ and $\bQ_{\sH}$ the respective moduli definition fields (Def.~\ref{mdf}).\footnote{Compatible with Thm.~\ref{cosetbr} we assume $\bQ_{\sH'}$ has been computed from the \BCL\ or information on homeomorphism-separated components.}   Prop.~\ref{inn-absmdf} does the Galois closure construction in families that allows relating $\bQ_{\sH}$ to $\bQ_{\sH'}$ in Cor.~\ref{corinn-absmdf}. 

This assumes   $\sH(G,\bfC)^\abs$ has fine moduli (the self-normalizing condition for $G(1)$ in $G$ of Prop.~\ref{finemodabsinn}). 
By assumption there is a unique total family, or {\sl fine moduli\/} structure, defined over $\bar \bQ$: \begin{equation} \label{absequat} \Psi_\abs: \sT^{\abs}\to \sH(G,\bfC)^{\abs}\times \prP^1_z\to U_r\times \prP^1_z\end{equation} on $\sH(G,\bfC)^\abs$.  Pullback over $\bp'\times \prP^1_z$ represents the cover  class associated to $\bp'\in \sH(G,\bfC)^{\abs}(\bar \bQ)$. 

\begin{prop} \label{inn-absmdf} A canonical fiber product construction gives the following commutative  diagram  
\begin{equation}\label{innabsdiagram} \begin{array}{ccccc} 
\sT^\inn &\longmapright{\Psi_\inn} {50}  &   \sH(G,\bfC)^\inn\times \prP^1_z  &
 \longmapright{} {50} &  U_r\times\prP^1_z\\
\mapdown{\Psi_{\abs,\inn}}&&  \mapdown{\Phi_{\abs,\inn}\times \Id_z}&& \mapdown{\Id_r\times \Id_z} \\
\sT^\abs & \longmapright {\Psi_\abs} {50}& \sH(G,\bfC)^\abs\times \prP^1_z & \longmapright{} {50}  &U_r\times\prP^1_z.
\end{array} \end{equation}

In  \eqref{inn-absmdf}, $\sH'$   is a component of  $\sH(G,\bfC)^\abs$ and $\sH(G,\bfC)^\inn_{\sH"}=\sH^*_{\sH'} \le \sH(G,\bfC)^\inn$  in the top line is the pullback of $\sH'$ to $\sH(G,\bfC)^\inn$.   
\begin{equation} \label{sH'equat} \rest(\Psi_\abs): \sT^*_{\sH'} \to \sH^*_{\sH'}\times \prP^1_z\to \sH'\times \prP^1_z\to U_r\times \prP^1_z \text{ defined over $\bQ_{\sH'}$.}\end{equation} 

The space $ \sH^*_{\sH'}$ may have several connected components, all conjugate under a  $N_{S_n}(G)/G$ action. Given one of these, $\sH$, the pullback of $\rest(\Psi_\abs)$ over it  gives the following diagram:
\begin{equation}\label{sHdiagram} \begin{array}{ccccc} 
\sT_\sH&\longmapright{\rest(\hat \Psi\otimes \bQ_{\sH'})} {60}  &  \sH\times \prP^1_z  &
 \longmapright{} {50} &  U_r\times\prP^1_z\\
\mapdown{\Psi_{\abs,\inn}}&&  \mapdown{\Phi_{\abs,\inn}\times \Id_z}&& \mapdown{\Id_r\times \Id_z} \\
\sT' & \longmapright {\rest(\Psi_\abs)} {60}& \sH'\times \prP^1_z & \longmapright{} {50}  &U_r\times\prP^1_z. 
\end{array} \end{equation}
The definition field of the \eqref{sHdiagram} upper row is the $\sH$ moduli definition field, $\bQ_{\sH}$. For $\bp\in \sH(\bar \bQ)$ over $\bp'\in \sH'$, the  fiber of $\sT_{\sH}$ over $\bP\times \prP^1_z$ is a Galois closure, over $\bQ_{\sH}(\bp)$, of $X_{\bp'} \to \prP^1_z$.  
\end{prop}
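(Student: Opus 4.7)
The plan is to perform the single-cover Galois-closure fiber product of Lemma \ref{homcovers} in families over $\sH(G,\bfC)^\abs \times \prP^1_z$, cut out the components that factor through the original cover using Lemma \ref{compcosets} and Definition \ref{normcomps}, and then descend from $\bar\bQ$ to the correct moduli definition field via a Weil-cocycle argument that uses the centerless hypothesis encoded in fine absolute moduli.

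\emph{Family Galois closure.} Starting with the unique fine-moduli total family $\Psi_\abs: \sT^\abs \to \sH(G,\bfC)^\abs \times \prP^1_z$, form the $n$-fold self-fiber product of $\sT^\abs$ over $\sH(G,\bfC)^\abs \times \prP^1_z$, delete the fat diagonal, and normalize. Over each geometric point $\bp' \in \sH(G,\bfC)^\abs$ this restricts to the construction of Lemma \ref{homcovers} for the fiber cover $\phi_{\bp'} \colon X_{\bp'} \to \prP^1_z$, so the result is a finite flat family whose geometric fibers decompose, by Lemma \ref{compcosets}, into components indexed by right cosets of $G$ in $S_n$.

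\emph{Keeping only the normalizer components and descending to $\bQ_{\sH'}$.} By \eql{Snact}{Snactb}, the Galois-closure components whose quotient recovers the absolute cover are precisely those indexed by cosets of $G$ in $N_{S_n}(G)$, and by \eql{Snact}{Snactd} this set $\sA(N_{S_n}(G))$ (Definition \ref{normcomps}) is $G_{\bar\bQ}$-stable. Restricting over $\sH' \times \prP^1_z$ and keeping only this subunion gives the family $\sT^*_{\sH'} \to \sH^*_{\sH'} \times \prP^1_z$, on which $N_{S_n}(G)/G$ acts by permuting connected components. Because $\rest(\Psi_\abs)$ is defined over $\bQ_{\sH'}$ and the entire construction is canonical, everything descends and produces \eqref{sH'equat}; the connected components of $\sH^*_{\sH'}$ are therefore permuted transitively (on each $G_{\bQ_{\sH'}}$-orbit) by an action factoring through $N_{S_n}(G)/G$.

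\emph{Isolating $\sH$ and reading off $\bQ_\sH$.} Fix one connected component $\sH$ of $\sH^*_{\sH'}$. Its stabilizer inside the $N_{S_n}(G)/G$-action cuts out a field extension $\bQ_\sH / \bQ_{\sH'}$. To see that $\bQ_\sH$ really is the moduli definition field (Definition \ref{mdf}), apply the centerless Weil-cocycle descent exactly as in the proof of Prop.~\ref{weiluse}: the fine-inner-moduli hypothesis (center of $G$ trivial, which follows from the self-normalizing condition via Prop.~\ref{finemodabsinn}) kills the $H^2$-obstruction with coefficients in $\Cen(G)$, so the restriction of $\sT^*_{\sH'}$ over $\sH$ descends canonically to a variety $\sT_\sH$ defined over $\bQ_\sH$, and any strictly smaller field would glue $\sH$ to a Galois-conjugate component, contradicting minimality. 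For $\bp \in \sH(\bar\bQ)$ lying over $\bp' \in \sH'$, the fiber of $\sT_\sH$ over $\bp \times \prP^1_z$ is, by construction, the $N_{S_n}(G)$-component of the fiber-product Galois closure of $\phi_{\bp'}$ singled out by $\bp$; its equations come from specializing the $\bQ_\sH$-defined family, so it is defined over $\bQ_\sH(\bp)$, and Lemma \ref{homcovers} identifies it with a Galois closure of $X_{\bp'} \to \prP^1_z$.

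\emph{Main obstacle.} The substantive step is the third paragraph: upgrading the single-cover Weil-cocycle argument of Prop.~\ref{weiluse} to the family $\sT^*_{\sH'} \to \sH^*_{\sH'}$, and verifying the minimality of $\bQ_\sH$. Both depend on the centerless hypothesis built into fine absolute moduli (Prop.~\ref{finemodabsinn}), which is precisely what removes the sole cohomological obstruction identified in \cite{Fr77}; without it one would only obtain a system of families in the sense of \eqref{totfamprop}, not a single descended total family over $\bQ_\sH$.
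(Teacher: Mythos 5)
Your proposal follows essentially the same route as the paper's own proof: it forms the $n$-fold self-fiber product of $\sT^\abs$ over $\sH(G,\bfC)^\abs\times \prP^1_z$ (the construction of \cite[\S3.1.3]{BFr02} that the paper cites), identifies the normalizer components via Lem.~\ref{compcosets} and Def.~\ref{normcomps} (the paper does this through \eqref{fiberGalclos}, which is the same use of \eql{Snact}{Snactb}), and descends to $\bQ_{\sH}$ by the Weil cocycle argument of Prop.~\ref{weiluse}, using that fine absolute moduli forces $G$ centerless via Prop.~\ref{finemodabsinn}. You are somewhat more explicit than the paper about which intermediate lemmas are being invoked at each step, but the decomposition, key tools, and the identification of where the real content lies (the family-version Weil cocycle descent) all match.
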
 

\begin{proof}  Apply the fiber product Galois closure construction to the diagram of \eqref{absequat}: $V\mapsto \sT^{\abs}$, $W \mapsto \sH^\abs\times \prP^1_z$.  Then, $\sH^\inn$ is the normalization of the integral closure of $\sH^\abs$ in the function field of the resulting $\widehat {\sT^\abs}$. As in \cite[\S3.1.3]{BFr02}, check on the fibers of $\widehat{\sT^\abs}_{\bp'}\to \bp'\times \prP^1_z\subset \sH^\abs\times \prP^1_z$, with (possibly) several components, each a geometric Galois closure of $\sT^{\abs}_{\bp'}\to \bp'\times \prP^1_z$ satisfying:  

\begin{edesc} \label{fiberGalclos} \item \label{fiberGalclosa}  It represents  forming the Galois closure construction on $\sT^\abs_{\bp'}\to \bp'\times \prP^1_z$. \item \label{fiberGalclosb}  Restrict  $S_n$ to a component as in \eql{Snact}{Snactb}; this gives $h: G\to \Aut(\widehat
{\sT^\abs}_{\bp'}/\prP^1_z))$ in the inner Nielsen class $\ni(G,\bfC)^\inn$, an isomorphism between $G$ and the 
group of the cover. \item Mapping between inner and absolute spaces takes $$\bp \text{ to }
\bp'=\Phi_{\inn,\abs}(\bp)\text{ with }
\bz=\Phi_\abs\circ\Phi_{\abs,\inn}(\bp).$$\end{edesc} 

The argument that $\bQ_\sH$ has the moduli definition field property is that if we take the Galois closure construction over $\bQ_{\sH'}(\bp')$ that -- using fine moduli for $\sH(G,\bfC)^\inn$ --   $\hat \sT_{\bp} \to \bp'\times \prP^1_z$ represents $\bp$ over $\bQ_{\sH}(\bp)$. The argument uses Weil's cocycle condition exactly in Prop.~\ref{weiluse}.
\end{proof}

Reminder:  Def.~\ref{braidable} defines {\sl braiding $\alpha\in N_{S_n}(G,\bfC)$}. Cor.~\ref{corinn-absmdf} elaborates on the HIT aspects of Prop.~\ref{inn-absmdf}. Expression \eql{extconstdef+}{extconstdef+c} is the extension of constants  for the  Galois closure over $\bQ_{\sH'}(\bp')$ given by the cover $X_{\bp'}\to \prP^1_z$ for  $\bp'\in \sH'$. 
\begin{cor} \label{corinn-absmdf} Consider a pair  $(\sH',\sH)$ as in \eqref{sHdiagram}. Then,  \begin{edesc} \label{extconstdef+}  \item \label{extconstdef+a} $\rest(\Phi_{\abs,\inn}): \sH\to \sH'\otimes \bQ_{\sH}$ is a geometrically irreducible Galois cover with group $$\{h\in N_{S_n}(G,\bfC)/G\mid h \text{ is  braidable}\}.$$ 

\item \label{extconstdef+b} $\rest(\Phi_{\abs,\inn}): \sH \to \sH'$ is a $\bQ_{\sH'}$ irreducible and Galois cover with group a subgroup of $N_{S_n}(G,\bfC)/G$. 
\item \label{extconstdef+c}  \eql{extconstdef+}{extconstdef+a} is a normal subgroup of  \eql{extconstdef+}{extconstdef+b} with quotient group $G(\bQ_{\sH}/\bQ_{\sH'})$. 
\item \label{extconstdef+d} For $\bp\in \sH $ over $\bp'\in \sH'(\bar \bQ)$, $G(\bQ_{\sH}(\bp)/\bQ_{\sH'}(\bp'))\le N_{S_n}(G,\bfC)/G$.\end{edesc}  

Restrict $\bp'$ to points with images in $U_r(\bQ)$. Then, the intersection of all the corresponding decomposition fields $\bQ_{\sH'}(\bp')$ (resp.~$\bQ_{\sH}(\bp)$) is $\bQ_{\sH}$. \end{cor}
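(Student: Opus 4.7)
The plan is to deduce the four parts of the Corollary together with the closing \HIT\ assertion from three inputs: Proposition \ref{inn-absmdf}, which constructs $\sH$ as a component of the fibre-product Galois closure of $\sT'\to \sH'\times \prP^1_z$ and identifies $\bQ_\sH$ as its moduli definition field; Theorem \ref{cosetbr}, which organizes the inner components above $\sH'$ via cosets of the braidable subgroup $N^\br_1\le N_T=N_{S_n}(G,\bfC)$; and the Extension of Constants diagram \eqref{extconstants} applied to the finite flat morphism $\rest(\Phi_{\abs,\inn}): \sH\to \sH'$. The fine-moduli hypothesis of Proposition \ref{finemodabsinn} gives $\Cen_{S_n}(G)=\{1\}$, hence $\Inn(G)=G$, which makes the identifications below clean.

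For (a), the pullback $\sH^*_{\sH'}$ of $\sH'$ to $\sH(G,\bfC)^\inn$ decomposes geometrically by the fibre-product relabeling of Lemma \ref{compcosets} into components on which cosets of $G$ in $N_{S_n}(G,\bfC)$ act. Theorem \ref{cosetbr} identifies the stabilizer of the single component $\sH$ with the braidable subgroup $\{h\in N_{S_n}(G,\bfC)/G \mid h\text{ braidable}\}\cong N^\br_1/G$, and gives the degree of $\sH\to \sH'$ as $(N^\br_1:\Inn(G))$. Hence $\sH\to \sH'\otimes \bQ_\sH$ is a connected \'etale Galois cover with the advertised group; geometric irreducibility after base change to $\bQ_\sH$ is the defining property of the moduli definition field established in Proposition \ref{inn-absmdf}.

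For (b) and (c), apply the Extension of Constants sequence to $\sH\to \sH'$ over $\bQ_{\sH'}$. The geometric Galois group is the braidable group of (a) and the constants extension is $\bQ_\sH/\bQ_{\sH'}$ by construction, yielding the exact sequence
\[ 1\to N^\br_1/G\to G(\sH/\sH')\to G(\bQ_\sH/\bQ_{\sH'})\to 1. \]
Each element of $G_{\bQ_{\sH'}}$ acts on the inner components over $\sH'$ by permuting the $N_T/N^\br_1$-coset of braid orbits, so its restriction to the geometric fibre of $\sH/\sH'$ is realized by a coset of $G$ in $N_{S_n}(G,\bfC)$. This proves the inclusion $G(\sH/\sH')\hookrightarrow N_{S_n}(G,\bfC)/G$ in (b), while $\bQ_{\sH'}$-irreducibility of $\sH$ follows because $\bQ_\sH$ is by construction the smallest extension of $\bQ_{\sH'}$ stabilising $\sH$. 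Part (d) is the specialization of (c) to a point: the decomposition group of $\bp\in \sH$ above $\bp'\in \sH'(\bar\bQ)$ sits inside $G(\sH/\sH')$ and therefore inside $N_{S_n}(G,\bfC)/G$, and projects to $G(\bQ_\sH(\bp)/\bQ_{\sH'}(\bp'))$.

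For the closing sentence I would invoke \HIT\ for $U_r$: outside a thin set of $U_r(\bQ)$, at any specialization $\bp'$ the decomposition group achieves the full arithmetic Galois group $G(\sH/\sH')$. Taking the intersection of the resulting decomposition fields $\bQ_{\sH'}(\bp')$ (resp.\ $\bQ_\sH(\bp)$) over these $\bp'$ gives $\bQ_{\sH'}$ (resp.\ $\bQ_\sH$) by Galois theory, which is the claimed equality. The main obstacle is part (b): producing the embedding $G(\sH/\sH')\hookrightarrow N_{S_n}(G,\bfC)/G$ requires that each $\bQ_{\sH'}$-arithmetic automorphism of $\sH$ act on geometric fibres through a coset of $G$ in $N_{S_n}(G,\bfC)$. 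This is Weil's cocycle descent of Proposition \ref{weiluse}, now lifted from a single cover to the family over $\sH'$; the fibrewise calculation is already in Proposition \ref{inn-absmdf}, and the rigidity of fine absolute moduli spreads the identification to the total space.
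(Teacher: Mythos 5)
Your proposal follows the paper's argument very closely for parts (a)--(d): both identify the geometric covering group with the braidable cosets of $G$ in $N_{S_n}(G,\bfC)$ via the Nielsen-class/fiber-product construction, and both read off (b)--(d) from the Extension of Constants sequence \eqref{extconstants} applied to $\sH\to\sH'$. The substantive part of (a) in the paper is exactly the branch-cycle computation you are invoking through Theorem \ref{cosetbr}: above a given absolute Nielsen representative $\bg'$, the inner representatives form a coset $\{h\bg h^{-1}\}$ indexed by $N_{S_n}(G,\bfC)/G$, and two of them lie in the same braid orbit iff the conjugating $h$ is braidable. So nothing is missing there.

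The divergence is in the closing statement. First, you apply \HIT\ directly to the cover $\sH\to U_r$; the paper first performs a Bertini--Noether reduction (\cite[Prop.~10.4.2]{FrJ86}$_2$) to a one-dimensional situation over $\prP^1_x$ and then invokes \HIT\ for a curve cover, together with an explicit replacement of the base field $K$ by $L\cdot K$ to force disjointness from an arbitrary proper extension $L/\bQ_\sH$. Your version is morally the same -- \HIT\ for a cover of an open subvariety of $\prP^r$ holds and suffices -- but the disjointness-from-$L$ step should be made explicit rather than folded into ``by Galois theory,'' since that is precisely where the intersection drops all the way down to $\bQ_\sH$ and not just contains it.

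Second, and more importantly, your conclusion ``intersection of $\bQ_{\sH'}(\bp')$ is $\bQ_{\sH'}$ (resp.\ of $\bQ_\sH(\bp)$ is $\bQ_\sH$)'' does not match what the Corollary actually asserts: the statement claims \emph{both} intersections equal $\bQ_\sH$. That is a stronger claim for the $\bQ_{\sH'}(\bp')$ intersection, and it is not addressed in your argument (nor, for that matter, is it cleanly argued in the paper's own proof, whose Bertini--Noether + \HIT\ step only treats $\bQ_\sH(\bp)$). You should at minimum flag the mismatch: either the intersection of the absolute decomposition fields really does contain $\bQ_\sH$ by some further argument tying $\bp'$ to its inner lift -- an argument your proposal does not supply -- or the Corollary's statement has a slip and means $\bQ_{\sH'}$ there. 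As written, your proof establishes a statement genuinely different from the one claimed.
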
 

\begin{proof} Proof of \eql{extconstdef+}{extconstdef+a}: Select a base point, $\bz^*\in U_r$ and classical generators, $\sP_{\bz^*}$ based at $\bz^*$ (\S\ref{classgens}). Then, each $\bp'\in \sH'$ corresponds to an element $\bg'\in \ni(G,\bfC)^\abs$ lying below some $\bg\in \ni(G,\bfC)^\inn$. Here is the set of $\bg^*$  above $\bg'$: 
$$\{\bg^*=h\bg h^{-1}\mid h\in N'\le N_{S_n}(G,\bfC)/G \text{ with $\bg=(\bg^*)q$ for some $q\in H_r$}\}.$$  That is, $N'$ consists of those $h\in  N_{S_n}(G,\bfC)/G$ for which conjugating by $h$ is braidable.

Proof/explanation of \eql{extconstdef+}{extconstdef+b}  and \eql{extconstdef+}{extconstdef+c}: Suppose $\bar\sigma\in  G(\bQ_{\sH}/\bQ_{\sH'})$ is the image of $\sigma\in G_{\bQ_{\sH'}}$ and $\bp'\in \sH'$ corresponds to a cover in the absolute Nielsen class with $\bp\in \sH$ lying above it. Then, $\sigma$ extends to an action on $\bp$, and on the whole galois closure construction of \eqref{fiberGalclos}. The result is that $(\bp)^\sigma$ is a cover representing a point in $(\sH)^\sigma$ lying above $\bp'$, inducing the action of $\bar \sigma$ on $\bQ_{\sH}$. This gives the homomorphisms of \eql{extconstdef+}{extconstdef+b}  and \eql{extconstdef+}{extconstdef+c}. The statement of \eql{extconstdef+}{extconstdef+d} therefore interprets as saying a decomposition group is a subgroup of the Galois group of a cover. 

Finally,  consider the last statement of the Cor. From \eql{extconstdef+}{extconstdef+c}, every decomposition field contains $\bQ_{\sH}$. We want to show that for any proper field extension $L/\bQ_{\sH}$, there is a $\bp$ lying over a point of $U_r(\bQ)$ for which $\bQ(\sH)(\bp)$ is disjoint over $\bQ_{\sH}$ from $L$. From the the Bertini-Noether reduction \cite[Prop.~10.4.2]{FrJ86}${}_2$ we may reduce to  a dimension 1 version of the situation. Simplify notation and take $K'=\bQ_{\sH'}$ (resp.~$K=\bQ_{\sH}$). 

This gives a sequence of covers \begin{equation} W^*\longmapright {\phi_{W^*}} {20}  W \longmapright{\phi_W} {30} X\longmapright {\phi_X} {15} \prP^1_x\end{equation} with $\phi_X$ an absolutely irreducible cover defined over $K'$, $\phi_{W^*}$ an injection, and the composite $f=\phi_X\circ \phi_W\circ \phi_{W^*}: W^* \to \prP^1_z$ absolutely irreducible, Galois with group $G$, and defined over $K$. To finish, find $x'\in \prP^1_x(\bQ)$ such that for any $w^*\in W^*$ over $x'$, $K(w^*)$ is disjoint from $L/K$. Hilbert's irreducibility theorem says there are infinitely many such $x'\in \bQ$ with $[K(w^*): K]=\deg(f)$. To include the disjointness condition, replace $K$ with $L\cdot K$. Taking the intersection of these $K(w^*)$ fields over $\bQ_{\sH'}$ has the fields $\sQ_{\sH}$  as their  common subfield. 
\end{proof}

\begin{rem}[Applying Thm.~\ref{cosetbr}]  \label{appcosetbr} Assume Schur-Separation property \eqref{schursep} holds.  Apply the generators of $H_r$ ($\sh$ and $q_2$) to $\ni(G,\bfC)^\inn$ to compute the complete braid orbit $\sO_\bg$ of some $\bg\in \ni(G,\bfC)^\inn$ with a particular lift invariant $s_\bg$.   Check those $\alpha\in N_{S_n}(G,\bfC)$ that appear as $(\bg)\alpha\in \sO_\bg$, denoting this $K_\br$. The union over coset representatives, $(K:K_\br)$, of elements in $\sO_\bg$ give the braid orbits on $\ni(G,\bfC)^\inn$ that lie over the image of $\sO_\bg$ in $\ni(G,\bfC)^\abs$.  Now form the corresponding braid orbits running over a list of lift invariant representatives.  The result is a Nielsen class list of all absolute and inner components. \end{rem} 

\subsubsection{Fine inner, but not fine absolute, moduli} \label{fineinn-notfiineabs} Prop.~\ref{withoutfineabs}  states an extension of  Prop.~\ref{inn-absmdf} when $\sH(G,\bfC)^\inn$ has fine moduli ($G$ has no center) but $\sH(G,\bfC)^\abs$ may not.  
Showing the nature of $\bQ_{\sH'}$  (assume $\sH'$ is absolutely irreducible) when it is not given by \BCL\ Prop.~\ref{bcl} is our main goal. We don't give an explicit proof, but note that works similarly except using the stronger application of the Weil cocycle condition that is in \cite[p. 33-35]{Fr77}. 

\begin{prop} \label{withoutfineabs}  Assume $\sH(G,\bfC)^\inn$ has fine moduli, but $\sH(G,\bfC)^\abs$ may not. Also, $\sH'$ is a component of $\sH(G,\bfC)^\abs$ with $\sH\le \sH(G,\bfC)^\inn$ lying above it.  
\begin{edesc} \item Local construction of the fine absolute moduli space gives the construction of the unique fine inner total space, and therefore $\bQ_{\sH}$ for any component $\sH\le \sH(G,\bfC)^\inn$. 
\item Suppose a representing cover of $\bp'\in \sH'$ has definition field $F_{\bp'}$. Then a definition field of a representing cover for $\bp\in \sH$ lying above $\bp'$ is given by $F_\bp\cdot \bQ_{\sH}(\bp)$. 
\end{edesc} \end{prop}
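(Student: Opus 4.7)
The plan is to reduce to the local construction already worked out in \S\ref{withoutfinemod} and then use the fiber product Galois closure construction of \eqref{fiberGalclos} to lift it uniquely to the inner level. First, for any $\bp'_0\in\sH(G,\bfC)^\abs$, the dragging process \eqref{drag} produces an absolute family $\Phi^\abs_{D_{\bp'_0}}\colon \sT^\abs_{D_{\bp'_0}}\to D_{\bp'_0}\times\prP^1_z$ over a small neighborhood $D_{\bp'_0}$. Although two such local families need not agree on overlaps (the obstruction is precisely the non-triviality of $\Cen_{S_n}(G)/\Inn(G)$, which is what kills fine absolute moduli), each one is well-defined up to isomorphism. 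Applying the family version of \eqref{fiberGalclos}, exactly as in the proof of Prop.~\ref{inn-absmdf}, converts each $\Phi^\abs_{D_{\bp'_0}}$ into an inner family over the preimage in $\sH(G,\bfC)^\inn$, with each fiber a choice of Galois closure of the corresponding absolute fiber.

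Next, I would invoke fine inner moduli ($\Cen(G)=\{1\}$, Prop.~\ref{finemodabsinn}) to argue that these local inner families are canonically isomorphic on overlaps. The cocycle measuring the failure of two local constructions to agree takes values in the automorphism sheaf of the inner object, whose stalks are $\Cen(G)=\{1\}$; hence the cocycle is trivial and the local inner families patch uniquely to a global total family $\Psi_\inn\colon\sT^\inn\to\sH(G,\bfC)^\inn\times\prP^1_z$. Its restriction to a component $\sH\le\sH(G,\bfC)^\inn$ is defined over the field $\bQ_\sH$, because any $\sigma\in G_\bQ$ fixing $\sH$ (setwise) pulls the total family back to an inner family over $\sH$, and by uniqueness from fine inner moduli the pullback must coincide with the original. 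This proves the first assertion without needing $\sH(G,\bfC)^\abs$ to carry a global total family.

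For the second assertion, start with the given representative $\phi_{\bp'}\colon X_{\bp'}\to\prP^1_z$ defined over $F_{\bp'}$. Apply the individual-cover version of the fiber product Galois closure construction of \S\ref{fibGalClos} over $F_{\bp'}$: the $n$-fold fiber product, with the fat diagonal removed and normalized, breaks into components over $\overline{F_{\bp'}}$, and choosing the component corresponding to $\bp\in\sH$ (as in Lem.~\ref{compcosets}) picks out a Galois closure $\hat\phi_\bp\colon\hat X_\bp\to\prP^1_z$. By the Weil cocycle argument of Prop.~\ref{weiluse}, this component is defined over $F_{\bp'}$ enlarged by the constants needed to distinguish it from its $N_{S_n}(G,\bfC)/G$-conjugates; but those conjugate components correspond to the other points of $\sH(G,\bfC)^\inn$ above $\bp'$, so the required extension is precisely $\bQ_\sH(\bp)$ (using Cor.~\ref{corinn-absmdf} to identify which subextension of $N_{S_n}(G,\bfC)/G$ is generated). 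Composing with $F_{\bp'}$ gives the definition field $F_\bp\cdot\bQ_\sH(\bp)$ claimed.

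The main obstacle is the patching step in the first paragraph: making precise that the $H^1$-obstruction that prevented a canonical global absolute family is killed on passage to inner families. Concretely, one must check on overlaps $D_{\bp'_0}\cap D_{\bp'_1}$ that the two fiber-product Galois closure constructions yield canonically (not merely non-canonically) isomorphic inner families, which reduces to the observation that any choice of local absolute isomorphism differs from another by an element of $\Cen_{S_n}(G)$, and this element acts trivially on the set of Galois-closure components modulo $\Inn(G)$-equivalence (since the ambiguity is absorbed by the inner equivalence relation). Once this is in place, the rest follows by the Grothendieck descent style argument used in \cite[p.~33--35]{Fr77}.
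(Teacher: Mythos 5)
The paper gives no proof of Prop.~\ref{withoutfineabs}; it only remarks, just before the statement, that the argument \lq\lq works similarly [to Prop.~\ref{inn-absmdf}] except using the stronger application of the Weil cocycle condition\rq\rq\ in \cite[p.~33--35]{Fr77}. Your proposal supplies a sensible expansion of that sketch using exactly the indicated strategy: replace the nonexistent global absolute total family with the local ones of \eqref{tothomfam}, take their fiber-product Galois closures as in \eqref{fiberGalclos}, patch using fine inner moduli, and then invoke the Weil cocycle argument of Prop.~\ref{weiluse} -- so your route matches the paper's intent. Two points need tightening. First, the parenthetical claim that \lq\lq the obstruction is precisely the non-triviality of $\Cen_{S_n}(G)/\Inn(G)$\rq\rq\ misstates Prop.~\ref{finemodabsinn}: fine absolute moduli requires $\Cen_{S_n}(G)$ itself to be trivial, and $\Inn(G)$ is not contained in $\Cen_{S_n}(G)$, so the quotient is not well-formed; the correct contrast is $\Cen_{S_n}(G)$ for absolute versus $\Cen(G)$ for inner fine moduli. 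Second, in your final paragraph the assertion that $c\in\Cen_{S_n}(G)$ \lq\lq acts trivially on the set of Galois-closure components\rq\rq\ is not literally true: $c$ moves $\bar V_1$ of Lem.~\ref{compcosets} to $\bar V_c$, which is a genuinely different component of the fiber product whenever $c\notin G$. What actually saves the patching -- and what your earlier, better-phrased sentence about the automorphism sheaf already captures -- is that $\bar V_1$ and $\bar V_c$ determine the \emph{same} inner object, because conjugation by $c$ is the identity automorphism of $G$; and since $\Cen(G)=\{1\}$ by hypothesis, the isomorphism between them as inner objects is unique, so the gluing cocycle vanishes unambiguously. With that clarification the argument is correct and consistent with the paper's one-line indication of proof.
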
 

\section{Towers of Hurwitz spaces} \label{HSTowers} Abelian varieties of dim. $\geng>2$ form a higher-dimensional space than do projective non-singular curves. It is Jacobians of curves in Hurwitz families that we consider in generalizing Serres' \OIT. 

\begin{defn} \label{schottky} Describing the locus of curves on the space of Jacobians by equations about singularities of the $\theta$ divisor of the Jacobian was called {\sl the Schottky problem\/} \cite[\S IV]{Mu76}.\end{defn}  

Modular Towers (\MT s) takes a different approach, using decomposition groups in towers of Hurwitz families to detect special Jacobians and what they show about properties of curves. This section constructs these towers and definitions related to their decomposition groups, thereby connecting the unsolved problems of Serre's \OIT\ and related decomposition groups to Hilbert's Irreducibility Theorem (\HIT).

\subsection{$\ell$-Frattini covers} \label{bkliftinv} Refer to a finite group $G$ as $\ell$-perfect if
\begin{equation} \label{lperfect} \text{$\ell | |G|$, but $G$ has no $\bZ/\ell$ quotient.}\end{equation} 
 Lift invariant Def.~\ref{liftinv} suffices with $G$ that is $\ell$-perfect and $\ell'$ conjugacy classes $\bfC$. 

\begin{defn} \label{repcover} A {\sl representation cover\/} of $G$ is a {\sl central\/}, surjective, Frattini cover $\psi_R: R\to G$ for which $\ker(\psi_R)$ is the Schur multiplier, $\SM_G$, of $G$. 
Write the (finite) abelian group $\SM_G$ as a product of its $\ell$-primary parts, $\SM_G=\prod_\ell \SM_{G,\ell}$. For each $\ell$, there is the induced $\psi_{R,\ell}: R_\ell\to G$, an $\ell$-{\sl representation\/} cover. with $\ker(\psi_\ell)$ isomorphic to $H_2(G,\bZ_\ell)=\SM_{G,\ell}$. For $G$ that is $\ell$-perfect, $\psi_{R,\ell}$ is the unique, universal {\sl central\/} $\ell$-extension of $G$.  \end{defn}

\S\ref{l'lift} does the basics on the lift-invariant which comes from $\psi_{R,\ell}$. Then \S\ref{l'lift}  expands to general Frattini covers.  \S\ref{prodMTs} uses these  to  produce Modular Towers, \MT s, that generalize towers of modular curves. It reviews types of \MT s, especially abelianized \MT s,  $\MT_{\ab}$.  
 \begin{edesc} \label{abMT} \item \MT$_{\ab}$s require only one lift invariant check to ensure nonempty \MT\ levels. 
 \item \MT$_{\ab}$s support our investigations of extending \HIT\ and comparing decomposition groups in the tower using Jacobians. 
 \item  \S\ref{weilpairing}, our addition to Serre's case, is the abelianized case. S\ref{heiscase} also uses the abelianized case though both cases have full \MT s that map to the abelianized \MT s.\end{edesc}
 \S\ref{decgps}, inspired by Serre's case, introduces the two types of decomposition groups on a \MT\ for which we  have some precise understanding: {\sl \HIT\/} where the decomposition group is an open subgroup of maximal (equal to the decomposition group of the  \MT) and \CM\ (or \ST, Shimura-Taniyama) type, akin to most of the conjectures such as Andr\'e-Oort (which \cite{GhT23} called its main motivation). These expand on two famous David Hilbert contributions:  \begin{center} \HIT\ and the theory of {\sl complex multiplication\/} (\CM).  \end{center} The first case of $\CM$ is the explicit description of the abelian extensions of those quadratic extensions of $\bQ$ whose (archimedian) completions are $\bC$.  Asking questions about the decomposition groups of projective systems of points on a \MT\ is a direct analog of Serre's questions, about decomposition groups attached to curves in a Hurwitz family based on their Jacobian varieties.   

\subsubsection{The $\ell'$-lift invariant and Frattini covers}   \label{l'lift} We simplify the lift invariant by assuming $\bfC$ consists of $\ell'$ classes. Rem.~\ref{notlperf} shows how to drop the $\ell'$ condition on $\bfC$  and comments on the $\ell$-perfect condition. 
Kernels of Frattini covers are always pronilpotent (product of $\ell$-Sylows) \cite[\S25.6-25.7]{FrJ86}$_{4}$.\footnote{\cite[\S3.2]{Fr20} has an extensive discussion of how to use universal Frattini covers.} So we profitably consider the cases of $\ell$-Frattini covers: Frattini covers with the kernel an $\ell$-group. Then, there is always a universal $\ell$-Frattini cover, $\tilde \psi_\ell: \tilde G_\ell\to G$, that factors through any $\ell$-Frattini cover.  Finally, the abelianization of these covers is given by $\tilde \psi_{\ell,\ab}: \tilde G_{\ell, \ab}= \tilde G_\ell/(\ker(\tilde \psi_\ell,\tilde \psi_\ell)$ (modding out by commutators of the kernel). Possible (nontrivial) lift invariants arise when $G$ has a (nontrivial) central Frattini cover $\psi_R: R\to G$, as in our \S\ref{weilpairing}, \S\ref{absinnAn} and \S\ref{heiscase} examples.  

Def.~\ref{liftinv2} gives the formula (as in Def.~\ref{liftinv})  for the lift invariant in this case.  
From the $\ell'$ condition,  Schur-Zassenhaus allows interpreting $\bfC$ uniquely as classes, of same order elements, in $R_\ell$. The notation $\hat \bg \in \bfC\cap R_\ell$ as lying over $\bg\in \ni(G,\bfC)$ now makes sense. From the Frattini condition, $\lrang{\hat \bg}=R_\ell$, and from the central condition, $\hat g\in \C\cap H$ lying over $g\in \C$ is unique:  \begin{equation} \label{exliftinv} \hat g\hat g'\hat g^{-1}\text{ and $\widehat {gg'g^{-1}}$ have the same order 
and   lie over $gg'g^{-1}$. So they are equal.} \end{equation}  

\begin{defn}[Lift invariant]  \label{liftinv2}   For $O$ a braid orbit on $\ni(G,\bfC)^\inn$, and $\bg\in O$,  $$\text{the lift invariant is } s_\bg=s_{\psi}(O)\eqdef \prod_{i=1}^r \hat g_i.$$ Apply  braid generators $q$ (the shift or a twist) to $\bg$ and check that $(\bg)q$ has the same lift invariant. Therefore \eqref{exliftinv} is a braid invariant (as in \cite{Fr90}, \cite{Se90}, \cite{FrV91}, \cite{Fr10}). For $O'$ the braid orbit below it on $\ni(G,\bfC)^\abs$, and $\bg'\in \sO'$ below $\bg$, the  braid invariant is the $N_K$ orbit, $S_{\sO'}$ on $s_{\bg}$.
\end{defn} 

Lem.~\ref{liftinvlem} says components with different lift invariants have different moduli properties. As usual $\dagger$ signifies inner or absolute equivalence. 
\begin{lem} \label{liftinvlem} Suppose $\phi_i: X_i\to \prP^1_z$, $i=1,2$,  are absolute covers for which  $S_{\phi_1}$ contains a lift invariant $\lambda_1$ not in $S_{\phi_2}$. With $R\to G$ the representation cover defining the invariants, take $\C_{-\lambda_1}$ the conjugacy class of $R$ defined by the element $-\lambda_1$. Then, the deformation class of $\phi_1$ is the image of a cover in $\ni(R,\bfC\cup \C_{-\lambda_1})$ while that of $\phi_2$ is not.  \end{lem}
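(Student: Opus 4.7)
\medskip

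\noindent\textbf{Proof proposal.} The plan is to exhibit an explicit element of $\ni(R,\bfC\cup\C_{-\lambda_1})$ whose image under $\psi_R$ represents $\phi_1$, and then to observe that the very same construction, run backwards, would force the lift invariant of $\phi_2$ to equal $\lambda_1$, contradicting the hypothesis.

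First I would pick a representative $\bg=(g_1,\dots,g_r)\in\ni(G,\bfC)^\inn$ of an inner cover lying above $\phi_1$ whose lift invariant is exactly $\lambda_1$ (such a $\bg$ exists since $\lambda_1\in S_{\phi_1}$). Using the $\ell'$ hypothesis on $\bfC$ and Schur--Zassenhaus (as in \eql{easySchur}{easySchurb} and \eqref{exliftinv}), each $g_i$ has a canonical same-order lift $\hat g_i\in\bfC\cap R$, and by Def.~\ref{liftinv2} we have $\prod_{i=1}^r\hat g_i=\lambda_1\in\ker(\psi_R)\subset Z(R)$. Now consider
\begin{equation*}
\hat\bg^{+}\eqdef (\hat g_1,\dots,\hat g_r,-\lambda_1)\in R^{\,r+1}.
\end{equation*}
The product-one condition holds tautologically since $-\lambda_1$ is central and cancels $\lambda_1$. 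The generation condition holds because $\lrang{\hat g_1,\dots,\hat g_r}$ already surjects onto $G$ and $\psi_R$ is Frattini, so $\lrang{\hat g_1,\dots,\hat g_r}=R$ and appending $-\lambda_1$ cannot hurt. Since $-\lambda_1$ is central, its conjugacy class $\C_{-\lambda_1}$ is the singleton $\{-\lambda_1\}$, so the class datum is exactly $\bfC\cup\C_{-\lambda_1}$. Hence $\hat\bg^{+}\in\ni(R,\bfC\cup\C_{-\lambda_1})^\inn$, and it determines a Galois cover $\hat\phi\colon\hat X\to\prP^1_z$ with group $R$.

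Next I would verify that $\phi_1$ is recovered from $\hat\phi$ under $\psi_R$. Applying $\psi_R$ entrywise sends $\hat\bg^{+}$ to $(\bg,1_G)$. A branch cycle equal to $1_G$ means the corresponding point of $\prP^1_z$ is unramified in the $G$-cover obtained by quotienting $\hat\phi$ by $\ker(\psi_R)$; dropping it from the list of branch points leaves the $r$-tuple $\bg$. Thus the deformation class of $\phi_1$ is the image under $\psi_R$ of the deformation class of $\hat\phi$, which is the required ``image in $\ni(R,\bfC\cup\C_{-\lambda_1})$'' statement. (The deformation statement then follows from Lem.~\ref{innerbraiding}, since braiding $\hat\bg^{+}$ induces braiding on $\bg$ through $\psi_R$.)

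For the second half of the claim I would argue contrapositively. Suppose there existed $\hat\bh^{+}=(\hat h_1,\dots,\hat h_r,c)\in\ni(R,\bfC\cup\C_{-\lambda_1})^\inn$ projecting under $\psi_R$ to branch cycles of an inner cover above $\phi_2$. Since $\C_{-\lambda_1}=\{-\lambda_1\}$ we must have $c=-\lambda_1$, and product-one forces $\prod_{i=1}^r\hat h_i=\lambda_1$. But $\hat h_i$ is the unique $\bfC$-lift of $h_i$ (again by Schur--Zassenhaus in the $\ell'$ setting), so this product is exactly the lift invariant of $(\row h r)\in\ni(G,\bfC)^\inn$. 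That inner cover lies above $\phi_2$, so $\lambda_1\in S_{\phi_2}$, contradicting the hypothesis. The main thing to watch carefully is that $\C_{-\lambda_1}$ really is a single element (central in $R$) so that no ambiguity remains when reading off the lift invariant from the $(r+1)$-tuple; once that is noted, both halves are essentially bookkeeping on Def.~\ref{liftinv2}.
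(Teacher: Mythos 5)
Your proof is correct and follows the paper's route: the key step in both is the formation of the augmented $(r\np1)$-tuple $(\hat g_1,\dots,\hat g_r,-\lambda_1)$, verifying product-one (since $\prod\hat g_i=\lambda_1$ cancels the appended central $-\lambda_1$) and generation (automatic from the Frattini property of $R\to G$), and then observing that the last branch cycle dies in $G$ so the $R$-cover factors through a cover representing $\phi_1$'s deformation class. For the negative claim about $\phi_2$, you argue the direct contrapositive -- any $(\hat h_1,\dots,\hat h_r,c)\in\ni(R,\bfC\cup\C_{-\lambda_1})$ forces $c=-\lambda_1$ (centrality) and hence $\prod\hat h_i=\lambda_1$, which would put $\lambda_1\in S_{\phi_2}$ -- whereas the paper phrases this more tersely through the remark that $\phi_2$ being homeomorphic to $\phi_1$ would put $\lambda_1$ in the $N_K$-orbit for $\phi_2$, contradicting that the lift invariant is a braid/deformation invariant. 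Your version of the negative half is more explicit about unwinding the definition of $S_{\phi_2}$ and arguably tracks the lemma's literal statement (``not the image of a cover in $\ni(R,\bfC\cup\C_{-\lambda_1})$'') more closely; the content is the same.
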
 

\begin{proof} Assume $\bg_1\in \ni(G,\bfC)^\inn$ has lift invariant $\lambda_1$, corresponding to $\phi_i\eqdef \phi_{\bg_1}$ lying above $\bg'_1\in \ni(G,\bfC)^\abs$.  Then, the lift, $\tilde \bg_1\in \bfC\cap R$, gives an element $(\tilde \bg_1, -\lambda_1)\in \ni(R,\bfC\cup C_{-\lambda_1})$. The map $\ni(R,\bfC\cup C_{-\lambda_1})\to \ni(G,\bfC)$ induced from $R\to G$ interprets at the Hurwitz space level -- from Riemann's existence Theorem -- as giving a cover $\phi_{(\tilde \bg_1,-\lambda_1)}$ of $\prP^1_z$ that factors through $\phi_{\bg}$. 

Now suppose $\phi_2$, a cover corresponding to $\bg'_2$ is homeomorphic to $\phi_1$. Then, its lift invariant is in the $N_K$ orbit $\lambda_1$, contradicting that the lift invariant is a braid (deformation) invariant. 
\end{proof}

We use $\ell$ (corresponding to $\ell$-adic representations as in \cite{Se68}) instead of $p$ for the main prime that appears in related papers.   From Def.~\ref{liftinv2}, any quotient of $\SM_G$ (Def.~\ref{repcover} --   or as generalized in Rem.~\ref{notlperf} if $(\ell,\SM_G)\not =1$) defines a lift invariant for a braid orbit on a Nielsen class $\ni(G,\bfC)$.  The full separation of Schur components may require the whole central extension, but proper quotients can give important information. Denote by $\psi_{1,0}: G_1\to G=G_0$  the maximal $\ell$-Frattini cover of $G$ with elementary $\ell$ group kernel, $M_1=\ker(\psi_{1,0})$. \cite[\S II.B]{Fr95}.\footnote{For the pure group theory see \cite{EFr80}, or any edition of \cite{FrJ86}, e.g.  \cite[\S 25.6--25.8]{FrJ86}$_4$.}
 
\begin{edesc} \label{explainmuk}  \item \label{explainmuka}  Denote the level $k\np1$ cover by  $\psi_{k,k\np1}: G_{k\np1}=G_1(G_k)\to G_k$.  The projective limit of these covers is $\tG \ell$, the universal $\ell$-Frattini cover.
\item \label{explainmukb} Denote the universal exponent $\ell$ central extension of
$G_k$  by $\mu_{k,\ell}: R_{k,\ell}^*\to G_k$ (Rem.~\ref{lum}).  
\item \label{explainmukc}  Since $\mu_{k,\ell}$ is an $\ell$-Frattini cover, $G_{k\np 1}\to G_k$ factors through $\mu_{k,\ell}$; and  
\item  $\ker(R_{k,\ell}^*\to G_k)$ is the max.~elementary $\ell$-quotient of the Schur multiplier of $G_k$.
\end{edesc} 

In contrast to the mysterious $N_K$ action on lift invariants given in Def.~\ref{liftinv2}, the first paragraph of Cor.~\ref{compmult2} gives a direct action of $N_T/G$ on the Schur multiplier. This can help describe the $N_K$ orbits of Def.~\ref{liftinv2}. Again, $G$ is $\ell$-perfect and $\hat \psi_\ell: \hat R_\ell\to G$ is the $\ell$-representation cover.  

\begin{cor} \label{compmult2}  An $\ell'$ subgroup $H\le N_K$ acts faithfully on the $\tilde G_{\ell}$, thereby producing the universal $\ell$ Frattini cover $\tilde G_\ell\xs H$ of $G\xs H$. This induces an action on $\tilde \psi_{\ell,\ab}$ and on $\ker(\mu_{\ell,k})$ in \eql{explainmuk}{explainmukb} extending its action on $G$ giving the desired $H$ action on the  lift invariant $N_K$ orbits on $\SM_{\ell}$. 

Suppose  $\alpha\in H$ and $s_\sO$ is the lift invariant of a component $\sH\le \sH(G,\bfC)^\inn$ over the component $\sH'\le \sH(G,\bfC)^\abs$. If $(s_\sO)\alpha\not= s_{\sO}$, then $\alpha$ applied to $\sH$ is a component over $\sH'$ distinct from $\sH$. 

Now suppose $\ker(\hat \psi_\ell)=\bZ/\ell^u$, with $\zeta=e^{2\pi i/\ell^u}$. Denote the moduli definition field of $\sH'$ by $K_{\sH'}$ and assume $\alpha^*\in G(K_\sH'(\zeta)/K_{\sH'})$. Then, $\alpha^*$ applied to the equations for $\sH$ gives a component $\sH^{\alpha^*}$ over $\sH'$ with lift invariant $s_{\sO}^{\alpha^*}$. \end{cor}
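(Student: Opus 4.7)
The proof breaks into three modules corresponding to the three assertions of the corollary; each reduces to functoriality plus one small verification.

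\textbf{Module 1 (building the action on $\tilde G_\ell$ and on $\SM_\ell$).} First I would use the universal property of $\tilde\psi_\ell\colon \tilde G_\ell\to G$: any automorphism $\alpha$ of $G$ lifts to $\tilde G_\ell$, and because $\tilde\psi_\ell$ is Frattini the lift is unique (two lifts would differ by an automorphism of $\tilde G_\ell$ trivial on $G$, whose graph restricted to $\tilde G_\ell$ would give a proper subgroup projecting onto $G$, contradicting Frattini). Faithfulness is automatic since $H$ already acts faithfully on $G$. To form $\tilde G_\ell\xs H$ and identify it as the universal $\ell$-Frattini cover of $G\xs H$: the projection has an $\ell$-group kernel, it is Frattini (a proper subgroup $L$ projecting onto $G\xs H$ maps onto $H$, hence contains an $\ell'$-complement $H'$ of $\ker(\tilde\psi_\ell)\cdot H$ by Schur--Zassenhaus; then $L\cap \tilde G_\ell$ projects onto $G$, forcing $L\cap\tilde G_\ell=\tilde G_\ell$), and universality follows by pulling any $\ell$-Frattini cover of $G\xs H$ back along $G\hookrightarrow G\xs H$ and invoking the universal property on $G$. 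Since $\hat\psi_\ell$ and $\mu_{\ell,k}$ are characteristic quotients of $\tilde G_\ell$ (maximal central $\ell$-quotients), the $H$-action descends, producing the desired $H$-action on $\SM_\ell$ compatible with the $N_K$ action on orbits used to define the lift invariant.

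\textbf{Module 2 (the component assertion).} Here I would invoke Thm.~\ref{cosetbr} together with the braid-invariance of $s_\sO$ recorded in Def.~\ref{liftinv2}. Since $\alpha\in H\le N_K$ acts on $\ni(G,\bfC)^\inn$ commuting with the braid action (Lem.~\ref{innerbraiding}\eql{bract}{bractb}), it permutes braid orbits and hence Hurwitz space components. An $\alpha$ that is braidable on $\sO$ (so that $\sH^\alpha=\sH$) sends $\bg\in\sO$ into $\sO$, which by definition of the lift invariant applied to a single representative gives $(s_\sO)\alpha=s_\sO$. Contrapositively, $(s_\sO)\alpha\neq s_\sO$ forces $\alpha$ non-braidable on $\sO$, so $\sH^\alpha\neq\sH$; both lie over $\sH'$ because $\alpha$ acts trivially on the absolute orbit under $\sH'$.

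\textbf{Module 3 (the Galois-theoretic statement).} This is where the cyclotomic character enters and is the main obstacle. Fix the isomorphism $\ker(\hat\psi_\ell)\cong \bZ/\ell^u$ and the identification with $\mu_{\ell^u}$ via $1\mapsto \zeta$. I would apply the extension-of-constants diagram \eqref{extconstants} and Prop.~\ref{inn-absmdf} / Cor.~\ref{corinn-absmdf} to the family over $\sH'$: for $\bp\in\sH$ over $\bp'\in\sH'$, the fibre of $\sT_\sH$ is a Galois closure of $X_{\bp'}\to\prP^1_z$ with group $G$, and $\alpha^*$ acts on this Galois-closure construction through $N_{S_n}(G,\bfC)/G$. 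The key computation is to verify that the induced action of $\alpha^*$ on the $\ell$-representation cover $\hat R_\ell\to G$ realizes the $n_{\alpha^*}$-power map on $\ker(\hat\psi_\ell)=\mu_{\ell^u}$, where $n_{\alpha^*}$ is the cyclotomic character value; this is a \BCL-style statement (Prop.~\ref{bcl}) saying that Galois conjugation of branch cycles raises them to the $n_{\alpha^*}$-power, and the lifts $\tilde g_i\in \bfC\cap \hat R_\ell$ are then raised to the same power under the central lift, so their product $s_\sO=\prod\tilde g_i$ is raised to $n_{\alpha^*}$. Applying $\alpha^*$ to the defining equations of $\sH$ then produces a component $\sH^{\alpha^*}$ over $\sH'$ whose lift invariant is the image of $s_\sO$ under the action of $\alpha^*$ on $\mu_{\ell^u}$, as claimed.

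\textbf{Main obstacle.} Module 3 is the delicate one: the identification of the Galois action on the cyclotomic coefficients of the total space with the $\alpha$-action of Module 1 on $\ker(\hat\psi_\ell)$. What has to be checked is that these two a priori unrelated actions are the same, which amounts to tracking the \BCL\ through the fibre-product Galois closure construction of \eqref{fiberGalclos} at the level of the representation cover, rather than at the level of $G$ itself. Everything else is essentially a bookkeeping exercise in functoriality of universal $\ell$-Frattini covers and the braid invariance of the lift invariant.
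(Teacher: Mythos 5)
Your three-module decomposition mirrors the organization of the corollary, and Modules 1 and 2 are essentially the paper's argument: for the first sentence the paper simply cites \cite{FrJ86} (4th ed., Prop.~25.13.2) or \cite{Fr95} and then deduces, from $\ell$-perfectness and uniqueness of the $\ell$-representation cover, that $\alpha\hat\psi_\ell\alpha^{-1}$ coincides with $\hat\psi_\ell$ with $\alpha$ acting on the kernel; and Module~2 is exactly the braid-invariance argument the paper gives. One caution on Module~1: your claimed uniqueness of the lift of $\alpha$ (``two lifts differ by an automorphism of $\tilde G_\ell$ trivial on $G$, whose graph gives a proper subgroup projecting onto $G$'') is not correct as stated --- a lift of the identity of $G$ to $\tilde G_\ell$ need not be the identity, and the fixed subgroup of such an automorphism need not project onto $G$. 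The honest statement in the references is that the action of an $\ell'$ group lifts and is unique up to conjugation by $\ker(\tilde\psi_\ell)$, which is still enough to induce a well-defined action on the abelianization and on $\SM_{G,\ell}$.

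Module~3 is where you genuinely diverge from the paper, and where your sketch has a gap. You propose tracking $\alpha^*$ directly through the fiber-product Galois closure \eqref{fiberGalclos} at the level of $\hat R_\ell$ and arguing that Galois conjugation raises the central lifts $\tilde g_i$ to the $n_{\alpha^*}$-power, hence raises the product $s_\sO$ to the same power. But the \BCL\ controls the effect of $\sigma\in G_\bQ$ on conjugacy \emph{classes}, not on individual branch-cycle entries, so ``$\tilde g_i\mapsto\tilde g_i^{n_{\alpha^*}}$'' is not what the \BCL\ gives you; one needs an extra argument to pass from the class-level statement to the element-level statement about the product. The paper routes around this by invoking Lem.~\ref{liftinvlem}: it realizes $s_\sO$ as the extra conjugacy class $\C_{-s_\sO}$ in the auxiliary Nielsen class $\ni(\hat R_\ell,\bfC\cup\C_{-s_\sO})$ and then applies the extended \BCL\ (Prop.~\ref{bcl}) to \emph{that} Nielsen class, where the cyclotomic action on the extra class is the standard \BCL\ computation; because $\ker(\hat\psi_\ell)$ is central and cyclic, the class-level statement already determines the element, and $s_\sO^{\alpha^*}$ falls out cleanly. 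Your route buys conceptual transparency about where the cyclotomic character enters, but the identification you flag as your ``main obstacle'' is precisely what Lem.~\ref{liftinvlem} packages up; without it your Module~3 does not close.
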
  

\begin{proof} \cite[Prop.~25.13.2]{FrJ86}$_4$ or \cite[p.~134]{Fr95} has the first sentence of the corollary.\footnote{While this can be made primitive recursive, especially in applying it to the abelianized $\ell$-Frattini quotient, it requires ingenuity to compute this. \cite[Part B]{Fr95} can be helpful.}  Since $\ker(\hat \psi_\ell)$ is a finite group, for some $k$, $\tilde \psi_{k,\ab}$ factors through it, Conjugating by $ \alpha$ acts on  $\tilde G_{\ell,\ab}\to G$. This induces  the Frattini quotient $\alpha \hat R_\ell \alpha^{-1} \to G$. For $\hat g$ a lift of $g\in G$ to $\hat R_\ell$,  $\alpha \hat g \alpha^{-1}$ is a lift of $\alpha g\alpha^{-1}=g'\in G$. Therefore, $\alpha \hat \psi_\ell\alpha^{-1}$ is also a representation cover. From uniqueness  of the $\ell$-representation cover, this is $\hat R_\ell \to G$; with $\alpha$ acting on the kernel. 

Now consider the lift invariant $s_{\sO}$ in the second paragraph, moved by $\alpha$. The orbit $\alpha\sO\alpha^{-1}$ will have lift invariant given by the action of $\alpha$ on $s_{\sO}$. As the lift invariant is a braid invariant, these two orbits must be distinct. 

The argument of the proof of Prop.~\ref{bcl} applied to covers in the Nielsen class $\ni(\hat R_\ell, \bfC\cup C_{- s_{\sO}})$ (Lem.~\ref{liftinvlem}) gives the 3rd paragraph statement. As stated in a footnote, the notation of \cite{Fr77} needs enhancement so that it applies to the more advanced notation of this paper. 
\end{proof} 

\begin{rem} \label{ApptoGlk} The 1st and 3rd paragraphs of Cor.~\ref{compmult2} produce components by lift invariants above an absolute component $\sH'$ by different processes. The components in the 3rd paragraph must arise by conjugating by an element in $N_K$, but not necessarily by an $\ell'$ element. There are  unanswered questions here, especially if the Schur multiplier is not cyclic, as in the 3rd paragraph. Then, the \BCL\ now gives its moduli definition field, a cyclotomic extension of $K_{\sH'}$. \end{rem} 

 \subsubsection{Production of \MT s} \label{prodMTs}  
 We produce the inner Hurwitz space components for formulating generalizations of Serre's \OIT. Specifically, abelianized \MT s with  inner, $\PSL_2(\bC)$  reduced,  spaces.
 
 Denote the pro-$\ell$ completion of the fundamental group of the (compact) Riemann surface $X_\bg$ by $\pi_1(X_\bg)^{(\ell)}$. \cite[Prop.~4.15]{BFr02} produces $\sM_{\bar \bg}$, as fitting in this short exact sequence  \begin{equation} \label{psig} \ \pi_1(X_\bg)^{(\ell)}\to \sM_{\bar \bg} \longmapright{\psi_{\bg,\tilde \bg}} {30}G=G(\hat X_\bg/\prP^1_z)\end{equation} with $\bar \bg$ associated to classical generators, as in \S\ref{classgens}, mapping to $\bg$. Then, mod out by the commutators of $\ker(\sM_{\bar \bg}\to G)$ to get   $\sM_{\bar \bg,\ab}$ with $\ker(\sM_{\bar \bg,\ab}\to G)$ the profinite $\bZ_\ell$ homology of $X_\bg$. 
Extending  $\psi_{\bar\bg,\bg}: \sM_{\bar\bg}\to G$ to  ${\tG \ell}_{\ab}$ is
equivalent to extending $\sM_{\bar \bg,\ab}\to G$ to  ${\tG \ell}_{\ab}$.  

 \begin{defn}[\MT] \label{def-MT} A projective system of (nonempty) $H_r$
orbits $\pmb \sO\eqdef \{\sO_k\le \ni(G_k,\bfC)^\inn\}_{k=0}^\infty$   is a   M(odular) T(ower), with its corresponding spaces by ${\pmb \sH}={\pmb \sH}_{\pmb \sO}=\{\sH_k\}_{k \ge 0}$ -- a \MT\ on (starting at) $\ni(G,\bfC)^\inn$. Denote $\ker(G_k\to G_0=G)$ by $\ker_{k,0}$.  

The $k$th level Nielsen class for an {\sl abelianized\/} \MT\  (\MT$_\ab$) replaces $G_k$ with   
$$G_k/(\ker_{k,0},\ker_{k,0})=G_{k,\ab} \text{ \cite[Prop.~4.16]{BFr02}.}$$ Similarly:  ${\pmb \sO}_\ab =\{\sO_{k,\ab}\}_{k\ge 0}$ and ${\pmb \sH}_{{\pmb \sO}_\ab}=\{\sH_{k,\ab}\}_{k\ge 0}$ for the spaces of the corresponding abelianization. \end{defn}

\begin{defn} \label{bHthusHk} For a given value of $k$ in Def.~\ref{def-MT}, we say $\pmb \sH$ {\sl goes through\/} $\sH_k\leftrightarrow$ braid orbit $\sO_k$. Similarly, for the abelianization version. Refer to $\sO_k$ as {\sl obstructed\/} if there is no $\bg_{k\np1}\in \ni(G_{k\np1},\bfC)$ above $\bg_k$. In particular, there is no \MT\ through $\sH_k$. \end{defn}

 The limit group, $\sM_{\bar \bg,\ab}$ is an extension $\sL_{\pmb \bg}\to \sM_{\pmb \bg}\to G$, with kernel  a $\bZ_{\ell}[G]$ lattice with charactistic quotients $\sM_{\bar \bg,\ab}/\ell^{k\np1}\sM_{\bar \bg,\ab} \to \sM_{\bar \bg,\ab}/\ell^{k}\sM_{\bar \bg,\ab}=M_1$, the charactistic $\ell$-Frattini module.     

\begin{defn}[\MT\ quotient] \label{def-MTq}  A quotient of an abelianized \MT\ has an associated $\bZ_\ell[G]$ lattice tail $\sL^* =\ker(\sM^*\to G)$. Then, the $\bZ/\ell[G] $ quotients $\sM^*/\ell^{k\np1}\sM^* \to \sM^*/\ell^{k}\sM^*$ (the kernel is again $M^*$) is a $\bZ/\ell[G]$ quotient of $M_1$ (independent of $k$).   \end{defn}  

Our \S\ref{examples} examples use \MT$_\ab$s. We will tend to drop the $\ab$ subscript.  For \MT s and abelianized \MT s, we also have reduced versions with their components covering (respectively) $U_r=\prP^r\setminus \Delta_r$ and $U_r/\SL_2(\bC)$.  \S\ref{weilpairing}  and  \S \ref{heiscase}, as listed in \eqref{mtcases}, use \MT\ quotients. 

\begin{exmpl}[What $M^*\,$s work in Def.~\ref{def-MTq}?]  We won't know for certain, but suppose $\one_G$ is a $\bZ/\ell[G]$ quotient of $M_1$. If this served as an $M^*$, then the corresponding quotient, $\sM_{\bar \bg}^*$, would give an infinite tail on a Schur multiplier quotient for $G$. That is an impossibility.  \end{exmpl}

Princ.~\ref{liftinvprinc} gives the condition for the existence of a \MT, guaranteeing, under a lift invariant condition, that we have nontrivial Nielsen classes. 

Denote the projective limit of all $G_{k,\ab}\,$s by
$\tG \ell/(\ker_0,\ker_0)={\tG \ell}_{\ab}$. Though $G_{1,\ab}=G_{1}$, for $k>1$ the natural map $G_k\to G_{k,\ab}$ has (known) degree  
1 if and only if 
\begin{center} $\dim_{\bZ/p}\ker(G_1\to G)= 1$ $\Leftrightarrow$  $G_0$ is $\ell$ super-solvable \cite[\S5.7]{BFr02}. \end{center}

Prop.~\ref{existProp} addresses, for a component $\sH$ of $\sH(G_k,\bfC)$, when it obstructs a  \MT. Allude to statements on \MT s interchangeably by reference to braid orbits (always assumed nonempty) or spaces. The more elementary parts of Prop.~\ref{existProp} are on subquotients of $M_{k\np1}=\ker(G_{k\np1}\to G_k)$ in which the irreducibles consist only of the trivial module, $\one_{G_k}=\one_G$ (Rem.~\ref{1Gs}). 

\begin{defn}[Loewy Path]  \label{irrpath} A {\sl Loewy path\/} through the indecomposable  module $M_{k\np1}$ consists of a string of irreducible $G_k$ modules $\bar M_u\to \bar M_{u\nm1}\to \cdots \bar M_1$ with $\bar M_i$ in Lowy layer $i$, where $\bar M_{i\np1}\to \bar M_i$ denotes an indecomposable $G_k$ subquotient of $M_{k\np1}$. See Ex.~\ref{minloewy}.  \cite[Lem.~2.4]{FrK97}. \end{defn} 

\begin{exmpl}[Loewy Layer] \label{minloewy}  In Def.~\ref{irrpath} the symbol $\bar M_{i\np1}\to \bar M_i$ for an indecomposable $G_k$ module $M$ means $\bar M_i$ is a quotient, and $\bar M_{i\np1}$ is the kernel of $M\to \bar M_i$. The case where $\bar M_{i\np1}$ and $\bar M_{i}$ are the trivial module is given by the small Heisenberg group \eqref{smheis}. \end{exmpl}

In \eql{steppingup}{steppingupa}, Prop.~\ref{existProp} explains existence of a \MT\ using elements of Nielsen classes.  \eql{steppingup}{steppingupd} gives a general criterion for existence of a \MT\  over a given braid orbit $\sO_k\le \ni(G_k,\bfC)^\inn$ under special circumstances. These include that the orbit contains an \HM\ rep. \eql{steppingup}{steppingupc} gives a pure lift invariant criterion for an abelianized \MT\ over $\sO_k$. The territory between them is spanned by the if and only if lift invariant criterion  \eql{steppingup}{steppingupb} for $\ni(G_{k\np 1},\bfC)$ having an orbit above $\sO_k$. 

\begin{prop} \label{existProp} If $G$ has $\ell'$ center,   then so does  $G_k$, and since $G$ is $\ell$-perfect, so is $G_k$,  $k\ge 1$. 

\begin{edesc} \label{steppingup} \item \label{steppingupa}  There is a \MT\ on a braid orbit $\sO_k\subset \ni(G_k,\bfC)$ if and only if the preimage of $\sO_k$ in $\ni(G_{k\np t}, \bfC)$ is nonempty for all $t\ge 0$. 
\item \label{steppingupb}  A  braid orbit $\sO_k \subset
\ni(G_{k},\bfC)$ is obstructed (Def.~\ref{bHthusHk})  if and only if it is not in the image of $\ni(R_{k,\ell}^*,\bfC)$, with $R_{k,\ell}^*$ the universal central extension of \eql{explainmuk}{explainmukb}. 
\item \label{steppingupc}  There is an abelianized \MT\ on a braid orbit $\sO_{k,\ab}$ of $\ni(G_{k,\ell,\ab},\bfC)$ if and only if $\sO_{k,\ell,\ab}$ has trivial lift invariant computed  from $R_{k,\ell}^*\to G_k$. 
\item \label{steppingupd}  There is a \MT\ on a braid orbit $\sO$ containing $\bg=(\bh_1,\dots, \bh_u)$ with \begin{itemize} \item $\bh_i$ satisfying product-one and  $\lrang{\bh_i}=H_i$ is an $\ell'$ group, $1\le i\le u$. 
\item The \HM\ case has  $H_i$  a cyclic $\ell'$ group.\end{itemize}  \end{edesc} In \eql{steppingup}{steppingupc}  and  \eql{steppingup}{steppingupd} there may be more than one branch (\MT\ braid orbit).
\end{prop}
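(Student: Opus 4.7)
Plan: The preservation of $\ell'$ center and $\ell$-perfectness for $G_k$ is a standard Frattini argument. If $\varphi\colon G_k\twoheadrightarrow\bZ/\ell$ exists, it either kills $K=\ker(G_k\to G)$ and descends to $G$ (contradicting \eqref{lperfect}), or $G_k=K\cdot\ker\varphi$; but $K\subseteq\Phi(G_k)$ (Frattini property) forces $\ker\varphi=G_k$, a contradiction. Similarly, an $\ell$-element of $Z(G_k)$ must map to $1$ in $G$ (else its image is a central $\ell$-element of $G$), hence lies in $K$, and induction up the tower finishes the claim. Part \eql{steppingup}{steppingupa} is then essentially tautological from Def.~\ref{def-MT}: because the braid action commutes with the quotient map $G_{k\np t\np1}\to G_{k\np t}$, braid orbits at higher levels project onto single braid orbits below, and the preimages of $\sO_k$ form a projective system of finite nonempty sets. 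An inverse-limit-of-nonempty-finite-sets argument produces a compatible sequence of orbits --- i.e., a \MT --- iff every finite preimage is nonempty.

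Parts \eql{steppingup}{steppingupb} and \eql{steppingup}{steppingupc} form the heart. Factor $\psi_{k,k\np1}\colon G_{k\np1}\to G_k$ through the universal central $\ell$-extension $\mu_{k,\ell}\colon R_{k,\ell}^*\to G_k$ of \eql{explainmuk}{explainmukb}; the maximality of $R_{k,\ell}^*$ ensures the kernel of $G_{k\np1}\to R_{k,\ell}^*$ is a $G_k$-module whose Loewy layers (Def.~\ref{irrpath}) contain no trivial $\one_{G_k}$ subquotient. The Fried--Kopeliovich obstruction lemma (the substantive input, extracted from the $M_1$-module analysis of \cite{FrK97}) asserts: a non-central Frattini $\ell$-extension whose kernel has no $\one_{G_k}$ subquotient imposes no obstruction on lifting Nielsen class elements, since the relevant cohomological obstruction is measured layer-by-layer and vanishes in each non-trivial layer. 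Consequently every obstruction to lifting $\sO_k$ into $\ni(G_{k\np1},\bfC)$ is already an obstruction to lifting into $\ni(R_{k,\ell}^*,\bfC)$; and by centrality of the remaining kernel together with Def.~\ref{liftinv2}, this is exactly the vanishing of the lift invariant $s_{\bg}$. This gives \eql{steppingup}{steppingupb}. For \eql{steppingup}{steppingupc}, iterate: each abelianized step $G_{k\np1,\ab}\to G_{k,\ab}$ has abelian central $\ell$-Frattini kernel, so the same criterion propagates level-by-level along the abelianized tower.

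Part \eql{steppingup}{steppingupd} is then a computation: each block $\bh_i$ satisfies product-one in an $\ell'$-group $H_i$, and the $\ell'$ hypothesis combined with Schur--Zassenhaus (as invoked around Def.~\ref{liftinv2}) canonically lifts $H_i$ inside $R_{k,\ell}^*$, so block product-one transports to the lift, and multiplying over $i$ kills the total lift invariant; \eql{steppingup}{steppingupc} then yields the \MT. The \HM\ case is the special case of two-element cyclic blocks. The hard part will be the cohomological vanishing in \eql{steppingup}{steppingupb}: the precise layer-by-layer statement that only central-trivial Frattini kernels can obstruct Nielsen-class lifts. This is the technical content behind the crisp statement of (b), and it rests on the detailed Loewy/$M_1$-module analysis of \cite{FrK97}; once it is in hand, the remaining parts follow without difficulty.
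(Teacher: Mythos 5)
Your opening paragraph and part \eql{steppingup}{steppingupa} are fine: the direct Frattini argument for preserving $\ell'$ center and $\ell$-perfectness is more explicit than the paper's citation of \cite[Prop.~3.21]{BFr02}, and the inverse-limit-of-nonempty-finite-sets reasoning is exactly the paper's Tychonoff argument. But there are two substantive gaps in the remaining parts.

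For \eql{steppingup}{steppingupb}, your argument hinges on the claim that $\ker(G_{k\np1}\to R^*_{k,\ell})$ has no $\one_{G_k}$ subquotient in its Loewy layers; this is not justified and is not true in general. The universal central extension $R^*_{k,\ell}\to G_k$ captures the maximal \emph{exponent-$\ell$ central quotient} of $M_{k\np1}$ (equivalently the Schur-multiplier head), but trivial modules can occur deeper in the Loewy series of $M_{k\np1}$ below non-trivial layers --- see Rem.~\ref{1Gs}, which only excludes consecutive trivial layers $\one_{G_k}\to\one_{G_k}$, not trivial modules further down. Consequently, your stated ``Fried--Kopeliovich obstruction lemma'' (no obstruction from Frattini extensions whose kernel avoids $\one_{G_k}$ subquotients) is not the content of \cite[Obs.~Lem.~3.2]{FrK97}; that lemma is about whether $\one_{G_k}$ appears in a \emph{specific position} in a Loewy path, namely as a kernel in a chain $G_{k\np1}\to G_{**}\to G_*\to G_k$ where the lifted $r$-tuple fails product-one. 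The paper's actual reduction to the single lift-invariant test against $R^*_{k,\ell}$ is the second-stage result \cite[Prop.~4.19]{lum}, whose proof rests on the one-dimensionality of $H^2(G_k, M_{k\np1})$ from \cite[Prop.~2.7]{Fr95} together with Weigel's $\ell$-Poincar\'e duality (Thm.~\ref{WeigelThm}). Your layer-by-layer heuristic substitutes an assertion for the cohomological argument that actually carries the proof.

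For \eql{steppingup}{steppingupd}, routing through \eql{steppingup}{steppingupc} is both indirect and insufficient: \eql{steppingup}{steppingupc} produces an \emph{abelianized} \MT, while \eql{steppingup}{steppingupd} asserts a full \MT. The paper's proof is much shorter and does yield the full conclusion: lift each $\ell'$-subgroup $H_i$ into $G_{k\np1}$ directly via Schur--Zassenhaus (this preserves product-one within each block $\bh_i$), and then the Frattini property of $G_{k\np1}\to G_k$ forces the lifts to generate $G_{k\np1}$. That gives an element of $\ni(G_{k\np1},\bfC)$ above $\bg$ at every level, hence a full \MT\ by \eql{steppingup}{steppingupa}. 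The lift invariant never needs to enter.
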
  

\begin{proof} 
 \cite[Prop.~3.21]{BFr02} replaces the phrase \lq\lq has $\ell'$ center\rq\rq\  with \lq\lq is centerless:\rq\rq\  a consequence of interpreting having no center by inspecting the {\sl Loewy displays\/} of the universal $\ell$-Frattini covers of $G$. This version states it for one prime $\ell$. We go through the list \eqref{steppingup} one by one.\footnote {The tests to be passed are independent of what equivalence relation we apply to the Hurwitz space components.} 

{\sl Proof of \eql{steppingup}{steppingupa}}: For $\bg_k\in \sO_k$, finding a \MT\ on $\sO_k$ is equivalent to  producing a sequence $\{\bg_{k\np t}: t\ge 0\}$ with $\bg_{k\np t}\in G_{k\np t}$ and $\bg_{k\np t}\mapsto \bg_{k}$ by the canonical map \eql{explainmuk}{explainmuka}, $t\ge 0$.  Since Nielsen classes are finite sets (therefore compact), and these maps define chains, a \MT\ is a maximal chain. By the Tychonoff Theorem, such exists under the hypothesis   \eql{steppingup}{steppingupa}. 

{\sl Proof of \eql{steppingup}{steppingupb}}:  From \eql{explainmuk}{explainmukc}, $\psi_{k,k\np1}$ factors through $\mu_k$. If $\bg_k\in \ni(G_k,\bfC)$ is the image of $\bg_{k\np1}\in G_{k\np1}^r\cap \bfC$ (as in Def.~\ref{liftinv}), which satisfies product-one, etc., then the image of $\bg_{k\np1}$ in $(R_{k,\ell}^*)^r\cap \bfC)$, $\bg_{k\np1}^*$ also satisfies product-one and generation, etc. 

The converse -- existence of $\bg_{k\np1}^*$ satisfying Nielsen class properties, produces $\bg_{k\np1}$ -- went through two stages.  \eqref{gkobst} rephrases \cite[Obs.~Lem.~3.2]{FrK97}. {\sl No\/} braid orbit, $\sO_{k\np1}\subset \ni(G_{k\np1},\bfC)$ above $\sO_k$ is equivalent to this: 
\begin{edesc} \label{gkobst} \item in {\sl any\/}  Loewy Path (Def.~\ref{irrpath} on $M_{k\np1}$) the trivial $\bZ/\ell[G_k]$ module $\one_{G_k}=\bar M_{i\np1}$ appears as $\ker (G_{**}\to G_*)$   in a sequence $G_{k\np1}\to G_{**} \to G_* \to G_k$ with 
\item  $\bg^{*} \in \ni(G_{*},\bfC)$ over $\bg$, $\bg^{**}\in \bfC\cap G_{**}$ (uniquely defined) over $\bg^*$ and $g_1^{**}\cdots g_r^{**}\not =1$. \end{edesc}  

Now I simplify \cite[Lem.~4.9]{lum}. \cite[Prop.~4.19]{lum}  substitutes all  tests in \eqref{gkobst} by just one: 
\begin{equation} \label{Rk+1} \text{As in Def.~\ref{liftinv}: }s_{R_k^*/G_k}(\tilde \bg)\not =1, \ \tilde \bg\in R_{k,\ell}^*\cap \bfC \text{ over }\bg_k. \end{equation} 
The homological interpretation of this is part of \eql{steppingup}{steppingupc}.\footnote{Here (resp.~in  \eql{steppingup}{steppingupc}) asserting there is a component above $\sO_k$ is nontrivial even when $\ker(R_{k,\ell}^*\to G_k)$ (resp.~$\ker(R_{k,\ell}\to  G_k)$) is 0.}

{\sl Proof of \eql{steppingup}{steppingupc}}:  Following the procedure of \eql{steppingup}{steppingupa}, refer to a maximal projective sequence of  elements $\bg'=\{\bg_{k\np t}\in \ni(G_{k\np t},\bfC)\}_{t=0}^\infty$ as a {\sl branch}. 

\begin{defn}[Component branch] \label{compbr} Denote the corresponding (to $\bg'$) projective sequence of braid orbits $\sB_k\eqdef \sB_{k,\bg'}=\{\sO_{k\np t}\}$ as a {\sl component branch\/}; another way to describe a \MT. \end{defn} Nielsen classes generalize to any (profinite) quotient $G'$ of $\tG \ell\to G$. Consider a braid orbit $O'\le \ni(G',\bfC)$ over
$O_k$. This corresponds  to   
$\psi': \sM_O\to G'$ factoring through $\psi_{O_k}$. Weigel's Th.~\ref{WeigelThm}  says  $\sM_O$ is an {\sl oriented $\ell$-Poincar\'e duality
group\/}. 

Limit braid orbits $\sO^*$ in $\ni(G^*,\bfC)$ define limit groups. Any quotient $G^{\#}$ of $\tG \ell$ has attached component and cusp graphs by running over Nielsen classes corresponding to quotients of $G^{\#}$.

\begin{thm} \label{WeigelThm}  $\sM_\bg$ is a dimension 2 oriented $\ell$-Poincar\'e duality group. \cite[Lem.~4.14]{lum}: $\sO_\bg$ starts a component branch if and only if, running over $\psi_{R'}: R'\to G'$ with $\ker(\psi_{R'})$ a quotient of ${\tsp S} {\tsp M}_{G,\ell}$,  each  $\psi_{G'}: \sM_\bg\to G'$ extending
$\sM_\bg\to G$ extends to $\psi_{R'}: \sM_\bg \to R'$. The obstruction to extending $\psi_{G'}$ to $\psi_{R'}$ is the image in $H^2(\sM_\bg,\ker(\psi_{R'}))$ by
inflation of $\alpha\in H^2(G',\ker(\psi_{R'}))$ defining the extension $\psi_{R'}$.
\end{thm} 

\begin{proof}[Comment] \cite[\S4.3]{lum} discusses this using classical generators $\row x r$ to describe the fundamental group of $\pi_1(X)^{(\ell)}$. \end{proof}

The phrase (dimension 2) $\ell$-Poincar\'e duality \cite{We05} expresses an exact cohomology pairing  \begin{equation} \label{duality} H^k(\sM_\bg,U^*)\times H^{2-k}(\sM_\bg,U)\to
\bQ_\ell/\bZ_\ell\eqdef I_{M_{\bg},\ell}\end{equation} where $U$ is any  abelian $\ell$-power group  that is also a $\Gamma=\sM_\bg$ module, $U^*$ is its dual
with respect to
$I_{\sM_{\bg},\ell}$ and 
$k$ is any integer.
\cite[I.4.5]{Se91} has the same definition, except in place of $\sM_\bg$ has  
 a pro-$\ell$-group. By contrast,   $\sM_{\bg}$ is $\ell$-perfect, being generated by $\ell'$ elements (Lem.~\ref{perfnongen}). 

Capturing the extension problems for forming a \MT\ through quotients of $\sM_{\bg}$ involves Frattini covers of $G$, which are also 
$\ell$-perfect.  The fiber over $O_k$ is empty if and only if there is some central Frattini extension $R\to G_k$ with kernel
isomorphic to $\bZ/\ell$ for which $\psi_\bg$ does not extend to $\sM_\bg\to R\to G$ \cite[Cor.~4.19]{lum}. 

Comment on the proof:  The key is \cite[Prop.~2.7] {Fr95}, which says $H^2(G_k,M_{k\np1}) = \bZ/\ell$ (it is 1-dimensional.) Then, the obstruction to lifting $G_k$ to $G_{k\np1}$ is the inflation of some fixed generator $H^2(G_k, M_{k\np1})$ to $H^2(\sM_g, M_{k\np1})$ \cite[Lem. 4.14]{lum} .
That proof also applies to limit groups. 
\cite[Cor.~4.20]{lum}:  If $G^*$ is a limit group in a Nielsen class and a proper quotient of  $\tG \ell$, then $G^*$ has exactly one nonsplit extension by a $\bZ/\ell[G^*]$ module, and that module must be trivial.

{\sl Proof of \eql{steppingup}{steppingupd}}:  Consider $\bg=(\bh_1,\dots, \bh_u)\in \ni(G_{k},\bfC)$ as in \eql{steppingup}{steppingupd}. Apply Schur-Zassenhaus to lift $H_i$ to $G_{k}$ from $G_k$ giving $\{\bh^*_i\}_{i=1}^t$ satisfying the same conditions in $G_{k\np1}$ as given for  $\{\bh_i\}_{i=1}^t$. Since $G_{k\np1}\to G_k$ is an $\ell$-Frattini cover, it is automatic that $\lrang{\bh^*_i, i=1,\dots, t}=G_{k\np1}$. \end{proof}

\begin{defn} For emphasis on the head of the group $\sM_\bg$, with the $G$ module lattice tail, we sometimes refer to it as a $(G,\ell)$-Poincar\'e Duality group.\end{defn}

\begin{rem}[dropping $(N_{\bfC},\ell)=1$  on the lift invariant] \label{notlperf} To drop the $\ell'$ assumption on $\bfC$, as in \cite[App.]{FrV91}: Replace ${}_\ell R$ by its maximal quotient, for which any  class $\tilde C_i$ of $H^*$ over any $\C_i$ of $\bfC$, has $|\tilde \C_i|=|\C_i|$.  This is equivalent to modding out by the group generated by elements of form $\{g'g(g')^{-1}g^{-1}\mid g'\in \bfC, g\in G\}\cap \SM_G$.\end{rem} 

\begin{rem} \label{lum} \cite[\S 2.1]{lum} exposits on the universal $\ell$-central extension when $G$ is $\ell$-perfect; it was not classical to restrict to one prime at a time, though it is based on \cite{Br82}. Another description of a representation cover is as a maximal quotient of ${}_\ell \tilde G_\ab \to G$ on whose kernel $G$ acts trivially. 

In \eql{explainmuk}{explainmukb} the order of $\ker(\mu_k)$ in $\mu_k: R_{k,\ell}^*\to G_k$ grows with $k$ for fundamentally the same reason the elements of order $\ell$ in the $\ell$-Frattini cover $\mu: \bZ/\ell^2\to \bZ/\ell$ map to 0 by $\mu$.   
\end{rem}
 
 \begin{rem}[Appearances of $\one_G$ in the modules $M_k$] \label{1Gs} This is an addendum to Prop.~\ref{existProp}. For example, if $G_k$ has $\ell'$ center, but $G_{k\np1}$ does not, then $\one_{G_k}$ appears at the left end of the Loewy display of $M_k$. Also, a subquotient with Loewy layers $\one_{G_k}\to \one_{G_k}$ can't appear in $M_k$; that would -- contrary to $G_{k\np1}$ is $\ell$-perfect -- give a homomorphism \cite[(3.17b)]{BFr02} $$G_{k\np1} \to \{\smatrix 1 {b} 0 {1}\mid b\in \bZ/\ell\}\ \to \{b\in \bZ/\ell\}.$$  

Using the universal Frattini cover of $G$ produces a great number of Schur-separated components among the levels of Modular Towers over most $\ell$-perfect finite groups $G$.  For example, from the result of Darren Semmen quoted in \cite[Prop.~5.3]{BFr02}. \end{rem}

\subsubsection{\HIT\ decomposition groups}  \label{decgps}  We state results for covers in a given absolute Nielsen class $\ni(G,\bfC)^\abs$ to remind of  Hilbert's Irreducibility Theorem (\HIT). Start with $\phi: X\to \prP^1_z$ 
\begin{edesc} \item  defined by  $\bp'\in \sH'\le \sH(G,\bfC)^\abs$  over the number field $K=K_{\sH'}(\bp)$; and  
\item a Galois closure, $\hat \phi: \hat X\to \prP^1_z$, of $\phi$, given by $\bp\in \sH\le \sH(G,\bfC)^\inn$ lying over $\bp'$, with $\sH$ a component over $\sH'$.
\end{edesc} 
Assuming fine moduli ($G$ has no center),  then $\hat \phi_\bp$ has definition field $K_{\sH}(\bp)$ given by the moduli definition field of $\sH$. The decomposition group $D_{z'}$ of $z'\in \prP^1_z(\bar K)$ is the Galois group of the field obtained by joining coordinates of all points of $X$ above $z'$, and their conjugates over $K(z')$. This will be a subgroup of the (arithmetic) monodromy group, $\hat G_{\bp/\bp'}\eqdef G(K_{\sH}(\bp)/K_{\sH'}(\bp'))$, an extension of the group of the constants field \eqref{extconstants}. The simplest \HIT\ statement:\footnote{Hilbert's examples didn't need Hurwitz spaces, or the apparatus we use, but we do.} \begin{edesc} \label{HIT1} \item \label{HIT1a} $\Hi_{\phi,K}$: for $z'$ dense in $\prP^1_z(K)$ (even in $\bQ$)  the fiber $X_{z'}$ is irreducible (over $K(z')$); 
\item and  \eql{HIT1}{HIT1a} applied to  $\hat \phi_\bp$, $D_{z'}$ is the monodromy group $\hat G_{\bp/\bp'}$.  \cite{Hi1892}\end{edesc} 

\begin{defn} \label{evenfratt} Call a sequence of finite group covers $$\dots \to H_{k\np1}\to H_k\to \dots \to H_1 \to H_0=G$$ {\sl eventually Frattini\/} (resp.~eventually $\ell$-Frattini) if there is a $k_0$ for which $H_{k_0\np k} \to H_{k_0}$ is a Frattini (resp.~$\ell$-Frattini) cover for $k\ge0$. If the projective limit of the $H_k\,$s is $\tilde H$, we say $\tilde H$ is eventually $\ell$-Frattini. Then, any {\sl open\/} subgroup of $\tilde H$ will also be eventually $\ell$-Frattini. 

Refer to a \MT\, ${\pmb \sH}_{\pmb \sO}$, as Frattini (resp.~eventually $\ell$-Frattini) if its {\sl geometric\/} monodromy group has this property. In the notation of a component branch (Def.~\ref{compbr}), this says the projective system of groups given by the braid group action on the sequence $\pmb B_{\pmb \sO}=\{B_k\}_{k\ge 0}$ has this property.  \end{defn}

Use the notation ${\pmb \sH}_{\pmb \sO}=\{\sH_k\}_{k \ge 0}$ for a \MT\ on ($\ni(G,\bfC)^\inn$, above Def.~\ref{bHthusHk}) with $\pmb\sO=\{\sO_k\}_{k\ge 0}$ the corresponding braid orbits on the \MT\ levels. Denote the group of braid actions on $\sO_k$ by $B_k$, $k\ge 0$, assuming we have identified $\pmb \sO$. 
For $\bz'\in U_r(K)$,  denote by $\bar \bp=\{\bp_k\in \sH_k\}_{k\ge 0}$ a projective system of points on ${\pmb \sH}_{\pmb \sO}$ over $\bz'$ lying on the \MT. Consider these systems of groups.

\begin{edesc} \label{decgp} \item \label{decgpa} The projective system of decomposition (arithmetic monodromy) groups, $D_{\bz',k}$ for the cover $\phi_{\bp_k}: X_{\bp_k}\to \prP^1_z$ and its projective limit:  ${\pmb D}_{\bz'}=\lim_{\leftarrow  k} \{D_{\bz',k}\}_{k\ge 0}$.  
\item \label{decgpb} Then, the projective system, $\pmb D_{\MT, U_r}$ of the arithmetic monodromy of the \MT. 
\end{edesc} 

Compatible with the notation,  ${\pmb D}_{\bz'}$, for the  $\bz'$ fiber group, is independent of $\bar \bp$. 
\begin{prop} \label{HIT2} Assume ${\pmb \sH}_{\pmb \sO}$ is eventually Frattini and $L$ is a number field. Then, for a dense set of $\bz'\in U_r(L)$ (or in $U_r/\PSL_2(\bQ)(L)\eqdef J_r(L))$, 
\begin{equation} \label{HITcond} \text{ the fiber of $\pmb \sH$ over $\bz'$ equals the arithmetic group of the Modular Tower over $L$.} \end{equation} 

\end{prop}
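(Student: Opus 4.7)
The plan is to combine classical Hilbert irreducibility at a single finite level of the tower with the eventually Frattini hypothesis to bootstrap to all higher levels. By assumption there is $k_0$ beyond which the geometric monodromy cover $G_{k+1}\to G_k$ is Frattini for all $k\ge k_0$. First I would apply \HIT, in the form of Cor.~\ref{corinn-absmdf}'s last assertion and the standard references \cite[\S5.1]{Se97}, \cite[Chaps.~13--14]{FrJ86}$_4$, to the (reducible) cover $\sH_{k_0}\to U_r$ defined over $L$: the set $H_{k_0}\subset U_r(L)$ of $\bz'$ whose decomposition group $D_{\bz',k_0}$ equals the full arithmetic monodromy $\tilde G_{k_0}$ is the complement of a thin set, hence dense. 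Because we work on a fixed component branch $\pmb\sO$ (Def.~\ref{compbr}), Thm.~\ref{cosetbr} lets us restrict to the component of $\sH_{k_0}$ inside $\pmb\sO$ and apply \HIT component-wise; the analogue on $J_r(L)$ follows since $\PSL_2(\bC)$ is connected and acts freely away from a proper subvariety.

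Second, I would propagate upward using the Frattini hypothesis. For $\bz'\in H_{k_0}$ and $k>k_0$, the decomposition group $D_{\bz',k}\le\tilde G_k$ surjects onto $D_{\bz',k_0}=\tilde G_{k_0}$ via the structure map $\tilde G_k\to\tilde G_{k_0}$. The geometric Frattini property on $G_k\to G_{k_0}$, combined with the exact sequence of \eqref{extconstants} (applied at both levels $k_0$ and $k$, with compatible splittings), shows that the arithmetic cover $\tilde G_k\to\tilde G_{k_0}$ is again Frattini: its kernel equals $\ker(G_k\to G_{k_0})$, which lies inside $\Phi(G_k)\le\Phi(\tilde G_k)$ because any maximal subgroup of $\tilde G_k$ either contains $G_k$ (and is then pulled back from the constants extension) or meets $G_k$ in a maximal subgroup of $G_k$ containing $\ker(G_k\to G_{k_0})$. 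The Frattini property of $\tilde G_k\to\tilde G_{k_0}$ then forces any subgroup of $\tilde G_k$ surjecting onto $\tilde G_{k_0}$ to be all of $\tilde G_k$, so $D_{\bz',k}=\tilde G_k$. Taking the projective limit gives $\pmb D_{\bz'}=\pmb D_{\MT,U_r}$ for every $\bz'\in H_{k_0}$, proving \eqref{HITcond} on the dense set $H_{k_0}\subseteq U_r(L)$ (resp.~$J_r(L)$).

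The hard part is certifying that the Frattini property really does transfer from the geometric to the arithmetic monodromy uniformly in $k$. This is delicate because the constants fields $\hat L_k$ of Cor.~\ref{corinn-absmdf} can grow with $k$, potentially introducing new cosets at each level that a naive Frattini argument would miss. The cleanest way around this is to enlarge $L$ once and for all by the (finite) constants extension $\hat L_{k_0}$ — legitimate because density in $U_r(L)$ and density in $U_r(\hat L_{k_0}\cdot L)$ differ only by a thin adjustment — and then to invoke the \BCL-type control on constants developed in Prop.~\ref{bcl} to show that for $k\ge k_0$ the relative constants extension $\hat L_k/\hat L_{k_0}$ injects into $\ker(G_k\to G_{k_0})^{\mathrm{ab}}$, which is itself an $\ell$-Frattini quotient and therefore contributes no new maximal subgroups. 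A secondary obstacle, requiring only bookkeeping, is handling the finitely many levels $k<k_0$: intersect $H_{k_0}$ with the corresponding finitely many Hilbert sets, which remains Hilbert (hence dense) by closure of Hilbert sets under finite intersection.
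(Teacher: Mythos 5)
Your proposal follows the same overall route as the paper's own argument: apply \HIT\ at the fixed level $k_0$ from the eventually-Frattini hypothesis, then use the Frattini property to propagate fullness of the decomposition group to all higher levels, and finally push down to $J_r$. The paper in addition makes a clean preliminary reduction in its first paragraph, using Grauert--Remmert normality: because the decomposition field over $\bz'\in U_r(L)$ always contains the constants field $\hat L_k$, the decomposition group automatically surjects onto $G(\hat L_k/L)$, so verifying $D_{\bz',k}\supseteq B_k$ (geometric) already forces $D_{\bz',k}=\tilde B_k$ (arithmetic). Your version works directly with the arithmetic covers $\tilde G_k\to\tilde G_{k_0}$ rather than making this reduction first, which exposes the same underlying constants issue in a different spot; and you are right that the paper's two-sentence Frattini step is silent about exactly that issue.

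The patch you propose for the growing-constants problem does not, as written, quite close. Your second-paragraph assertion that $\ker(\tilde G_k\to\tilde G_{k_0})=\ker(G_k\to G_{k_0})$ only holds when $\hat L_k=\hat L_{k_0}$; when the constants strictly grow, the arithmetic kernel is the extension of $G(\hat L_k/\hat L_{k_0})$ by $\ker(G_k\to G_{k_0})$, which is not a Frattini kernel in $\tilde G_k$ for free. And the third-paragraph claim that $\hat L_k/\hat L_{k_0}$ injects into $\ker(G_k\to G_{k_0})^{\rm ab}$ is not something Prop.~\ref{bcl} delivers. What the \BCL\ does give, because the classes $\bfC$ (and hence $N_\bfC$) are the same at every tower level, is a uniform cyclotomic bound $\hat L_k\subseteq L(\zeta_{N_\bfC})$; so the tower $\{\hat L_k\}_k$ stabilizes, and after enlarging $k_0$ past both the Frattini threshold and the stabilization level your first two paragraphs close directly, with no base-change of $L$ required. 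One further small point: when showing $\Phi(G_k)\le\Phi(\tilde G_k)$ you say a maximal $M\le\tilde G_k$ not containing $G_k$ "meets $G_k$ in a maximal subgroup"; that need not be true, but the conclusion is rescued by the Dedekind modular law, which gives $(M\cap G_k)\cdot\ker(G_k\to G_{k_0})=G_k$ and hence $M\cap G_k=G_k$ by the Frattini hypothesis on the geometric cover.
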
 

\begin{proof} Given a Galois cover of normal varieties, $\phi: W\to V$ over a field $K$, the decomposition field over $v'\in V(K)$ automatically contains the extension of constants field of $\phi$. Since the use of Grauert-Remmert to form the Hurwitz space components in a \MT\ uses projective normalization, all covers are of normal varieties, if the decomposition group of a fiber contains the geometric monodromy group of the cover, it automatically contains the arithmetic monodromy group. 

With $k_0$ the starting index for eventually Frattini, a standard generalization of \eqref{HIT1} implies the conclusion to \HIT\ holds for any cover of a variety birational to projective space. So it applies to a cover of $U_r$. This gives a dense set of $\bp_{k_0}\to \bz'$ for which the (from above, arithmetic or geometric) monodromy group attached to the cover $\phi_{k_0}\in \sH_{k_0}$, equals the monodromy of $\sH_{k_0}\to U_r$. From eventually Frattini, we can change $k_0$ to any $k\ge k_0$ for a corresponding dense set of $\bz'$. The image of this dense set modulo $\PSL_2(\bQ)$ will be dense in the image, giving the same conclusion for $J_r$. \end{proof}  

\begin{defn} \label{HITdef} Refer to ${\pmb D}_{\bz'}$, in \eql{decgp}{decgpa} as {\sl \HIT\  \/} (resp.~{\sl full\/} \HIT) on the \MT\  if it is an open subgroup of (resp.~equals) the decomposition group of the \MT. \end{defn}

\begin{exmpl}[\HIT\ results without Nielsen classes] \label{indnc} One attempt for a definitive \HIT\ result is to form an explicit ({\sl primitive recursive}) Hilbert Set, $\Hi_{\phi, K} \le \prP^1_z(K)$:  for $z'\in  \Hi_{\phi, K}$, \eqref{HIT1} holds. For one cover, $\phi: X\to \prP^1_z$, \cite[Thm.~2]{Fr74} gives a nonregular analog of the Chebotarev density theorem, and  \cite[Thm.~3]{Fr74} applies it to construct $\Hi_{\phi, K}$ as an arithmetic progression in $\bZ$. 

 \cite[Thm.~4.9]{Fr85} gives an explicit universal Hilbert subset $\Hi_{K}$ for which \eqref{HIT1} holds for {\sl each\/} $\phi$ with finitely many exceptional  $z'\,$s dependent on $\phi$.\footnote{On the scope of  \HIT\ (a la, the title of \cite{Fr85}): I used Weil's Theory of (arithmetic) distributions (1928 thesis), Sprindzuk used diophantine approximation and Weissaur used nonstandard arithmetic.} Examples of  \cite{D87} -- these are for irreducibility of $\phi_{z'}$ -- are memorable:  $\{ 2^n + n\mid n\ge 0\}$, but it relies on Siegel's Theorem; so is not effective. The examples of \cite[Thm.~4]{DZ98} give a bound $N_{\phi}$ such that $\{n\in \Hi_{\phi,\bQ} \mid n> N_{\phi}\}$.\end{exmpl} 

\subsection{Hurwitz spaces and Jacobians} \label{HSJac}  \S\ref{OITver} explains Serre's \OIT\ as about decomposition groups on the fibers of a \MT\ that is identified with a tower of modular curves. This emphasizes the eventually $\ell$-Frattini property. Serre's \OIT\ has two possibilities for ${\pmb D}_{\bz'}$ for $\ni((\bZ/\ell)^2\xs \bZ/2, \bfC_{2^4})$, for a fixed prime $\ell\ne 2$ with list \eqref{serre} stating this more precisely. \S \ref {jacobians} connects Serre's \OIT\ (and generalizing it) and the main \MT\  conjecture to naming and divining properties of the Jacobians along the curves attached to points on Hurwitz spaces. This connection starts with Hilbert's conjecture on geometrically interpreting abelian extensions of complex quadratic fields, but it's a bigger topic than that (see list \eqref{serre}).   
 Prop.~\ref{HIT2} says for any \MT\ that is eventually $\ell$-Frattini, we can expect the \lq\lq general\rq\rq\  ${\pmb D}_{\bz'}$ to be {\sl \HIT}, our name generalizing Serre's $\GL_2$ type.  
 
 But, in Serre's case, there is another type, in \S\ref{cmabelvar}, \CM,  for which the $\ell$-adic representation presents  the Galois group of ${\pmb D}_{\bz'}$ as an abelian extension of $\bQ(j')$ with $j'$ the $j$-invariant corresponding to an elliptic curve with complex multiplication. In \S \ref{weilpairing} this corresponds to a cover in the  Nielsen class of \eql{mtcases}{mtcasesa}. 
 
 \S\ref{WohlfahrtThm} reminds how, using {\sl Wohlfahrt's Theorem\/} and the Riemann-Hurwitz formula Prop.~\eqref{j-Line} for reduced Hurwitz spaces with $r=4$, to exclude a reduced Hurwitz space cover from being a modular curve.  Andr\'e's Theorem \ref{andres} requires knowing our reduced Hurwitz space is not a modular curve to conclude an example where we don't get the \CM \ analog of Serre. Instead, for any compact subset of $\prP^1_j\setminus \{\infty\}$,  only finitely many fibers of the \MT\ are \ST\ (the general analog of \CM) type. This is the case $\ell=2$ in series of examples in \S\ref{heiscase}. 

\subsubsection{Tying to the \OIT} \label{OITver}  

\begin{edesc} \label{serre}  \item \label{serrea}  \CM\ type: With $j'$ corresponding to an elliptic curve with ring of endomorphisms an order in a complex quadratic extension of $\bQ$, then ${D}_{\bz'}$ an open subgroup of $\bZ_\ell$. 
\item \label{serreb} With  $j'$ not an $\ell$-adic integer;  ${D}_{\bz'}$ is full \HIT\ with ${\pmb D}_{\bz'}$ (resp.~geometric monodromy)  equal to $\GL_2(\bZ_\ell)$ (resp.~$\SL_2(\bZ_\ell)$) \cite[\S 3.2]{Se68}.  
\item \label{serrec} With $j'$ an algebraic integer but not of \CM\ type, in our language, ${\pmb D}_{\bz'}$  is \HIT\ (type). In Serre's case, refer to the decomposition groups as of $\GL_2$ type. 
\end{edesc} A Tate paper that never materialized suggested {\sl all\/} non-\CM\ fibers  (not just those in \eql{serre}{serreb}) would give \HIT\ for ${\pmb D}_{\bz'}$;  \eql{serre}{serrec} requires Faltings Theorem. \S\ref{SMR} reviews Serre's constructions, his characterization of compatible collections of $\ell$-adic representations, and especially his showing that \ST\ is included.

\cite[Prop.~3.20]{Fr20} -- Prop.~\ref{PSLFrat} -- shows the eventually $\ell$-Frattini property applies to Serre's case. Thus, Prop.~\ref{HIT2} says the \lq\lq general\rq\rq\ decomposition group of Serre's case has $\GL_2$ type. 
\begin{prop} \label{PSLFrat} The natural cover $\SL_2(\bZ/\ell^{k\np 1})\to \SL_2(\bZ/\ell)$  is an $\ell$-Frattini cover for all $k$ if $\ell> 3$. For $\ell=3$ (resp.~2),
$\SL_2(\bZ/\ell^{k+1})\to \SL_2(\bZ/\ell^{k_0\np1})$, $k\ge k_0$ where $k_0=1$ (resp.~2),  is the minimal value for which these are Frattini covers. For all $\ell$,  $$\PSL_2(\bZ_\ell)\to \PSL_2(\bZ/\ell)\text{ is eventually $\ell$-Frattini}.$$\end{prop}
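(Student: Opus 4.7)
The plan is to unwind Frattini-ness into a statement about the non-existence of complements to the kernel at each step, and then to verify that step-by-step using the structure of the $\SL_2$ congruence filtration. First I would describe the kernel: $K_k \eqdef \ker(\SL_2(\bZ/\ell^{k+1})\to \SL_2(\bZ/\ell))$ carries the filtration with successive quotients $K_i/K_{i+1} \simeq \mathfrak{sl}_2(\bF_\ell)$ via $I+\ell^i A \mapsto A \bmod \ell$, so $K_k$ is a nilpotent $\ell$-group of order $\ell^{3k}$ on which $\SL_2(\bZ/\ell^{k+1})$ acts through the adjoint representation of $\SL_2(\bF_\ell)$. In particular the $\ell$-Frattini \emph{kernel condition} of Def.~\ref{deffrat} is free.

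Next I would reduce to a single-step Frattini statement. Since compositions of Frattini covers are Frattini, it suffices to show that for all $k \ge k_0$ the single step $\SL_2(\bZ/\ell^{k+2}) \to \SL_2(\bZ/\ell^{k+1})$ is Frattini, with kernel $\mathfrak{sl}_2(\bF_\ell)$. For this step-level kernel $N_k$, which is a normal $\ell$-subgroup, the Frattini property is equivalent to $N_k \subseteq \Phi(\SL_2(\bZ/\ell^{k+2}))$, and for such a minimal normal abelian subgroup this is equivalent to asking that $N_k$ admits \emph{no complement} in $\SL_2(\bZ/\ell^{k+2})$. For $\ell \ge 5$ the module $\mathfrak{sl}_2(\bF_\ell)$ is irreducible under the adjoint action of $\SL_2(\bF_\ell)$, so $N_k$ is itself a minimal normal subgroup and Gasch\"utz's theorem applies directly; for $\ell=2,3$ the module has further composition factors but each is $\SL_2$-invariant, and the same Gasch\"utz-type splitting criterion still applies to each factor in turn.

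The main obstacle is the non-splitting input: showing no complement exists. I expect to verify this in two ways. Cohomologically, a complement would split the short exact sequence and hence give a section $\SL_2(\bZ/\ell^{k+1}) \hookrightarrow \SL_2(\bZ/\ell^{k+2})$; this is ruled out by examining the exponent (an element like $I+\ell^{k+1}E_{12}$ has order $\ell$ in the target but any lift of the image element of order $\ell$ in $\SL_2(\bZ/\ell^{k+1})$ has order $\ell^2$ in $\SL_2(\bZ/\ell^{k+2})$) so the relevant class in $H^2(\SL_2(\bZ/\ell^{k+1}), \mathfrak{sl}_2(\bF_\ell))$ is nonzero. The second ingredient is the base-level $\ell$-perfection check that pins down $k_0$: for $\ell\ge 5$, $\SL_2(\bF_\ell)$ is already perfect, so $k_0=0$; for $\ell=3$, $\SL_2(\bF_3) \cong Q_8 \rtimes \bZ/3$ has abelianization $\bZ/3$, but a direct commutator calculation in $\SL_2(\bZ/9)$ shows that this $\bZ/3$ quotient is killed one level up, giving $k_0=1$; for $\ell=2$, $\SL_2(\bF_2)\cong S_3$ and $\SL_2(\bZ/4)$ both retain a $\bZ/2$ quotient, whereas $\SL_2(\bZ/8)$ is $2$-perfect, giving $k_0=2$. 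This base-level verification, plus the minimality claim, is the most delicate part.

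Finally, the $\PSL_2$ statement follows by passing to the quotient by $\{\pm I\}$. Since the central subgroup $\{\pm I\}$ is already present at level $0$ and is an $\ell'$ group for odd $\ell$ (and disappears entirely into the level-$1$ structure for $\ell=2$), the kernel of $\PSL_2(\bZ/\ell^{k+1})\to \PSL_2(\bZ/\ell)$ agrees with $K_k$ for $k$ large enough, and the Frattini conclusion above transports across the quotient. Combined with the starting index $k_0$ determined by the small-prime case analysis, this yields that $\PSL_2(\bZ_\ell) \to \PSL_2(\bZ/\ell)$ is eventually $\ell$-Frattini as required by Def.~\ref{evenfratt}.
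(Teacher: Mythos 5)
The paper does not prove this proposition in-text; it cites \cite[Prop.~3.20]{Fr20}. So only the correctness of your blind proposal can be assessed, and it has a genuine gap centered exactly on the delicate part you flag.

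First, a factual error: you assert that for $\ell=2,3$ the kernel $\mathfrak{sl}_2(\bF_\ell)$ ``has further composition factors.'' This is true only for $\ell=2$ (the scalars $\bF_2 I$ form a trivial submodule since $\mathrm{tr}(I)=0$). For $\ell=3$, $\mathfrak{sl}_2(\bF_3)$ is the Steinberg module $L(2)$ and is irreducible as a module for $\SL_2(\bF_3)$. This matters because it makes your Gasch\"utz reduction \emph{apply} at $\ell=3$, so the dichotomy becomes ``complement exists iff not Frattini'' -- and then your non-splitting argument, if it worked, would force $\SL_2(\bZ/9)\to\SL_2(\bZ/3)$ to be Frattini, contradicting the stated $k_0=1$.

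Second, the exponent argument is both misstated and, once repaired, does not work uniformly. As written, $I+\ell^{k+1}E_{12}$ lies in the kernel $N_k$, so its image in $\SL_2(\bZ/\ell^{k+1})$ is the identity and ``any lift of the image element'' is about lifts of the identity, which trivially live in $N_k$. What you presumably intend is: the full unipotent $u=I+E_{12}\in\SL_2(\bZ/\ell^{k+1})$ has order $\ell^{k+1}$, and every lift of $u$ to $\SL_2(\bZ/\ell^{k+2})$ has order $\ell^{k+2}$, which would preclude a complement. But at $(\ell,k)=(3,0)$ this \emph{fails}: in the expansion of $(I+N)^{3}$ the term $N^{3}$ contributes modulo $9$ (whereas for $\ell\ge 5$ the analogous $N^{j}$, $2\le j\le \ell$, are killed either by $\binom{\ell}{j}\equiv 0 \bmod\ell$ for $j<\ell$ or by $N^4\equiv 0$ for $j=\ell$). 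A direct computation shows there \emph{is} a lift of $u$ of order $3$ in $\SL_2(\bZ/9)$, and in fact the whole group $\SL_2(\bF_3)$ (the binary tetrahedral group) embeds in $\SL_2(\bZ_3)$, giving an honest section at level $0$ and hence a proper surjecting subgroup. A similar phenomenon (exceptional finite subgroups of $\SL_2(\bZ_2)$) accounts for $k_0=2$ at $\ell=2$. Your argument never isolates the break between the Frattini and non-Frattini steps, which is the heart of the proposition.

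Third, the appeal to $\ell$-perfection to pin down $k_0$ is not the right criterion. An $\ell$-Frattini cover $H\to G$ induces a bijection on $\bZ/\ell$-quotients in both directions (the index-$\ell$ kernel must project with index $\ell$ by Frattini-ness), so having a $\bZ/\ell$ quotient on the base does not by itself obstruct Frattini-ness, and the abelianization of $\SL_2(\bZ/\ell^{k_0+1})$ is not what is being measured. The correct mechanism for the failure of Frattini-ness at small levels is the existence of the finite subgroups lifting $\SL_2(\bF_\ell)$ into $\SL_2(\bZ_\ell)$ noted above, exactly the point addressed (for $\ell\ge 5$) by Serre's classical lemma \cite[IV, 3.4, Lemma 3]{Se68} that a closed subgroup of $\SL_2(\bZ_\ell)$ surjecting onto $\SL_2(\bF_\ell)$ is the whole group; the exceptional behavior at $\ell=2,3$ is well documented there.
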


\subsubsection{Jacobians of curves on a Hurwitz space} \label{jacobians} 
Start from a \MT\  (of inner spaces) and take the level $k$ component $\sH_k$. For each $\bp\in \sH_0$, consider the cover $\hat \phi_\bp: \hat X_\bp\to \prP^1_z$ and  the level $k$ space $\sJ_k$ of covers of the Jacobians, $\sJ_{k,\bp}$, of $\phi_\bp$ with kernel $(\bZ/\ell^{k\np1})^{2\geng}$. \cite[\S6]{Fr10} discusses the natural map $\sH(G,\bfC)^\red\to  \sJ_{G,\bfC}$ curves in a Hurwitz space to their corresponding Jacobian varieties. We need the curve in its Jacobian. \cite[Lect.~III]{Mu76} is analytic, following Riemann using holomorphic differentials, with no reference to $G_\bQ$. 

This starts from Riemann's  birational equivalence of the Jacobian $J_\bp$ associated to the curve $\hat X_\bp$ (of genus $\geng$)  -- here Galois over $\prP^1_z$ is irrelevant -- with the symmetric product $\Symm_{\geng,\bp}=(X_\bp)^{\geng}/S_{\geng}$. An application of the Riemann-Roch theorem shows that, for {\sl general\/} divisors $D_1$ and $D_2$ on $\Symm_{\geng,\bp}$ there is a unique linear equivalence class $D_3\in \Symm_{\geng,\bp}$ linearly equivalent to $D_1\np D_2$. Therefore, modulo linear equivalence $J_\bp$ is the {\sl group\/} of degree 0 divisor classes on $\hat \phi_\bp$. The algebraic structure on $\sJ_\bp$ comes from the analytic functions on multiplies of the linear system from the  $\theta$-divisor $\Theta_\bp$. This identifies with the space of divisor classes of degree $\geng \nm1$ modulo linear equivalence and (again due to Riemann, but made algebraic by Weil).  So, there is a definition field of this structure\footnote{Weil needed this to complete his thesis:  the proof of finite generation of the points defined on an abelian variety over a number field.} giving an embedding of $J_\bp$ in a projective space.  Points on $\hat X_\bp$ (resp.~ $\Theta_\bp$) map to points of degree 0 by translating by a divisor class $D'_\bp$ of degree 1 (resp.~$D''_\bp$ of degree $\geng\nm1$). Lem.~\ref{gact} now uses that $\hat \phi$ is a Galois cover: giving an action of $G$ on $J_\bp$ in \eqref{gact2}. 

\begin{lem} \label{gact} There is a copy of $\hat X_\bp$ in $J_\bp$ on which $G$ acts compatibly with its action on $J_\bp$. \end{lem}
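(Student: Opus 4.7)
The plan is to construct a $G$-equivariant closed immersion of $\hat X_\bp$ into $J_\bp$ via a variant of the Abel--Jacobi embedding using a $G$-invariant base divisor class. The image is then automatically a $G$-stable copy of $\hat X_\bp$ on which the ambient $G$-action on $J_\bp$ restricts to the given action on $\hat X_\bp$.

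First I would exhibit a $G$-invariant degree-one divisor class $D^* \in \mathrm{Pic}^1(\hat X_\bp)$. The natural source of $G$-invariant classes is pullback from $\prP^1_z$: for any unramified $z_0 \in \prP^1_z$, the fiber $D_0 = \hat\phi_\bp^{-1}(z_0)$ is $G$-stable as a divisor, so its class $[D_0] \in \mathrm{Pic}^n(\hat X_\bp)$ is fixed by $G$, where $n = |G|$. Since $J_\bp(\bC)$ is a divisible abelian group, there exists $D \in \mathrm{Pic}^1$ with $nD \equiv [D_0]$; such $D$ is determined modulo $J_\bp[n]$, and the failure of $G$-invariance is measured by the $1$-cocycle $c\colon g \mapsto gD - D \in J_\bp[n]$. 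Using $|G|$-annihilation of $H^1(G,-)$ together with divisibility of $J_\bp$, the image of $c$ in $H^1(G, J_\bp)$ is a coboundary, so after translating $D$ by a suitable element of $J_\bp$ one obtains a $G$-invariant $D^*$ with $nD^* \equiv [D_0]$.

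Setting $\iota\colon \hat X_\bp \to J_\bp$ by $x \mapsto [x] - D^*$ gives (for $\geng(\hat X_\bp) \ge 1$; the genus-zero case is vacuous since $J_\bp$ is trivial) a closed immersion by the classical Abel--Jacobi theorem. For $g \in G$, the direct computation $g \cdot \iota(x) = [gx] - gD^* = [gx] - D^* = \iota(gx)$ shows $\iota$ is $G$-equivariant, so its image is a $G$-stable curve $C \subset J_\bp$ isomorphic to $\hat X_\bp$, on which the ambient and original $G$-actions coincide.

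The main obstacle is the cohomological step of trivializing the $1$-cocycle $c$; a cohomology-free fallback is the manifestly $G$-equivariant map $\iota'\colon x \mapsto n[x] - [D_0]$, which lands in $J_\bp$ because $[D_0]$ has degree $n$ and is $G$-fixed. One then only has to check that $\iota'$ is birational onto its image, equivalently that distinct points of $\hat X_\bp$ generically have difference not lying in $J_\bp[n]$ -- a transparent verification from the fact that the Abel--Jacobi image of $\hat X_\bp$ generates $J_\bp$ as an abelian variety, so cannot lie in the finite subgroup $J_\bp[n]$.
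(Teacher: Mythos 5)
Your argument targets a strictly stronger statement than the paper's: that the \emph{linear} $G$-action on $J_\bp=\mathrm{Pic}^0(\hat X_\bp)$ by pullback of divisor classes already preserves an Abel--Jacobi copy of $\hat X_\bp$, equivalently that $\mathrm{Pic}^1(\hat X_\bp)$ has a $G$-fixed point. The obstruction to the latter is precisely the class of your cocycle $c$ in $H^1(G,J_\bp)$, and the step asserting $[c]=0$ does not go through: $|G|$-annihilation gives $n[c]=0$, which is automatic, and divisibility of $J_\bp(\bC)$ does \emph{not} pass to $H^1(G,J_\bp)$ --- group cohomology does not preserve divisibility --- so you cannot ``divide $[c]$ by $n$.'' This obstruction genuinely can be nonzero. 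The fallback $\iota'\colon x\mapsto n[x]-[D_0]$ is $G$-equivariant but is not a closed immersion, nor even generically injective, in the very cases the paper uses: for $\geng=1$ it factors through multiplication by $n$ on the elliptic curve and is $n^2$-to-one, and for $\geng\ge 2$ it is generically $\lvert S\cap J_\bp[n]\rvert$-to-one, with $S$ the (finite but possibly nontrivial) translation stabilizer of the Abel--Jacobi curve; the ``generation'' observation only rules out the image sitting inside the finite set $J_\bp[n]$, which is not what is needed. And since the lemma asks for an actual copy of $\hat X_\bp$, birationality onto the image would not suffice in any case.

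The paper avoids the $H^1$ obstruction by changing the action rather than moving the curve. The sentence immediately before Lem.~\ref{gact} and the display \eqref{gact2} define the $G$-action on $J_\bp$ in question to be the \emph{affine} action $u\mapsto g\cdot u+\bigl([x_0^g]-[x_0]\bigr)$, with the second line of \eqref{gact2} verifying the cocycle identity that makes this a bona fide group action. Under that action, $\iota_0(gx)=g\cdot\iota_0(x)$ holds tautologically for the Abel--Jacobi map $\iota_0\colon x\mapsto [x]-[x_0]$, so the image $\hat X_\bp-x_0$ is automatically $G$-stable with the compatible action, and no cohomological vanishing is needed. Re-reading the lemma with this intended affine $G$-action in mind dissolves the obstruction your argument was fighting.
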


\begin{proof}  Take $x_0\in \hat X_\bp$ and form  $\hat X_\bp\nm x_0$. Since $g\in G$ maps $\hat X_\bp$ to $\hat X_\bp$, it maps $\hat X_\bp\nm x_0$ to $\hat X_\bp \nm x_0^g$ by a map that is uniquely detected by what it does to $\hat X_\bp$. This action gives a 1-cocycle $g\in G \mapsto x_0^g\nm x_0$ of translations of $\hat X_\bp\nm x_0$ inside $J_\bp$ along with unique maps between the translations. Now apply Weil's cocycle condition (as in the proof of Prop.~\ref{inn-absmdf}) to construct $\hat X^*_\bp\subset J_\bp$ with $G$ action. 

Fix a basis $\pmb\omega_\bp\eqdef \row \omega \geng$ of the holomorphic differentials on $\hat X_\bp$. Form $\geng$-tuples of integrals
\begin{equation} \label{defpathints} \begin{array}{c} \pmb \Omega(x_0,x)\eqdef(\row {\int_{x_0}^x \omega} \geng), x\in \hat X_\bp \mod L, \text{periods along closed paths at $x_0$};\\
\text{and } \bar {\pmb  {\Omega}}(x_0,\bx)\eqdef( \int_{x_0}^{x_1} \omega_1,\dots, \int_{x_0}^{x_\geng}\omega_{\geng}), \row x \geng \in \hat X_\bp \mod L\end{array}\end{equation}  Therefore, $\bar {\pmb  \Omega}(x_0,\bx)$)  is independent of the choice of paths from $x_0$ to $x$ (resp.~$x_0$ to $x_i$, $i=1,..., \geng$).  The following is due to Riemann. 
\begin{edesc} \label{Riem}  \item \label{Riema}  The collection of path integrals, $J_{\bp}$,  of the second line of \eqref{defpathints} is a description of the linear systems on $\hat X_\bp$ of degree 0. These form a complex torus of dimension $\geng$. 

\item \label{Riemb}  From \eql{Riem}{Riema},  the collection of path integrals of the first line gives an embedding of $\hat X_\bp$ in $J_{\bp}$ (dependent on $x_0$) as degree 0 divisors of form $\hat X_\bp \nm x_0=\{x\nm x_0\}_{x\in \hat X_\bp}$. \end{edesc} 

For $g\in G$, write $\pmb \Omega(x_0,x^g)$ as $U(x_0,x^g)\eqdef \pmb \Omega(x_0,x_0^g) + \pmb \Omega(x_0^g,x^g)$.  As a {\sl collection of points\/}, this is the same as $\hat X_\bp \nm x_0$. The second summand is obtained from  $g$ acting on (endpoints of) the paths of the integrals of the second line of \eqref{defpathints}. Here is the action. 

\begin{equation} \label{gact2} \begin{array}{c} \smatrix g {U(x_0,x_0^g)}  0 1 (U(x_0, x)\ 1)^{\tr}= (U(x_0^g, x^g)\np  U(x_0,x_0^g)\ 1)^\tr \text{ and multiplying matrices} \\
\smatrix {g_2} {U(x_0,x_0^{g_2})}  0 1 \smatrix {g_1} {U(x_0,x_0^{g_1})}  0 1=\smatrix {g_2g_1} {U(x_0,x_0^{g_1})^{g_2} \np  U(x_0,x_0^{g_2})} 0 1,\end{array}\end{equation}
the result of first appying $g_1$ and then $g_2$ is the same as applying $g_2g_1$.  \end{proof}  

 Corresonding to $\bp_0\in \sH(G_0,\bfC)$, denote the Jacobian from  the second line of \eqref{defpathints} by $J_{\bp_0}$. Suppose \MT$=\{\sH_k\le \sH(G_{k,\ab},\bfC\}^\inn\}_{k=0}^\infty$ is a(n abelianized) Modular Tower, and $\{\bp_k\in \sH_k\}_{k=0}^\infty$ is a projective sequence of points over $\bp_0$ on the \MT.  For, $k\ge 1$, each $\bp_k\leftrightarrow \hat \phi_k: \hat X_{\bp_k}\to \prP^1_z$ is a Galois unramified cover of $\hat \phi_{\bp_0}$ obtained by 
pullback to $J^*_k$, a quotient of $\psi_{J,k\np1}: J_{\bp_0}\to  J_{\bp_0}$, $k\ge 0$, from modding out by the lattice $\ell^{k\np1}\sL_{\bp_0}$. \footnote{The same $G$ module, $M_1$ in previous notation, is in the kernel from $\ell^{k\nm1}\sL_{\bp_0}\to \ell^{k}\sL_{\bp_0}$ independent of $k$.}  

The action of $G$ extends to the $\ell$-adic Jaoobian module $\sL_{J,\bp_0}$ and to $\sL_{J,\bp_0}/\ell^{k\np1}\sL_{J,\bp_0}=M_{J,k\np1}$ (Lem.~\ref{gact}). This module already appears as the $\bZ/\ell[G]$ quotient of $\sM_{\bar\bg}$ in   \eqref{psig}.\footnote{Unlike the characteristic $\ell$-Frattini module, $M_{J,1}$ may not be indecomposable (as in the Serre's case \S\ref{weilpairing}).}  \begin{defn}\label{jaccase} Taking $G_{J,k}$ as the composite of $G$ and $M_{J,k\np 1}$, this therefore gives a Nielsen class $\ni(G_{J,k}, \bfC)$ with our usual equivalences compatible, extending $\ni(G_k,\bfC)$, with the extending braid group action. \cite{Fr20} referred to this as the {\sl Jacobian case}. \end{defn}

\begin{lem} \label{HrMTact} There is an explicit procedure for computing an open subgroup of the action of $H_r$ (Hurwitz monodromy) on the projective sequence of Nielsen class braid orbits of a \MT\ equivalent to an action on the lattice tail of the extension ${\tG \ell}_{\ab} \to G$ (below \eqref{psig}). This gives a check of the eventually Frattini property of \MT. 
\end{lem}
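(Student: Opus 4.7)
The plan is to exploit the projective sequence of braid orbits $\{\sO_k\le \ni(G_{k,\ab},\bfC)^\inn\}$ making up the \MT, and to concentrate the $H_r$ action at a fixed level so that its effect on all higher levels is captured by a linear action on the lattice tail $\sL$ of $\sM_{\bar\bg,\ab}\to G$ from \eqref{psig}. First I would fix $\bg_0\in \sO_0$ and let $H_{r,\bg_0}\le H_r$ be the stabilizer of $\bg_0$, a subgroup of finite index (since $\sO_0$ is finite). Then pick any projective sequence $\bar\bg=\{\bg_k\in \sO_k\}_{k\ge 0}$ lying over $\bg_0$. Because $H_{r,\bg_0}$ fixes the absolute/inner class of $\bg_0$, each $q\in H_{r,\bg_0}$ moves $\bg_k$ to another lift of $\bg_0$ in $\ni(G_{k,\ab},\bfC)$, and by Prop.~\ref{existProp} and the description of $\sM_{\bar\bg,\ab}$ below \eqref{psig}, the set of such lifts is a principal homogeneous space for the quotient $\sL/\ell^{k\np 1}\sL = M_{k\np1}^{\ab}$. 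Thus $(\bg_k)q=\bg_k+\delta_k(q)$ defines, for each $k$, a $1$-cocycle $\delta_k\colon H_{r,\bg_0}\to M_{k\np1}^{\ab}$, compatible in $k$, and the induced map $\delta_\infty\colon H_{r,\bg_0}\to \sL$ is the \emph{action on the lattice tail} the lemma alludes to.

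Next I would promote the open subgroup $H_{r,\bg_0}$ to a subgroup $H^\circ\le H_{r,\bg_0}$ that acts \emph{linearly}: replace $H_{r,\bg_0}$ by the intersection of the stabilizers of $\bg_0$ as an \emph{inner} Nielsen class element and as a conjugation-class. Then $\delta_\infty$ becomes a homomorphism into the $\bZ_\ell[G]$-module $\sL$, and its $H^\circ$-image is a $\bZ_\ell$-submodule $\Delta\le \sL$, explicitly generated by the level-$k$ cocycle values $\delta_k(q_i)$ where $q_i$ run over a finite set of generators of $H_{r,\bg_0}$ (with $k$ large enough that the images in $\sL/\ell^{k\np1}\sL$ generate). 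These values are computable: at each level $k$ one lifts $q_i$'s braid action on classical generators (as in Lem.~\ref{innerbraiding} and \eql{iso}{isob}) to $\bfC\cap G_{k,\ab}$ using Schur--Zassenhaus on the $\ell'$-classes $\bfC$, and reads off the difference $(\bg_k)q_i - \bg_k\in M_{k\np1}^{\ab}$. Composition with the characteristic quotient $M_{k\np1}^{\ab}\twoheadrightarrow M_1$ already displays enough structure to pass the criterion.

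Finally I would connect this to the eventually Frattini property of Def.~\ref{evenfratt}. The geometric monodromy group $B_{k\np1}$ at level $k\np 1$ differs from $B_k$ exactly by a subgroup of $\Aut(M_{k\np1}^{\ab})$ determined by the range of $\delta_{k\np1}$ modulo $\delta_k$. By the Frattini property, the extension $B_{k\np1}\to B_k$ is an $\ell$-Frattini cover if and only if this incremental image lies in an $\ell$-subgroup, equivalently the linearized cocycle $\delta_\infty\colon H^\circ\to\sL$ has image in $\ell\cdot\sL$ beyond some level $k_0$. Since $\Delta$ is a finitely generated $\bZ_\ell$-submodule of the Noetherian $\bZ_\ell$-lattice $\sL$, Weigel's $(G,\ell)$-Poincar\'e duality in Thm.~\ref{WeigelThm} ensures this image stabilizes in a controllable way, and the check reduces to a finite computation of the $\bZ/\ell[G]$-module generated by the $\delta_1(q_i)\mod\ell$ inside $M_1$.

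The main obstacle is the second step: certifying, from only level-$k_0$ data, that the cocycle $\delta_\infty$ really is a homomorphism into the \emph{lattice} $\sL$ rather than into some non-lifting extension. This requires using Prop.~\ref{existProp}\eql{steppingup}{steppingupc} together with Weigel's theorem to rule out obstructions to lifting $\delta_k$ to $\delta_{k\np1}$ at each level; this is where the hypothesis that we are on an (abelianized) \MT\ -- so the lift invariants vanish all the way up -- is crucial. Once that is in hand, the eventually Frattini test becomes the primitive-recursive check described in the last paragraph.
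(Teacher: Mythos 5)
Your proposal takes a genuinely different route from the paper's proof, and it has gaps that would need filling. The paper's argument is geometric and built directly on the Jacobian: it uses the multiplication-by-$\ell$ unramified cover $\psi_{J,1}\colon J_{\bg}\to J_{\bg}$, pulls it back to $\hat X_0$, selects the component mapping onto $\hat X_1$, iterates, and then reads off the $\ell$-adic lattice from the kernels of the successive Galois groups $G(\hat X_{J,k\np1}/\prP^1_z)\to G(\hat X_{J,k}/\prP^1_z)$; the explicit $H^*$-action comes from the Reidemeister--Schreier construction of generators of the stabilizer $H^*$ of an element of $O_0$. You bypass the Jacobian entirely and try to extract the lattice action combinatorially from the differences $(\bg_k)q-\bg_k$. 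That is a plausible alternative in spirit, but it misses what the Jacobian does for you, and that is precisely where your argument becomes unjustified.

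Concretely: (1) the asserted identification ``the set of lifts of $\bg_0$ at level $k$ is a principal homogeneous space for $\sL/\ell^{k\np1}\sL$'' is not established; lifts of $\bg_0\in\ni(G_0,\bfC)$ to $\ni(G_{k,\ab},\bfC)$ correspond to extensions of $\sM_{\bar\bg,\ab}\to G_0$ to $\sM_{\bar\bg,\ab}\to G_{k,\ab}$, and the universal property of $J_{\bg}$ is exactly what the paper invokes to describe these fibers cleanly -- without it you need a separate argument that the fiber is nonempty and a torsor, and for the right module. (2) The claim that $\delta_\infty$ becomes a \emph{homomorphism} after restricting to $H^\circ$ needs real justification: a $1$-cocycle is a homomorphism only when the coefficient action is trivial, and you supply no reason this holds after your passage to the inner/class stabilizer. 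The paper sidesteps the issue by producing the action geometrically rather than via a cocycle. (3) Your invocation of Weigel's Thm.~\ref{WeigelThm} misreads its role: Weigel ensures the \MT\ is non-empty (that the braid orbit lifts), via the Poincar\'e duality/lift-invariant criterion of Prop.~\ref{existProp}\eql{steppingup}{steppingupc}. Once you have a projective sequence $\bar\bg$ on a \MT, the cocycle lifts automatically; Weigel is not the right tool to certify that $\delta_\infty$ has image in a lattice. Finally, the step from ``image of $\delta_\infty$ lies in $\ell\sL$ eventually'' to ``$B_{k\np1}\to B_k$ is $\ell$-Frattini'' is stated as if obvious, but your framing in terms of ``a subgroup of $\Aut(M_{k\np1}^{\ab})$'' conflates the fiber (a torsor) with a quotient group, and would need to be reworked. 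The paper's route, building the action on the lattice from the Jacobian cover and then applying Schreier generators, avoids each of these issues in one stroke.
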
   

\begin{proof} Use notation as in \eqref{steppingup} for the braid orbits $O_k\le \ni(G_k,\bfC)^\inn$ defining the \MT. Take $\bg_1\in O_1$ to define the cover $\hat X_1\to \hat X_0$ for the first level unramified $\ell$-Frattini cover. By the universal property of the Jacobian variety, the pullback, $\hat X_{J,1}$ of $\hat X_0$ in the Jacobian cover \begin{equation} \label{jacPullback} \text{$\psi_{J,1}$: $J_{\bg} \longmapright {\rm mult. by \ell} {35} J_{\bg}$ is an unramified cover (it may not be connected).}\end{equation} 

Consider the subgroup $H^*\le H_r$ that is fixed on an element of $O_0$, and take the component of $\hat X_{J,1}$ mapping surjectively to $\hat X_1$ which defines the cover $\hat X_1\to \hat X_0$ fulfilling the first step in a \MT. This works inductively for $k\ge 1$, and the kernels of $G(\hat X_{J,k\np1}/\prP^1_j)\to  G(\hat X_{J,k}/\prP^1_j)$ define an $\ell$-adic lattice on which $H^*$ has an orbit in $\ker (\sM_{\bar\bg}\to G)$. Scheier's construction of generators of $H^*$ (using the two standard generators of $H_r$) gives an explicit action on the lattice. The eventually Frattini property of a \MT\ is equivalent to this action being eventually Frattini.  
\end{proof}

 \begin{rem} \label{remSerreCase} In going from the Nielsen classes for $\ni(G_k,\bfC)$ to $\ni(G_{J,k\np1},\bfC)$, as in Serre's \OIT\ \S \ref{weilpairing} with $G_k=(\bZ/\ell^{k\np1})\xs \bZ/2$ and $G_{J,k}=(\bZ/\ell^{k\np1})^2\xs \bZ/2$, because the Schur multipliers of the groups may be different, the components and the Thm.~\ref{WeigelThm} check on \MT s, may come out quite different despite the maps between them. \end{rem} 
 
\newcommand{\End}{{\text{\rm End}}}
 \subsubsection{\ST\ points and their abelian varieties}  \label{cmabelvar} \S\ref{jacobians} gave Riemann's production of the Jacobian. Riemann also gave the construction of a complex algebraic (embeddable in projective space) abelian variety from $\bC^{\geng}/\sL$ when $\sL$ is a $2\geng$ dimensional lattice with  the imaginary part of the matrix of generators of $\sL$ is positive definite. Def.~\ref{schottky} gives the most famous problem, Schottky's, for differentiating general such complex torii from the Jacobians of curves.\footnote{I can't see its use here.}

Serre's \OIT\ with its two types of decomposition groups -- both eventually $\ell$-Frattini -- immediately raises these questions. For each, the tacit assumption is that you would also ask for which Nielsen classes (or if possible, which \MT s) you would expect the answer to manifest.  This section has sufficient information about the Shimura-Taniyama abelian varieties (\ST) to demonstrate why they appear as the appropriate generalization of \CM.  What is, perhaps, surprising is how much they seem to be the {\sl only\/} type of abelian varieties that garner special attention, though I (and in his case, Serre) emphasize those that I am calling \HIT, giving a definition of them dependent on generalizing Serre's modular curve towers to \MT s. 
 
Take $L$ a number field. If complex conjugation $\cmb{\  }: L\to L$ acts nontivially on $L$, then the fixed field $K$ is real, of index 2 and $L=K(\alpha)$ with $\alpha$ and $\cmb{\alpha}$ conjugate. My notation is similar to \cite[\S5.5]{Sh71}, starting with his \S A on (what he calls) \CM\ fields. 
 \begin{defn} \label{commbar} Refer to $L$ as a \CM\ field if all embeddings $\psi:L\to \bC$ are complex ($L$ is totally complex). Then, all embeddings of $K$ in $\bC$ are real, and  all such $\psi\,s$ commute with  $\cmb{\  }$ acting on $L$.\end{defn} 
 
 \begin{lem} Given two \CM\ extensions $L_i/\bQ, i=1,2$, their composite is another; therefore the Galois closure of a \CM\ extension is also \CM. \end{lem}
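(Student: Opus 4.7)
The plan is to exploit Shimura's intrinsic characterization of \CM\ (implicit in Def.~\ref{commbar}): a totally imaginary number field $L$ is \CM\ iff there is an automorphism $c_L\in\Aut(L)$ such that $\sigma\circ c_L=\cmb\sigma$ for every embedding $\sigma:L\hookrightarrow\bC$. Equivalently, complex conjugation on $\bC$ induces the \emph{same} automorphism of $L$ through any embedding, and that automorphism fixes a totally real index-$2$ subfield. The whole argument rests on verifying this intrinsic property for a composite $M=L_1L_2$.

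First I would fix embeddings $L_1,L_2\hookrightarrow\bar\bQ\subset\bC$ and let $M=L_1L_2$. For any embedding $\sigma:M\hookrightarrow\bC$, its restrictions $\sigma_i=\sigma|_{L_i}$ satisfy $\sigma_i\circ c_i=\cmb{\sigma_i}$. Hence complex conjugation $c$ on $\bC$ preserves $\sigma(L_i)$, and therefore preserves $\sigma(M)=\sigma(L_1)\sigma(L_2)$. This gives an involution $c_{M,\sigma}\in\Aut(M)$ with $\sigma\circ c_{M,\sigma}=\cmb\sigma$. The next step is to show $c_{M,\sigma}$ is independent of $\sigma$: restricting the equation $\sigma\circ c_{M,\sigma}=\cmb\sigma$ to $L_i$ and comparing with $\sigma_i\circ c_i=\cmb{\sigma_i}$ yields $c_{M,\sigma}|_{L_i}=c_i$, so $c_{M,\sigma}$ is determined on the generating subfields $L_i$ and hence equals a single $c_M\in\Aut(M)$.

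Next I would verify the remaining \CM\ requirements. Total imaginarity is automatic: every $\sigma:M\hookrightarrow\bC$ restricts to a nonreal embedding of $L_1$, so $\sigma(M)\not\subset\bR$. For the fixed field $M^+=M^{c_M}$, note $[M:M^+]=2$ because $c_M$ is a nontrivial involution (it is complex conjugation under any embedding). For any $\sigma$, $\sigma\circ c_M=\cmb\sigma$ forces $\sigma(M^+)=\mathrm{Fix}(c)\cap\sigma(M)=\bR\cap\sigma(M)\subset\bR$, so $M^+$ is totally real. This completes the composite statement.

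The Galois closure assertion then follows: if $\tilde L\subset\bar\bQ$ is the Galois closure of a \CM\ field $L$, write $\tilde L=\prod_{\sigma}\sigma(L)$ over the finitely many embeddings $\sigma:L\hookrightarrow\bar\bQ$. Each $\sigma(L)$ is \CM\ with intrinsic conjugation $\sigma\circ c_L\circ\sigma^{-1}$, and iterated application of the composite statement gives $\tilde L$ is \CM. The only delicate point in the whole argument---what I expect to be the main obstacle---is pinning down that the involutions on the $L_i$ glue to a single automorphism of $M$ rather than a family of embedding-dependent ones; this is precisely where Shimura's characterization (that $c_i$ is intrinsic, commuting with \emph{every} embedding) is essential, and routine verifications on composites would be insufficient without it.
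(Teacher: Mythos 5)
Your proof is correct and follows the same route as the paper: factor embeddings of $L_1L_2$ through embeddings of $L_1$ and $L_2$, check that complex conjugation commutes with every embedding of the composite, then obtain the Galois closure as an iterated composite of the conjugates $\sigma(L)$. The only difference is that you make explicit what the paper's one-line ``Check:'' leaves implicit, namely that the induced conjugations $c_{M,\sigma}$ glue to a single well-defined $c_M\in\Aut(M)$ because they are determined by their restrictions to the generating subfields $L_i$.
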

 
 \begin{proof} Embeddings of $L_1\cdot L_2$ into $\bC$ are given by compositing separate embeddings of $L_1$ and $L_2$. Check: $\cmb \ : L_1\cdot L_2 \mapsto L_1\cdot L_2$ therefore commutes with any embedding of $L_1\cdot L_2$. The Galois closure of $L_1/\bQ$ is the composite of all the conjugates of $L_1/\bQ$ with each of form $\psi(L_1/\bQ)$, $\psi$ an embedding in $\bC$.  So it satisfies Def.~\ref{commbar}. \end{proof} 
 
 Shimura constructs abelian varieties $A=\bC^n/\sL$ -- a complex torus -- with $\theta: L \mapsto \End_{\bQ}(A)$ with $2n=\deg(L/\bQ)$. He called them  \CM\ type; we will often use \ST. 
 
 \begin{edesc} \label{ST} \item \label{STa} There is a divisor, $D$, on $A$ for which multiples of $D$ have a linear system that gives an embedding of $A$ in projective space.
 \item \label{STb}  \cite[ (5.5.10)]{Sh71} uses the distinct complex embeddings, $\row \phi n$, of $L$ and their conjugates;  to define $A$ from $L_{\bR}=L\otimes_{\bQ}\bR$ modulo a $\bZ$ lattice in $L$ (e.g. integers of $L$). 
 \item \label{STc} \cite[p. 258-259]{Sh71} reminds of Riemann's Theorem saying a complex torus has the structure of an abelian variety (as in \eql{ST}{STa} if and only if there is a {\sl Riemann form}: \begin{itemize} \item  an alternating and $\bR$-valued bilinear  pairing, $E(\bx,\by)$, $\bx,\by\in \bC^n$; and 
 \item $E$ takes integer values on $\sL\times \sL$ with $E(\bx,\sqrt{-1}\by)$ symmetric and positive definite.\end{itemize}
 \end{edesc}
 
Riemann constructed the $\Theta$-function from \eql{ST}{STc}. After normalizing \cite[(5.5.15]{Sh71} gives a formula for a Riemann form:  \eql{ST}{STb} is linear in $\row \phi n$ and its complex conjugates. 
 
\begin{edesc} \label{STquest} \item  \label{STquesta} How would you detect whether a component $\sH_0\subset \sH(G,\bfC)^{\inn,\red}$ contains a dense set of points whose Jacobians are Shimura-Taniyama? 
\item  More generally: As an example related to Schottky's problem (Quest.~\ref{schottky}) and to the Coleman-Oort conjecture, when is an \ST\ variety the Jacobian of a curve? \end{edesc}  
 
 \subsubsection{Using Wohlfahrt's Theorem, \cite{Woh64}} \label{WohlfahrtThm}  We noted that most reduced Hurwitz spaces with $r=4$ (appearing after projective normalization as covers of $\prP^1_j$) are {\sl not\/} modular curves. Relevant to discussing \ST\ varieties, this section, based on computing cusps, shows how to give an example. 
 
  \begin{thm} \label{andres}  \cite[Prop.~6.12]{lum} is the case $\ell=2$ of \S\ref{heiscase}, $\sH(A_4,\bfC_{\pm 3^2})^{\inn,\rd}$ has two components; called there \HM\ and \DI\ components, but labeled here as $\sH_+$ and $\sH_-$ corresponding to their lift invariant values being $\pm 1$. Each is embedded in $\prP^1_j\times\prP^1_j$, but neither is a modular curve.  Andr\'e's Thm. \cite{An98} says there are no accumulation points in either component, off the cusps, whose Jacobians are \ST. Ex.~\ref{COvsAO} notes only $\sH_+$ supports a \MT, and so is relevant to the Coleman-Oort Conjecture, while $\sH_-$ is still relevant to Andr\'e-Oort. \end{thm}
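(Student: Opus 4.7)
The plan is to verify each assertion in turn, most of which rest on the machinery already assembled in the excerpt, with the bulk of the genuine work being the enumeration of Nielsen classes and the cusp-width calculation needed to apply Wohlfahrt.

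First I would enumerate $\ni(A_4,\bfC_{\pm 3^2})^\inn$ by hand, then run Thm.~\ref{cosetbr} on it. The relevant representation cover is $\hat\psi_2\colon \SL_2(\bF_3)\to A_4$ with $\ker(\hat\psi_2)=\bZ/2$, and since the classes $\C_{\pm 3}$ are $\ell'=2'$, Schur--Zassenhaus lifts them uniquely into $\SL_2(\bF_3)$, so Def.~\ref{liftinv2} gives a braid invariant $s_{\hat\bg}=\tilde g_1\tilde g_2\tilde g_3\tilde g_4\in \{\pm 1\}$. Any \HM\ rep $(g,g^{-1},h,h^{-1})$ has $s_{\hat\bg}=+1$ by a direct check, so at least one braid orbit sits in the $+$ class. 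To produce the $-$ orbit I would take a non-\HM\ rep (e.g.\ $(g_1,g_2,g_3,g_4)$ with no inverse pairings adjacent), compute its lift in $\SL_2(\bF_3)$, and verify $s=-1$; since the lift invariant is constant on braid orbits, this forces at least two orbits. Applying the $q_2$ and $\sh$ generators of $H_4$ (as in \eql{iso}{isob}) modulo $\sQ''$ to exhaust both orbits shows the count is exactly two, giving $\sH_\pm$.

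Next, the pair of natural $j$-line maps embedding $\sH(A_4,\bfC_{\pm 3^2})^{\inn,\red}$ in $\prP^1_j\times\prP^1_j$ comes from the two ways to partition $\bfC_{\pm 3^2}$ into opposite-sign pairs: the canonical reduced map $\sH^{\inn,\red}\to J_4\simeq \prP^1_j$ of Prop.~\ref{j-Line}, composed with the two distinct $\sh$-compatible projections. To show neither component is a modular curve, I would compute the cusp data via Prop.~\ref{gamOrbit}: compute the $\gamma_0'$ and $\gamma_1'$ fixed-point counts and the $q_2$-orbit widths on each reduced orbit, feed them into the Riemann--Hurwitz formula \eqref{RHOrbitEq} to get the genus of $\sH_\pm$ as $j$-line covers, and then invoke Wohlfahrt's theorem (\S\ref{WohlfahrtThm}): any modular curve $X(\Gamma)$ has level equal to the lcm of its cusp widths, and this forces rigid numerical constraints on the triple (degree, genus, cusp widths). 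Showing the computed data for $\sH_\pm$ violates these constraints is the main obstacle and the step requiring the most care; with that in hand, Andr\'e's theorem \cite{An98} applies verbatim to yield only finitely many \ST\ accumulation points off the cusps in each component.

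Finally, the \MT\ dichotomy is immediate from Princ.~\ref{liftinvprinc}: since $\SM_{A_4,2}=\bZ/2$, the criterion says a \MT\ exists over $\sO$ iff $s_{\hat\bg}=+1$ (trivial in $\SM_{A_4,2}$). Thus $\sH_+$ carries a \MT\ and $\sH_-$ is obstructed at level $0$, so only $\sH_+$ is a candidate for the Coleman--Oort discussion, while both $\sH_\pm$ remain within the Andr\'e--Oort framework since Andr\'e's theorem depends only on non-modularity of the base component, not on \MT\ liftability.
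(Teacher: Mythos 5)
Your overall skeleton is right (lift invariant splits the Nielsen class into two braid orbits; compute cusp data; apply Wohlfahrt; invoke Andr\'e; apply the lift‑invariant obstruction for the \MT\ dichotomy), but there is a concrete error in the middle step.

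Your construction of the embedding in $\prP^1_j\times\prP^1_j$ is not what the paper does, and I don't see how to make your version work. You propose to get the second $j$‑coordinate by choosing one of ``the two ways to partition $\bfC_{\pm 3^2}$ into opposite‑sign pairs'' and composing with ``two distinct $\sh$‑compatible projections.'' But reduced Hurwitz spaces with $r=4$ come with a single canonical map to $\prP^1_j$ (Prop.~\ref{j-Line}), and there is no second $\sh$‑compatible projection hiding in the braid structure. The paper's second coordinate is geometric, not braid‑theoretic: the absolute covers $X_\bp\to\prP^1_z$ in $\ni(A_4,\bfC_{\pm 3^2})^\abs$ have genus $1$ (Riemann--Hurwitz: $2(4+\geng-1)=4\cdot 2$ gives $\geng=1$), and the map $\bp\mapsto j(\Pic(X_\bp)^{(0)})$ sends $\bp$ to the $j$‑invariant of the Jacobian of its fiber curve. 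Together with $\bp\mapsto j(\bp)$ (the image of the configuration point), this gives the birational embedding in $\prP^1_j\times\prP^1_j$ -- and it is precisely this correspondence between elliptic‑curve $j$‑invariants that makes Andr\'e's theorem applicable in the next step. Your version does not produce such a correspondence, so Andr\'e would not apply as you wrote it.

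The Wohlfahrt step is also underspecified in your writeup: you say ``the step requiring the most care'' and leave it there. The paper supplies the actual numerical input (via the \sh‑incidence matrix from \cite{Fr10}): the reduced orbits have degrees $9$ and $6$ over $U_j$, cusp widths $\{4,3,3\}$ with lcm $12$ (for $\sH_+$) and $\{4,1,1\}$ with lcm $4$ (for $\sH_-$), both of genus $0$, and then the Wohlfahrt/index calculation against $\PSL_2(\bZ/12)$, $\PSL_2(\bZ/4)$ via \eqref{modct}. You would need to produce this cusp data (or cite Prop.~\ref{A4L0} and the incidence computation) to close the argument; the rest of your outline (lift invariant enumeration and the \MT\ obstruction via Princ.~\ref{liftinvprinc}) tracks the paper.
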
 

The remainder of this section proves Thm.~\ref{andres}. 
For $ \Phi^\rd:  \sH^\rd\to U_\infty$, a reduced Hurwitz space covers, let $\Gamma  \le \SL_2(\bZ)$ define it as an upper half-plane 
quotient $\bH/\Gamma$ (\cite[\S 2.10]{BFr02}). Let  $N_\Gamma$ be the least common
multiple (lcm) of its cusp widths;  the lcm of the ramification orders of points of the compactification 
$\bar \sH^\rd$ over $j=\infty$ (lcm of the orders of $\gamma_\infty$ on reduced Nielsen classes, \S\ref{genformr=4}).  

\begin{thm}[Wohlfahrt] \label{wohlfahrt} $\Gamma$ is congruence if and only if it contains the congruence subgroup, $\Gamma(N_\Gamma)$, defined by
$N_\Gamma$.\end{thm}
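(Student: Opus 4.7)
The plan is to split the equivalence into its two directions, the first being essentially tautological. If $\Gamma \supseteq \Gamma(N_\Gamma)$, then $\Gamma$ contains a principal congruence subgroup, so it is congruence by definition. Everything nontrivial lives in the converse.

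For the converse, suppose $\Gamma$ is congruence, so there is some $M$ with $\Gamma \supseteq \Gamma(M)$. First I would verify $N_\Gamma \mid M$. For each cusp $\alpha = g\cdot\infty$ of $\Gamma$ (with $g\in \SL_2(\bZ)$), the stabilizer of $\alpha$ in $\Gamma(M)$ is generated (modulo $\pm I$) by $gT^M g^{-1}$, because $\Gamma(M)$ is normal in $\SL_2(\bZ)$ and $T^M$ generates the stabilizer of $\infty$ in $\Gamma(M)$. Since $\mathrm{Stab}_\Gamma(\alpha) \supseteq \mathrm{Stab}_{\Gamma(M)}(\alpha)$, the width $w_\alpha$ of $\Gamma$ at $\alpha$ divides $M$. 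Taking the lcm over cusps gives $N_\Gamma \mid M$.

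Next, the heart of the argument: to conclude $\Gamma(N_\Gamma) \le \Gamma$, I would use the key lemma that $\Gamma(N)$ equals the normal closure $\langle\!\langle T^N\rangle\!\rangle$ of $T^N$ in $\SL_2(\bZ)$. Granting this, each generator $gT^{N_\Gamma}g^{-1}$ of $\Gamma(N_\Gamma)$ is a power of the parabolic generator $gT^{w_\alpha}g^{-1}\in\Gamma$ at the cusp $\alpha = g\cdot\infty$, because $w_\alpha \mid N_\Gamma$; hence $gT^{N_\Gamma}g^{-1}\in \Gamma$ for every $g\in \SL_2(\bZ)$, and the conclusion follows.

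The main obstacle is the normal closure lemma. I would prove it by showing the induced surjection $\pi\colon \SL_2(\bZ)/\langle\!\langle T^N\rangle\!\rangle \twoheadrightarrow \SL_2(\bZ/N)$ is injective. The target $\SL_2(\bZ/N)$ is generated by the classes of the elementary matrices $\begin{pmatrix}1&a\\0&1\end{pmatrix}$ and $\begin{pmatrix}1&0\\b&1\end{pmatrix}$, and each of these lifts to a conjugate of a power of $T$ inside $\SL_2(\bZ)$ (using $S = \begin{pmatrix}0&-1\\1&0\end{pmatrix}$ to swap upper- and lower-triangular elementaries). Thus every element of $\Gamma(N)$, being in the kernel of reduction mod $N$, can be expressed (using the standard Euclidean/elementary reduction in $\SL_2$ over $\bZ$) as a product of conjugates of $T^N$, proving $\Gamma(N) = \langle\!\langle T^N\rangle\!\rangle$. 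Once this lemma is in hand, the chain $N_\Gamma \mid M$ plus the normal-generation argument closes the loop.
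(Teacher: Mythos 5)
The first half of your argument is fine: conjugating the stabilizer of $\infty$ in the normal subgroup $\Gamma(M)$, and comparing with $\mathrm{Stab}_\Gamma$ at each cusp, correctly gives $w_\alpha\mid M$ for every cusp and hence $N_\Gamma\mid M$. (The paper itself cites Wohlfahrt~[Woh64] and gives no proof, so there is nothing in the paper to compare against; this is an assessment of your argument on its own terms.)

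The second half collapses because the ``key lemma'' is false. The normal closure $\langle\!\langle T^N\rangle\!\rangle$ of $T^N$ in $\SL_2(\bZ)$ is \emph{not} $\Gamma(N)$ once $N\ge 6$. Pass to $\PSL_2(\bZ)=\langle S,T\mid S^2=(ST)^3=1\rangle$: imposing $T^N=1$ gives the $(2,3,N)$ triangle group, which is Euclidean for $N=6$ and hyperbolic for $N\ge 7$, hence infinite in either case. So $\langle\!\langle T^N\rangle\!\rangle$ has \emph{infinite} index in $\SL_2(\bZ)$ for $N\ge 6$, while $\Gamma(N)$ has finite index $|\SL_2(\bZ/N)|$; they cannot coincide. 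Your proof sketch of the lemma is a non sequitur at precisely this point: knowing that $\SL_2(\bZ/N)$ is generated by elementary classes, each of which lifts to a conjugate of a power of $T$, does not imply that the kernel $\Gamma(N)$ of reduction is generated by conjugates of $T^N$ -- the Euclidean reduction of a matrix $\gamma\in\Gamma(N)$ writes $\gamma$ as a word in $T$ and $S T S^{-1}$ whose individual exponents need not be divisible by $N$; only the final product lies in $\Gamma(N)$. Indeed, the failure of $\SL_2(\bZ)$ to have the congruence subgroup property is exactly the statement that $\langle\!\langle T^N\rangle\!\rangle\subsetneq\Gamma(N)$ in general. Wohlfahrt's actual argument does not generate $\Gamma(N)$ from $T^N$ alone; it shows (after a Chinese-remainder reduction to the case where $M$ and $N$ share the same prime support, and with a suitable bound like $M\mid N^2$) that $\Gamma(N)$ is generated by $\Gamma(M)$ \emph{together with} the conjugates of $T^N$, i.e., $\Gamma(N)=\Gamma(M)\cdot\langle\!\langle T^N\rangle\!\rangle$. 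That joint generation is the nontrivial arithmetic input you are missing; once you have it, your conclusion goes through since $\Gamma$ contains both $\Gamma(M)$ and, by the cusp-width divisibility $w_\alpha\mid N_\Gamma$, every $gT^{N_\Gamma}g^{-1}$.
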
  Using Thm.~\ref{wohlfahrt} to  show (some) $j$-line covers aren't  modular.  Compute $\gamma_\infty$ orbits on $\ni^\rd$. Then, check their distribution among $\bar
M_4=\lrang{\gamma_\infty,\sh}$ orbits ($\sH^\rd$ components). For each $\sH^\rd$ component
$\sH'$,  check  the lcm of $\gamma_\infty$ orbit lengths to compute $N'$, the modulus as if it were a modular curve. Then, see whether a
permutation representation of $\Gamma(N')$ could produce 
$\Phi': \sH'\to \prP^1_j$, and the type of cusps now computed.   

Use notation of \S\ref{redfinmod}. 
\cite[Prop.~3.5]{Fr10} has the \sh-;incidence diagram on the Nielsen class $\ni(A_4,\bfC_{\pm 3})^{\inn,\rd}$ with the detailed calculations and explanation for it in \cite[\S 3.3.2]{Fr10}. Reduced classes are given by modding out by $\sQ''$ on inner Nielsen classes. The $\gamma_\infty$ orbits appear in two blocks with those in the first block labeled $\cO_{1,1}^4,  \cO^3_{1,3}, \cO^3_{3,1}$ and those in the second block labeled $\cO^4_{1,4}, \cO^1_{3,4}, \cO^1_{3,5}$: with each labeling along the top and left side. The integers in each square matrix indicate a pairing between two such orbits $\cO$ and $\cO'$ given by computing the cardinality of the intersection of $\cO$ and the shift applied to $\cO'$. Because $r=4$, these are square matrices.  

The blocks correspond to lift invariant values of $\pm 1$. The first contains the \HM\ reps. whose orbits are $\cO_{1,1}^4$ and $\cO_{3,1}^3$. The second block contains  the \DI\ element whose cusp is labeled $\cO_{1,4}^4$. The superscripts are the lengths of the orbits, or the cusp widths, and the degree of the cover is given by summing the cusp widths in a block. 
Note: Neither of $\sH_0^{\inn,\rd,\pm}$ have reduced fine moduli. The Nielsen braid orbit for
$\sH_0^{\inn,\rd,-}$ (resp.~$\sH_0^{\inn,\rd,+}$) fails \eql{failfm}{failfma} (resp.~and also \eql{failfm}{failfmb}): 
\begin{edesc} \label{failfm} \item  \label{failfma} $\sQ''$ has length 2 (not 4 as required in \eql{redfinemod}{redfinemoda}) orbits; and  
\item  \label{failfmb} $\gamma_1$ has a fixed point (contrary to
\eql{redfinemod}{redfinemodb}). \end{edesc} 

This gives all the data required for applying the genus formula of \eqref{RHOrbitEq}. 
 
\begin{prop} \label{A4L0}  The two
$\bar M_4$ orbits on $\ni(A_4,\bfC_{\pm 3^2})^{\inn,\rd}$, $\ni^+_0$ and $\ni^-_0$, having respective
degrees 9 and 6 over $U_j$, and their normalized completion both have genus 0.  Both have
natural covers 
$\bar
\mu^{\pm}:
\bar \sH^{\inn,\pm}_0\to \prP^1_j$ by completing the map -- using that both are families of genus 1 curves: \begin{equation} \label{ja} \bp\in \sH^{\inn,\rd,\pm}_0\mapsto
\beta(\bp)\eqdef j(\Pic(X_\bp)^{(0)})\in \prP^1_j.\end{equation} Then, this case's identification of inner and absolute reduced classes gives 
\begin{equation} \label{jb} \bp\in
\sH^{\inn,\rd,\pm}_0\mapsto (j(\bp),j(\Pic(X_\bp)^{(0)})),\end{equation} a birational embedding of
$\bar \sH^{\inn,\rd, \pm}_0$ in
$\prP^1_j\times \prP^1_j$. Neither is a modular curve.\end{prop}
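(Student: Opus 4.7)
The plan is to read all needed combinatorial data from the $\sh$-incidence matrix on $\ni(A_4,\bfC_{\pm 3^2})^{\inn,\rd}$ that the proposition references, and then turn a modest number of cranks: Riemann--Hurwitz \eqref{RHOrbitEq} for the genus, the Jacobian $j$-invariant map for the embedding, and Wohlfahrt (Thm.~\ref{wohlfahrt}) for non-modularity.

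First I would separate the two $\bar M_4$ orbits $\ni_0^\pm$ by lift invariant $\pm 1$ using the block decomposition already spelled out before the proposition: the $+$-block consists of the $\gamma_\infty$-orbits $\cO^4_{1,1},\cO^3_{1,3},\cO^3_{3,1}$ and the $-$-block of $\cO^4_{1,4},\cO^1_{3,4},\cO^1_{3,5}$. Because we are post-reduction, the superscripted widths must be corrected by the reduced-orbit factors $f_{u,v}$ of Def.~\ref{cardorbsh}; using the listed $\sQ''$-stabilizers (the failure of b-fine moduli recorded in \eqref{failfm}), the sum $\sum v/f_{u,v}$ equals $9$ and $6$ respectively, confirming the stated degrees over $U_j$.

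Next I would count $\gamma_0$- and $\gamma_1$-fixed points on each reduced orbit directly from the classical-generator description of $q_1q_2$ and $q_1q_2q_1$ acting on 4-tuples of $3$-cycles of $A_4$ in the classes $\bfC_{\pm 3^2}$. Plugging $|\sO'|$, $\tr(\gamma_0')$, $\tr(\gamma_1')$, and the cusp sum into \eqref{RHOrbitEq} is then a direct arithmetic check; for $\ni_0^+$ one gets $2(9+g-1)=\tfrac{2(9-\tr\gamma_0')}{3}+\tfrac{9-\tr\gamma_1'}{2}+(4+3+3)-1$ and for $\ni_0^-$ the analogous $6$-degree version, both yielding $g=0$ after orbit shortening. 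This finishes the first assertion.

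For the birational embedding, I use that every cover in the Nielsen class has Galois closure of genus $1$: four branch points with $3$-cycle inertia in $A_4$ gives, from Riemann--Hurwitz, $2(12+\hat g-1)=4\cdot 8$, so $\hat g=1$. Thus the assignment $\bp\mapsto j(\mathrm{Pic}^{(0)}(X_\bp))$ in \eqref{ja} is a morphism $\bar\sH_0^{\inn,\rd,\pm}\to\prP^1_j$, defined because we are already over the fine-moduli inner locus (cf.\ Prop.~\ref{inn-absmdf}). Pairing this with the structure map $\bp\mapsto j(\bp)$ into $U_\infty\subset\prP^1_j$ gives the map of \eqref{jb}. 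Birationality is a generic-fiber count: the first projection is finite of degree $9$ (resp.\ $6$), and the braid action shows that two distinct reduced Nielsen classes over the same $j$-value give non-isomorphic genus-$1$ Galois closures for a Zariski-open set of $j$'s, so the pair separates points generically.

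The main obstacle is the non-modularity claim, and here is where I would lean on Wohlfahrt. Let $\Gamma^\pm\le\PSL_2(\bZ)$ be the subgroup with $\bH/\Gamma^\pm\cong\bar\sH_0^{\inn,\rd,\pm}$ from \eql{sQact}{sQactd}. Read off $N_{\Gamma^+}=\mathrm{lcm}(4,3,3)=12$ and $N_{\Gamma^-}=\mathrm{lcm}(4,1,1)=4$ from the corrected cusp widths. If $\Gamma^\pm$ were congruence, Thm.~\ref{wohlfahrt} would force $\Gamma^\pm\supseteq\Gamma(N_{\Gamma^\pm})$; so $\bar\sH_0^{\inn,\rd,\pm}$ would appear as a quotient of $X(N_{\Gamma^\pm})$ with the correct index, cusp widths, and elliptic fixed-point data. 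For $\Gamma^-$ the short argument is cleanest: $\Gamma(4)$ has no elliptic elements of order $2$, so every intermediate congruence subgroup is torsion-free at order $2$; yet \eql{failfm}{failfmb} exhibits a $\gamma_1$ fixed point on $\ni_0^-$, contradiction. For $\Gamma^+$ one enumerates the finitely many subgroups between $\Gamma(12)$ and $\PSL_2(\bZ)$ of the right index and checks that none has the cusp partition $(4,3,3)$ together with the required counts of order-$2$ and order-$3$ elliptic points; alternatively, compare the genus formula for each candidate with $g=0$ and the precise ramification signature above $\{0,1,\infty\}$ to rule them out. Packaging these two checks gives the non-modularity half and completes the proposition.
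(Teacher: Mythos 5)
Your re-derivation of the degrees, genera, and the Jacobian $j$-map embedding is consistent with the preceding discussion that Prop.~\ref{A4L0} draws on (the $\sh$-incidence data, \eqref{RHOrbitEq}, and the genus-1 computation from Ex.~\ref{nofine}); the paper's proof in fact treats all of that as already established and only argues non-modularity, so you are just showing the work the paper elides.

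The non-modularity argument is where you diverge from the paper, and your replacement has two concrete gaps. First, the inference ``$\Gamma(4)$ has no elliptic elements of order $2$, so every intermediate congruence subgroup is torsion-free at order $2$'' is false: torsion-freeness does not pass upward. Any $\Gamma$ with $\Gamma(N)\le\Gamma\le\PSL_2(\bZ)$ may well contain elliptic elements of its own ($\PSL_2(\bZ)$ itself is the extreme case), so exhibiting a $\gamma_1$ fixed point gives no contradiction with $\Gamma\supseteq\Gamma(4)$. Second, even if the inference were valid, you have attached the $\gamma_1$ fixed point to the wrong component: per \eqref{failfm} the paper says $\sH_0^{\inn,\rd,-}$ fails only \eql{failfm}{failfma} (orbit shortening under $\sQ''$), while it is $\sH_0^{\inn,\rd,+}$ that also fails \eql{failfm}{failfmb} ($\gamma_1$ has a fixed point). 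So the clean case you wanted for $\Gamma^-$ does not exist. Your fallback for $\Gamma^+$ (enumerating all subgroups between $\Gamma(12)$ and $\PSL_2(\bZ)$ of index $9$ with cusp partition $(4,3,3)$ and the right elliptic signature) is a coherent plan, but it is not carried out, and it is a substantially heavier computation than what the paper invokes.

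The paper's own route after Wohlfahrt is a one-step numerical check: if $\Gamma^\pm$ were congruence it would contain $\bar\Gamma(N_{\Gamma^\pm})$, so $\PSL_2(\bZ/N_{\Gamma^\pm})$ would have a subgroup whose index equals the stated degree over $\prP^1_j$, and the paper then claims the required divisibility of $|\PSL_2(\bZ/N)|$ by that degree fails (using the algorithm \eqref{modct} to compute the order). That is a cheaper check than either of your branches. If you want to repair your argument in the paper's spirit, drop the torsion claim entirely, compute $|\PSL_2(\bZ/4)|$ and $|\PSL_2(\bZ/12)|$ directly via \eqref{modct}, and compare against the indices; if a bare Lagrange count does not suffice, the next cheapest obstruction is matching the cusp-width partition against the possible cycle types of $T=\smatrix 1 1 0 1$ on cosets of a subgroup of $\PSL_2(\bZ/N)$ of that index, rather than invoking torsion inheritance, which is simply not a theorem.
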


\begin{proof} The only point not proved is that neither is a modular curve. In the case of $\sH_0^{\inn,\rd,+}$ (resp.~$\sH_0^{\inn,\rd,-}$)) Thm,~\ref{wohlfahrt} says $\PSL(\bZ/12)$ (resp.~$\PSL(\bZ/4)$ would have an index 9 (resp.~6) subgroup, and that index would divide the order of the group. \eqref{modct} is an algorithm for computing the order of $\GL_n(\bZ/N)$ from which you see  we don't have $9| |\PSL(\bZ/12)|$, et. cet.
\begin{edesc} \label{modct} \item From linear algebra the determinant, $D_M$, of a matrix $M$ with entries in $\bZ/N$ is invertible if and only if the columns of the matrix generate the $\bZ/N$ module.
\item Chinese remainder theorem: $D_M$ is invertible if and only if it is invertible modulo each prime power dividing $N$, reverting the calculation to the case $N$ is a prime power.
\item  \label{modctc} With $N=p^u$, use that $(\bZ/p^u)^k\to (\bZ/p)^k$ is a Frattini cover. \end{edesc} 
Starting with \eql{modct}{modctc}, use the standard algorithm for counting invertible transformations of basis vectors over a finite field, and go back up the ladder of \eqref{modct}.  \end{proof}
 
\begin{rem}[Conjectures related to \CM\ jacobians from Wikipedia] \label{AOB}  Shimura wrote many papers on variants of the Siegel Upper-half space, say \cite{Sh70}. Variants of these conjectures use Shimura varieties. Appropriate for us are these statements for sufficiently large $\geng$:
\begin{guess}[Coleman-Oort]  Coleman: Only finitely many smooth projective curves of genus $\geng$ have  Jacobians of \ST\ type. Oort generalization:  The {\sl Torelli locus\/} -- of abelian varieties of dimension $\geng$ -- has no special subvariety of dimension $> 0$ that intersects the image of the Torelli mapping in a dense open subset. 

For properties of the relation between the space of Jacobians of curves and the moduli space of curves, see discussions of the Torelli map between them \cite{tor}. \MT s, with its emphasis on describing spaces using the braid action on Nielsen classes is not asking the same kind of questions. For example, while our conditions for fine moduli are stated group theoretically, the Torelli type conditions statements are about general loci where fine moduli doesn't hold. \end{guess} \end{rem}

\section{Hurwitz space Components from  Thm.~\ref{cosetbr}} \label{examples}  All our example series of applying Thm.~\ref{cosetbr} {\sl start\/} with one in the series having $\sH(G,\bfC)^\abs$ a space of genus zero covers, even the first (Ex.~\ref{nofine}) modular curve-related example. 
\S\ref{bkliftinv}  explains what we need from Frattini covers, Schur multipliers and the lift invariant. All cases have the order of the Schur multiplier of $G$, $\SM_G$, prime to $N_{\bfC}$ (the \lcm\ of orders of elements in $\bfC$). They have nontrivial, but cyclic, Schur multipliers, for which we understand the moduli definition fields of Schur-separated components.  There is a prime, $\ell$ (explained in each example) related to a specific system of groups.
We assume $G$ is $\ell$-perfect (\eqref{lperfect}; e.g. not abelian).

\S\ref{weilpairing}  connects the lift invariant to the {\sl Weil Pairing\/} as it arises in the moduli definition field of spaces appearing in Serre's \OIT. \S\ref{absinnAn} puts an umbrella over the literature (from Serre, Liu-Osserman, and the author) on Hurwitz spaces starting with $G$ an alternating group, especially where covers in the Nielsen class $\ni(G,\bfC)^\abs$ have genus 0.  Examples show cusps on general reduced Hurwitz spaces can have more intricate structures than they do on modular curves. 

\S\ref{absinnAn} calculates in $A_n$ by multiplying permutations. In \cite{BFr02}, we operated from the right on letters of a permutation representation.  Here, though, we operate from the left. 
Example: In considering the middle product of $\HM_1$ in Lem.~\ref{outerinn}, the result is $$(1\,2\,\dots\,{\scriptstyle\frac{n\np1}2})\cdot ({\scriptstyle \frac{n\np1}2\,\frac{n\np3}2}\,\dots\,n)=(1\,2\,\dots\,n\nm1\,n).$$  Operating from the left on integers, that is the correct product, but not from the right. 

Recall previous notation. An {\sl absolute Hurwitz space component\/}, $\sH'$, corresponds to \begin{equation} \label{absequiv} \text{ a braid orbit, $O'$,  in $\ni(G,\bfC)^\abs\eqdef \ni(G,\bfC)/N_{S_n}(G,\bfC)$.}\end{equation}  Then, $\Psi_{\abs,\inn}: \sH(G,\bfC)^\inn \to \sH(G,\bfC)^\abs$ sends an inner   component $\sH\subset \sH(G,\bfC)^\inn$ -- corresponding to an inner braid orbit $O$ lying above $O'$ -- by restriction $\sH\to \sH'\subset \sH(G,\bfC)^\abs$. 

 \S \ref{heiscase}, with $G=(\bZ/\ell^{k\np1})^2\xs\bZ/3$, starts with a procedure (\S\ref{nonbraidcomps}) for finding the Schur multiplier (and so a non-trivial lift invariant) when $G$ is an $\ell$-split group.\footnote{The procedure uses two brilliant theorems from modular representation theory, Heller's and Jennings, which I learned to use from interacting with the authors of \cite{Be91}, \cite{S05} -- The author was my student at UCI. He started with a hint based on the main example of \cite{BFr02} -- and \cite{Se88}.}   Indeed, this case and that of \S\ref{weilpairing} appear similar: the kernels of the splittings have the same $\ell$-groups, $H=(\bZ/\ell^{k\np1})^2$.  But the Hurwitz space components that arise are different, and the seemingly trivial $H$ is deceiving. 
 
In this case we start to compare decomposition groups in a \MT\ with the Coleman-Oort Conjecture, Rem.~\ref{AOB}, and Serre's \OIT\  by asking about the two types, \HIT\ and \ST\ decomposition groups in the \MT s that arise. 

\eqref{psig} defines $\sM_{\bg}$ as the extension of $G$ given by branch cycles $\bg$ with a tail, the $\ell$-adic cohomology of $\hat X_\bg$. Now figure the relation with the quotient of the Universal abelianized $\ell$-Frattini cover which has a lattice used to define a \MT\ whose level 0 inner space contains $\hat X_\bg\to \prP^1_z$. The goal is to find braid orbits of all homomorphisms of $\sM_{\bg}\to G$ to ${\tG \ell}_{\ab}\to G$, with lattice kernel $L_{\ab, G}$ for abelianized \MT s. The $k$ level modules $\ell^kL/\ell^{k\np1}L$ are the same as $G$ modules and equal to $M_1$.

But there are other Frattini quotients of ${\tG \ell}_{\ab}$, extensions of $G=G_0$,  that you can use in place of  ${\tG \ell}_{\ab}$, on which the braid group acts. These arise by taking any $\bZ/\ell[G]$ quotient, $M_{1,q}$, of $M_1$, forming this at each level $k$, giving the extension $L_{\ab,G,c} \to {\tG \ell}_{\ab,c}\to G$ which inherits all the branch cycle properties of ${\tG \ell}_{\ab}$.  

We describe the $G=G_0$ lattice tails, $\sL_{\ab,G,q}$, in our examples, by listing the modules $M_{1,q}$: 
\begin{edesc} \label{mtcases} \item \label{mtcasesa} Serre's Case \S\ref{weilpairing} : $G_0=(\bZ/\ell^{k\np1})^2\xs \bZ/2$, $M_{1,q}=(\bZ/\ell)^2$.
\item \label{mtcasesb} Prop.~\ref{2'-2cusp}: $G_0=A_n$, $\ell=2$, $\bfC_{4\cdot \frac{n\np1}2}$. For $n=5$, $M_{1,q}$ has Loewy display $V_2\oplus V_2\mapsto \one_{A_5}$.\footnote{We know this for a few other values of $n\ge 4$, only. See \S~\ref{WohlfahrtThm} for $n=4$.}
\item \label{mtcasesc} \S\ref{heiscase}: $G_0=(\bZ/\ell^{k\np1})^2\xs \bZ/3$, $M_{1,q}=(\bZ/\ell)^2$.
\end{edesc} 
 The reduced Hurwitz space components, all of dimension 1, are modular curves only in the case \eql{mtcases}{mtcasesa}.  For all cases, the modules $M_1$  from the lattice  kernel of ${\tG \ell}_{\ab} \to G_0$ are indecomposable $G_0$ modules. As $G_0$ {\sl quotients\/} of $M_1$,  they are decomposable  for case \eql{mtcases}{mtcasesa} and for \eql{mtcases}{mtcasesc} when $\ell\equiv 1 \mod 4$. \cite[\S II.B]{Fr95} applies Heller's construction,  using projective indecomposable modules corresponding to the irreducible modules for $\bZ/\ell[G_0]$. Almost a formula for $M_1$: \cite[Proj.~Indecomposable Lem. 2.3 and \S II.C]{Fr95}, except it is difficult to compute projective indecomposables. 
 
 \cite[\S II.C]{Fr95} on the case $p=2$ lists the four simple $\bar \bF_2[A_5]$ modules: $\one_G$, reduction $\mod 2$ of the degree 4 summand of the standard representation and the two conjugate -- over $\bF_4$ -- adjoint representations using that $A_5=\SL_2(\bF_4)$. This gives the second Loewy layer of $M_1$, and shows $G_1\to G_0$ has kernel a 5-dimensional $\bZ/2[G]$ module with the Schur multiplier, $\bZ/2$ at its head. 
 
The remarks Rem .~\ref {Anissues} and \S \ref {Serre-book} show how our main examples extend Serre's \OIT, respectively, in considering the Nielsen classes related to alternating groups and the groups $(\bZ/\ell^{k\np1})^2\xs \bZ/3$.  \cite{Fr25} and \cite{FrBG} provide full details of the respective refined conclusions.

\subsection{Lift invariants and \OIT\ Nielsen class} \label{weilpairing} This section's  Nielsen class has a group with semidirect product with $\bZ/2$, a variant on that with the semidirect product with $\bZ/3$  in \cite[\S 5]{Fr20} and \S\ref{heiscase}. This is a different approach to a conclusion in Serre's \OIT\ traditionally from the Weil Pairing. The Hurwitz spaces of $\ni(G_{\ell,k,1_2}=(\bZ/\ell)^{k\np1}\xs \bZ/2,\bfC_{2^4})$ 
have several components.   
 
 The rubric of Rem.~\ref{appcosetbr} is applied to find components of our main object of study; \eql{nil2}{nil2a} computes the class $\C_2$.  \S\ref{lpreliminary} is the preliminary setup -- since Serre's \OIT\ wasn't regarded as related to the lift invariant -- intended to also help with the superficially similar example of \S\ref{heiscase} which gets more deeply into the relation of the \OIT\ and hyperelliptic jacobians.\footnote{Serre, in private writings, considered generic extensions of his \OIT\ using hyperelliptic jacobians.} 

\S\ref{LISe} computes the lift invariants of these components to directly find their moduli definition fields, a result attributed by a different approach to Weil's $\ell$-adic pairing. Our computations will be done in semi-direct products of a group $M$ with a group $N$ acting on it, written $M\xs N$. We compute using the notation of $2\times 2$ matrices, with $m_2^{n_1}$ the action of $n_1$ on $m_2$, \begin{equation} \label{gpmult} \text{the product is }\smatrix {n_1} 0 {m_1}  1 \smatrix {n_2} 0 {m_2} 1 = \smatrix {n_1n_2}  0 {m_1^{n_2}\cdot m_2}  1\text{   and \lq\lq$\cdot$\rq\rq\  is group muliplication.}\end{equation}

\subsubsection{The \OIT\ Nielsen class} \label{lpreliminary} 
The lift invariant attached to these Nielsen classes comes from the {\sl small Heisenberg group}:
\begin{equation} \label{smheis} \bH_{\ell,k}\eqdef  \Biggl\{M(a,a',w)\eqdef \begin{pmatrix} 1 &a &w \\ 0& 1 &{a'} \\ 0& 0& 1\end{pmatrix}, a,a',w\in \bZ/\ell^{k\np1}\Biggr\}.\end{equation}  
We show  $\bZ/2$ in $G_{\ell,k,2_2}$ extends to $\bH_{\ell,k}$ with trivial action on the kernel of $\bH_{\ell,k} \to (\bZ/\ell^{k\np1})^2$. With the action of $-1$  given by ${}^\beta M(a,a',w)=M(-a,-a',w)$.\footnote{In \S\ref{nonbraidcomps}, we use the notation $\bH_{\ell,k,2}$ to differentiate this extension from another representation cover, $\bH_{\ell,k,3}$, of $V_{\ell,k}$ on which there is a $\bZ/3$ action.}  
\begin{equation} \label{cenext} \begin{array}{c}  \text{Check that  $\beta$ applied to } M(a_1,a_1',w_1)M(a_2,a_2',w_2)={}^\beta M(a_1,a_1',w_1){}^\beta M(a_2,a_2',w_2) \\ \text{ or } 
\begin{pmatrix}1 &-a_1\nm a_2 &w_1 \np w_2 \np a_1a_2'\\ 0& 1 &{-a'_1\nm a_2'} \\ 0& 0& 1 \end{pmatrix}   = \begin{pmatrix}  1 &-a_1 &w_1 \\ 0& 1 &-{a'_1} \\ 0& 0& 1 \end{pmatrix}  \begin{pmatrix} 1 &-a_2 &w_2 \\ 0& 1 &-{a'}_2 \\ 0& 0& 1\end{pmatrix}.\end{array} \end{equation} 
The following three statements -- shown in \S\ref{liftinvZellZ2} -- give the significance of this, starting with the distinction between absolute and inner classes, $\ni(\ell,k,2_2)^\dagger$, $\dagger=\abs$ or $\inn$.   
\begin{edesc}  \label{wp} 
\item  \label{wpa}  There are $\phi(\ell^{k\np1})$ braid orbits on the inner classes, $\ni(\ell,k,2_2)^\inn$, whose corresponding components are  conjugate by the action of $G(\bQ(\zeta_{\ell^{k\np1}})/\bQ)$.
\item \label{wpb}  The geometric (resp.~arithmetic) monodromy of the absolute, reduced, spaces as a cover of $\prP^1_j$ is $\SL_2(\bZ/\ell^{k\np1})$ (resp.~ $\GL_2(\bZ/\ell^{k\np1})$). 
\item \label{wpc}  The roots of 1 in \eql{wp}{wpa} arise from the lift invariant to the central extension in \eqref{cenext}. 
\end{edesc} 

Display elements of $\ni_{\ell,k,1_2}$ subject to product-one and generation as in Def.~\ref{HNC}. 
\begin{equation} \label{nil1}  \begin{array}{c} \text{With }A_{\ell^{k\np1}}\eqdef \{\ba=(\row a 4) \in (\bZ/\ell^{k\np1})^4\mid a_1\nm a_2\np a_3\np a_4\equiv 0\mod \ell^{k\np1}, \\ \lrang{a_i\nm a_j,1\le i<j\le 4} =\bZ/\ell^{k\np1}\}, \text{consider }\bg_\ba\in \ni_{\ell, k,1_2} \text{ given by}\\ 
\bigl(\smatrix {-1} 0 {a_1} 1,\smatrix {-1} 0 {a_2}  1, \smatrix {-1} 0 {a_3}  1, \smatrix {-1} 0 {a_4}  1\bigr).
\end{array}\end{equation} 
 
By substituting $(a_i,a_i')\text{ for }a_i, i=1,\dots,4$, in the above with $\wedge$ designating  the wedge product, define 
$\ni_{\ell,k,2_2}\eqdef \{\bg_{\ba,\ba'}\mid \ba\wedge \ba'\ne 0\}$.  The representation $T$ for absolute classes is on the cosets of $\smatrix {-1} 0 0 1$ (resp.~$\smatrix {-1} 0 {\pmb 0}  1$, with $\pmb 0 = (0,0)$) for $\ni_{\ell,k,1_2}$ (resp.~$\ni_{\ell,k,2_2}$). 

\begin{prop} \label{dihbr} The Nielsen class $\ni_{\ell,k,1_2}^\dagger$ has one braid orbit.  The action of $H_4$ on $\ni_{\ell,k,2_2}^\dagger$ extends its action on $\ni_{\ell,k,1_2}^\dagger$. This is the example of Lem.~\ref{HrMTact} noted in Rem.~\ref{remSerreCase}. \end{prop}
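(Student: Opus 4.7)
The plan separates into two steps matching the two sentences of the statement, followed by an identification with the framework of Lemma~\ref{HrMTact}.

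First, for the single-braid-orbit claim on $\ni_{\ell,k,1_2}^\dagger$: using the group law \eqref{gpmult} together with the fact that each $g_i$ is an involution, a short direct calculation shows that the generators in \eql{iso}{isob} act on the parameters $(a_1,a_2,a_3,a_4) \in A_{\ell^{k\np1}}$ via
\begin{equation*}
q_2 \colon (a_1,a_2,a_3,a_4) \mapsto (a_1,\,2a_2 - a_3,\,a_2,\,a_4), \qquad \sh \colon (a_1,a_2,a_3,a_4) \mapsto (a_2,a_3,a_4,a_1).
\end{equation*}
Conjugation by $\smatrix{1}{0}{b}{1}$ simultaneously translates all four entries by $-2b$ (valid since $\ell$ is odd), so inner classes depend only on the differences $a_i - a_j$. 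Normalizing $a_1 = 0$ leaves a pair $(a_2,a_3)$ with $\lrang{a_2,a_3} = \bZ/\ell^{k\np1}$ and $a_4$ determined by product-one. I would first prove transitivity modulo $\ell$ as a finite combinatorial check on generating pairs in $(\bZ/\ell)^2$, then lift to level $\ell^{k\np1}$ via the Frattini argument of Prop.~\ref{existProp}. Alternatively, as Ex.~\ref{nofine} already records, $\sH(D_{\ell^{k\np1}},\bfC_{2^4})^{\dagger,\red}$ is identified with the classical modular curve $X_0(\ell^{k\np1})$ (absolute) or $X_1(\ell^{k\np1})$ (inner); their known irreducibility delivers the single orbit immediately.

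Second, for the extension of the action to $\ni_{\ell,k,2_2}^\dagger$: projection onto the first coordinate is a $\bZ/2$-equivariant map $(\bZ/\ell^{k\np1})^2 \to \bZ/\ell^{k\np1}$, so it induces a surjective group homomorphism $\Pi \colon G_{\ell,k,2_2} \twoheadrightarrow G_{\ell,k,1_2}$ sending the involution class to the involution class. Applying $\Pi$ entry-wise gives $\Pi_\ast \colon \ni_{\ell,k,2_2} \to \ni_{\ell,k,1_2}$, $\bg_{\ba,\ba'} \mapsto \bg_\ba$; product-one descends tautologically, and generation is preserved because $\Pi$ is surjective on tuples already generating the upstairs group. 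Since the braid generators $q_2$ and $\sh$ are built purely from the group law, $\Pi_\ast$ is automatically $H_4$-equivariant, and by Prop.~\ref{innerbraiding} it descends to inner and normalizer quotients. This is the claimed extension.

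Finally, I would identify this setup with the Jacobian case of Def.~\ref{jaccase}: the kernel $(\bZ/\ell^{k\np1})^2$ of $\Pi$ is precisely the $\ell^{k\np1}$-torsion of the Jacobian of the genus-one cover attached to a base point of $\ni_{\ell,k,1_2}$ (with $G$ acting by $\pm 1$), so $G_{\ell,k,2_2} = G_{J,k}$ in the notation of Rem.~\ref{remSerreCase}, and the $H_4$-action on $\ni_{\ell,k,2_2}^\dagger$ matches the one produced by the pullback construction \eqref{jacPullback} of Lem.~\ref{HrMTact}. The main obstacle I foresee is the transitivity check in the first step when carried out by direct computation rather than by invoking classical modular-curve connectedness: one must track the combined effect of $\sh$ and $q_2$ on generating pairs modulo $\ell$ (essentially a calculation in $\SL_2(\bZ/\ell)$) and then Frattini-lift, and producing that argument cleanly is the only delicate point.
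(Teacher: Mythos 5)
Your proof is correct, and it supplies considerably more detail than the paper does. The paper's own proof consists of two sentences: the first sentence of the proposition is delegated to the earlier references \cite[Lem.~5]{Fr74} and \cite[Thm.~2.1]{Fr78}, and the second sentence is declared ``immediate from the definition of braid action.'' What you do differently is make the reasons explicit. Your computations of the $q_2$ and $\sh$ actions on the parameter $\ba$ -- namely $q_2\colon a_3\mapsto 2a_2 - a_3$ (swapping positions $2$ and $3$) and $\sh$ a cyclic shift, together with conjugation by $\smatrix 1 0 b 1$ translating each $a_i$ by $-2b$ -- are correct and check against \eqref{gpmult}. Your cleaner route for the first sentence, invoking Ex.~\ref{nofine}'s identification of $\sH(D_{\ell^{k\np1}},\bfC_{2^4})^{\dagger,\rd}$ with $X_0(\ell^{k\np1})$ and $X_1(\ell^{k\np1})$ and appealing to connectedness of modular curves, is sound and is in effect the same geometric content the paper cites from \cite{Fr74} and \cite{Fr78}. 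For the second sentence, your projection argument is a genuine, and welcome, unpacking of ``immediate'': the surjection $\Pi\colon G_{\ell,k,2_2}\twoheadrightarrow G_{\ell,k,1_2}$ is $\bZ/2$-equivariant, the induced $\Pi_\ast$ respects product-one by functoriality, and respects generation because the first-coordinate differences of a generating tuple in $V_{\ell,k}$ must themselves generate $\bZ/\ell^{k\np1}$ (otherwise mod $\ell$ the pairs lie in a line); the $H_4$-equivariance of $\Pi_\ast$ is then automatic since $q_2$ and $\sh$ are built from the group law.

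One point you flag yourself deserves emphasis: your computational alternative for the first sentence, namely a transitivity check mod $\ell$ followed by a ``Frattini lift,'' is not automatic. Prop.~\ref{existProp} (and the Frattini property of $G_{k\np1}\to G_k$) guarantees that Nielsen class elements \emph{lift}, but by itself it does not say that the number of \emph{braid orbits} is preserved under lifting; a braid orbit downstairs could a priori split into several upstairs. So if you want the computational route, you would still need an additional argument (the classical one, in effect, uses the surjectivity of $\SL_2(\bZ)\to\SL_2(\bZ/N)$ or equivalent). This is exactly why the cleaner path is to cite the modular-curve identification, which is what both you and the paper (via \cite{Fr74}, \cite{Fr78}, \cite{Fr95}) ultimately rely on. Your identification with the Jacobian framework of Lem.~\ref{HrMTact} and Rem.~\ref{remSerreCase} is the intended reading.
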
 

\begin{proof} The first sentence is noted geometrically in \cite[Lem.~5]{Fr74}; with more arithmetic detail in \cite[Thm.~2.1]{Fr78} as a special case of a general problem. The second sentence is immediate from the definition of braid action. \end{proof} 

Using Prop.~\ref{dihbr}, compute braid orbits on $\ni_{\ell,k,2_2}^\inn$ by choosing any one allowable $\ba$. Then,  check possibilities for $\ba'$ that go with it. Start with $\ba\leftrightarrow$ shift of an \HM\ rep:
\begin{equation} \label{bash}  \ba_\sh =(0,a,a,0)\text{ with }a\in (\bZ/\ell^{k\np1})^* \text{ and }\ba\leftrightarrow \eqref{nil1} \text{ with }a_1=a_4=0, a_2=a_3=a. \end{equation} 

\begin{lem} \label{JacCond} Represent a class in $\ni_{\ell,k,2_2}^{\inn}$ by $\bg_{\ba,\ba'}$ modulo these conditions: 
\begin{edesc} \label{nil2} \item \label{nil2a}  $(a_1,a_1')=\pmb 0$ and $\sum_{i=2}^4 (-1)^i(a_i,a_i') \equiv {\pmb 0} \mod \ell^{k\np1}$; and 
\item \label{nil2b}  $\{(a_i,a_i')\mod \ell \mid 2\le  i\le 4\}$ aren't all on a line through $\pmb  0$. 
\end{edesc}  Starting with $\ba=\ba_\sh$, allowable $\ba'$, up to inner equivalence, have the form $$\{(0,a_2',a_3',a_3'-a_2' )\} \text{ with }a_3'\nm a_2'\not \equiv 0 \mod \ell.$$ 
\end{lem}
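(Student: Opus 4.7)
The plan is to unpack the two assertions of the lemma separately. First, I would justify the normal form imposed on $\bg_{\ba,\ba'}$ by conditions (a) and (b); then I would specialize to $\ba=\ba_\sh$ and translate product-one and generation into the claimed description of $\ba'$. All the work happens inside the extension $G_{\ell,k,2_2}=(\bZ/\ell^{k\np1})^2\xs \bZ/2$ using the matrix convention of \eqref{gpmult}.

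First I would compute the iterated product $g_1g_2g_3g_4$ for $g_i=\smatrix{-1}{0}{(a_i,a_i')}{1}$ by repeatedly applying \eqref{gpmult}. Because the $n$-coordinate alternates between $-1$ and $+1$, the sign of each $(a_i,a_i')$ alternates, and one computes
\[
g_1g_2g_3g_4=\smatrix{1}{0}{-(a_1,a_1')+(a_2,a_2')-(a_3,a_3')+(a_4,a_4')}{1}.
\]
Hence product-one is the alternating sum condition. For the inner normalization, I would conjugate all four $g_i$ simultaneously by $\smatrix{1}{0}{(c,c')}{1}$; a short calculation in the same style gives $(a_i,a_i')\mapsto(a_i,a_i')-2(c,c')$, and since $\ell$ is odd, $2$ is a unit in $\bZ/\ell^{k\np1}$, so the choice $(c,c')=\tfrac12(a_1,a_1')$ enforces $(a_1,a_1')=\pmb 0$. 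Combined with the alternating sum identity this yields condition (a). For (b), I would invoke the Frattini property of $(\bZ/\ell^{k\np1})^2\to(\bZ/\ell)^2$: $\lrang{\bg_{\ba,\ba'}}=G$ iff the differences $(a_i,a_i')-(a_j,a_j')$ generate the abelian kernel, and this reduces mod $\ell$ to the statement that the three residues in $(\bZ/\ell)^2$ are not all collinear through $\pmb 0$.

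Next I would specialize to $\ba=\ba_\sh=(0,a,a,0)$ with $a\in(\bZ/\ell^{k\np1})^*$. Condition (a), read in the primed coordinates, forces $a_2'-a_3'+a_4'=0$, i.e.\ $a_4'=a_3'-a_2'$, producing the claimed shape $\ba'=(0,a_2',a_3',a_3'-a_2')$. Finally I would verify condition (b): the three points are $(a,a_2'),(a,a_3'),(0,a_3'-a_2')\in(\bZ/\ell)^2$; since $a\not\equiv 0\bmod\ell$, the first two are collinear through $\pmb 0$ precisely when $a\cdot a_3'-a\cdot a_2'\equiv 0\bmod\ell$, i.e.\ $a_3'\equiv a_2'\bmod\ell$. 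Hence generation is equivalent to $a_3'-a_2'\not\equiv 0\bmod\ell$, completing the description.

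The only delicate point — and the main obstacle — is keeping the book-keeping of inner equivalence consistent with the braid action so that the normal form is honest. After setting $(a_1,a_1')=\pmb 0$ the residual inner action is conjugation by the involution $\smatrix{-1}{0}{\pmb 0}{1}$, which globally negates all $(a_i,a_i')$; this action sends $\ba_\sh\mapsto -\ba_\sh$, so fixing $\ba=\ba_\sh$ does not fully rigidify the $(a_2',a_3')$ pair, and the phrase \emph{up to inner equivalence} in the conclusion must be read modulo this sign. I would note that this residual sign is harmless for what follows, since it is compatible with the $q_2$-orbit structure and with the lift-invariant calculation \S\ref{LISe} we will need for identifying the components in \eql{wp}{wpa}.
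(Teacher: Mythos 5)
Your proof is correct and follows essentially the same route as the paper: normalize $(a_1,a_1')=\pmb 0$ by conjugating by the half-translation $\smatrix 1 0 {(a_1/2,a_1'/2)} 1$, read product-one off the alternating sum, reduce generation to non-collinearity in $V_{\ell,1}$ via the Frattini property of $V_{\ell,k}\to V_{\ell,1}$, and then specialize to $\ba_\sh$. You supply the explicit computation of $g_1g_2g_3g_4$ and of the conjugation action, which the paper only gestures at; that is a welcome addition, and your sign $-a_1+a_2-a_3+a_4$ is the one consistent with the lemma's condition \eql{nil2}{nil2a} (the minus sign on $a_4$ is apparently mistyped as $\np a_4$ in the display \eqref{nil1}).

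The only place I would push back is the final caveat. After normalizing $(a_1,a_1')=\pmb 0$, the residual inner stabilizer is indeed $\lrang{g_1}\cong\bZ/2$, acting by global negation of all entries, exactly as you say. But that negation sends $\ba_\sh=(0,a,a,0)$ to $(0,-a,-a,0)\ne \ba_\sh$ since $2a$ is a unit. So once you have truly fixed $\ba=\ba_\sh$ with a specific unit $a$, the $\bZ/2$ does not stabilize the configuration, and the resulting $(a_2',a_3')$ is rigid with no residual sign ambiguity. Your reading (that the pair $(a,(a_2',a_3'))$ should be identified with $(-a,(-a_2',-a_3'))$) is the right thing to keep in mind if one lets $a$ range over all of $(\bZ/\ell^{k\np1})^*$ rather than pinning it down, and you correctly observe this identification leaves the lift invariant $a(a_3'-a_2')$ unchanged; but it is not a gap in the lemma as stated.
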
 

\begin{proof}  For the 1st item of \eql{nil2}{nil2a}, replace the original element by the inner equivalent representative by conjugating by $\smatrix 1  0 {(a_1/2,a_1'/2)}  1$.  Since $$\smatrix 1 0 {(a_1/2,a_1'/2)} 1 \smatrix {-1} 0 {(a,a')}  1 \smatrix 1 0 {-(a_1/2,a_1'/2)}  1= \smatrix {-1} 0 {(a\nm a_1,a'\nm a_1')}  1,$$  
we may assume $(a_1,a_1')=\pmb 0$. Complete \eql{nil2}{nil2a} from product-one. 

Recognize \eql{nil2}{nil2b} as equivalent to this:  entries of $\bg_{\ba,\ba'}$ generate $(\bZ/\ell^{k\np1})^2\xs\bZ/2$. Given that the first entry is now $\smatrix {-1} 0 {\pmb 0}  1$, this says $\lrang{(a_i,a_i'),i=2,3,4}=V_{\ell,k}$.

Since $V_{\ell,k}$ is a Frattini cover of $V_{\ell,1}$, this is equivalent to showing the image of $\lrang{(a_i,a_i'),i=2,3,4}$ is all of $V_{\ell,1}$. For this, it suffices that in the 2-dimensional space $V_{\ell,1}$, the hoped-for generators aren't all on one line (through the origin). 

Now consider allowable $\ba'$ that go with $\ba_\sh$. Having the 4th entry nonzero $\!\mod \ell$ is necessary and sufficient for the second line condition of \eqref{nil2}; the first line is automatic from its form. 
\end{proof} 

\subsubsection{Values of the lift invariant} \label{liftinvZellZ2} \label{LISe} We show values of the lift invariant to the small Heisenberg group separate braid orbits on $\ni_{\ell,k,2_2}^\inn$. Indirectly, this accounts for the constants that come from $\bQ(e^{2\pi i/\ell^{k\np1}})$ traditionally arising from the {\sl Weil pairing\/}. These now interpret as values of a Nielsen class lift invariant, as given in Def.~\ref{liftinv}. 

Lem.~\ref{Normact} identifies the action of the normalizing group $N_{S_n}(G)$ on Nielsen classes; the effect of conjugating by  elements of $S_n$ that normalize  
\begin{equation} \label{cosets} G_{\ell,k,2_2} =\bigl\{ \smatrix {\pm1} 0 {(a,a')}  1 \mid (a,a')\in (\bZ/\ell^{k\np1})^2\bigr\}.\end{equation} 
 \eqref{cosets} lists the left cosets of $\bZ/2$ running over $(a,a')$, the matrices $M_{a,a'}\eqdef \smatrix 1 0 {(a,a')} 1$ multiplied on the left of the copy of $\bZ/2$, represented by $\{\smatrix {\pm 1} 0 {\pmb 0} 1\}$. 
\begin{lem} \label{Normact} The actions of the normalizer of $G_{\ell,k,2_2}$ in $S_{\ell^2}$, $N_{S_{\ell^2}}(G_{\ell,k,2_2})$, identifies with conjugations by $\GL_2(\bZ/\ell^{k\np1})$. The cosets of  $\SL_2(\bZ/\ell^{k\np1})$ in $\GL_2(\bZ/\ell^{k\np1})$ are represented by the matrices $\smatrix b 0 0 1$, $b\not \equiv 0 \mod \ell$,  with the action of conjugation given by $\smatrix {-1} 0 {(a,a')}  1\mapsto \smatrix {-1} 0 {b(a,a')}  1$.   \end{lem}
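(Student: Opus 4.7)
The plan is to identify $N_{S_n}(G_{\ell,k,2_2})$ via its conjugation action on $G = G_{\ell,k,2_2}$, show it is isomorphic to $\Aut(G)$, and then exhibit the $\GL_2(\bZ/\ell^{k\np1})$ quotient that acts faithfully on inner Nielsen classes.

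First, I would apply Prop.~\ref{finemodabsinn} to compute $\Cen_{S_n}(G)$. The stabilizer $G(1)$ in the given permutation representation is the involution subgroup $\{I,\tau\}$ with $\tau = \smatrix{-1}{0}{\pmb 0}{1}$. Using \eqref{gpmult}, conjugation of $\tau$ by $\smatrix{1}{0}{(a,a')}{1} \in H$ produces $\smatrix{-1}{0}{-2(a,a')}{1}$, which equals $\tau$ only when $(a,a') = \pmb 0$ since $\ell$ is odd and $2$ is a unit modulo $\ell^{k\np1}$. Hence $N_G(G(1)) = G(1)$, so $\Cen_{S_n}(G) = \{1\}$ and the normalizer embeds into $\Aut(G)$. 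The same computation shows all involutions of $G$ form a single $H$-conjugacy class (conjugate $\smatrix{-1}{0}{c_1}{1}$ to $\smatrix{-1}{0}{c_2}{1}$ by $\smatrix{1}{0}{(c_2-c_1)/2}{1}$), so every $\alpha \in \Aut(G)$ preserves the class $\bfC$ and lifts to a permutation in $N_{S_n}(G)$, giving $N_{S_n}(G) \cong \Aut(G)$.

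Next, I would compute $\Aut(G)$ directly. Because $\ell$ is odd, $H = (\bZ/\ell^{k\np1})^2$ is the characteristic Sylow $\ell$-subgroup of $G$, so every automorphism restricts to some $M \in \GL_2(\bZ/\ell^{k\np1})$. After composing with a suitable inner automorphism by an element of $H$ --- these fill out a full copy of $H$ inside $\Aut(G)$ precisely because $2$ is a unit --- any automorphism can be arranged to fix $\tau$. This yields a surjection $N_{S_n}(G) \to \GL_2(\bZ/\ell^{k\np1})$ with kernel the inner-by-$H$ subgroup, which acts trivially on $\ni(G,\bfC)^\inn$. Cosets of $\SL_2(\bZ/\ell^{k\np1})$ in $\GL_2(\bZ/\ell^{k\np1})$ are classified by the determinant, so they are represented by $\smatrix{b}{0}{0}{1}$ with $b \in (\bZ/\ell^{k\np1})^*$; the explicit formula for the conjugation action on $\smatrix{-1}{0}{(a,a')}{1} = \tau \cdot \smatrix{1}{0}{(a,a')}{1}$ then follows by applying the lifted automorphism through \eqref{gpmult}.

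The main subtlety is bookkeeping around the kernel of $N_{S_n}(G) \to \GL_2(\bZ/\ell^{k\np1})$: the essential observation that $2$ is invertible modulo $\ell^{k\np1}$ ensures every $H$-translate of $\tau$ is realized by an inner automorphism, so the kernel is exactly the inner-by-$H$ subgroup and the natural quotient is precisely $\GL_2(\bZ/\ell^{k\np1})$ rather than some larger extension --- a subtlety that would genuinely complicate matters were $\ell = 2$.
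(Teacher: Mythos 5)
Your proposal is correct, but it takes a considerably more elaborate route than the paper's own two-line argument. The paper's proof observes only that (i) a permutation normalizing $G_{\ell,k,2_2}$ must normalize the characteristic subgroup $(\bZ/\ell^{k\np1})^2$, hence induces an element of $\GL_2(\bZ/\ell^{k\np1})$, and (ii) the coset representatives $\smatrix{b}{0}{0}{1}$ conjugate $\smatrix{-1}{0}{(a,a')}{1}$ to $\smatrix{-1}{0}{b(a,a')}{1}$ by direct multiplication; it does not address injectivity, surjectivity, or what ``identifies with'' means precisely.

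You instead route through Prop.~\ref{finemodabsinn}: compute $N_G(G(1)) = G(1)$ to conclude $\Cen_{S_n}(G)$ is trivial, so $N_{S_n}(G)$ embeds in $\Aut(G)$; then check all involutions of $G$ are $H$-conjugate (using that $2$ is a unit mod $\ell^{k\np1}$) so every automorphism lifts, giving $N_{S_n}(G) \cong \Aut(G)$; then identify $\Aut(G) \cong H \rtimes \GL_2(\bZ/\ell^{k\np1})$ and observe the $H$-part acts by inner automorphisms, hence trivially on inner Nielsen classes. This buys a genuinely complete proof of ``identifies with'' rather than just a map, and it makes explicit the role of invertibility of $2$ that the paper leaves silent; on the other hand, it is longer and invokes the fine-moduli machinery for a statement the paper treats as a calculation. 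One small slip: conjugation by $\smatrix{1}{0}{h}{1}$ sends $\smatrix{-1}{0}{c}{1}$ to $\smatrix{-1}{0}{c - 2h}{1}$, so to carry $c_1$ to $c_2$ you should conjugate by $\smatrix{1}{0}{(c_1 - c_2)/2}{1}$, not $(c_2-c_1)/2$ --- a harmless sign that doesn't affect the conclusion.
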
 

\begin{proof}  If conjugation by  $\gamma$ normalizes $G_{\ell,k,2_2}$,  then it normalizes the characteristic subgroup $(\bZ/\ell^{k\np1})^2$. So it gives an element of $\GL_2(\bZ/\ell^{k\np1})$.  Multiplying  $\smatrix {b^{-1}} 0 0 1 \smatrix {-1} 0 {(a,a')}  1 \smatrix b 0 0 1$ gives the result $\smatrix {-1} 0 {b(a,a')}  1$, concluding the proof. \end{proof} 

Prop.~\ref{GL2} first computes the lift invariant;  \eqref{N2props} shows how the braid orbits on $\ni(G_{\ell,k,2_2},\bfC_{2^4})^\dagger$ fulfill the situation in Thm.~\ref{cosetbr}. Use the notation $M(a,a',w)$, $w\in \bZ/\ell^{k\np1}$ compatible with \eqref{smheis} for an element in $\bH_{k,\ell}\xs \bZ/2$ above $M(a,a')$. 

\begin{prop} \label{GL2} Order 2 elements $\smatrix {-1} 0 {M(a,a',w)}  1\in \bH_{k,\ell}\xs \bZ/2$ have $w=\frac{aa'}2$.

\begin{edesc} \label{aa'} \item \label{aa'a} Since every braid orbit contains an element  $\bg_{\ba_\sh,\ba'}$, to compute all lift invariant values it suffices to compute $s_{\bg_{\ba_\sh,\ba'}}$ with $\ba'=(0,a_2',a_3',a_2'\nm a_3')$ and $a_2'\ne a_3'\mod \ell$.    
\item \label{aa'b}  The lift value from \eql{aa'}{aa'a} is $a(a_3'\nm a_2')$, running over all values in $(\bZ/\ell^{k\np1})^*$ as $\ba'$ varies. \end{edesc} 

\begin{edesc} \label{N2props} \item \label{N2propsa}  There are two braid orbits on $\sH_{\ell,k,2_2}^{\abs}$. Each has inner components above it, corresponding, respectively, to the square (resp.~non-square) values of the lift invariant.   
\item \label{N2propsb} the inner Hurwitz space components are conjugate by the action of $G(\bQ(e^{2\pi i/\ell^{k\np1}})/\bQ)$, so  $\bQ(e^{2\pi i/\ell^{k\np1}})$  is their moduli definition field; 
\item \label{N2propsc}  and the geometric (resp.~arithmetic) monodromy group of any $\ni_{\ell,k,2_2}^{\inn,\red}$ components over $\prP^1_j$ is $\SL_2(\bZ/\ell^{k\np1})$ (resp.~$\GL_2(\bZ/\ell^{k\np1})$). \end{edesc} 
\end{prop}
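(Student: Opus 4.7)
The plan is to verify the first sentence by a direct square computation, then pin down the lift invariant on a convenient representative of every inner braid orbit, and finally organize the component structure via the combined action of $N_T/\Inn(G)$ and the cyclotomic Galois action on the lift invariant.

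First I would verify the canonical lift formula. Using \eqref{gpmult} and the matrix product $M(a_1,a_1',w_1)\cdot M(a_2,a_2',w_2)=M(a_1+a_2,a_1'+a_2',w_1+w_2+a_1 a_2')$ read off from \eqref{smheis}, the square of $\smatrix{-1}0{M(a,a',w)}1$ is $\smatrix{1}0{\beta(M(a,a',w))\cdot M(a,a',w)}1=\smatrix{1}0{M(0,0,2w-aa')}1$. Since $\ell$ is odd, this is trivial iff $w=aa'/2$; this is the unique Schur--Zassenhaus lift of the given involution.

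For \eql{aa'}{aa'a}, I invoke Prop.~\ref{dihbr}: $\ni_{\ell,k,1_2}^\inn$ has a single braid orbit, and its shifted \HM\ representative is $\ba_\sh$. Because the braid action on $\ni_{\ell,k,2_2}^\inn$ extends the braid action on $\ni_{\ell,k,1_2}^\inn$, every braid orbit contains a representative $\bg_{\ba_\sh,\ba'}$, with $\ba'$ of the form given by Lem.~\ref{JacCond}. For \eql{aa'}{aa'b}, multiply the four canonical lifts $\tilde g_i=\smatrix{-1}0{M(a_i,a_i',a_ia_i'/2)}1$ in order, using \eqref{gpmult} and carefully applying $\beta$ at each step. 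With $\ba=\ba_\sh=(0,a,a,0)$ and $\ba'=(0,a_2',a_3',a_3'-a_2')$, the running partial products land successively in the nontrivial and then the trivial coset of $\bZ/2$, and the final product lies in the center $\ker(\bH_{k,\ell}\xs\bZ/2\to G)$, as it must by product-one in $G$; bookkeeping of the cocycle terms $a_1a_2'$ produces the central $w$-coordinate $a(a_2'-a_3')/2$. Since $a\in(\bZ/\ell^{k\np1})^*$ by generation in $\ba_\sh$ and $a_3'-a_2'\in(\bZ/\ell^{k\np1})^*$ by the condition in Lem.~\ref{JacCond}, this value is invertible, and as $\ba'$ varies (fix $a_3'$, vary $a_2'$) it ranges over all of $(\bZ/\ell^{k\np1})^*$.

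For \eqref{N2props}, I combine the lift invariant computation with Thm.~\ref{cosetbr}. By Lem.~\ref{liftinvlem} the lift invariant separates inner braid orbits, so by the previous step there are exactly $\phi(\ell^{k\np1})$ inner orbits, each labeled by its value in $(\bZ/\ell^{k\np1})^*$. By Lem.~\ref{Normact}, $N_T/\Inn(G)\cong(\bZ/\ell^{k\np1})^*$ acts via $b\colon(a_i,a_i')\mapsto b(a_i,a_i')$, which scales the quadratic expression $a(a_2'-a_3')/2$ by $b^2$; the two orbits under this action are the square and non-square classes, giving precisely two absolute braid orbits, which proves \eql{N2props}{N2propsa}. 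For \eql{N2props}{N2propsb}, the third paragraph of Cor.~\ref{compmult2} (applied with $\hat\psi_\ell$ having kernel $\bZ/\ell^{k\np1}$) says $\alpha^*\in G(\bQ(\zeta_{\ell^{k\np1}})/\bQ)$ acts on components by sending the lift invariant $s$ to $\chi(\alpha^*)\cdot s$, where $\chi$ is the cyclotomic character; this action is simply transitive on $(\bZ/\ell^{k\np1})^*$, hence permutes all inner components transitively and identifies their common moduli definition field as $\bQ(\zeta_{\ell^{k\np1}})$. Finally \eql{N2props}{N2propsc} follows by identifying these reduced spaces with the classical Jacobian-case modular tower indicated in Rem.~\ref{remSerreCase} via Lem.~\ref{HrMTact}: the geometric braid action on reduced Nielsen classes realizes the natural $\SL_2(\bZ/\ell^{k\np1})$-action on $(\bZ/\ell^{k\np1})^2$, and the cyclotomic action computed above enlarges the geometric to the arithmetic monodromy by the determinant character, yielding $\GL_2(\bZ/\ell^{k\np1})$; the eventually $\ell$-Frattini property of Prop.~\ref{PSLFrat} is what guarantees the ascent is consistent level by level.

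The main obstacle is the third paragraph's lift computation: the noncommutative cocycle $a_1a_2'$ in the $\bH_{k,\ell}$-multiplication interacts with the $\beta$-twists at alternating steps, and one must be scrupulous to confirm the final answer lies in the center --- that serves as the key consistency check. The identification of the $\SL_2/\GL_2$ monodromy in \eql{N2props}{N2propsc} is a second subtle point because one must check that the braid action on reduced Nielsen classes really coincides with the standard linear action on $\ell^{k\np1}$-torsion; this is where one leans on the literature built around \cite[\S 2]{Fr78} and the $X_1(\ell^{k\np1})$ interpretation recalled in Ex.~\ref{nofine}.
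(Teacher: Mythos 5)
Your argument is structurally the same as the paper's through \eql{aa'}{aa'b} and \eql{N2props}{N2propsb}, and mostly sound.  Two comments and one genuine gap.

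First, a small positive: your derivation of \eql{N2props}{N2propsa} via Lem.~\ref{Normact}, noting that conjugation by $\smatrix b001$ scales the lift invariant by $b^2$ so that the $N_T/\Inn(G)$-orbits are precisely the squares and the non-squares in $(\bZ/\ell^{k\np1})^*$, is actually more direct and explicit than the paper's terse \lq\lq as expected, they give generators for $\SL_2\ldots$ and thereby give \eql{N2props}{N2propsa}.\rq\rq\  Your version makes the two-orbit count transparent.

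Second, on the value: you report the lift invariant as $a(a_2'-a_3')/2$.  The paper states $a(a_3'-a_2')$, with no factor of $1/2$.  If you redo the matrix multiplication carefully (the cocycle $a_1 a_2'$ in the $M(\cdot,\cdot,\cdot)$-product, and the $\beta$-twist at each step), the product $\tilde g_1\tilde g_2\tilde g_3\tilde g_4$ lands in the center with $w$-coordinate $\tfrac{a a_2'}2 - \tfrac{a a_3'}2 = \tfrac{a(a_2'-a_3')}2$, so your value is consistent with the multiplication rules as stated.  Since $2$ is invertible and the sign is a convention, this does not change any conclusion; but you should flag that this differs from the displayed formula in the paper rather than silently assuming they agree.

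The genuine gap is \eql{N2props}{N2propsc}.  You claim the geometric monodromy is $\SL_2(\bZ/\ell^{k\np1})$ by \lq\lq identifying these reduced spaces with the classical Jacobian-case modular tower indicated in Rem.~\ref{remSerreCase} via Lem.~\ref{HrMTact}.\rq\rq\  Neither reference does the work.  Rem.~\ref{remSerreCase} is a cautionary note that the Schur multipliers of $\bZ/\ell^{k\np1}\xs\bZ/2$ and $(\bZ/\ell^{k\np1})^2\xs\bZ/2$ differ, so the component structure \emph{may not} transfer; Lem.~\ref{HrMTact} states a procedure without carrying it out.  What is actually required here — and what the paper supplies — is an explicit computation of the matrices by which $\sh$ and $q_2$ act on the $(2,3)$-entries of $\bg_{\ba_\sh,\ba'}$ after a normalizing conjugation: one finds $\sh\leftrightarrow\smatrix{-1}{-2}11$ and $q_2\leftrightarrow\smatrix21{-1}0$, whose square and product $q_1q_2\leftrightarrow\smatrix{-2}{-3}11$ have orders $2$ and $3$ respectively, hence generate $\SL_2(\bZ/\ell^{k\np1})$ for every $\ell$ and $k$.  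Without that concrete check you have not established the geometric monodromy; the passage from $\SL_2$ to $\GL_2$ via Cor.~\ref{compmult2} is then fine.  Supply the matrix computation and the proof is complete.
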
 

\begin{proof} 
An order 2 lift, $\smatrix {-1} 0 {M(a,a',w)} 1$, of $\smatrix {-1} 0 {(a,a')}  1$ to $\bH(\bZ/\ell^{k\np1})\xs \bZ/2$ from \eqref{cenext} satisfies   $$\smatrix {-1}0 {M(a,a',w)}  1 \smatrix {-1}0 {M(a,a',w)}  1)= \smatrix 1 0 {M(-a,-a',w)M(a,a',w)}  1=\smatrix 1 0 {M(0,0,0)}  1.$$
Calculate: $M(-a,-a',w)M(a,a',w)$ has $2w\nm aa'$ in its upper right corner, or $w=\frac{aa'}2$, as stated. 

Use \eql{aa'}{aa'a}, to show \eql{aa'}{aa'b}.  In  the product of  order 2 lifts of $\bg_{\ba_\sh,\ba'}$ entries to  $\bH(\bZ/\ell^{k\np1})\xs \bZ/2$, with $\ba'=(0,a_2',a_3', a_3'-a_2')$, lift invariants run over the $w$ value in the lower left matrix  
$$\smatrix {-1} 0 {M(0,0,0)} 1 \smatrix {-1} 0 {M(a,a_2',\frac{aa_2'}2)} 1 \smatrix {-1} 0 {M(a,a_3',\frac{aa_3'}2)} 1 \smatrix {-1} 0 {M(0,a_3'\nm a_2',0)} 1.$$ 

Multiply the first two matrices, then the last two matrices. This gives  
$$\smatrix 1 0 {M(a,a_2',\frac{aa_2'}2)} 1 \smatrix 1 0 {M(-a,-a_2',\frac{aa_3'}2)}  1  .$$ 

Conclude the lift invariant value is $aa_3'/2 \np aa_3'/2 \nm aa_2'=a(a_3'\nm a_2')$,  an element in $(\bZ/\ell^{k\np1})^*$ according to the conditions of Lem.~\ref{JacCond}.   Lem.~\ref{Normact}  gives the normalizer of $G_{\ell,k,2_2}$ as $\GL_2(\bZ/\ell^{k\np1})$. Its action on Nielsen class elements satisfying the condition of fixing $\bg_\sh$ allows us to take $\ba'$ anything off of $\ba_\sh$. From the formula for the lift invariant, it clearly takes on all values in $(\bZ/\ell^{k\np1})^*$, giving the full action, as required by Def.~\ref{liftinv2}, of the normalizer. That concludes the proof of \eql{aa'}{aa'b}. 

We give the effect of  $H_4$ generators on the 2nd and 3rd entries of $\bg_{\ba_{\sh},\ba'}$, after conjugatiing by 
$\smatrix 1 0 {(0,\frac{a_3'\nm a_2'}2)} 1$ to have $\smatrix {-1} 0 {(0,0)} 1 $ in the first entry: 
\begin{equation} \begin{array}{rl}  \sh: \bg_{\ba_{\sh},\ba'}  \to & (\bullet, \smatrix {-1} 0 {(0,a_3'\nm a_2')}  1, \smatrix {-1} 0 {(-a,a_3'\nm 2a_2')} 1, \bullet )\\
q_2: \bg_{\ba_{\sh},\ba'}  \to & (\bullet, \smatrix {-1} 0  {2(a,a_2')\nm (-a,\nm a_3')}  1, \smatrix {-1} 0 {(a,a_2')} 1,\bullet).  \end{array} \end{equation} 

That is, $\sh$ is represented by $\smatrix {-1}{-2} 1 1$ and $q_2$ is represented by $\smatrix 2 {1} {-1} 0$. The square of $\smatrix {-1}{-2} 1 1$ is $-I_2$. Multiply $q_1q_2q_1=q_2q_1q_2$ by $q_2^{-1}$ to get $q_1q_2$. That acts as $$\smatrix {\nm1}{\nm2} 1 1 \smatrix 2 1 {\nm1} 0{}^{-1}= \smatrix {\nm 2}{\nm3} 1 1.$$ Check this has order 3. Therefore, elements of respective orders 3 and 2, independent of $\ell$, represent the actions of $\gamma_0$ and $\sh$. 

So, as expected, they give generators for $\SL_2(\bZ/\ell^{k\np1})$ and thereby give \eql{N2props}{N2propsa} and geometric monodromy statements of the rest of  \eqref{N2props} The arithmetic monodromy statements of \eql{N2props}{N2propsb} and \eql{N2props}{N2propsc} are a special case of Cor.~\ref{compmult2} applied to this case of a cyclic Schur multiplier. That concludes the proof. 
\end{proof} 

\subsection{Absolute vs Inner spaces when $G=A_n$} \label{absinnAn}  
\S\ref{Anabs} considers the spaces $\sH(G,\bfC)^\abs$ with $G=A_n$, $T$ the standard representation and $\bfC$ consisting of $2'$ conjugacy classes (elements of odd order). The Schur multiplier is well known to be $\bZ/2$, for $n\ge 4$, and its presence is graphically clear from the covering $\SL_2(\bF_4)\to \PSL_2(\bF_4)=A_5$ as below \eqref{mtcases}. The situation in applying Thm.~\ref{cosetbr} simplifies: $N_{S_n}(A_n)=S_n$, so $N_K/A_n=\bZ/2$ and this will have trivial orbits on any lift invariants.   Def.~\ref{schur-sep} is very simple in this case: Two braid orbits are Schur-separated if they respectively have lift invariants 0 and 1. The biggest issues are these:  
\begin{edesc} \label{Anhyp}  \item \label{Anhypa} Are all components Schur-separated \eqref{schursep}?
\item If not \eql{Anhyp}{Anhypa}, are there above a Hurwitz space component $\sH'\le \sH(A_n,\bfC)^\abs$ two components $\sH_j\le \sH(A_n,\bfC)^\inn$, $j=1,2$, so conjugate by $S_n/A_n$.\end{edesc}  With high multiplicity  in $\bfC$ (Def.~\ref{highmult}), then  \eqref{liftinvusecom} says there are precisely two (inner or absolute) components. Yet, when absolute covers have genus 0 (so they don't have high multiplicity), we never achieve both lift invariants. 

Thm.~\ref{LO08Se90Fr10} lists results  that start with precursors from Fried, Liu-Osserman and Serre. 
\S\ref{Aninn} analyzes what happens with the inner spaces corresponding to the Nielsen class hypotheses of the results above, where the absolute spaces have one component. In the case of two components, determining the moduli definition field extension of $\bQ$ of these components can be described using discriminants of specific covers in the corresponding absolute classes (Rem.~\ref{Anissues}).

Prop.~\ref{2'-2cusp} uses special Liu-Osserman Nielsen classes to give examples of nontrivial Modular Towers generalizing the main example of \cite{BFr02}. This relates the main theme of this paper to identifying this special case of \eqref{IGPstatement}: 
\begin{equation} \label{Anregreal} \begin{array}{c} \text{where would you find {\sl any\/} $\bQ$ regular realizations} \\  \text{of the characteristic 2-Frattini covers of $A_n$.} \end{array} \end{equation}  

\begin{rem}[Being explicit about \eqref{Anregreal}]  Suppose for some $n\equiv 1 \mod 4$, we could realize all the regular realizations of \eqref{Anregreal} with a uniform bound, $B_n$ on the number of their branch points. A special case of \cite[Thm.~4.4]{FrK97} says there is a \MT\ with each of those regular realizations corresponding to a $\bQ$ point on that tower. 

The Main \MT\ conjecture \cite[Main Conjecture 1.4]{FrK97}, though, says this is not possible, a conjecture generalizing Mazur's Theorem on $\bQ$ points on modular curves, a consequence compatible with generalizing Falting's Theorem. No one has regularly realized even $A_5$ and the exponent 2, 2-Frattini cover ${}_2\psi_5: {}_2 \bar A_5\to A_5$ (with kernel $(\bZ/2)^5$) described in \cite[Prop.~2.4]{Fr95}.\footnote{As special cases of general results,  ${}_2 \bar A_5$ is centerless and $\ker({}_2\psi_5)$ is indecomposable \cite[Lem.~2.4]{FrK97}.} 
\end{rem} 

\subsubsection{$G=A_n$ and absolute spaces} \label{Anabs}  For a conjugacy class $\C$, indicate its cycle type by $(\row u t)$. 

\begin{exmpl} \label{Anexmp}   \eqref{3cycle} summarizes the main example of \cite{Fr10} with $(G,\bfC)=(A_n,\bfC_{3^r})$, $r$ repetitions of 3-cycle classes, $n\ge 5$.\footnote{It also does the boundary examples $r=n\nm1$ and $n=4$.} 
\begin{edesc}  \label{3cycle}  \item \label{3cycle0} Applying \eql{modbyinner}{modbyinnerb}, for any $h\in S_n$ there is a braid from $\bg$ to $h\bg h^{-1}$ if and only if there is such a braid for one case of  $h\in S_n\setminus A_n$. 
\item  \label{3cyclea} If \eql{3cycle}{3cycle0} holds, $\ni(A_n,\bfC)^\abs$ and $\ni(A_n,\bfC)^\inn$ both have only one braid orbit.
\item \label{3cycleb} \eql{3cycle}{3cyclea} holds for $r=n\nm1$ on $\ni(A_n,\bfC_{3^r})^\dagger$, $\dagger=\abs$ or $\inn$.\footnote{For $\bg\in \bfC_{3^r}$ it is not necessary to include that $\bg=A_n$, just that $\lrang{\bg}$ is transitive.} 
\item  \label{3cycled}  With $r\ge n$, \eql{3cycle}{3cycleb} holds by replacing \lq\lq one braid orbit\rq\rq\ with exactly \lq\lq two Schur-separated (braid) orbits\rq\rq\ on $\ni(A_n,\bfC_{3^r})^\dagger$, $n\ge 5$. \end{edesc}  

\cite{Se90} or \cite[Cor. 2.3]{Fr10} gives the circumstance of the initial collaboration between the author and Serre; giving the lift invariance formula of Thm.~\ref{invariance}.  \end{exmpl}

\renewcommand{\pu}{\text{\bf pu}}
 In our usual notation $\ni(G,\bfC,T)$, refer to a conjugacy class in $G\le S_n$ as {\sl pure-cycle\/} if its elements have only one cycle of length exceeding one under the representation $T$.
\begin{defn} \label{purecycles}  \cite{Wm73}: If a non-cyclic $G$ is primitive and contains a pure-cycle,  then $G$ is $A_n$ or $S_n$. An element $g=(\row u t)\in G$ defines the collection of pure cycles $\row \C t$ in the group, $G_\pu$,  generated by all disjoint cycles in elements of $\bfC$. Refer to  $\ni(G,\bfC)^\abs$ as {\sl pure-cycle\/} if all conjugacy classes are pure-cycle and $\bfC$ as odd-cycle, if all $g\in \bfC$ have odd order. 
\end{defn} 

\begin{lem} \label{canpure} Given $\ni(A_n,\bfC)^\abs$, there is a canonical pure-cycle Nielsen class, $\ni(G_\pu,\bfC_\pu)^\abs$ attached to it in the group $G_\pu$ generated by the pure cycles of elements $g\in \bfC$.  

Then, $G_\pu=A_n$ if and only $\bfC$ is odd-cycle. For $G_\pu=A_n$, covers in $\ni(A_n,\bfC_\pu)^\abs$ and in $\ni(A_n,\bfC)^\abs$ have the same genus. Lift invariants of $\ni(A_n,\bfC_\pu)^\abs$  contain those of $\ni(A_n,\bfC)^\abs$. 
\end{lem}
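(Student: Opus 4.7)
\medskip

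The plan is to produce a canonical map $\bg\mapsto \bg_\pu$ from $\ni(A_n,\bfC)^\abs$ to $\ni(G_\pu,\bfC_\pu)^\abs$ and verify each clause against it. First, given $\bg=(g_1,\dots,g_r)\in \ni(A_n,\bfC)$, write each $g_i$ as its disjoint cycle product $g_i=c_{i,1}\cdots c_{i,t_i}$ (keeping only cycles of length $>1$) and set $\bg_\pu=(c_{1,1},\dots,c_{1,t_1},c_{2,1},\dots,c_{r,t_r})$, with the order within each block irrelevant because disjoint cycles commute. Product-one for $\bg_\pu$ follows instantly: each inner block multiplies back to $g_i$, so the entries of $\bg_\pu$ multiply in order to $g_1\cdots g_r=1$. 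Generation holds by the definition of $G_\pu$, and each entry of $\bg_\pu$ lies in one of the pure-cycle classes of $\bfC_\pu$. Well-definedness up to absolute equivalence comes from the fact that conjugation by $S_n$ permutes the pure cycles of each $g_i$ in the obvious way.

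Next I would handle the biconditional $G_\pu=A_n\Leftrightarrow \bfC$ odd-cycle together with the genus claim. A cycle of length $u$ is an even permutation iff $u$ is odd, so if $\bfC$ is odd-cycle every pure cycle lies in $A_n$, giving $G_\pu\le A_n$; conversely each $g_i$ is already a product of its own pure cycles, whence $\lrang{\bfC}\le G_\pu$ and so $A_n=\lrang{\bfC}\le G_\pu$, forcing $G_\pu=A_n$. If $\bfC$ is not odd-cycle, some pure cycle has even length, hence is an odd permutation lying in $G_\pu$, so $G_\pu\not\le A_n$. For the genus equality, Riemann-Hurwitz reads $2(n\nm1\np \geng)=\sum_i \mathrm{ind}(g_i)$, and $\mathrm{ind}$ is additive over the disjoint cycle decomposition, $\mathrm{ind}(g_i)=\sum_j \mathrm{ind}(c_{i,j})$, so $\bg$ and $\bg_\pu$ have identical total index and thus identical genus.

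Finally for the lift invariant containment, I would work in the $2$-representation cover $\hat\psi:\hat A_n\to A_n$ with $\ker(\hat\psi)=\bZ/2$. Since the entries of $\bfC$ have odd order, each $g\in \bfC$ has a unique odd-order lift $\hat g\in\hat A_n$ (Schur-Zassenhaus applied fiberwise). The key step is: if $c,c'$ are commuting disjoint odd cycles in $A_n$ with odd-order lifts $\hat c,\hat c'$, then $\hat c\hat c'=\hat c'\hat c$. Because the commutator $[\hat c,\hat c']$ lies in the central kernel $\bZ/2$, it has order dividing $2$; but it also equals a commutator of odd-order elements of the finite group $\lrang{\hat c,\hat c'}$, and evaluating its order mod the odd-order subgroup shows the commutator must be trivial. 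Hence $\hat c\hat c'$ is an odd-order lift of $cc'$, and uniqueness forces $\widehat{cc'}=\hat c\hat c'$. By induction on $t_i$, $\hat g_i=\hat c_{i,1}\cdots \hat c_{i,t_i}$, and multiplying over $i$ yields $s_\bg=\prod_i \hat g_i=\prod_{i,j}\hat c_{i,j}=s_{\bg_\pu}$, proving the containment.

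The main obstacle is the commutator computation in $\hat A_n$ used to identify $\widehat{cc'}$ with $\hat c\hat c'$; everything else is bookkeeping. I would likely realize it by placing $\hat c,\hat c'$ inside a subgroup of $\hat A_n$ isomorphic to the spin lift of the direct product $\lrang{c}\times \lrang{c'}\subset A_n$, then using that a central extension of an odd cyclic group (or a product of two) by $\bZ/2$ splits, so the distinguished odd-order section respects products. This gives the multiplicativity of odd-order lifts on disjoint odd cycles that the lift invariant identity requires.
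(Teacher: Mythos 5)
Your proposal is correct and takes essentially the same route as the paper's own proof: form $\bg_\pu$ by splitting each entry into its disjoint cycles, observe that the Riemann--Hurwitz index is additive over disjoint cycles (so the genera coincide), read off the parity criterion for $G_\pu=A_n$ from even/odd cycle lengths, and compute the lift invariant via uniqueness of odd-order lifts in $\tilde A_n$.

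The one place where you improve on the paper's exposition is the lift-invariant step. The paper simply says that $\widetilde{(u_i)}\,\widetilde{(u_{i+1})}$ must equal $\widetilde{(u_i)(u_{i+1})}$ by uniqueness of odd-order lifts, without pausing to justify why the left-hand product has odd order at all. You correctly isolate this as the crux: you must show the odd-order lifts of two disjoint cycles commute. Your stated justification (``evaluating its order mod the odd-order subgroup'') is vaguely phrased, but your fallback argument via Schur--Zassenhaus applied to the preimage of $\lrang{c}\times\lrang{c'}$ is watertight, and a still more elementary version is available: $[\hat c,\hat c']$ is central, so $1=[\hat c^{m},\hat c']=[\hat c,\hat c']^{m}$ where $m=\operatorname{ord}(\hat c)$ is odd; since the commutator also has order dividing $2$, it is trivial. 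Either version gives exactly what the paper's inductive replacement step silently assumes, so your proof is a filled-in version of the same argument rather than a different route.
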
 

\begin{proof} Given disjoint cycle notation for $g=(u_1)\dots (u_t))\in G$, define the pure cycle classes associated to $g$ as $\row \C t$. Applying Riemann-Hurwitz, the genus for the two Nielsen classes is the same; the non-zero contributions to the genus, in both cases,  run over disjoint cycles, and those are the same for elements in the respective Nielsen classes. 

If $\bfC$ is not odd-cycle, then there is $g\in \bfC$ containing an even pure-cycle, and that would give an element in $\bfC_\pu$ that is not in $A_n$. Thus,  $G_\pu$ must be $S_n$. This leaves considering $\bg\in \ni(A_n,\bfC)^\abs$, with lift invariant $s_\bg$, whether we can realize that lift invariant in $\ni(G_\pu,\bfC_\pu)^\abs$. 

For $g$ as above, consider the commuting elements -- by abuse denoted as above $(u_i)$ -- and their respective lifts $\widetilde {(u_i)}$.  Since the classes are $2'$, there is a unique lift to $\tilde A_n$, as there is for any of the products $\widetilde {(u_i)}\widetilde {(u_{i\np1})}$. Therefore this is $\widetilde{(u_1)(u_2)}$. So, inductively replacing the lift of an entry $g$ in $\bg$ by the product of the lifts of its disjoint cycles doesn't change the lift invariant.\footnote{This isn't the correct calculation if $G_\pu=S_n$.}  \end{proof} 

For odd order  $g\in A_n$, $n\ge 3$, denote  the  count of length $u$  disjoint cycles in $g$ with $$\frac{(u-1)^2}{8}\equiv 1 \text{ or }2 \mod 4\text{ by }w(g).$$   
For $g \in A_n $ of odd order, let $\omega(g)$ by the sum $\frac{u^2\nm1}8\mod 2$ running of the lengths $u$ of the disjoint cycles of $g$.  The two proofs of Thm.~\ref{invariance} tie together the referenced articles. 

\begin{thm} \label{invariance} Assume $\bfC$ is odd-cycle. For $n\ge 3$, and any  $\bg\in \ni(A_n,\bfC)^\abs$ of genus 0, $$s_\bg=\sum_{i=1}^r  (-1)^{\omega(g)};   s_\bg\text{ is constant on the Nielsen class.}$$ 

Example: For $\phi: X\to\prP^1$  in  $\ni(A_n,\Ct {n\nm1})^\abs$, then   $X$ has genus 0, and $s_\phi=n\nm1 \!\!\mod 2$. 
 \end{thm}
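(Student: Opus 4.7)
The plan is to split the theorem into the constancy of $s_\bg$ on the Nielsen class and the explicit formula. Constancy is nearly immediate from the one-orbit theorem, while the formula reduces to a calibration in the Spin cover against a single convenient representative. A first reduction via Lemma \ref{canpure} replaces $\bfC$ (if necessary) with $\bfC_\pu$: it preserves the genus of covers, and the lift invariants of the original sit inside those of the pure-cycle class, so it suffices to prove the formula there. The odd-cycle hypothesis makes $\bfC_\pu$ consist of $2'$ classes, so Theorem \ref{LO08Se90Fr10case} asserts $\ni(A_n,\bfC_\pu)^\abs$ has a single braid orbit. Since the lift invariant is a braid invariant by construction (Def.~\ref{liftinv2}), $s_\bg$ is constant on the Nielsen class; this proves the second sentence of the theorem and reduces the first to computing $s_\bg$ on one convenient representative.

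For the value, work in the Spin cover $\psi_R:\tilde A_n\to A_n$, the non-trivial central extension of $A_n$ by $\bZ/2$. The $2'$ hypothesis and Schur--Zassenhaus pick out a canonical lift $\hat g_i\in\tilde A_n$ of the same order as $g_i$, and $s_\bg=\prod_{i=1}^r\hat g_i\in\{\pm 1\}$. The combinatorial ingredient is Schur's classical computation assigning to each odd-order $g\in A_n$ the sign $\epsilon(g)=(-1)^{\omega(g)}$, built from the disjoint-cycle decomposition via $\sum(u^2\nm1)/8$ with $u$ running over cycle lengths; the formula to prove is $s_\bg=\prod_i\epsilon(g_i)$, equivalently $\sum_i\omega(g_i)\bmod 2$ when $s_\bg\in\bZ/2$ is read additively.

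The main obstacle is exhibiting, inside the unique braid orbit attached to a genus $0$ Nielsen class, a representative where $\prod_i\hat g_i$ can be evaluated entry-by-entry in terms of the local signs $\epsilon(g_i)$. Two routes appear natural: first, choose a Liu--Osserman-style canonical form (the very form that underlies their uniqueness theorem) in which the cycles have a prescribed overlap pattern in $\{1,\dots,n\}$ so that the product of Spin lifts collapses using the Clifford-type relations in $\tilde A_n$; second, induct on $r$ by braiding adjacent branch cycles into a single entry via a $q_i$ twist, verifying that $\omega$ is additive under the merge on supports compatible with genus $0$, and that the merged-class lift invariant records the remaining product $\prod_i\epsilon(g_i)$. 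Either route is a genuinely combinatorial computation in the Schur double cover; it is the point where the genus $0$ hypothesis does its work, by forcing enough rigidity to keep the normal form tractable.

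Finally, the example $\ni(A_n,\Ct{n\nm1})^\abs$ comes out: each $3$-cycle has $\omega=1$, there are $n\nm1$ of them, so $s_\bg=(n\nm1)\bmod 2$; Riemann--Hurwitz for $r=n\nm1$ three-cycles on a degree-$n$ cover indeed gives genus $0$, so the genus hypothesis of the theorem is satisfied and the displayed value is produced.
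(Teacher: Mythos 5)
Your constancy argument is correct and matches the paper's ``2nd proof'': reduce to pure-cycle via Lem.~\ref{canpure}, then invoke Liu--Osserman's one-component theorem for pure-cycle genus $0$ Nielsen classes to conclude that the braid-invariant $s_\bg$ takes a single value on the whole Nielsen class. One caution on citations, though: you should cite \cite{LO08} directly for the one-component statement rather than Theorem \ref{LO08Se90Fr10case}, since the latter is stated in the introduction as a special case of the developments here and is recovered via Cor.~\ref{LO08Se90Fr10}, which explicitly relies on ``the second proof of Thm.~\ref{invariance}.'' Using it here introduces a circularity that the paper's own proof avoids.

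Where you have a genuine gap is the explicit formula. You correctly identify that the job is to compute $\prod_i \hat g_i$ in the Spin cover $\tilde A_n$ on some tractable representative, and you sketch two plausible routes (a Liu--Osserman normal form with Clifford-type collapsing; an induction on $r$ via twist merges), but you carry out neither, and the theorem is precisely the statement that this product equals $\prod_i (-1)^{\omega(g_i)}$. Note that constancy alone cannot close this: both $s_\bg$ and $\prod_i(-1)^{\omega(g_i)}$ are visibly constant on the Nielsen class (each depends only on $\bfC$), so you are asserting equality of two constants, which still requires an actual calculation. The paper's first proof defers exactly this computation to \cite{Se90} and to \cite[Cor.~2.3]{Fr10}, the latter of which reduces the general odd-cycle case to the single base case $\ni(A_n,\Ct{n\nm1})^\abs$ and computes there. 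Your sketch is in the spirit of those references but is not a proof as written. Also, your closing remark that the genus $0$ hypothesis ``forces enough rigidity to keep the normal form tractable'' mislocates the role of genus $0$: it enters through Liu--Osserman's one-component theorem (giving constancy and hence the reduction to a single representative), not through simplifying the Spin-cover calculation itself. The Riemann--Hurwitz and $\omega(g)=1$ check in your final paragraph is correct.
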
 

\begin{proof}  References at the end of Ex.~\ref{Anexmp} give one proof of the lift invariant result.  \cite[Cor. 2.3]{Fr10} gives a short proof of  \cite{Se90}, reverting it to the  example (original) above case, $\ni(A_n,\bfC_{n\nm1})^\abs$. 

Here is a 2nd proof. Assuming genus 0  for pure-cycle $\bfC_\pu$, \cite{LO08} says $\ni(A_n,\bfC_\pu)^\abs$ has one component. Thus, running over $\bg\in \ni(A_n,\bfC_\pu)^\abs$  the lift invariant has only one value. From Lem.~\ref{canpure}, the lift invariant has only one value running over $\bg\in \ni(G,\bfC)$. \end{proof}

The failure of Schur-separation of all components \eql{Anhyp}{Anhypa} as reverting to the pure-cycle case in Thm.~\ref{LO08Se90Fr10} generalizes. Cor.~\ref{LO08Se90Fr10} follows almost immediately from  Lem.~\ref{canpure} and the second proof of Thm.~\ref{invariance}. Rem.~\ref{purecycfail} adds comments for where to look -- in these Nielsen classes -- for its failure. 

\begin{cor} \label{LO08Se90Fr10} Assume  genus 0 for Nielsen class absolute covers and $\bfC$ is odd-cycle. Then, $\sH(A_n,\bfC)^\abs$ has exactly one component if and only if Schur-separation holds. 
 
Now consider the same hypotheses without the genus 0 assumption. Denote by $\ni(G,\bfC)^\abs_k$ the elements $\bg$ with $s_\bg=u$, $u\in \bZ/2$. There is one braid orbit on $\ni(G,\bfC)^\abs_k$, if and only if no other orbit has lift invariant $u$.  

If the Schur-Separation fails above, then it fails for the pure-cycle $\bfC_\pu$ associated to $\bfC$. From \eql{3cycle}{3cycled}, it does not fail for any $\bfC$ for which $\bfC_\pu$ is $\bfC_{3^r}$ for some $r$. 
\end{cor}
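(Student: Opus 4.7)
The plan is to handle the three assertions sequentially, using the tools the author flags: Theorem \ref{invariance} and Lemma \ref{canpure}. First I record the structural simplification specific to $G = A_n$: the Schur multiplier is $\bZ/2$ and the quotient $N_{S_n}(A_n)/A_n = \bZ/2$ acts trivially on it, so each absolute braid orbit $\sO'$ carries a single lift-invariant value $s_{\sO'} \in \{0,1\}$, and Schur-separation of $\sH(A_n,\bfC)^\abs$ reduces to the injectivity of the map $\sO' \mapsto s_{\sO'}$ on the set of braid orbits. For the first assertion this is immediate: by Theorem \ref{invariance}, genus zero plus odd-cycle $\bfC$ forces $s_\bg$ to be the constant $\sum_{i} (-1)^{\omega(g_i)}$ across the entire Nielsen class, so two distinct orbits would have to share that single value and Schur-separation would fail; conversely a single orbit is Schur-separated tautologically. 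The second assertion is the definitional unpacking: since $s_\bg$ is a braid invariant (Definition \ref{liftinv2}), the subset $\ni(A_n,\bfC)^\abs_u$ is a union of complete braid orbits, and it is a single braid orbit precisely when no other braid orbit also has lift invariant $u$.

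For the third assertion, the strategy is to prove the contrapositive, i.e.\ that Schur-separation for $\bfC_\pu$ forces Schur-separation for $\bfC$. The tool is the pure-cycle expansion map $\Phi \colon \ni(A_n,\bfC)^\abs \to \ni(A_n,\bfC_\pu)^\abs$ implicit in the proof of Lemma \ref{canpure}, which replaces each entry $g_i$ by the ordered sequence of its disjoint cycles and, as shown there, preserves the lift invariant. Suppose there are two distinct $\bfC$-braid orbits $O_1 \ne O_2$ sharing a lift-invariant value $v$; then $\Phi(O_1)$ and $\Phi(O_2)$ sit in $\bfC_\pu$-braid orbits both with lift invariant $v$, and the goal is to argue that these two $\bfC_\pu$-orbits are themselves distinct, contradicting Schur-separation for $\bfC_\pu$. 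The final sentence of the corollary then follows by combining this contrapositive with \eql{3cycle}{3cycled}, which supplies Schur-separation for $\bfC_{3^r}$, and hence for every $\bfC$ with $\bfC_\pu = \bfC_{3^r}$.

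The main obstacle is exactly the distinctness claim in the previous paragraph: the $\bfC_\pu$ braid group $H_{r_\pu}$ acts on a configuration space with strictly more marked points than $H_r$ does, so a priori two distinct $\bfC$-orbits could merge upon expansion. The plan to close this gap is to decompose any $\bfC_\pu$-braid transporting $\Phi(\bg_1)$ to $\Phi(\bg_2)$ into (i) inter-block braids that permute entire blocks of disjoint cycles coming from a single $\bfC$-entry, which descend to honest braids in $\bfC$, and (ii) intra-block braids that shuffle the mutually commuting disjoint cycles within one block without altering that block's product, and therefore act trivially on the corresponding $\bfC$-element. A careful verification that every $\bfC_\pu$-braid between $\Phi(\bg_1)$ and $\Phi(\bg_2)$ can be normalized, up to intra-block moves, to an inter-block braid preserving this block structure would complete the descent and hence the corollary.
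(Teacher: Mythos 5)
Your treatment of the first two assertions is correct and matches the paper: Theorem~\ref{invariance} makes the lift invariant a single constant on a genus-$0$, odd-cycle Nielsen class, so Schur-separation is equivalent to there being a single braid orbit; and the second assertion is the observation that $\ni(A_n,\bfC)^\abs_u$ is a union of braid orbits carrying the fixed value $u\in\bZ/2$, so it is one braid orbit exactly when that value picks out a unique orbit. Rem.~\ref{purecycfail} shows this is precisely the reading the paper intends.

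The third assertion is where your self-flagged gap bites, and your proposed ``normalization'' is not a careful-but-routine verification --- it is the entire content. The image of the pure-cycle expansion $\Phi$ is not preserved by the $H_{r_\pu}$-action: a $2$-twist at the boundary between block $i$ and block $i+1$ sends $(\ldots, c_{i,k_i},\, c_{i+1,1},\ldots)$ to $(\ldots,\, c_{i,k_i}c_{i+1,1}c_{i,k_i}^{-1},\, c_{i,k_i},\ldots)$, and after that twist the $i$-th block contains a cycle of length $|c_{i+1,1}|$ in place of one of length $|c_{i,k_i}|$, so unless those lengths happen to coincide the resulting tuple is not $\Phi(\bg')$ for \emph{any} $\bg'\in\ni(A_n,\bfC)$. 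An $H_{r_\pu}$-path from $\Phi(\bg_1)$ to $\Phi(\bg_2)$ therefore generically wanders through Nielsen tuples with no block structure at all, and there is no a priori mechanism that confines it to the subgroup generated by your intra- and inter-block moves. Lemma~\ref{canpure} --- the tool the paper invokes to make this corollary ``follow almost immediately'' --- only gives a containment of lift-invariant \emph{value sets}, not an injection of braid orbits; and Rem.~\ref{purecycfail} concedes that the \cite{LO08} input (uniqueness of the orbit per lift value) is unavailable outside genus $0$. So the paper itself is thin at exactly this point. If you want to carry your plan through, you need a genuine structural input --- e.g.\ identifying the block-preserving subgroup of $H_{r_\pu}$ via a Birman-type exact sequence and proving that, restricted to the image of $\Phi$, its orbits coincide with the full $H_{r_\pu}$-orbits --- rather than assuming the normalization can be ``carefully verified.''
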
 

Ex.~\ref{553} gives a Nielsen class of covers of genus $>0$ having just one value of the lift invariant for $G=A_n$ and $\bfC$ odd-cycle.
\begin{exmpl} \label{553} The lift invariant given below comes from \cite[Princ.~5.15]{BFr02}. There are two 5-cycle conjugacy classes in $A_5$, which we denote $\C_{+5}$ and $\C_{-5}$. The notation $\bfC_{+5-5\cdot 3}$ adds the class of a 3-cycle to this. Covers in the Nielsen class $\ni(A_5,\bfC_{5_+5_-3})^\abs$  have genus $\geng_{553}$   given by $$2(5\np \geng_{553}\nm1)=4+4+2=10,\text{  or }\geng_{553}=1.$$

For each ordering of the conjugacy classes 
$\bfC_{5_+5_-3}$, the nielsen class $\ni(A_5,\bfC_{5_+5_-3})^\inn$ 
has exactly one element, for a total of six elements. All representatives $\bg$ 
have $s_\bg=1$.  Note that by including both $\C_{5-}$ and $\C_{5+}$ this makes $\bfC$ a rational union of classes (Def.~\ref{ratclass}).  \end{exmpl}
 
\begin{rem}[Pure-cycle failure?]  \label{purecycfail}  In the first paragraph of Cor.~\ref{LO08Se90Fr10}, the Nielsen class assumes only one value. So if Schur-Separation holds, then there is only one braid component, etc. Using \cite{LO08} and Lem.~\ref{canpure}, in this case, Schur-Separation must hold. The argument of the lemma, though, didn't use genus 0. In the second paragraph, the only lift values are in $\bZ/2$, and we can therefore separate the braid orbits according to those with a given lift value. 

The second proof of Thm.~\ref{invariance} applies, but the strong conclusion does not, since \cite{LO08} did not prove a result that used the value of the lift invariant in place of the genus 0 condition. In private conversation, Brian Osserman didn't realize that formulating Schur-separation didn't require Galois covers. I told him a Schur-separation version of Lem.~\ref{homcovers}. 
\end{rem} 

\begin{rem}[Liu-Osserman on $S_n$] Liu-Osserman considered all pure-cycle Nielsen classes, including $G=S_n$. That works as above, except it doesn't have the possibility of a non-trivial lift value, nor the outer automorphism. I left it out, as a less interesting case of Thm.~\ref{cosetbr}.\end{rem} 

\subsubsection{$A_n$ and inner spaces}  \label{Aninn}  This subsection is dedicated to $G=A_n$, $T$ the standard degree $n$ rep. and odd-cycle covers, in search of automorphism-separated components on $\ni(A_n,\bfC)^\inn$. That is, we take up \lq\lq the top\rq\rq\ of Thm.~\ref{cosetbr} where we already know the components of $\sH(A_n,\bfC)^\abs$ and the question reverts to whether we can braid, $\alpha$, an outer automorphism from $S_n$ on $\ni(A_n,\bfC)^\inn$. According to \eql{3cycle}{3cycle0}, for this question we can take $\alpha$ any 2-cycle in $S_n$. Lem.~\ref{outerinn} says finding these reverts to the case of pure-cycle Nielsen classes. 

\begin{lem} \label{outerinn} As above, if you can braid the outer automorphism on $\ni(A_n,\bfC)^\inn$, then you can braid it on $\ni(A_n,\bfC_\pu)^\inn$. For example, if covers in $\ni(A_n,\bfC)^\abs$ have genus 0, then either $\ni(A_n,\bfC)^\inn$ has one component or two automorphism-separated components. Indeed,  when absolute covers have genus 0, and $r$ is even, it suffices to consider whether we can braid between the two \HM\ reps. For example, with $g_1=({\scriptstyle\frac{n\np1}2} \dots 2\,1)$ \and $g_3=({\scriptstyle \frac{n\np1}2\,\frac{n\np3}2}\,\dots\,n)$, can we braid between $\HM_1=(g_1,g_1^{-1}, g_3,g_3^{-1})$ and $\HM_2=(g_1', (g_1')^{-1}, g_3',(g_3')^{-1})$ with $g_i'=(1\,n)g_i(1\,n)$. \end{lem}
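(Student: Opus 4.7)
My plan is to address the three assertions in order, reducing each to material already developed in the paper.

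For the first assertion, I would use Lem.~\ref{canpure} to construct the canonical refinement map $\rho:\ni(A_n,\bfC)^\inn\to \ni(A_n,\bfC_\pu)^\inn$ that replaces each entry $g=(u_1)\cdots(u_t)$ by its ordered tuple of nontrivial disjoint cycles. Since disjoint cycles commute, product-one is preserved; generation is preserved because $G_\pu=A_n$ in the odd-cycle case. The braid generators $\sh$ and $q_2$ from \eql{iso}{isob} act compatibly with $\rho$: shifting respects block-refinement, and the twist $q_2$ only conjugates a single entry by another, which commutes entry-wise through $\rho$ again because disjoint cycles commute. Similarly, conjugation by a transposition $(1\,n)\in S_n\setminus A_n$ commutes with $\rho$. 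Hence if a braid $q\in H_r$ satisfies $(\bg)q=(1\,n)\bg(1\,n)$, the same statement transports to $\rho(\bg)$.

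For the second assertion I apply Main Thm.~\ref{cosetbr}: since $T$ is the standard representation and $N_T/G=S_n/A_n$ has order $2$, the count $v=(N_T:N_1^\br)$ of inner components over any given absolute component is $1$ or $2$. The genus-zero hypothesis (combined with Lem.~\ref{canpure} and Cor.~\ref{LO08Se90Fr10}, which builds on \cite{LO08}) guarantees $\ni(A_n,\bfC)^\abs$ is a single braid orbit, so the inner space has either a unique component (the outer automorphism is braidable) or exactly two automorphism-separated components in the sense of \eql{nonbrauto}{nonbrauto}.

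For the third assertion, observe that by Rem.~\ref{HMrepuse} and \eql{bract}{bractb} it suffices to test braidability of $\alpha=(1\,n)$ on a single convenient representative of each inner orbit. In the genus zero, $r$ even, pure-cycle setting, every braid orbit on $\ni(A_n,\bfC_\pu)^\inn$ contains an \HM\ representative (one uses the Liu-Osserman single-orbit result at the absolute level together with Def.~\ref{HM} and Prop.~\ref{existProp}\eql{steppingup}{steppingupd} to produce an \HM\ element in the preimage). Conjugating an \HM\ tuple $(g_1,g_1^{-1},\ldots,g_s,g_s^{-1})$ by the involution $(1\,n)$ again produces an \HM\ tuple because it conjugates inverse pairs to inverse pairs. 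Both $\HM_1$ and $\HM_2=(1\,n)\HM_1(1\,n)$ lie in the Nielsen class, and the braidability of $(1\,n)$ is equivalent to $\HM_1$ and $\HM_2$ being in the same braid orbit; this is the content of the example $g_1=({\scriptstyle\frac{n+1}{2}}\,\ldots\,2\,1),\ g_3=({\scriptstyle\frac{n+1}{2}\,\frac{n+3}{2}}\,\ldots\,n)$, where a Riemann--Hurwitz check on the two $(n+1)/2$-cycles and their inverses confirms genus zero.

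The main obstacle will be certifying that every braid orbit (not just some) on the inner space carries an \HM\ representative in the genus-zero $r$-even case. This is where the Liu--Osserman single-orbit theorem must be combined carefully with Cor.~\ref{autocomps}: the two inner components (if distinct) are equivalent covers of the absolute component, so any feature present in one orbit -- including an \HM\ representative produced by \cite{LO08} -- has its $\alpha$-conjugate in the other orbit, reducing the whole braidability question to the explicit $\HM_1$ vs.~$\HM_2$ comparison. The actual braid word (or obstruction) between $\HM_1$ and $\HM_2$ is then a finite combinatorial check that will be carried out in specific examples.
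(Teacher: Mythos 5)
Your overall approach mirrors the paper's: reduce via the canonical pure-cycle association, invoke Thm.~\ref{cosetbr} with $N_T/A_n\cong\bZ/2$ for the dichotomy, and test braidability on HM representatives by Rem.~\ref{HMrepuse}. The paper's own proof is terser and you usefully fill in some of what it compresses.

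The one place your account overstates compatibility is the first assertion. The pure-cycle replacement $\bg\mapsto\bg_\pu$ changes the tuple length: $r$ entries become $r_\pu=\sum_i t_i$ entries (each $g_i$ with $t_i$ disjoint cycles becomes a block of $t_i$ entries), so $H_r$ and $H_{r_\pu}$ are different groups and there is no literal commutation of $\sh$ or $q_2$ past the refinement map. Your phrase that $q_2$ ``commutes entry-wise through $\rho$ because disjoint cycles commute'' is not the right justification — the twist $q_2$ in $H_r$ conjugates a whole block by a whole adjacent block, and in $H_{r_\pu}$ this corresponds to a compound (``cabled'') braid, not to the generator $q_2$; the ``disjoint cycles commute'' fact lets you order the cycles inside one block but plays no role in moving one block past another. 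The paper sidesteps this by saying only that one should ``find $q_\pu$'' — i.e.\ the word in $H_{r_\pu}$ is a different, block-respecting braid, constructed by leaving each block of disjoint cycles intact while the braid on blocks is replayed at the finer level. Your intertwining claim is essentially true once ``$q_2$ transports to $q_2$'' is replaced by ``$q_2$ transports to the corresponding blocked braid,'' and the commutation of this transport with conjugation by $\alpha=(1\,n)$ (which is genuinely entry-by-entry) is what makes the reduction go through.

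Your ``main obstacle'' — certifying every inner orbit carries an HM rep — is actually not needed. By \eql{bract}{bractb} the braid action commutes with conjugation by $(1\,n)$, so once $\HM_1$ lies in one inner orbit, $\HM_2=(1\,n)\HM_1(1\,n)$ lies in its conjugate orbit (the same orbit exactly when $(1\,n)$ is braidable). The test is therefore purely the comparison of $\HM_1$ and $\HM_2$, with no auxiliary existence claim required; Cor.~\ref{autocomps} is the right supporting reference but you were already done at that point.
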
 

\begin{proof} Use the canonical association of $\bg\in \ni(A_n,\bfC)^\inn$ and assume for $\alpha\in S_n\setminus A_n$, $(\bg)\alpha=(\bg)q$ for $q\in H_r$. Use the fact that the actions of $q$ and conjugating by $\alpha$ commute, and also with the ${}_\pu$ substitution. By leaving the disjoint cycles in place after the replacement $\bg\mapsto \bg_\pu$ as in the proof of Lem.~\ref{canpure}, find $q_\pu$ for which $(\bg_\pu)\alpha=(\bg_\pu)q_\pu$. For the last sentence, apply Lem.~\ref{canpure} and \cite{LO08}. \end{proof} 

Prop.~\ref{An-Snarith} details what happens with examples of this section. It is elementary to check when $\bfC$ is a rational union (see Ex.~\ref{553}). We already noted the absolute space (in this case, and so the inner \eql{fmod}{fmodb}) has fine moduli. 
 
\begin{prop} \label{An-Snarith} Assume Schur-Separation holds for odd-cycle $\ni(A,\bfC)^\abs$. With $\bfC$ a rational union, consider the absolute-inner Hurwitz space cover $$\Phi_{\abs,\inn}:  \sH(A_n,\bfC)^\inn \to \sH(A_n,\bfC)^\abs.$$  From Cor.~\ref{LO08Se90Fr10},  $\sH(A_n,\bfC)^\abs$ has one (resp.~2) absolutely irreducibile components according to the lift invariant is 0 (resp.~1) assumed on the corresponding braid orbit $\ni_k$, $k=0,1$, on $\ni(A_n,\bfC)^\abs$.  In either case, components with their configuration maps have moduli definition field $\bQ$ and  \begin{center} $\bp^\abs\in \sH(A_n,\bfC)^\abs$ represents a cover $\phi_{\bp^\abs}: X_{\bp^\abs}\to \prP^1_z$, defined over $\bQ(\bp^\abs)$. \end{center} 

Suppose, vis-a-vis $\Phi_{\abs,\inn}$, $\sH''$ is the pullback of a component, $\sH'\subset  \sH(A_n,\bfC)^\abs$. Since $N_{S_n}(A_n)/\Inn(G)=\bZ/2$ generated by any element of $S_n\setminus A_n$, \eqref{homeoorbitsu=1} gives this. Either:   
\begin{edesc} \label{AnSn} \item \label{AnSna} $\sH''$ is absolutely irreducible and restriction of $\Phi_{\abs,\inn}$ is Galois with group $\bZ/2$; or 
\item \label{AnSnb} $\sH''$ consists of two absolutely irreducible components, $\sH''_1$ and $\sH''_2$, both with moduli definition field $K/\bQ$, $[K:\bQ]\le 2$.\end{edesc} In case \eql{AnSn}{AnSna}, there is a Zariski dense subset of $\bp'\in \sH'(\bar \bQ)$ for which the cover $X_{\bp'} \to\prP^1_x$ has arithmetic Galois closure $S_n$ over $\bQ(\bp')$.  

For \eql{AnSn}{AnSnb}, whatever is $K$, the discriminant of a cover $\bp'\in \sH(A_n,\bfC)'(\bar \bQ)$ is a square in $K(\bp')$. So, if $K=\bQ$ and $\bp'$ has coordinates in $\bQ$, then the discriminant of $\bp'$ is a square in $\bQ$. 
\end{prop}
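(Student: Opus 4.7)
The plan is to unpack the proposition by successively applying the tools already assembled: the extended Branch Cycle Lemma (Prop.~\ref{bcl}), the Galois-closure fiber-product construction (Prop.~\ref{inn-absmdf} and Cor.~\ref{corinn-absmdf}), Thm.~\ref{cosetbr} for enumerating components, and the \HIT-style density statement of Cor.~\ref{corinn-absmdf}. First I would verify that both spaces have fine moduli: for $T$ the standard degree-$n$ representation of $A_n$ with $n\ge 4$, $A_n$ is self-normalizing in $S_n$ and $\Cen_{S_n}(A_n)=\{1\}$, so Prop.~\ref{finemodabsinn} gives both fine absolute and fine inner moduli. Schur-separation identifies the components of $\sH(A_n,\bfC)^\abs$ with the fibers of the lift-invariant map; since $\bfC$ is a rational union and the Schur multiplier $\bZ/2$ carries only the trivial $G_\bQ$-action, the extended \BCL\ (Prop.~\ref{bcl}) plus the normalizer-orbit description in \eqref{liftinvuse} gives moduli definition field $\bQ$ for each absolute component. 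Fine moduli then ensures that $\bp^\abs$ admits a representative cover defined over $\bQ(\bp^\abs)$.

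Next I would analyze the cover $\Phi_{\abs,\inn}$ restricted to $\sH''$. By Thm.~\ref{cosetbr}, the inner components above $\sH'$ are indexed by cosets $N_{S_n}(A_n)/N^\br_1 = (S_n/A_n)/(N^\br_1/A_n)$; since $S_n/A_n=\bZ/2$, there are either one or two components. In the single-component case \eql{AnSn}{AnSna}, the outer automorphism is braidable; Cor.~\ref{autocomps} makes $\sH''\to\sH'$ a geometrically Galois cover of degree $2$ with group $\bZ/2$. The moduli definition field computation of Cor.~\ref{corinn-absmdf}, combined with the rational-union hypothesis (which collapses the cyclotomic contribution to $\bQ$), gives that this cover is defined over $\bQ$. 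The arithmetic statement then follows from the last clause of Cor.~\ref{corinn-absmdf}: intersecting decomposition fields over points in $U_r(\bQ)$ recovers $\bQ_{\sH''}=\bQ$, so a Zariski dense set of $\bp'\in\sH'(\bar\bQ)$ has decomposition group of $\phi_{\bp'}$ realizing the full arithmetic monodromy $S_n$ over $\bQ(\bp')$.

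For case \eql{AnSn}{AnSnb} the outer automorphism fails to braid, so the two inner components are permuted by an element of $N_{S_n}(A_n)/A_n=\bZ/2$; hence they are $G_\bQ$-conjugate and share a common moduli definition field $K$ with $[K:\bQ]\le 2$. The discriminant square claim is the crux of the proposition, and I expect to be the main obstacle. The plan is this: over $K(\bp')$ the fiber $\sT^\inn_{\bp_i}$ of the total family over $\bp_i\in\sH''_i$ provides a Galois closure $\hat X\to\prP^1_z$ together with a \emph{marked} identification of its group with $A_n$ rather than merely a $\bZ/2$-orbit of such identifications. The standard interpretation of this marking (as in Lem.~\ref{PDTR} and the extension-of-constants diagram \eqref{extconstants}) is that the arithmetic monodromy of $\phi_{\bp'}$ over $K(\bp')$ is contained in $A_n$, which is equivalent to the discriminant of $\phi_{\bp'}$ being a square in $K(\bp')$. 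Thus the existence of a $K(\bp')$-rational lift $\bp_i\in\sH''$ forces $\disc(\phi_{\bp'})\in K(\bp')^{\times 2}$. In particular, if $K=\bQ$ and $\bp'\in\sH'(\bQ)$, then $\disc(\phi_{\bp'})$ is a rational square.

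The main obstacle I foresee is making rigorous the passage from \lq\lq marked identification of the Galois group with $A_n$ over $K(\bp')$\rq\rq\ to \lq\lq discriminant is a square in $K(\bp')$.\rq\rq\ I would handle this by pulling back the unique total family $\Psi_\inn$ of \eqref{totfam} over the inner component $\sH''_i$ to $\bp_i\times\prP^1_z$, applying the Galois-closure fiber-product construction \eqref{fiberGalclos} of Prop.~\ref{inn-absmdf} to see that the arithmetic monodromy over $K(\bp')$ really does sit inside $A_n$, and then invoking the classical fact that $\Gal(\hat X/\prP^1_z)\subseteq A_n$ over a field iff the discriminant of a degree-$n$ minimal polynomial for $X\to\prP^1_z$ is a square in that field. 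This final step is essentially the geometric content distinguishing the inner-space component from the absolute-space component, which is exactly what Thm.~\ref{cosetbr} was designed to isolate.
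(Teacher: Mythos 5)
Your proposal follows essentially the same route as the paper's proof: fine moduli via Prop.~\ref{finemodabsinn}, the component dichotomy via Thm.~\ref{cosetbr}/Cor.~\ref{corinn-absmdf}, the \BCL\ for moduli definition fields, and \HIT\ for the density statement. The one genuine value-add is that you unpack the discriminant claim in detail — the paper dismisses it as ``from algebraic number theory,'' whereas you correctly identify the mechanism: a $K(\bp')$-rational lift $\bp_i \in \sH''_i$ supplies, via the fiber of the total inner family, a \emph{marked} identification $\Gal(\hat X/\prP^1_z) \cong A_n$ over $K(\bp')$, which forces the arithmetic monodromy into $A_n$ and hence the discriminant into $K(\bp')^{\times 2}$. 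That elaboration is correct and useful.

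Two small imprecisions worth noting, neither fatal. First, in case \eql{AnSn}{AnSnb} you assert that when the outer automorphism fails to braid, the two inner components ``are $G_\bQ$-conjugate.'' That does not follow automatically: the paper is more careful here and observes that they are \emph{either} permuted by $G_\bQ$ \emph{or} both already defined over $\bQ$ (both options are consistent with $N_{S_n}(A_n)/A_n \cong \bZ/2$ acting on components, by Cor.~\ref{autocomps}). Your bound $[K:\bQ] \le 2$ survives either way, so the conclusion is unharmed, but the intermediate step is stated too strongly. Second, in case \eql{AnSn}{AnSna} you derive Zariski density of points with full arithmetic monodromy from the ``intersection of decomposition fields equals $\bQ_{\sH''}$'' clause of Cor.~\ref{corinn-absmdf}; as written that clause alone is weaker than a density statement. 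The engine is really \HIT\ directly (as the paper cites), which is what powers that clause of the corollary in the first place — so you are implicitly correct, but the stated chain of reasoning skips a step.
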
  

\begin{proof} The statement on representation of the cover $\phi_{\bp^\abs}$ over $\bQ(\bp^\abs)$ is from \eqref{totfam}. From Cor.~\ref{corinn-absmdf},  either $\sH(A_n,\bfC)^\inn$ has two components, or $\Phi_{\abs,\inn}$ is Galois with group $\bZ/2$. Since $n\ge 4$, the normalizer of  $A_n(1)$ in $A_n$ is just $A_n(1)$ and both $\sH(A_n,\bfC)^\dagger$, $\dagger=\inn$ or $\abs$ have fine moduli as in Prop.~\ref{finemodabsinn}. From  Thm.~ \BCL\  \ref{bcl}, in case \eql{AnSn}{AnSna}, since $\sH''$ is absolutely irreducible, its moduli definition field is $\bQ$; in case \eql{AnSn}{AnSnb} the components are either permuted among each other, or they both have moduli definition field $\bQ$.  

Suppose \eql{AnSn}{AnSna} holds. Then, Hilbert's irreducibility theorem says the density is 1 (for essentially any density) of $\bp$ for which the cover over $\bp$ has arithmetic monodromy $S_n$ by the definition of $\sH(A_n,\bfC)^\inn$ in this case.  The statement on the discriminant is from algebraic number theory. When an extension is geometrically $A_n$, the discriminant tells you whether it is arithmetically $S_n$ by whether its square root extends the definition field. 
\end{proof}

\begin{exmpl}[Ex.~\ref{Anexmp} continued] \label{Anexmp2} \cite[\S2.10.1, Table 2]{BFr02} uses the {\sl \sh-incidence matrix\/} for $\ni(A_5,\bfC_{3^4})^{\dagger,\rd}$ with $\dagger=\abs$ and $\inn$. From this, we read off the cusps and genus of a cover. 
\cite[Prop.~2.19]{Fr20} does the same for $\ni(A_4, \bfC_{\pm 3^2})$, which is, for $\ell=2$, our main example, as in Prop.~\ref{A4L0}: two components, Schur separated, and both components at level 0 have genus 0.  \S\ref{sephmanddi} reminds of the \sh-incidence matrix and applies it for the main example of this paper. \end{exmpl} 

Applications required a precise (and somewhat long) version of the construction of  Nielsen classes representatives in Prop.~\ref{2'-2cusp}. So we left it to \cite{Fr25}, but indicate in the proof below examples of where an easy construction gives many \MT s. 
\begin{prop} \label{2'-2cusp} Let $\bd=\row d r$, $r\ge 3$,  with  $\ni_{\bd}^\abs$ a Nielsen class of odd pure-cycle genus 0 covers. Then,  $G= A_n$, $n\ge
4$. For $\ell=2$, there is a (nonempty) abelianized 
\MT\ above any component of 
$\sH(A_n,\bfC_{\bd})^\inn$ if and only if \begin{equation} \label{MTover} \sum_{i=1}^r \frac{o(g_i)^2-1}8 \equiv 0 \mod 2. \end{equation} For
$\ell\ne 2$, there is always an abelianized \MT\ above any component of $\sH(A_n,\bfC_{\bd})^\inn$. 

If the $d_i\,$s are equal in pairs, there is always (irrespective of $\ell$) a (full\wsp not abelianized) \MT\  over any component of $\sH(A_n,\bfC_{\bd})^\inn$. \end{prop}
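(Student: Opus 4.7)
The plan is to split the proof into three stages corresponding to the three clauses, then pinpoint the main obstacle.

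First, identify the group. A pure-cycle Nielsen class is transitive by Def.~\ref{HNC} and primitive by a standard genus-$0$ Riemann--Hurwitz argument combined with the presence of a pure-cycle branch element, so Williamson's theorem (Def.~\ref{purecycles}) places $G$ in $\{A_n, S_n\}$; the odd-cycle hypothesis excludes $S_n$ via Lem.~\ref{canpure}, giving $G = A_n$ with $n \ge 4$ (forced by $r \ge 3$ together with non-degeneracy of the Nielsen class).

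Second, establish the $\ell = 2$ criterion. Since $\SM_{A_n,2} = \bZ/2$ for all $n \ge 4$, the $2$-representation cover is the spin cover $\tilde A_n \to A_n$. Prop.~\ref{existProp}\eql{steppingup}{steppingupc} reduces abelianized-MT existence above a braid orbit to the vanishing of the spin lift invariant $s_\bg$. Thm.~\ref{invariance} --- applicable since $\bfC$ is odd-cycle and the absolute covers have genus $0$ --- evaluates it in the pure-cycle setting as
\[
s_\bg \;\equiv\; \sum_{i=1}^r \frac{d_i^2 - 1}{8} \pmod 2,
\]
each $g_i$ being a single non-trivial cycle of length $d_i$ (fixed points contribute $0$ to $\omega$). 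Because $s_\bg$ is constant on the Nielsen class, this criterion is uniform across all components, yielding the iff \eqref{MTover}. For $\ell \ne 2$, the $\ell$-part of $\SM_{A_n}$ is trivial for every $n \ge 4$ except for the exceptional pair $(n,\ell) \in \{(6,3),(7,3)\}$; in the generic range Princ.~\ref{liftinvprinc} delivers the abelianized MT unconditionally, and in the two exceptional cases Rem.~\ref{notlperf} forces the relevant $\bZ/3$-summand of the representation cover to drop out as soon as a class of order divisible by $3$ is present, killing the putative obstruction.

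Third, handle the HM-paired clause. When the $d_i$ come in pairs, each summand in \eqref{MTover} appears twice, so $s_\bg \equiv 0 \pmod 2$ automatically and by Stage~2 an abelianized MT already exists over every component. To upgrade to a full MT, I would exhibit an HM representative $\bg = (g_1, g_1^{-1}, \ldots, g_{r/2}, g_{r/2}^{-1})$ in each component --- the pairing lets one swap partners within a paired class by a standard braid manipulation, so every component contains such a rep. Setting $\bh_j = (g_j, g_j^{-1})$ makes $H_j = \lrang{g_j}$ cyclic of odd order $d_j$, hence $\ell'$ under the natural coprimality $(\ell, N_\bfC) = 1$ implicit in the HM clause. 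Prop.~\ref{existProp}\eql{steppingup}{steppingupd} then delivers a full (non-abelianized) MT.

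The principal obstacle is the exceptional-Schur-multiplier sub-case of Stage~2 for $(n,\ell) \in \{(6,3),(7,3)\}$, where one must verify via Rem.~\ref{notlperf} that the triple-cover piece really does drop out of the modified representation cover for the class structures allowed in a pure-cycle genus-$0$ Nielsen class. Everything else is direct assembly of the existing infrastructure; the remaining care-requiring point is the Stage~3 assertion that every component (not merely one preferred component) supports an HM element once the $d_i$'s are paired, which one settles by braid-transitivity on inverse partners inside each paired class.
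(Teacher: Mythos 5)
Your proof follows the paper's own route step for step: Williamson's theorem plus the cyclic--exclusion Riemann--Hurwitz count identifies $G=A_n$, Thm.~\ref{invariance} gives the spin lift invariant for $\ell=2$, triviality of the $\ell$-part of the Schur multiplier handles $\ell\ne 2$, and Prop.~\ref{existProp}\eql{steppingup}{steppingupd} through \HM\ reps gives the paired-$d_i$ clause. So the decomposition and key lemmas match.

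The one place you deviate is worth flagging. The paper's proof simply asserts ``the Schur multiplier for $A_n$ is $\bZ/2$,'' and on that basis declares the lift invariant trivial for $\ell\ne2$; you correctly point out that $\SM_{A_6}=\SM_{A_7}=\bZ/6$, so the $3$-part is nontrivial there and the paper's sentence is (as stated) false for $(n,\ell)\in\{(6,3),(7,3)\}$. This is a genuine catch: the paper's proof should have addressed it. However, your repair does not close the gap. You claim ``a class of order divisible by $3$ is present'' forces Rem.~\ref{notlperf} to kill the $\bZ/3$ summand, but this premise can fail: for $A_7$ the genus-$0$ pure-cycle constraint $\sum(d_i-1)=12$ with $r\ge3$ admits $\bd=(5,5,5)$, which has no class of order divisible by $3$, and then the $\ell'$ condition for $\ell=3$ holds, the representation cover $3\cdot A_7$ is in play undiluted, and one actually needs to compute (or cite) the vanishing of the $3$-lift invariant on that Nielsen class --- something neither you nor the paper does. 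Even in the cases where $3\mid N_\bfC$ (as is forced for $A_6$), the assertion that the Rem.~\ref{notlperf} quotient ``drops out'' the $\bZ/3$-summand is a computation you have asserted rather than carried out; it is plausible (a $3$-cycle is not central mod the center of $3\cdot A_n$) but it is not automatic.

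Two smaller points. First, your Stage~1 phrasing suggests Williamson puts $G\in\{A_n,S_n\}$ and oddness then excludes $S_n$; since the $g_i$ are odd-length cycles they are already in $A_n$, so $G\le A_n$ from the outset and Williamson directly gives $G=A_n$ once transitivity and primitivity and non-cyclicity are in place --- the paper's RH argument ruling out cyclic $G$ (giving $r=2$) is the piece you left implicit. Second, for Stage~3 you invoke Prop.~\ref{existProp}\eql{steppingup}{steppingupd} with the observation that $H_j=\langle g_j\rangle$ is $\ell'$ ``under the natural coprimality $(\ell,N_\bfC)=1$''; but the clause is stated ``irrespective of $\ell$,'' so $\ell\mid d_j$ is allowed and that hypothesis is not available. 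The paper's own proof is equally silent here; the rescue is that for an \HM\ tuple the product-one condition is automatic for any lift $g_j^*$, $(g_j^*)^{-1}$ and generation is automatic by Frattini, so the $\ell'$ hypothesis on $H_j$ in \eql{steppingup}{steppingupd} is stronger than what the \HM\ argument actually needs --- worth saying explicitly rather than smuggling in a coprimality that is not hypothesized.
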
 

\begin{proof} Appearances of alternating groups come from \cite{Wm73}, whose hypotheses \cite[Thm.~5.3]{LO08}) imply a noncyclic, transitive subgroup
$G$ of $A_n$,   generated by  odd pure-cycles must be $A_n$, $n\ge 4$. If we exclude that $G$ is cyclic, then $G=A_n$, $n\ge 4$, in any
such Nielsen class. If, however, $G=\lrang{h}$, then transitivity implies $h$ is an $n$-cycle. Apply the pure-cycle and genus 0 conditions.  Conclude: all 
$g_i\,$s are invertible powers of $h$. By \RH: $2(n-1)=r(n-1)$, $r=2$, contrary to hypothesis. 

Why $\ni(A_n,\bfC_\bd)$ is nonempty: For $r=3$ and  $\g_{e_1\cdot e_2\cdot e_3}=0$, there is a unique  $$\bg\in \ni(G,\bfC_{e_1\cdot e_2\cdot
e_3})^\abs \text{ with }\ord(g_i)=e_i, i=1,2,3.$$
\cite[Princ.~4.9]{Fr25} constructs Nielsen class reps., for all $\bd$ satisfying the conditions above for $r=4$, it also notes their easy construction when the $d_i\,$s are equal in pairs through \HM\ reps. Then, 
and outside that case, it constructs special representatives having {\sl  split-cycle cusps}.   

The Schur multiplier for $A_n$ is $\bZ/2$. From Thm.~\ref{invariance}, the left side of \eqref{MTover} is the value of the lift invariant for $\ell=2$, and the lift invariant is trivial for $\ell\ne 2$. 
So, Prop.~\ref{existProp} says \eqref{MTover} gives an abelianized \MT\ for $\ell=2$ over a trivial llift invariant braid orbit of $\ni(A_n,\bfC_{\bd})$, and  such a \MT\  always exists when $\ell\ne 2$. 
\end{proof} 

\begin{rem}[$A_n$ component issues] \label{Anissues} \cite[\S3.4]{Fr25} does this in the case of $A_n$ with $n\equiv 1 \mod 4$ and $\bfC$ consists of four $\frac{n\np1}2$ cycles.  With notation from \eqref{AnSn}, the hardest  \cite{LO08} case, toward finding their absolute Hurwitz spaces had one component, was $\sH(A_n,\bfC_{(\frac{n\np 1}2)^4})^\abs$, $n\equiv 1 \mod 4$. 

\cite{Fr25} extends their paper to the inner case:  is the moduli definition field   $\bQ$ or a quadratic extension of $\bQ$? This  reverts to a property of an explicitly constructed function $f_n:\prP^1_x\to \prP^1_z$ in the absolute class, mapping $\{0,\infty,\pm 1\}\to \{0,\infty,\pm 1\}$: Is the discriminant of $f_n$ a square in $\bQ$? Note: We can compositionally iterate the $f_n\,$s. \end{rem}  

\subsection{A Nielsen class for $(\bZ/\ell^{k\np1})^2\xs \bZ/3=G_{\ell,k,3}$, $k\ge 0$} \label{heiscase}  
As for $G_{\ell,k,2_2}$ in \S\ref{weilpairing}, $G_{\ell,k,3}$ is solvable. Here, $\bfC=\bfC_{\pm 3^2}$, two repetitions each of the order 3 classes in $\bZ/3$; $\bZ/3$ acts by taking $A^*=\smatrix 0 {-1} {1} {-1}=\zeta_3=e^{2\pi i/3}$ acting on $\bZ^2=\sO_K$ -- left action as in linear algebra classes -- the algebraic integers of $\bZ[\zeta_3]$ on the right. Reducing $\mod \ell^{k\np1}$, $A^*$ on $V=\lrang{\bv_1=\zeta_3,\bv_2=\zeta_3^2}\otimes \bZ$. In matrix multiplication notation: $\tr=$  transpose turns a one-row vector to a one-column matrix:\footnote{This notation matches how elements in the Nielsen class multiply. The $\bZ/3$ action descends from an action on the free group on two generators,} \begin{equation} \label{A*act} \begin{array}{c}A^*\bv_1=A^*(1\  0)^\tr =(0\ 1)^\tr=\bv_2 \text{ and }\\A^*  \bv_2^\tr = -\bv_1^\tr-\bv_2^\tr =(-1,-1)^\tr \text{ from }\zeta_3^2\cdot \zeta_3=1=-\zeta_3-\zeta_3^2. \end{array}\end{equation}  

The representation $T$ is on the cosets of $\bZ/3=\{((0,0),\bZ/3)\}$ in $G_{\ell,k,3}$.  \S\ref{nonbraidcomps} shows the Schur multiplier of $G_{\ell,k,3}$ is nontrivial: giving an $\ell$-Frattini extension of the group with $\bZ/\ell^{k\np1}$ kernel in the center of the extension. It is, therefore, superficially similar to the OIT example of \S\ref{weilpairing}, but the $\ell$-Sylow of the restriction of its {\sl representation cover\/} is not $\bH_{\ell,k}$.  

While the lift invariant is our main separator of components, for some Hurwitz spaces there can be more obvious geometric reasons why a Hurwitz space's components are dealt with in separate collections. \S\ref{HM-DIPrinc} collects components in a subspace, $\sH_{\HM-\DI}$, where the components (all reduced Hurwitz spaces of dimension 1) have compactifications over $\prP^1_j$ with a cusp -- over $j=\infty$ -- of width 1 (Lem.~\ref{HM-DIcomps}). These are of two such cusp types (\HM\ and \DI\ as in \eqref{hm-diHyp}).\footnote{We decided not to deal in this paper with whether there are other components, since these components provide all the lessons on lift invariants we could handle.}  
Ex.~\ref{HM-separated} explains the related main example of \cite{Fr95} which led to the name \HM\ (Harbater-Mumford). 

Then, \S\ref{liftinvell03} (Lem.~\ref{l03liftinv}) computes the lift invariants of the components in $\sH_{\HM-\DI}$ achieving all possible values in $\bZ/\ell^{k\np1}$.  
Following Thm.~\ref{cosetbr} (rubric Rem.~\ref{appcosetbr}), we list absolute components, first from lift invariant values and then including the separation between \HM\ and \DI\ components, followed by listing the automorphism-separated components above each absolute component. 
 
Lem.~\ref{l03liftinv} (\S\ref{liftinvell03}) computes the lift invariant for Nielsen classes corresponding to components in $\sH_{\HM-\DI}$ ($\ell\ne 3$). The formula is explicit. At level $k$, Those in  $(\bZ/\ell^{k\np1})^*$ are \DI\ orbits; \HM\ orbits have lift invariant 0, but so, too, do some \DI\ orbits. \eqref{GL3properties} gives the genuses of the covers in two of the relevant families. 
\begin{edesc}  \label{GL3properties} \item $\sH(\bZ/3, \bfC_{\pm 3^2})^\inn$ has covers of genus $\geng_{\bZ/3, \inn}=2$: $2(3 \np \geng_{\bZ/3,\inn}\nm 1)=4\cdot 2$.
\item $\sH(G_{\ell,k},\bfC_{\pm 3^2})^\abs$ has covers of genus $\geng_{G_{\ell,k},\abs}$: $$2((\ell^{k\np 1})^2\np \geng_{G_{\ell,k},\abs} \nm1)=4\cdot 2\frac{((\ell^{k\np 1})^2\nm 1)}3\text{ or }\geng_{G_{\ell,k},\abs}=\frac{\ell^{2(k\np1)}\nm1}3.$$ 
\end{edesc}

\begin{rem} \label{quadrecrem} Action of $\bF_\ell[\bZ/3]$  on $V_{\ell,0}$ has two 1-dimensional subspaces  if and only if $x^2\np x\np1$ --  irreducible for $\ell=2$ -- 
is reducible. For $\ell\ne 2$,  this is equivalent to $x^2+3$ is reducible:  equivalent to -3 is a square $\!\! \mod \ell$.  \eqref{quadrec}  applies quadratic reciprocity: $\left(\frac 3 \ell\right) \left(\frac \ell 3\right) = (-1)^{(\frac {(3 \nm 1)}2 \frac{(\ell\nm1} 2)}$.

\begin{edesc} \label{quadrec}  \item either -1 and 3 are both squares $\!\! \mod \ell \Leftrightarrow \ell\equiv 1 \!\!\mod 4 \text{ and } 1 \!\!\mod 3$, or 
\item neither -1 nor 3 are squares $\!\!\mod \ell \Leftrightarrow \ell\equiv 3 \!\!\mod 4$  and $1\!\!\mod 3$. \end{edesc} These conclusions from quadratic reciprocity imply $-3$ is a square $\mod \ell$,  $\Leftrightarrow \ell\equiv 1 \!\!\mod 3$.  \end{rem} 

\subsubsection{The Schur multiplier of $G_{\ell,k,3}$} \label{nonbraidcomps}  We use the matrix multiplication indicated in \eqref{gpmult}. An element $\bv\in V_{\ell,k}=(\bZ/\ell^{k\np1})^2$ is represented by $\smatrix 1 0 {\bv} 1$, $\alpha$ by $\smatrix {\alpha} 0 {\pmb 0} 1$ with $\pmb 0=(0,0) \in V_{\ell,k}$ with the conjugacy classes of $\alpha$ in $V_{\ell,k}\xs \bZ/3$ the set $\C_+=\{\smatrix {\alpha}0  {\bv^{\alpha}\nm \bv}  1 \eqdef {}_\bv\alpha\mid \bv\in V_{\ell,k}\}$ compatible with matrix multiplication and the notation for the \OIT\ group in \S\ref{liftinvZellZ2}. 
\begin{defn} \label{alpha-gen} Refer to   $\bv\in V_{\ell,k}$ as an $\alpha$-generator  if $\lrang{\alpha,\bv}=V_{\ell,k}\xs \bZ/3$.\end{defn} Denote $\bH_{\ell,k,2}$ for $\bH_{\ell,k}$ in \eqref{smheis} to indicate the representation cover with a $\bZ/2$ action on it.

\begin{lem} There is no extension of the action of $\alpha$  to $\bH_{\ell,k,2}$ to produce a central extension of $G_{\ell,k,3}$. Still, there is a central extension, $\bH_{\ell,k,3}\to V_{\ell,k}$, with kernel $\bZ/\ell^{k\np1}$ on which $\bZ/3$ acts,  producing the universal central extension $\bH_{\ell,k,3}\xs\bZ/3\to G_{\ell,k,3}$.  \end{lem}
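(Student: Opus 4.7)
Plan. The lemma splits into two halves: non-existence of an extension of $\alpha$ to $\bH_{\ell,k,2}$ producing a central extension of $G_{\ell,k,3}$, and construction of a different central extension $\bH_{\ell,k,3}$ realizing the universal central extension.

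For the non-existence, the approach is cocycle-theoretic. Any lift $\tilde A^*$ of $A^*$ to an automorphism of $\bH_{\ell,k,2}$ trivial on the center has, with the canonical section $\bu \mapsto M(\bu,0)$, the form $\tilde A^*(M(\bu,w)) = M(A^*\bu, w + f(\bu))$ for some $f\colon V_{\ell,k} \to \bZ/\ell^{k\np1}$. The homomorphism property forces the coboundary equation
\begin{equation*}
f(\bu) + f(\bv) - f(\bu + \bv) = c(A^*\bu, A^*\bv) - c(\bu,\bv),
\end{equation*}
where $c(\bu,\bv) = u_1 v_2$ is the Heisenberg cocycle; a direct computation gives the right-hand side as $-u_1 v_2 - u_2 v_1 + u_2 v_2$. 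With $f$ quadratic, matching coefficients yields $-2a = 0$, $b = 1$, $-2c = 1$, and the last equation has no solution modulo $\ell^{k\np1}$ when $\ell = 2$, a hard obstruction in the case most relevant to our \MT\ framework. For $\ell$ odd, one must also impose the order-three condition $(\tilde A^*)^3 = \Id$, i.e., $\sum_{i=0}^{2} f((A^*)^i\bu) = 0$; the freedom to adjust $f$ by an additive homomorphism vanishes here because $I + A^* + (A^*)^2 = 0$ on $V_{\ell,k}$, so the identity must hold intrinsically. Tracking this through shows the resulting extension, even when it exists as a group, fails to deliver the $\ell$-representation cover structure needed for the lift-invariant formalism of Def.~\ref{liftinv}.

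For the construction of $\bH_{\ell,k,3}$, the strategy is to replace the Heisenberg cocycle $c$ by a cohomologous but $\bZ/3$-invariant cocycle. Since $A^* \in \SL_2(\bZ/\ell^{k\np1})$ preserves the symplectic form $\omega(\bu,\bv) = u_1 v_2 - u_2 v_1$, the cohomology class of $c$ is $\bZ/3$-invariant, so we seek a representative cocycle $A^*$-invariant on the nose. For $\ell$ odd, $c_3(\bu,\bv) = \omega(\bu,\bv)/2$ works cleanly: the naive lift $\tilde A^*(M(\bu,w)) = M(A^*\bu, w)$ is tautologically of order 3. For $\ell = 2$, analysis of the $\bZ/3$-action on symmetric bilinear forms on $V_{2,k}$ identifies the fixed line as spanned by the form $\smatrix {-2} 1 1 {-2}$; combined with an $A^*$-invariant antisymmetric contribution coming from a Bockstein of $\omega$, this produces the required $A^*$-invariant cocycle defining $\bH_{2,k,3}$, with $\bZ/3$ acting trivially on the central $\bZ/2^{k\np1}$.

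Universality of $\bH_{\ell,k,3}\xs\bZ/3 \to G_{\ell,k,3}$ follows by a counting-plus-Frattini argument. The Lyndon--Hochschild--Serre spectral sequence for $V_{\ell,k} \to G_{\ell,k,3} \to \bZ/3$ collapses under $\gcd(3, \ell^{k\np1}) = 1$, giving $H_2(G_{\ell,k,3}) = (V_{\ell,k}\wedge V_{\ell,k})^{\bZ/3} = \bZ/\ell^{k\np1}$ (trivial $\bZ/3$-action since $\det A^* = 1$). The cover $\bH_{\ell,k,3}\xs\bZ/3 \to G_{\ell,k,3}$ has kernel of exactly this order, contained in the derived subgroup, hence Frattini; uniqueness of the $\ell$-representation cover of the $\ell$-perfect group $G_{\ell,k,3}$ (Cor.~\ref{compmult2}) identifies it as the universal central extension. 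The main obstacle will be making the $\ell = 2$ construction of $\bH_{\ell,k,3}$ fully explicit, since the $\ell$-odd construction via $\omega/2$ is unavailable and one must work directly with the fixed-form analysis together with a careful Bockstein calculation in $H^2(V_{2,k}, \bZ/2^{k\np1})$.
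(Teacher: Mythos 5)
Your parametrization of the candidate lifts of $A^*$ to $\bH_{\ell,k,2}$ by a function $f\colon V_{\ell,k}\to\bZ/\ell^{k\np1}$ is strictly more careful than the paper's own argument, which simply substitutes the naive lift $f=0$ into \eqref{cenextalpha}, sees the $w$-entries disagree, and stops; the coboundary equation you write (up to a sign convention: the equation is $f(\bu)\np f(\bv)\nm f(\bu\np\bv)=c(\bu,\bv)\nm c(A^*\bu,A^*\bv)$) and the $\ell=2$ obstruction $-2c=1$ are both right. Intrinsically, the $\ell=2$ obstruction is that the squaring map $q(\bv)=\hat{\bv}^2$ on $\bH_{2,k}$ is a quadratic refinement of the commutator pairing and $A^*$ changes its Arf class, so no lift of $A^*$ to $\Aut(\bH_{2,k})$ fixing the center can exist.

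The trouble is for odd $\ell$, where your nonexistence conclusion is false and your two halves contradict each other. Take the quadratic $f$ solving your linear system ($a=0$, $b=1$, $c=-\tfrac{1}{2}$, which exists since $2$ is invertible $\bmod\ \ell^{k\np1}$); a two-line calculation shows $f(\bu)\np f(A^*\bu)\np f((A^*)^2\bu)=0$ identically, so the resulting $\tilde A^*$ has order $3$ and $\bH_{\ell,k,2}\xs\bZ/3\to G_{\ell,k,3}$ is a central Frattini cover with kernel $\bZ/\ell^{k\np1}$. The structural reason is Schur--Zassenhaus: the preimage of $\lrang{A^*}$ in the group of center-fixing automorphisms of $\bH_{\ell,k,2}$ is an extension of $\bZ/3$ by $\Hom(V_{\ell,k},\bZ/\ell^{k\np1})$, an $\ell$-group, so an order-$3$ lift exists whenever any lift does. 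The sentence ``tracking this through shows the resulting extension ... fails to deliver the $\ell$-representation cover structure'' carries no argument and hides precisely the step that would have exposed this. Worse, your own construction of $\bH_{\ell,k,3}$ for odd $\ell$ via the cocycle $\omega/2$ produces a group that is \emph{isomorphic as a central extension of $V_{\ell,k}$ to $\bH_{\ell,k,2}$}: the difference $u_1v_2-\omega(\bu,\bv)/2=(u_1v_2\np u_2v_1)/2$ is the coboundary of $-u_1u_2/2$. So the second half constructs the very extension the first half declares impossible. Note also that what the paper takes for $\bH_{\ell,k,3}$ (via Lem.~\ref{carmichael}) is $K_{\ell^2,\ell^2,k}$, the extension with nontrivial power map $\da^{\ell^{k\np1}}=\db^{\ell^{k\np1}}=w$; for odd $\ell$ that is not what your $\omega/2$ cocycle builds, since $\omega(\bu,\bu)=0$ forces trivial $\ell^{k\np1}$-th powers. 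Your LHS computation of $H_2(G_{\ell,k,3})$ and the stem/Frattini argument for universality are fine, but the proof must first pin down \emph{which} central extension of $V_{\ell,k}$ actually carries the order-$3$ action, and as it stands your two halves give conflicting answers for odd $\ell$.
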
 

\begin{proof}  Try extending $\alpha$ to the small Heisenberg group acting trivially on the center:  substitute  $\smatrix {\alpha} {M(a,a',w)} 0 1$ for $\smatrix {-1} {M(a,a',w)} 0 1$ in the expression for $\beta$ in \eqref{cenext} to check if  
\begin{equation} \label{cenextalpha} \begin{array}{c}  \text{$\alpha$ applied to } M(a_1,a_1',w_1)M(a_2,a_2',w_2)={}^\alpha M(a_1,a_1',w_1){}^\alpha M(a_2,a_2',w_2)  \text{ or is } \\ 
\begin{pmatrix}1 &-a_1\nm a_2\nm a_1'\nm a_2' &w_1 \np w_2 \np a_1a_2'\\ 0& 1 &{a_1\np a_2} \\ 0& 0& 1 \end{pmatrix}  = \begin{pmatrix}  1 &-a_1\nm a_1' &w_1 \\ 0& 1 &{a_1} \\ 0& 0& 1 \end{pmatrix}  \begin{pmatrix} 1 &-a_2\nm a_2' &w_2 \\ 0& 1 &{a}_2 \\ 0& 0& 1\end{pmatrix}.\end{array} \end{equation} 
Result: the upper right-hand positions on the two sides are not generally equal.  

Lem.~\ref{carmichael} gives the extension in the last statement of the Lemma, with the centralizing $\bZ/3$ action stated in \eql{ell2k}{ell2kb}, given in detail in \eqref{extalpha}. \end{proof} 

We use these basic facts applied to an $\ell$-group $K$, with $|K|> \ell$.
\begin{edesc} \label{basicl} \item \label{basicla} A subgroup of  $K$ of index $\ell$ is automatically normal, and  
\item \label{basiclb} $K$ contains a subgroup of index $\ell$ \cite[p.~122]{Ca56}. 
\item \label{basiclc}  $K$ contains an element $w\not = 1$ in its center \cite[p.~68]{Ca56}. 
\end{edesc} 

Lem.~\ref{carmichael2} gives cases from Lem.~\ref{carmichael} satisfying the additional assumptions \eql{framework}{frameworkb} and \eql{framework}{frameworkc}. 
\begin{edesc} \label{framework} \item \label{frameworka}  The nontrivial center $C$ of $K$ \eql{basicl}{basiclc} has order $\ell$; 
\item \label{frameworkb} the homomorphism $K\to K/C$ is a Frattini cover\footnote{From \eql{framework}{frameworka} this is automatic.} with a split, faithful action of an $\ell'$-group $H$ on $K/C$;  and 
\item \label{frameworkc} $H$ extends to $K\xs H$ acting trivially on $C$. \end{edesc}

\newcommand{\da}{\dot{a}} \newcommand{\db}{\dot{b}} \newcommand{\dx}{\dot{x}} \newcommand{\dy}{\dot{y}}
Assume \eqref{framework} holds for $K$. Each group in  Lem.~\ref{carmichael}  is a $\bZ/\ell$ extension of $V=\lrang{a,b}=(\bZ/\ell)^2$, distinguished by the orders of generators $\da,\db$ of $K$. 
\begin{lem} \label{carmichael} With $\ell$ odd, there are three nonisomorphic nonabelian groups of order $\ell^3$. Each has generators $\da,\db$ with $\lrang{w=\da\db\da^{-1}\db^{-1}}=C$ with these respective properties: 
\begin{edesc} \label{typel} \item \label{typela} for $K_{\ell,\ell}$, $\da$ and $\db$ have order  $\ell$;   
\item \label{typelb} for $K_{\ell^2,\ell^2}$, $\da$ and $\db$ have order $\ell^2$, $\da^\ell=\db^\ell=w$; and 
\item \label{typelc} for $K_{\ell^2,\ell}$,  $\da$ (resp.~$\db$) has order $\ell^2$ (resp.~$\ell$).
\end{edesc} 

There is an $\ell^{k\np1}$ version,  $K_{\ell^2,\ell^2, k}$, of  $K_{\ell^2,\ell^2}$ whose properties we list in \eqref{ell2k}.  
\end{lem}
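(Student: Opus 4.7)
The plan begins by pinning down the center. By \eql{basicl}{basiclc} any group $K$ of order $\ell^3$ has nontrivial center $C$; since $K$ is nonabelian, $|C| \ne \ell^3$, and $|C| = \ell^2$ would make $K/C$ cyclic, which forces $K$ abelian by the classical fact that $G/Z(G)$ cyclic implies $G$ abelian. Hence $|C| = \ell$. Then $K/C$ has order $\ell^2$ and, by the same cyclic-quotient argument, is isomorphic to $(\bZ/\ell)^2$. Lifting any basis to $\dot{a},\dot{b} \in K$, these generate $K$ because $C$ lies in the Frattini subgroup (use \eql{basicl}{basicla}-\eql{basicl}{basiclb}). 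The commutator $w = \dot{a}\dot{b}\dot{a}^{-1}\dot{b}^{-1}$ is central since $K/C$ is abelian, and nontrivial since $K$ is not, so $\lrang{w} = C$.

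The computational heart is the commutator identity valid whenever $w$ is central:
$$(\dot{a}\dot{b})^n \;=\; \dot{a}^n\dot{b}^n\, w^{\binom{n}{2}}.$$
For odd $\ell$, $\binom{\ell}{2} = \ell(\ell-1)/2$ is divisible by $\ell$, so $w^{\binom{\ell}{2}} = 1$; hence the $\ell$-th power map $K \to C$ is a group homomorphism. In particular $\dot{a}^\ell,\dot{b}^\ell \in C$, so each generator has order in $\{\ell,\ell^2\}$. Replacing $\dot{b}$ by a product $\dot{a}^i\dot{b}^j$ with $j\in(\bZ/\ell)^*$ leaves $w$ unchanged up to an invertible power and acts on the pair $(\dot{a}^\ell,\dot{b}^\ell)\in C\times C$ by linear change of basis; the orbits of this action on $\{1,w\}^2$ collapse its four elements to the three representatives $(1,1)$, $(w,w)$, $(w,1)$, which match the three presentations \eql{typel}{typela}--\eql{typel}{typelc}.

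The main obstacle is the clause ``three nonisomorphic.'' The classical classification gives only two isomorphism types of nonabelian groups of order $\ell^3$ for $\ell$ odd: the Heisenberg group (exponent $\ell$) and $\bZ/\ell^2\rtimes\bZ/\ell$ (exponent $\ell^2$). I will therefore interpret the statement as a classification of pairs $(K,\{\dot{a},\dot{b}\})$ of groups equipped with a distinguished ordered generating pair realizing the normalized commutator data above; this is the level of datum actually used in the central-extension framework \eqref{framework} and in the $\bZ/3$-action of \eqref{ell2k} and \eqref{extalpha}, because the orbit of generating pairs under the ambient $\bZ/3$ must preserve the commutator relations. Within the exponent-$\ell^2$ group both $K_{\ell^2,\ell^2}$ and $K_{\ell^2,\ell}$ appear, and I will explicitly produce generating pairs with $(\dot{a}^\ell,\dot{b}^\ell)=(w,w)$ and $(w,1)$ respectively; I will note that they are isomorphic as abstract groups but inequivalent as pairs.

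Finally, for the $\ell^{k+1}$-version $K_{\ell^2,\ell^2,k}$, the same identity applies with $n=\ell^{k+1}$, because $\binom{\ell^{k+1}}{2}$ is divisible by $\ell^{k+1}$ for odd $\ell$. One defines $K_{\ell^2,\ell^2,k}$ by $[\dot{a},\dot{b}]=w$, $\dot{a}^{\ell^{k+1}}=\dot{b}^{\ell^{k+1}}=w$, with $w$ central of order $\ell^{k+1}$. The $\ell^{k+1}$-th power map is then a homomorphism into $C\cong\bZ/\ell^{k+1}$, and the resulting central extension of $V_{\ell,k}=(\bZ/\ell^{k\np1})^2$ with kernel $C$ has the group $K_{\ell^2,\ell^2,k}$ as total space. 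The properties listed in \eqref{ell2k} then follow from direct computation with this presentation, postponing the verification that a $\bZ/3$-action centralizing $C$ exists to Lem.~\ref{carmichael2}.
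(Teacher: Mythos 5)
Your observation that the classical classification yields only two, not three, isomorphism classes of nonabelian groups of order $\ell^3$ for odd $\ell$ is correct and identifies a genuine imprecision in the lemma as stated: $K_{\ell^2,\ell^2}$ and $K_{\ell^2,\ell}$ are both presentations of the same exponent-$\ell^2$ group (starting from a generating pair $a$ of order $\ell^2$ and $b$ of order $\ell$ with $[a,b]=a^\ell=w$, the pair $(a,ab)$ has both entries of order $\ell^2$ and $[a,ab]=w$, since $\ell\mid\binom{\ell}{2}$ makes $(ab)^\ell=a^\ell$). The paper's proof never verifies non-isomorphism; after exhibiting the normal cyclic $V^*=\lrang{\da}$ and the relation $\db^{-1}\da\db=\da^{1\np\ell}$ from \eqref{bcong}, it separates the two cases in \eqref{cyclicV} only by the order of $\db$, which is a datum of the generating pair rather than of the abstract group. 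Your reinterpretation---a classification of pairs $(K,(\da,\db))$ with prescribed orders, commutator, and (eventually) $\bZ/3$-action---is the reading the paper actually uses downstream, since \eqref{extalpha} and Lem.~\ref{carmichael2} are stated entirely in terms of the presentation.

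Beyond that repair your route differs from the paper's mainly in organization, and both are sound. The paper builds from the normal cyclic $V^*$ of order $\ell^2$ and the conjugation action of a $\db$ outside it; you instead note that $\ell\mid\binom{\ell}{2}$ (for $\ell$ odd) makes the $\ell$-th power map a homomorphism $K\to C$ factoring through a linear form on $K/C\cong(\bZ/\ell)^2$, and classify presentations by the normalized values $(\da^\ell,\db^\ell)$ up to change of basis (which also rescales $w$ by the determinant). The linear-form picture makes the count of exactly three presentation types transparent (zero form; nonzero form with one basis vector in the kernel; nonzero form with neither in the kernel) and passes to the $\ell^{k\np1}$ version with no new idea, since $\ell^{k\np1}\mid\binom{\ell^{k\np1}}{2}$ for $\ell$ odd. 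The paper's $V^*$-based account makes the explicit relations used in \eqref{ell2k} and \eqref{extalpha} slightly more immediate. Your version is arguably closer to a proof of what the lemma is genuinely asserting.
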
 

\begin{proof} From \eql{framework}{frameworkb}, $K$ contains a normal subgroup, $V^*$, of order $\ell^2$. The same argument shows $V^*$ is abelian. Also, since $K$ is nonabelian, it has only one subgroup, $\lrang{w}$, of order $\ell$ in its center.  

If $V^*=\lrang{\da}$ is cyclic, its automorphism group is $(\bZ/\ell^2)^*$, invertible integers mod $\ell^2$,  acting by putting $\da$ to $\ell'$ powers. Conjugate $V^*$ by $\db\in K\setminus V^*$ (of $\ell$-power order).  Replace $\db$ by an appropriate $\ell'$ power to have it act as \begin{equation} \label{bcong} \begin{array}{c} \da\mapsto  \db^{-1}\da\db=\da^{1\np\ell}\text{ giving }K_{**}=\lrang{\da,\db\mid \da^{\ell}=w}; \text{ and }\\ \text{ from }\db^{-1}\da\db\da^{-1}=w, \da\db\da^{-1}=\db w.\end{array}\end{equation} There are two cases with $V_{\db}\eqdef \lrang{\db}$.
\begin{edesc} \label{cyclicV}
\item \label{cyclicVa} $K_{**}=K_{\ell^2,\ell^2}$: For $\ord(\db)=\ell^2$, $V_{\db}\norm K_{**}$ and $V\cap V_{\db}=\lrang{w}$.   
\item  \label{cyclicVb} $K_{**}=K_{\ell^2,\ell}$: $\ord(\db)=\ell$, and $V_{\db}$ is not normal.  \end{edesc} 
We do the case $K_{**}=K_{\ell^2,\ell^2}$, leaving  $K_{\ell^2,\ell}$ to the reader. 
Each element in the group has the form $\da^m\db^n w^u$ using that  $w$ centralizes; then reduce  $m,n,u$ mod $\ell$. 
\begin{edesc} \label{immult} \item \label{immulta}  For example, we can always write $\da^m\db^n$ as $\da^{m'}\db^{n'}w^u$ with $m\equiv m', n\equiv n' \mod \ell$. 
\item \label{immultc} Replace $\db^n\da^m$ by $\da^m\db^nw^{-m\cdot n}$ using $\db\da=\da\db w^{-1}$, applying \eql{immult}{immulta} when necessary.  \end{edesc} 
This multiplication is associative since it doesn't depend on where you might put (\ )s, but only on the cardinality of $\da\,$s to the right of $\db\,$s. 

We already have $K_{\ell,\ell}$ as the small Heisenberg group of \S\ref{smheis}. Here is a list of the $K_{\ell^2,\ell^2}$  generalization, to level $k$: 
\begin{edesc} \label{ell2k} \item If fits in the short exact sequence $$\lrang{w_k}\eqdef \bZ/\ell^{k\np1} \to K_{\ell^2,\ell^2, k}\eqdef\lrang{\dot a_k,\dot b_k}\longmapright{\psi_k}{20} (\bZ/\ell^{k\np1})^2\text{, as an $\ell$-Frattini cover of $(\bZ/\ell^{k\np1})^2$};$$ 
\item \label{ell2kb} with a $\bZ/3$ action that centralizes $\ker(\psi_k)$, extending the $\bZ/3$ action for $k\nm1$, etc. 
\item In the  exponent condition in \eql{immult}{immulta}  replace $\!\!\mod \ell$ with $\!\!\mod \ell^{k\np1}$. \end{edesc} 
Here is the  $\bZ/3$ action of \eql{ell2k}{ell2kb}; we use $\dot a, \dot b, u$, but it works for these generators with the $k$ subscripts as well.
As in \eqref{A*act}, $\lrang{\alpha}=\bZ/3$; $\alpha$ acts on $V_{\ell,k}=\lrang{a,b}$: $a\mapsto b$ and $b\mapsto -a\nm b$. With $\da$ and $\db$  respective generators of $K_{\ell^2,\ell^2,k}$ lying over $a$ and $b$, use multiplicative notation. 
\begin{edesc} \label{extalpha} \item Extend $\alpha$ (resp.~$\alpha^{-1}$) by $(\da,\db)\mapsto  (\db,\db^{-1}\da^{-1}=(\da\db)^{-1})$ (resp.  $((\da\db)^{-1},\da)$.
\item Then $(\da,\db)\mapsto (\db,\db^{-1}\da^{-1})$ has order  3: 
$$\begin{array}{c}  (\da,\db)\longmapright{(\alpha)^2} {25} (\db^{-1}\da^{-1}, \da)\longmapright {\alpha}{20}  (\da,\db); w=aba^{-1}b^{-1}\longmapright {\alpha} {20} \ b(b^{-1}a^{-1})b^{-1}(ab) \\ =(ba)^{-1}(ab); 
\text{ conjugate by $ab$ and we are back to $w$.}\end{array}$$
\end{edesc}
\cite{FrBG} gives universal properties of \eqref{ell2k}  showing it is the universal central extension of $\bH_{\ell,k,3}$. 
 \end{proof}

\subsubsection{The \HM-\DI\ principle}  \label{HM-DIPrinc}   The following \HM-\DI\ principle will simplify computations. Instead of the whole Hurwitz space, consider the union of reduced components containing cusps defined by the following Nielsen class representatives:  
\begin{edesc} \label{hm-diHyp} \item \label{hm-diHypa}  An \HM\ rep. of form $(g_1,g_1^{-1}, g_3, g_3^{-1})$, $\lrang{g_1,g_3}=G$; or 
\item \label{hm-diHypb}  A {\sl double identity}, \DI,  element of form $(g_1,g_2,g_1,g_4)$ satisfying product-one with $$\lrang{g_1,g_2,g_4}=G, \text{ and }g_2,g_4\in \C_{-}.$$ 
\item \label{hm-diHypc}  Apply \eql{midprod}{midprodb} to  $\bg=(g_1,g_2,g_3,g_4)\in \bfC_{\pm 3^2}$ to conclude the cusp width of $\bg$ is 1 if $g_2=g_3^{\pm 1}$ and exceeds 1, otherwise. \end{edesc} We speak of \HM\ and \DI\ orbits or components.
\begin{lem} \label{HM-DIcomps}  Cusps associated to \sh\ applied to \eql{hm-diHyp}{hm-diHypa} and $q_1$ applied to \eql{hm-diHyp}{hm-diHypb} have width 1, and a Hurwitz space component of $\sH(G_{\ell,0,3},\bfC_{\pm 3^2})$ has a cusp of width 1 if and only if its braid orbit has one of these cusps. Denote the union of such  components by $\sH_{\HM-\DI}$. The total space so defined has moduli definition field $\bQ$. \end{lem}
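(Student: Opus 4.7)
My approach will have three stages. In the first I will verify the width claims using Prop.~\ref{gamOrbit}. For an \HM\ rep.\ $(g_1, g_1^{-1}, g_3, g_3^{-1})$, applying $\sh$ produces $(g_1^{-1}, g_3, g_3^{-1}, g_1)$, whose middle pair has product $g_3 g_3^{-1} = 1$. Since $g_3 \ne g_3^{-1}$ (elements of $\bfC_{\pm 3^2}$ have order $3$), this falls in the $g_2 \ne g_3$ branch of Prop.~\ref{gamOrbit} with $u = 1$. A direct $q_2$-orbit computation gives unreduced length $v = 2$, but the stabilizer element $q_1 q_3^{-1} \in \sQ''$ permutes this two-element orbit non-trivially modulo inner equivalence (using that $g_3$ and $g_3^{-1}$ are $G_{\ell,0,3}$-related through the $\bZ/3$ action). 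So the reduced orbit factor is $f_{u,v} = 2$ and the reduced cusp width is $v/f_{u,v} = 1$. For a \DI\ rep., the twist $q_1$ yields $(g_1 g_2 g_1^{-1}, g_1, g_1, g_4)$ whose middle pair satisfies $g_2 = g_3$, placing us in the first case of Prop.~\ref{gamOrbit} with $u = v = 1$, hence width $1$ directly.

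In the second stage I will prove the converse: any braid orbit containing a reduced width-$1$ cusp must contain an \HM\ or \DI\ representative. Using \eql{hm-diHyp}{hm-diHypc}, a cusp representative $\bg = (g_1, g_2, g_3, g_4)$ has reduced width $1$ precisely when $g_2 = g_3^{\pm 1}$. Combined with product-one: if $g_2 = g_3$, the relation forces a repeated entry pattern that gives a \DI\ form after a suitable shift; if $g_2 = g_3^{-1}$, then $g_1 g_4 = 1$ and shifting gives an \HM\ form $(g_1, g_1^{-1}, g_3, g_3^{-1})$. Generation of $\bg$ transfers to the generation condition in Def.~\eql{hm-diHyp}{hm-diHypa} or \eql{hm-diHyp}{hm-diHypb}.

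In the third stage I will handle the moduli definition field. The class collection $\bfC_{\pm 3^2}$ is a rational union in the sense of Def.~\ref{ratclass}: the two classes $\C_\pm$ are interchanged by the nontrivial element of $(\bZ/3)^*$ and appear with equal multiplicity. Hence by Prop.~\ref{bcl} (extended \BCL) the cyclotomic piece of the moduli definition field of $\sH(G_{\ell,0,3}, \bfC_{\pm 3^2})^\inn$ is $\bQ$, and $G_\bQ$ permutes its components. The property \emph{the component has a reduced width-$1$ cusp} is a geometric/topological invariant on the compactification and hence $G_\bQ$-stable; so $\sH_{\HM-\DI}$, being the union of such components, is a $G_\bQ$-invariant subunion. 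The restriction of the canonical total family to $\sH_{\HM-\DI}$ is therefore defined over $\bQ$, giving $\bQ_{\sH_{\HM-\DI}} = \bQ$.

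The hardest step will be the orbit-shortening verification in Stage 1, where the shifted \HM\ case requires carefully exhibiting the $\sQ''$-action on a two-element $q_2$-orbit as a nontrivial permutation modulo inner equivalence. The Stage 2 enumeration and Stage 3 rationality argument are then routine once the width dichotomy is in hand.
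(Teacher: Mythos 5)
Your proposal works hard on the width verification (Stage 1), a step the paper's proof treats as immediate: the paper's two-line proof only addresses the moduli-definition-field statement, taking the width-1 claims and the "if and only if" to follow from the middle-product dichotomy recorded in \eql{hm-diHyp}{hm-diHypc}. Your extra effort in Stage 1 would be welcome, but the argument you give for the shifted \HM\ case has a concrete flaw. You assert that $q_1q_3^{-1}$ carries the shifted \HM\ element $\bg=(g_1^{-1},g_3,g_3^{-1},g_1)$ into its own $q_2$-orbit, "using that $g_3$ and $g_3^{-1}$ are $G_{\ell,0,3}$-related through the $\bZ/3$ action." That last claim is false: $g_3$ and $g_3^{-1}$ lie in the \emph{distinct} classes $\C_{+3}$ and $\C_{-3}$, and conjugation by any element of $G_{\ell,0,3}$ (in particular by $\bZ/3=\lrang{\alpha}$, which acts by inner automorphisms) preserves each class. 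Inversion $g\mapsto g^{-1}$ is an outer operation here. Concretely, $(\bg)q_1q_3^{-1}=(g_1^{-1}g_3g_1,\,g_1^{-1},\,g_1,\,g_1^{-1}g_3^{-1}g_1)$ has ordered class pattern $(\C_+,\C_-,\C_+,\C_-)$, whereas $\bg$ and $(\bg)q_2$ have patterns $(\C_-,\C_+,\C_-,\C_+)$ and $(\C_-,\C_-,\C_+,\C_+)$ respectively; no element of $\Inn(G_{\ell,0,3})$ (or of $\sQ''$) can identify tuples with different ordered $\C_\pm$-patterns. So the orbit-shortening factor $f_{u,v}$ you need to exhibit is not obtained by the route you propose. (By contrast your $q_1(\DI)$ computation, where the middle pair is literally equal, is fine and follows at once from the first sentence of Prop.~\ref{gamOrbit}.)

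Your Stage 2 (converse) is in line with what \eql{hm-diHyp}{hm-diHypc} is asserting, and Stage 3 essentially reproduces the paper's argument: $\bfC_{\pm 3^2}$ is a rational union (Def.~\ref{ratclass}) so the \BCL\ (Prop.~\ref{bcl}) puts the moduli definition field of $\sH(G_{\ell,0,3},\bfC_{\pm 3^2})^\inn$ at $\bQ$, and $G_\bQ$ permutes components preserving the collection of cusp widths. One small gap shared with the paper's terse treatment: $G_\bQ$-invariance of the \emph{set} of components is not by itself enough for a $\bQ$ moduli definition field of the restricted total family; you need the Weil cocycle argument, which the paper offloads to the technique of Ex.~\ref{HM-separated} (\cite[Thm.~3.21]{Fr95}). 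It would strengthen your Stage 3 to cite that device explicitly rather than declare the step "routine."
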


\begin{proof} The only piece requiring proof is the last line, and this follows because the Hurwitz space itself has moduli definition field $\bQ$ and $G_\bQ$ acting on the components preserves the collection of the cusp widths of each component. (as the technique used on Ex.~\ref{HM-separated} shows here). \end{proof} 

Denote $\bg=(g_1,g_2,g_3,g_4)$ in the Nielsen class as in $\ni_{\pm \pm}$ if its elements are, in order, in the classes $\C_+,\C_-,\C_+,\C_-$. The steps for analyzing components of $\sH_{\HM-\DI}$  for applying Thm.~\ref{cosetbr}: 
\begin{edesc} \label{divliftinv0} \item \label{divliftinv0a} Lem.~\ref{l03liftinv} computes lift invariants of \DI\ elements in $\ni_{\pm \pm}$, finding all possible lift invariant elements are achieved. Again, \HM\ elements have trivial lift invariant.  
\item \label{divliftinv0b} As in Ex.~\ref{0LiftInv}, some \DI\ elements have lift invariant 0. We need to know if \DI\ and \HM\  absolute components are distinct. 
\item \label{divliftinv0c} \eql{divliftinv0}{divliftinv0b} has two possibilities: 
\begin{itemize} \item Sometimes \DI\ and \HM\ components fall in they same absolute space. \item They are always homeomorphism-separated and belong in distinct absolute spaces. \end{itemize} 
\item \label{divliftinv0d} In either case of \eql{divliftinv0}{divliftinv0c} we need to analyze inner space components above an absolute component for the effect of braiding the automorphisms. 
\end{edesc}
\begin{exmpl} \label{HM-separated}  The proof of \cite[Thm.~3.21]{Fr95} uses projective normalization of the Hurwitz space in its function field, indicating how the absolute Galois group detects properties of Hurwitz spaces on their boundaries. The main application distinguishes the union of \HM\ components of a Hurwitz space by a total degeneration of curves in the family on the boundary. Then, it gives a criterion -- \HM-gcomplete -- for a braid orbit to contain all \HM\ reps in a Nielsen class, and that this implies the corresponding component has moduli definition field $\bQ$.  
This used a special device, \cite[(3.21)]{Fr95}, a (normalization) specialization sequence, designed explicitly for Hurwitz space compactification. Nevertheless, \cite{DEm06} carried out a Deligne-Mumford-type compactification that included the same result. 

Second: This has been used to show many \MT s that have moduli definition field $\bQ$ at all their levels. Thus, the Main \MT\  Conjecture can't be proven by showing that high \MT\ levels have high degree moduli definition field over $\bQ$. This criterion does not apply, though, to $r=4$. The Main Conjecture for $r> 4$ may require generalizing  Falting's Theorem to higher dimension. 
\end{exmpl}

\subsubsection{Lift invariants of the \DI\ components in $\sH_{\HM-\DI}$} \label{liftinvell03}  

Lem.~\ref{carmichael} gives  the Schur cover of $G_{\ell,k,3}$, after adding the $\bZ/3$ action:  $K_{\ell^2,\ell^2,k}\xs \bZ/3$. We now compute the lift invariant, simplifying notation by doing just level $k=0$.  

Use the notation of ${}_{x^my^n}\alpha=(x^my^n)\alpha y^{-n}x^{-m}$ for a general element of $\C_3=\C_+$ since $w$ is in the center. Similarly, for $\C_{-3}=\C_-$ replace $\alpha$ by $\alpha^{-1}$. Each Nielsen class element braids to  one in 
$$\ni_{\pm\pm}=\{ (\alpha, {}_{x^m_2y^n_2}\alpha^{-1}, {}_{x^m_3y^n_3}\alpha, {}_{x^m_4y^n_4}\alpha^{-1}) \text{ satisfying product-one and generation.}\}$$ 
Lem.~\ref{carmichael2} gives the steps for computing $\ni_{\pm\pm}$ lift invariants. For compatibility  use $\dx$ and $\dy$ in place of $\da$ and $\db$ from Lem.~\ref{carmichael}. Then, \eqref{conjclasses} gives presentations in $\hat G_{\ell,0,3} $ of the order 3 lift -- with $\alpha^{-1}$ on either the right or left -- of an element in $\C_{\nm} $. For products of powers of $\dx$ and $\dy$, take {\sl standard form\/} to be $\dx^m\dy^nw^u$. 

\begin{lem} \label{carmichael2} Useful formulas for writing a conjugate in standard form (all exponents $\!\!\mod \ell$):
\begin{equation} \label{2forms}   \begin{array}{c} \text{ \rm a. } (\dy\dx)^n=\dx^n\dy^nw^{\frac{n(n\np1)}2}, \text{ \rm b. }\dy^m\dx^n=\dx^n\dy^mw^{m\cdot n} \\ \text{ \rm c. }{}_{\dx^m\dy^n}\alpha^{\pm 1}=  {}_{\dy^n\dx^m}\alpha^{\pm 1}, \text{ \rm d. } (\dx\dy)^n =  \dx^n\dy^n w^{\frac{(n\nm 1)n}2}.\end{array}\end{equation} 

The order 3 lifts of elements in $\C_{\nm}$  have either of these two forms running over $m,n$: 
\begin{equation} \label{conjclasses} \begin{array}{c}  {}_{\dx^m\dy^n}\alpha^{-1}=\dx^m\dy^n(\alpha^{-1}\dy^{-n}\dx^{-m}\alpha)\alpha^{-1} \text{ or } \alpha^{-1}(\alpha \dx^m\dy^n \alpha^{-1})\dy^{-n}\dx^{-m}   \\ \text{which are respectively } \begin{cases} \dx^m\dy^n(\dx\dy)^n\dy^{-m}\alpha^{\nm1}=\dx^{m\np n}\dy^{2n\nm m}\alpha^{-1}w^{\frac{3n^2\nm n}2}, 
\\ \alpha^{\nm1} (\dx\dy)^{-m}\dx^n \dy^{-n}\dx^{-m} =\alpha^{\nm 1} \dx^{n\nm 2m}\dy^{\nm m\nm n}w^{\frac{3m^2\np m\nm 4n^2}2}.
\end{cases} \end{array} \end{equation}
\end{lem}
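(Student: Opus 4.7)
The plan is to first establish the four identities in \eqref{2forms} purely inside the nilpotent group $K_{\ell^2,\ell^2,k}$, using only the commutation relation $\dy\dx = \dx\dy\,w$ (read off from the presentation of the group in the proof of Lem.~\ref{carmichael}) together with the centrality of $w$, and then to derive the two lift formulas in \eqref{conjclasses} by combining these identities with the explicit $\alpha$-conjugation rules of \eqref{extalpha}.

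For identity (b), $\dy^m\dx^n = \dx^n\dy^m w^{mn}$, a double induction suffices: fix $n=1$ and induct on $m$ via $\dy^{m+1}\dx = \dy(\dy^m\dx) = \dy(\dx\dy^m w^m) = (\dy\dx)\dy^m w^m = \dx\dy^{m+1}w^{m+1}$, then fix $m$ and induct on $n$ in the identical manner, moving the central $w$ past everything. Identities (a) and (d) arise by iterating (b): expanding $(\dy\dx)^n = \dy\dx\cdots\dy\dx$ and pushing each $\dx$ all the way to the left accumulates $w$-exponents $1,2,\ldots,n$ that telescope to $\tfrac{n(n+1)}{2}$, whereas the analogous expansion of $(\dx\dy)^n$ gives $0+1+\cdots+(n-1) = \tfrac{n(n-1)}{2}$. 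Identity (c) is then immediate from (b) together with \eql{ell2k}{ell2kb}: since $\dy^n\dx^m = \dx^m\dy^n w^{mn}$ with $w$ central, and since $\alpha^{\pm 1}$ commutes with $w$, the two elements $\dx^m\dy^n$ and $\dy^n\dx^m$ produce the same conjugate of $\alpha^{\pm 1}$.

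For \eqref{conjclasses}, I would read off from \eqref{extalpha} the four conjugation rules $\alpha^{-1}\dx\alpha = \dy$, $\alpha^{-1}\dy\alpha = (\dx\dy)^{-1}$ together with their inverses $\alpha\dx\alpha^{-1} = (\dx\dy)^{-1}$, $\alpha\dy\alpha^{-1} = \dx$. For the first form, start from ${}_{\dx^m\dy^n}\alpha^{-1} = \dx^m\dy^n\alpha^{-1}\dy^{-n}\dx^{-m}$, insert $\alpha\alpha^{-1}$ to obtain $\dx^m\dy^n(\alpha^{-1}\dy^{-n}\dx^{-m}\alpha)\alpha^{-1}$, and apply the action rules entrywise to get $\alpha^{-1}\dy^{-n}\dx^{-m}\alpha = (\dx\dy)^n\dy^{-m}$; expanding $(\dx\dy)^n$ via (d) and then commuting powers of $\dy$ past powers of $\dx$ via (b) assembles the whole expression into the standard form $\dx^{m+n}\dy^{2n-m}\alpha^{-1}w^{(3n^2-n)/2}$. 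The second form is produced symmetrically: insert $\alpha^{-1}\alpha$ on the left, use the inverse action rules to compute $\alpha\dx^m\dy^n\alpha^{-1} = (\dx\dy)^{-m}\dx^n$, and normalize again via (b) and (d) to produce the stated expression $\alpha^{-1}\dx^{n-2m}\dy^{-m-n}w^{(3m^2+m-4n^2)/2}$.

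The principal obstacle is the disciplined bookkeeping of the $w$-exponent: each application of (b) contributes a product of the two exponents, every expansion of $(\dx\dy)^{\pm k}$ by (a) or (d) contributes a quadratic term $\pm\tfrac{k(k\mp 1)}{2}$, and signs must be tracked carefully through inverses. A useful internal check is that the two forms in \eqref{conjclasses} must represent the same element of $K_{\ell^2,\ell^2,k}\xs\bZ/3$; conjugating one form by $\alpha^{\pm 1}$ and renormalizing via \eqref{2forms} should reproduce the other, providing an independent verification of both $w$-exponent computations.
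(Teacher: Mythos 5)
Your plan is correct and essentially reproduces the paper's own proof: establish \eqref{2forms} by iterating the basic commutation $\dy\dx=\dx\dy\,w$ and exploiting the centrality of $w$ (and the triviality of the $\alpha$-action on $w$), then obtain \eqref{conjclasses} by inserting $\alpha\alpha^{-1}$ or $\alpha^{-1}\alpha$, reading off the $\alpha$-conjugation rules from \eqref{extalpha}, and normalizing via \eqref{2forms}. The only cosmetic difference is the order of dependency among (a), (b), (d) — the paper proves (a) by direct displacement and derives (d) from (a) (calling (b) "even easier"), while you prove (b) first and get (a), (d) from it — which leads to the same bookkeeping and the same $w$-exponents.
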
 

\begin{proof}  Since results only depend on exponents $\!\!\mod \ell$, we can assume all exponents are $\ge 0$. For  \eqref{2forms} a., to get to standard form in $(\dy\dx)^n$, running over $1\le i\le n$, move the $i$th $\dy$ past all $\dx\,$s ($n\nm i\np1$ of them) to its right. Use \eqref{bcong} to replace each $\dy\dx$ by $\dx\dy w$. The cumulative $w\,$s are  $w^{\sum,_{i=1}^n  n\nm i\np1}=w^{\frac{n\cdot (n\np1)}2}$ to the right of $\dx^n\dy^n$,  \eqref{2forms} b. is even easier. For \eqref{2forms} c., consider 
$${}_{\dx^m\dy^n w^u}\C_{\pm}= \dx^m\dy^n w^u\C_{\pm}w^{-u} \dy^{-n}\dx^{-m} .$$ The result follows since  $w$ is in the center and $w^uw^{-u}=1$.  Finally, for  \eqref{2forms} d.  $$(\dx\dy)^n=\dx(\dy\dx)^{n\nm 1}\dy = \dx^{n}\dy^n w^{\frac{(n\nm 1)n}2}\text{ from \eqref{2forms} a}.$$

Details of \eqref{conjclasses}: The 1st line arranges for $\alpha^{-1}$ to be on, respectively, the right and left using the \eqref{extalpha} action. Apply $\alpha$ in the 1st case and aim for standard form with $\alpha^{-1}$ and a power of $w$ on the right. To finish that calculation write  $\dy^n(\dx\dy)^n$ as $\dx^n\dy^{2n}w^{u(n))}$. First move each $\dy$ in $\dy^n$ past $n$ copies of $\dx$.  For each such move add one $w$ to the right side. That leaves $(\dx\dy)^n(\dy^n)w^{n^2}$. The exponent for $w$ is $n^2\np \frac{(n\nm 1)n}2= \frac{3n^2\nm n}2$ from   \eqref{2forms} d. 

For the 2nd cases line, put $(\dx\dy)^{-m}\dx^n \dy^{-n}\dx^{-m}$ in standard form.  First apply \eqref{conjclasses} d. and b.: 
$$\begin{array}{c} \mapsto \dx^{\nm m}\dy^{\nm m} \dy^{\nm n} \dx^{ n} \dx^{\nm m} w^{u}=\dx^{\nm m}\dy^{\nm m \nm n} \dx^{n\nm m}w^u \text{ with } u=\frac{m^2\np m \nm 2n^2}2,\\ \text{then apply \eqref{conjclasses} b. }\mapsto \dx^{n\nm 2m}\dy^{\nm m\nm n} w^{u\np (m\nm n)(m\np n)}.  \end{array} $$
Which we calculate to conclude the expression for the second case.
\end{proof}

There is little difference between the proof of Lem.~\ref{l03liftinv} for $k=0$ and for general $k$, except for taking exponents $\mod \ell^{k\np1}$. To simplify notation we take $k=0$. \begin{lem} \label{l03liftinv} The lift invariant of a \DI\ element in $\ni_{\pm \pm}$ is the product of the entries of some \begin{equation} \dot{\bg}_{m_2,n_2,m_3,n_3}\eqdef (\alpha^{-1}, {}_{\dx^{m_2}\dy^{n_2}}\alpha^{-1}, {}_{\dx^{m_3}\dy^{n_3}}\alpha^{-1})\in \bfC_{-3}.\end{equation}  The following hold: 
\begin{edesc} \label{results} \item \label{resultsa} Generation for the image element, $\bg_{m_2,n_2,m_3,n_3}\in \ni(G_{\ell,0,3},\bfC_{\nm3})$, fails if and only if $\lrang{m_2x^{},n_2y^{}}$ is an eigenspace for $\alpha$ (in particular, then $\ell\equiv 1\mod 3$, Rem.~\ref{quadrec}).
\item \label{resultsb} Assuming generation in \eql{results}{resultsa} the lift invariant of  $\dot{\bg}_{m_2,n_2,m_3,n_3}$ is $w^{m_2^2-n_2^2-m_2n_2}$. 
\item \label{resultsc} For $k=0$ and $\ell> 5$, there are distinct \DI\ orbits running over  $u\in (\bZ/\ell)$. For $\ell=5$, the lift invariants run over the squares in $(\bZ/\ell)^*$. 
\end{edesc} 
\end{lem}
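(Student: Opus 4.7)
The plan is to compute the lift invariant in three logical passes: first reduce the four-entry DI product to a three-entry product in $\C_{-3}$; second expand that triple product using the monomial identities \eqref{2forms}, the conjugation formulas \eqref{conjclasses}, and the $\alpha$-action of \eqref{extalpha}; and third translate the residual generation condition into the eigenspace criterion of \eql{results}{resultsa}. For the reduction, take a DI element $(g_1,g_2,g_1,g_4)$ satisfying product-one; because $\C_+$ is $\ell'$ and $\ell\ne 3$, Schur-Zassenhaus gives a unique order-$3$ lift $\tilde g_1$ in $\hat G_{\ell,0,3}$, so $\tilde g_1^2=\tilde g_1^{-1}$. Then
\[
\tilde g_1\tilde g_2\tilde g_1\tilde g_4 \;=\; \tilde g_1^{-1}\cdot(\tilde g_1^{-1}\tilde g_2\tilde g_1)\cdot\tilde g_4
\]
is a product of three $\C_{-3}$ elements with the same value in $\lrang{w}$. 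Using inner equivalence (Lem.~\ref{innerbraiding}) to place the first entry at $\alpha$ and putting the other two in the standard conjugation form ${}_{\dx^m\dy^n}\alpha^{-1}$ lands us in the lemma's normal form $\dot\bg_{m_2,n_2,m_3,n_3}$; either expansion of \eqref{conjclasses} yields the same product in $\lrang{w}$ because the two differ within the commutator subgroup of $K_{\ell^2,\ell^2,0}$.

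For the explicit computation, substitute the first form ${}_{\dx^m\dy^n}\alpha^{-1}=\dx^{m+n}\dy^{2n-m}\alpha^{-1}w^{(3n^2-n)/2}$ into $\alpha^{-1}\cdot\dot g_2\cdot\dot g_3$, push each $\alpha^{\pm 1}$ past the adjacent monomial using $\alpha^{-1}\dx\alpha=(\dx\dy)^{-1}$ and $\alpha^{-1}\dy\alpha=\dx$, and collect into standard form $\dx^A\dy^B\alpha^{?}w^C$ via \eqref{2forms}. The product-one relations $A\equiv B\equiv 0\bmod \ell^{k+1}$ reduce to two linear equations in $(m_3,n_3)$ whose unique solution is $m_3=n_2$, $n_3=n_2-m_2$. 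Summing the four sources of $w$-contributions—the two $(3n_i^2-n_i)/2$ terms from \eqref{conjclasses}, the $(\dx\dy)^{\pm}$-expansion corrections from each $\alpha$-pass (formula d of \eqref{2forms}), and the $w^{mn}$ factors from each $\dy^p\dx^q$-swap (formula b)—and then substituting the linear relations collapses the total exponent to the quadratic of \eql{results}{resultsb}.

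For \eql{results}{resultsa}, the image $\bg_{m_2,n_2,m_3,n_3}$ generates $G_{\ell,0,3}$ if and only if the $V_{\ell,0}$-parts of the commutators $[\dx^{m_i}\dy^{n_i},\alpha^{-1}]=(I-A^{*-1})(m_i,n_i)$ span $V_{\ell,0}$. Since $A^*$ has characteristic polynomial $x^2+x+1$, the operator $I-A^{*-1}$ is invertible on $V_{\ell,0}$, so these commutators span the same subspace as $\lrang{(m_2,n_2),(n_2,n_2-m_2)}$. Generation fails exactly when that subspace is a proper, necessarily one-dimensional, $A^*$-invariant subspace, i.e., an $\alpha$-eigenspace; such exists only when $x^2+x+1$ splits over $\bF_\ell$, i.e., $\ell\equiv 1\bmod 3$ (Rem.~\ref{quadrecrem}). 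For \eql{results}{resultsc}, the quadratic form in \eql{results}{resultsb} is, up to the sign conventions fixed by \eqref{extalpha}, the norm form $N(m_2+n_2\zeta_3)$ from $\bZ[\zeta_3]/\ell^{k+1}$; this is surjective onto admissible residues for $\ell>5$, yielding distinct DI orbits indexed by $u\in \bZ/\ell$, while the $\ell=5$ case must be handled by direct enumeration over the (small) parameter set compatible with the constraint $m_3=n_2$, $n_3=n_2-m_2$ to confirm that only squares in $(\bZ/5)^*$ occur.

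The main obstacle will be the step-two $w$-bookkeeping: four distinct sources of $w$-contributions must be tracked and summed, and all three identities a, b, d of \eqref{2forms} must be applied in sequence to force the cancellation that collapses the exponent to the clean quadratic claimed in \eql{results}{resultsb}. A secondary subtlety is that the reduction in step one depends on choices that differ by commutators; one must verify these differences are killed in the central quotient so that the three-entry product really does reproduce the original lift invariant.
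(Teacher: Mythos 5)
Your overall architecture matches the paper's proof: reduce the four-entry DI product to a three-entry product in $\bfC_{-3}$ via $\tilde g_1\tilde g_2\tilde g_1\tilde g_4 = \tilde g_1^{-1}(\tilde g_1^{-1}\tilde g_2\tilde g_1)\tilde g_4$; expand using \eqref{conjclasses} and \eqref{extalpha}; extract linear relations from product-one; and collapse the $w$-exponent to a binary quadratic form. The paper does the first reduction by the braid $q_2^{-1}$, you do it by Schur--Zassenhaus plus $\tilde g_1^3=1$; either is fine. But the arithmetic you announce would be executed in step two is wrong in two places. First, the linear relations: adding the paper's product-one conditions gives $n_3=m_2$ and $m_3=m_2-n_2$, not your $m_3=n_2$, $n_3=n_2-m_2$ (essentially an $m\leftrightarrow n$ swap). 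For part \eql{results}{resultsa} this error happens to be harmless because the relevant $2\times 2$ determinant $m_2n_3-n_2m_3$ only changes sign, and both sign choices give the norm form $m_2^2-m_2n_2+n_2^2$ of $\bZ[\zeta_3]$ whose isotropic vectors are exactly the $\alpha$-eigenvectors when $\ell\equiv 1\bmod 3$; so your commutator-span argument for \eql{results}{resultsa} survives.

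The fatal error is in the identification of the lift-invariant exponent. You claim the quadratic in \eql{results}{resultsb} is ``the norm form $N(m_2+n_2\zeta_3)$,'' i.e.\ $m_2^2-m_2n_2+n_2^2$. The paper's computed exponent is $m_2^2-m_2n_2-n_2^2$, a different form of discriminant $5$, not $-3$. You have conflated the determinant that controls generation (which really is the $\bZ[\zeta_3]$ norm form) with the $w$-exponent that controls the lift invariant (which is not). With the $\bZ[\zeta_3]$ norm form, nothing special happens at $\ell=5$ -- its exceptional prime is $3$, already excluded -- so your step-three reasoning could not produce the stated $\ell=5$ dichotomy of \eql{results}{resultsc}. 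The paper's $\ell=5$ case comes precisely from $x^2-x-1\equiv (x-1/2)^2 \bmod 5$ being a perfect square, which depends on the discriminant being $5$. Your suggestion to ``handle $\ell=5$ by direct enumeration'' is a tell that the claimed closed form doesn't actually explain the exception. To repair the proof you must carry out the $w$-bookkeeping you flag as ``the main obstacle''; the paper does this explicitly by writing one conjugate with $\alpha^{-1}$ on the right and one with $\alpha^{-1}$ on the left, picking up $w^{(3n_3^2-n_3)/2}$, $w^{(3m_2^2+m_2-4n_2^2)/2}$ and the swap contribution $w^{(m_2+n_2)(n_2-2m_2)}$, and summing to $m_2^2-m_2n_2-n_2^2$ after substituting $n_3=m_2$, $m_3=m_2-n_2$.
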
 

\begin{proof} Apply $q_2^{-1}$ to braid $(g_1,g_2,g_1,g_4)$ to $(g_1,g_1,g_1^{-1}g_2g_1, g_4)$. Now check, with $g_1^{-1}g_2g_1=g_2'$,   that this has the same lift invariant as $(g_1^{-1}=g_1^2,g_2',g_4)\in \ni_{0,-3^3}$ which we take to be $\dot{\bg}_{m_2,n_2,m_3,n_3}$, subject to the product-one condition using \eqref{conjclasses}:  
\begin{edesc} \label{product-oneres} \item \label{product-oneresa} $n_2 \nm 2m_2 \np m_3\np n_3=0$ and $\nm m_2\nm n_2 \np 2n_3 \nm m_3=0$; 
\item   \label{product-oneresb} add the terms of \eql{product-oneres}{product-oneresa} to get  $m_2 = n_3=m_3\np n_2$, or $m_3=m_2\nm n_2$.  \end{edesc} 
That shows \eql{results}{resultsa}.  
Braid $\dot{\bg}_{m_2,n_2,m_3,n_3}$ to $({}_{\dx^{m_2}\dy^{n_2}}\alpha^{\nm1},\dx^{m_3\np n_3}\dy^{2n_3\nm m_3}w^{\frac{3n_3^2\nm n_3}2}\alpha^{-1},\alpha^{-1})$.   
Apply the (left) shift and the second case of \eqref{conjclasses} to get the lift value by eliminating the product  $\alpha^{-1}\alpha^{-1}\alpha^{-1}=1$. Use product-one \eql{product-oneres}{product-oneresa} and  \eqref{conjclasses} b. (in the middle terms) of  
$$\dx^{m_3\np n_3}(\dy^{2n_3\nm m_3}w^{\frac{3n_3^2\nm n_3}2} \dx^{n_2\nm 2m_2})\dy^{\nm m_2\nm n_2}w^{\frac{3m^2_2\np m_2\nm 4n_2^2}2}.$$
Using \eql{product-oneres}{product-oneresb}, the lift invariant is $w^{\frac{3m_2^2\nm m_2}2} w^{\frac{3m_2^2\np m_2\nm 4n^2_2}2} w^{(m_2\np n_2)(n_2\nm 2m_2)}=w^{m_2^2-n_2^2-m_2n_2}$, thus concluding  \eql{results}{resultsb}.

We achieve all lift invariant values $\mod \ell$ follows if the 2-form $m_2^2-n_2^2-m_2n_2$ maps onto $\bZ/\ell$. A solution $(m_2',n_2')\in (\bZ/\ell)^2$ then gives solutions $(um_2',un_2')$ for any $u\in \bZ/\ell$. So, achieving all lift values is equivalent to $x^2-x-1=(x\nm 1/2)^2 \nm 5/4$  -- or $x^2-5$ -- has both square and nonsquare values for $x\in \bZ/ \ell$. It has only square values $\!\!\mod 5$. 

For $\ell\ne 5$,  the nonsingular projective curve $C_a$ in $\prP^2$ defined by $x^2-5y^2-az^2=0$  has rational points over $\bF_\ell$ from the triviality of Brauer-Severi varieties over finite fields. The value $a=1$ (resp.~a primitive root $\!\!\mod \ell$)  is a square (resp.~nonsquare), concluding the proof of \eql{results}{resultsc}. 
\end{proof} 

\begin{exmpl} \label{0LiftInv}[\DI\ orbits with 0 lift invariant]   Since \HM\ orbits have lift invariant 0, we have the question if these \DI\  braid orbits are homeomorphism-separated from all \HM\ braid orbits. They are not: Lem.~\ref{bothhmanddi-lem} and Ex.~\ref{bothhmanddi}. 

Apply \eql{results}{resultsc} -- quadratic reciprocity (\eqref{quadrec} with 3 replaced by 5.  The relevant formula is $$\left(\frac 5 \ell\right) \left(\frac \ell 5\right) = (-1)^{(\frac {(5 \nm 1)}2 \frac{(\ell\nm1)} 2}=1 \implies \text{ for }\ell \equiv 1, 4 \mod 5\ \exists\ x,  x^2 -5\equiv 0  \!\!\mod \ell.$$
Subexample: $\ell=11$, $x=4$. Translate this to a \DI\ element: $m_2=3, n_2=-1 \mod 11$,  and from \eqref{product-oneres},  $m_3=4, n_3=3$. Also, since $11 \equiv 2\mod 3$, generation holds (Ex.~\ref{quadrec}). 
\end{exmpl} 

\subsection{Serre's goal and Coleman-Oort}  \label{Serre-book}  \S\ref{sephmanddi} applies the \sh-incidence matrix to analyze the steps  in \eqref{divliftinv0}, for applying Thm.~\ref{cosetbr} restricted to the $\sH_{\HM-\DI}$ components. \S\ref{SMR} gives the context for Serre's $\ell$-adic representations, outlining his Main Theorem, which he applied to the \ST\ representations. 
\S\ref{HIT-ST}  summarizes expectations for types of fibers on a given \MT. 

We conclude with a statement on the whole context of our approach, driven by properties of finite groups that fit in series, and relations to many unsolved problems in Galois theory (like the Regular Inverse Galois Problem). The opening paper, \cite{Fr95}, on \MT s stated this. The goals of \cite{Fr26} bridge the topics, and the gap of many years of two Serre books (\cite{Se68}, \cite{Se92}, see \cite{Fr94}, not to mention that gap-bridger, Galois cohomology, a topic between Serre and me over many years). Here's what makes my approach look so different. As the parameter (usually $\ell$) changes, my moduli space (of curve covers) seems to change in a style distinct from  that given by, for example, the moduli of abelian varieties of dimension $\geng$. True in a way, but expanding the applicable problems requires seeing that isn't always an essential difference. For example, even for elliptic curves, there are different spaces, $X(\ell^{k\np1})$, as $\ell$ varies, and also --- should you so desire Ð- my series of examples often fit within one rubric, with one finite group, $H$, acting on a lattice for which you vary the $\ell$-adic completion. The results, however, for two different $H$s can be extraordinarily different, as examples \S \ref{weilpairing} and \S \ref{heiscase} show, even if the lattices seem the same.  `

\subsubsection{Applying  the \sh-incidence matrix} \label{sephmanddi}  Start with \eql{divliftinv0}{divliftinv0b}: Are the \DI\ components of $\sH_{\HM-\DI}$ of lift invariant 0 in Ex.~\ref{0LiftInv} homeomorphism-separated  from the \HM\ components. 

With $\bv=-(m_2,n_2)$, here is an example \DI\ element: $$\bg_{\DI}=(\alpha_0^{-1},{}_\bv\alpha_0, {}_\bv\alpha_0,{}_\bw\alpha^{-1}_0) \text{ (with   } \sh(\bg_{\DI})=(\alpha_0,\alpha_0, {}_{(m_2,n_2)}\alpha_0^{-1}, {}_{(m_3,n_3)}\alpha_0^{-1})).$$ Its cusp has just one element since its middle product commutes with its 2nd and 3rd terms. Determine $\bw$ from the product-one condition, $2\bw\np \bw^\alpha=2\bv^\alpha\np \bv$. 

We want to see if $\sh(\bg_{\DI})$ is in the braid orbit of an \HM\ rep. 

\begin{lem}  \label{CuspsDIandHM} \label{bothhmanddi-lem} Start with, when does the cusp of $\bg=({}_{\bv_1}\alpha_0,{}_{\bv_2}\alpha_0^{-1},{}_{\bv_3}\alpha_0,{}_{\bv_4}\alpha_0^{-1})$ contain an \HM? Conjugate $\bg$ by $\smatrix 1 0 {\nm \bv_1} 1$ to assume $\bv_1=\pmb 0$.  Denote  $\bv_2\nm \bv_2^{\alpha^{-1}} \np \bv_3^{\alpha^{-1}}\nm \bv_3$ by $\bw_{2,3}$. Multiply $\smatrix \alpha 0 {\bv_2^\alpha\nm \bv_2} 1$ and $\smatrix {\alpha^{-1}} 0 {\bv_2^{\alpha^{-1}}} 1$ to see the middle product of $\bg$ is $\smatrix 1 0 {\bw_{2,3}} 1$. Then,   the cusp contains a \HM\ if either there is $k_2\in \bZ/\ell$ for which,  $\bv_2+k_2\bw_{2,3} = \pmb 0$ or $\bv_3$, or there is a $k_3\in \bZ/\ell$ for which  $\bv_3+k_3\bw_{2,3} = \bv_2$ or $\bv_4$. 

The related question for the cusp containing a \DI: if there is a $k_2'\in \bZ/\ell$ (resp.~$k_3'$) for which $\bv_2+k_2'\bw_{2,3}=\bv_4$ (resp.~$\bv_3+k_3'\bw_{2,3}=\pmb 0$). 
\end{lem}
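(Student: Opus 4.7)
The plan reduces the lemma to an explicit semidirect-product calculation in $V_{\ell,k}\xs\bZ/3$, combined with use of the cusp group $\Cu_4 = \lrang{q_2, \sQ''}$ on the reduced Hurwitz space.  First normalize by conjugating the tuple by $\smatrix{1}{0}{-\bv_1}{1}$ (an inner equivalence, hence allowed) so that $\bv_1 = \pmb 0$ and $g_1 = \alpha$.  Direct matrix multiplication using the rule \eqref{gpmult} then produces $g_2 g_3 = \smatrix{1}{0}{\bw_{2,3}}{1}$, with $\bw_{2,3}$ the stated expression in $\bv_2, \bv_3$.

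Next, parametrize the $q_2$-orbit.  Since $q_2$ fixes $g_1$, $g_4$, and the middle product $g = g_2 g_3$, one has $q_2^{2k}(g_2, g_3) = (g^k g_2 g^{-k},\, g^k g_3 g^{-k})$.  A one-line application of \eqref{gpmult} yields the conjugation identity
$$g^k \cdot {}_\bv\alpha^{\pm 1} \cdot g^{-k} \;=\; {}_{\bv + k\bw_{2,3}}\alpha^{\pm 1},$$
so $q_2^{2k}$ translates both $\bv_2$ and $\bv_3$ by $k\bw_{2,3}$.  In particular the reachable second and third entries in the cusp (before any $\sQ''$-action) run through precisely the $\bw_{2,3}$-translates of $\bv_2$ and $\bv_3$.

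Now match each of the alternatives to two canonical cases plus a symmetric one handled by $\sQ''$.  If $\bv_2 + k_2\bw_{2,3} = \pmb 0$, the second entry after $q_2^{2k_2}$ becomes $\alpha^{-1} = g_1^{-1}$, and product-one for the full tuple forces $g_4 = (g_3')^{-1}$, giving an HM tuple $(g_1, g_1^{-1}, g_3', (g_3')^{-1})$.  Symmetrically, $\bv_3 + k_3\bw_{2,3} = \bv_4$ produces $g_3' = {}_{\bv_4}\alpha = g_4^{-1}$, and product-one then forces $g_2' = g_1^{-1}$, again HM.  For the DI principal case, $\bv_3 + k_3'\bw_{2,3} = \pmb 0$ immediately gives $g_3' = \alpha = g_1$, so the tuple takes the DI form $(g_1, g_2', g_1, g_4)$.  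The three remaining alternatives are reduced to these canonical ones by first applying $\sh^2 \in \sQ''$ (or $q_1 q_3^{-1}$ if needed) and then renormalizing $\bv_1$ back to $\pmb 0$ by a further inner conjugation: after $\sh^2$ and renormalization by $\smatrix{1}{0}{-\bv_3}{1}$ the parameter tuple becomes $(\pmb 0,\, \bv_4 - \bv_3,\, -\bv_3,\, \bv_2 - \bv_3)$, and using the product-one identity $\bv_2 - \bv_3 + \bv_4 = \pmb 0$ (forced because $\alpha$ has no nontrivial fixed vector in $V_{\ell,k}$ when $\ell \ne 3$) the new middle product becomes $-\bw_{2,3}$; each auxiliary condition then becomes a canonical one in the rotated coordinates.

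The main obstacle is the bookkeeping for the three auxiliary conditions in the last paragraph: each $\sh^2$ or $q_1 q_3^{-1}$ operation reshuffles positions and changes what \lq\lq middle product\rq\rq\ means, so verifying that each auxiliary condition matches precisely one canonical case in the rotated coordinates requires careful tracking of how $\bw_{2,3}$, the class pattern, and the normalization base all transform.  A further subtlety is that the alternative $\bv_2 + k_2\bw_{2,3} = \bv_3$ implicitly forces $\bv_2 - \bv_3$ to be an $\alpha$-eigenvector (since $\bw_{2,3} = (1-\alpha)(\bv_2 - \bv_3)$), so that branch is nontrivial only when $\ell \equiv 1 \pmod 3$ (cf.~Rem.~\ref{quadrecrem}); checking that the $\sQ''$-reduction is compatible with this arithmetic constraint is the final delicate step.
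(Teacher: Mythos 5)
Your core idea — normalize so $\bv_1=\pmb 0$, compute the middle product $g_2g_3=\smatrix 1 0 {\bw_{2,3}} 1$, and observe that $q_2^{2k}$ translates $\bv_2,\bv_3$ by $k\bw_{2,3}$ — is the same as the paper's (which invokes Prop.~\ref{j-Line} and then just says the listed conditions describe when the $q_2$-orbit meets an \HM\ or \DI\ form). Your derivation of the translation identity $g^k\cdot{}_\bv\alpha^{\pm 1}\cdot g^{-k}={}_{\bv+k\bw_{2,3}}\alpha^{\pm 1}$ and your handling of the conditions $\bv_2+k\bw_{2,3}=\pmb 0$, $\bv_3+k\bw_{2,3}=\bv_4$ (these two are in fact equivalent by product-one), and $\bv_3+k'\bw_{2,3}=\pmb 0$ are correct and add useful detail that the paper omits.

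The step you flag as delicate is, however, where your proposal actually breaks. You propose to recover the remaining alternatives by applying $\sh^2\in\sQ''$ and renormalizing. But as your own formula $(\pmb 0,\,\bv_4-\bv_3,\,-\bv_3,\,\bv_2-\bv_3)$ shows — once you use the product-one identity $\bv_2-\bv_3+\bv_4=\pmb 0$ — the rotated parameter tuple is just $(\pmb 0,-\bv_2,-\bv_3,-\bv_4)$ with middle product $-\bw_{2,3}$. Plugging this into your two canonical conditions reproduces $\bv_2\in\lrang{\bw_{2,3}}$ and $\bv_3\in\lrang{\bw_{2,3}}$ unchanged; it does not produce $\bv_2+k\bw_{2,3}=\bv_3$ or $\bv_2+k\bw_{2,3}=\bv_4$. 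So the $\sh^2$-rotation you rely on yields nothing new, and the auxiliary conditions (which force $\bv_2-\bv_3$, respectively $2\bv_2-\bv_3$, into $\lrang{\bw_{2,3}}$) must instead come from the odd powers of $q_2$ (which change the class pattern to $\C_+\C_+\C_-\C_-$ and require a compensating $\sQ''$ move to return to the $\C_+\C_-\C_+\C_-$ form) or from $q_1q_3^{-1}$ — neither of which you compute. Your correct observation that $\bw_{2,3}$ is $(1-\alpha^{-1})(\bv_2-\bv_3)$, so that the alternative $\bv_2+k\bw_{2,3}=\bv_3$ is nontrivial only when $\alpha$ has a $\bZ/\ell$ eigenvector (i.e.~$\ell\equiv 1\bmod 3$), is exactly the kind of arithmetic sanity check one needs here, but it does not substitute for the missing case analysis. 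In fairness, the paper's own proof is also terse on this point and asserts that everything comes from "this one $q_2$ (cusp) orbit" without exhibiting the $\Cu_4$ bookkeeping.
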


\begin{proof} From Prop.~\ref{j-Line}, for this Nielsen class, the full cusp containing $\bg$ consists of the conjugations of $\bg$ by powers of $\smatrix 1 0 {\bw_{2,3}} 1$. The listings in the statement of the Lem.~are just the conditions that we get \HM\ or \DI\ elements in this one $q_2$ (cusp) orbit. 
\end{proof} 

To include all levels of \MT s, where lift invariants of braid orbits fall in $\bZ/\ell^{k\np1}$, requires considering the jumps of lift invariant values in going from $(\bZ/\ell^{k\np1})^*$ to lift invariants in $\bZ/\ell^k$. We expect 
 \sh-incidence matrices used in \cite{FrBG} to simplify this, but Ex.~\ref{bothhmanddi} gives a major issue. 

\begin{exmpl}[Cusps containing \HM\ and \DI\ reps] \label{bothhmanddi} We know that there are several \HM\ orbits in these  Nielsen classes, but do the \DI\ orbits with lift invariant 0 belong in braid orbits separate from \HM\ orbits?  The simplest possibility they are not, would be if $\sh(\bg_{\DI})$ (notation of Lem.~\ref{CuspsDIandHM}) is in the cusp of an \HM\ rep. There are several cases. For example, the condition, there exists $k_2\in \bZ\ell$ for which $\bv_2+k_2\bw_{2,3} = \pmb 0$ is for the cusp of $\bg$ to contain an \HM\ rep. Similarly, $\bv_3+k_3'\bw_{2,3}=\pmb 0$ for some $k_3$ is that it contains a \DI\ rep.  That is, is $|\sh(\bg_{\DI})\cap \cO_{\bg}|=1$? 

The generation condition for Nielsen classes demands that $\bv_2$ $\alpha$-generates (Def.~\ref{alpha-gen}). Since $\bw_{2,3}=(\bv_2\nm \bv_3)^{(1\nm\alpha^{-1})}$, by subtracting the equations see that $\bv_2\nm \bv_3$ is an $\alpha$ eigenvector. By adding them, also  $\lrang{\bv_2,\bv_3}=\lrang{\bv_2\nm \bv_3}$. That is, $\bv_2$ does not $\alpha$-generate , so, this is impossible. \end{exmpl}

\begin{rem} \label{notlperfect} The group theory differs between \S\ref{weilpairing} and this section because $(\bZ/\ell)^2$ is not $\ell$-perfect. That allows it to have two non-isomorphic representation covers, one given by the small Heisenberg group, the other not, partly explaining why these examples are so very different. 

Once we get the $\bZ/3$ action involved, then $G_{\ell,0,3}$ {\sl is\/} $\ell$-perfect ($\ell\ne 3$), and it has a unique representation cover. The same for adding the $\bZ/2$ action to get $G_{\ell,0,2}$ ($\ell\ne 2$).   The lift invariant computations of Lem.~\ref{liftinvell03} and Lem.~\ref{l03liftinv} allow graphically presenting the components of all the \MT s coming from this section from knowing all components with lift invariants from the values of the 2-form $m_2^2 -n_2^2 -m_2n_2$, with one complication, those \DI\ components with lift invariant 0. 
 \end{rem}

\subsubsection{Compatible $\ell$-adic representations of $G^\ab_K$} \label{SMR} 
Serre starts with the short (adele/ idele) exact sequence from {\sl Class field Theory: CFT}:  $K$ a number field, $\bm=(m_{\nu_1}. \dots, m_{\nu_s})$, an $s$-tuple of integers attached to finite valuations of $K$ indicating multiplicities. 

Start with $G^\dagger_K$ a quotient of the absolute Galois group of $K$. Serre's interest is in the maximal abelian quotient, $G^\ab_K$. Then a system of representations referencing $\ell$, at the minimum, means $\rho_\ell: G^\dagger_K \to \Aut(V_\ell)$, running over almost all $\ell$, with $V$ given by a $\bZ[G^\dagger_K]$ module tensored with $\bQ_\ell$. {\sl Compatibility\/} means the  characteristic polynomials $\det(1 - \Fr_{\rho_\ell, \bp}T)$  associated with Frobenius elements (conjugacy classes attached to a prime $\bp$ of $K$) attached to different (almost all)  $\ell$ are the same (and defined over $\bQ$). Spanning from Weil (with abelian varieties) and Grothendieck with \'etale cohomology of non-singular projective varieties,  Serre goes outside of these algebraic-geometric places affording {\sl all\/} $\ell$-adic representations, but only for the abelian case, $G_K^\dagger=G^\ab_K$.\footnote{One reason for that, is that is the only case where we know how to get a handle on $G_K^\dagger$. But, the point of the \OIT\ theorem, and the conjectures like Coleman-Oort, is this case stands out even when considering what is the image of $G_K$ in acting on a Tate module of an abelian variety.}

Applied to $\bQ$ algebras $A$, here are the steps starting with the class field theory (CFT) sequence for computing the profinite group of abelian extensions, $G^\ab_K$, of $K$:  $C_\bm$ is the group generated by ideals modulo principle ideals $(u)$, $u$ in the ring of integers for which $u-1$ in the  $m_\nu$ power of the ideal for $\nu$, for all indexes $\nu$.

\newcommand{\mult}{\rm mult}

\begin{equation} \label{cft}  \begin{array}{c} \text{II-7}: 1 \to K^*/E_\bm \to I_\bm \to C_\bm \to 1,  \text{ with $I$ the ideles,} \\
\text{ $E_\bm$ given by \eql{restscalars}{restscalarsc} and $G_K^\ab$ the  projective limit of $C_\bm$ over $\bm$.}\end{array} \end{equation}  He forms a $K$ torser (multiplicative, algebraic group), $S_\bm$, over $K$ whose $\bQ_\ell$ values produce compatible $\ell$-adic representations of $G_K^\ab$. Here's the sequence, with $d=[K:\bQ]$: \begin{edesc} \label{restscalars} \item $\sG_{\mult}(A)\eqdef \{(x\in A\mid \exists y \in A, \text{ with }xy=1\}$  assigns invertible elements $A^*$. \item  Apply Weil's  {\sl restriction of scalars\/}  $T=R_{K/\bQ}(\sG_{\mult}/K)$,  a dimension $d$ torus over $\bQ$; its $A$ points are   $(K\otimes_\bQ A)^*$, so $T(\bQ)=K^*$. \item \label{restscalarsc} For subgroup $E_\bm \le K^*$,   indexed as above by $\bm$, take $\bar E_\bm$ its Zariski closure in $T$, and  $T_{E_\bm}=T/\bar E_\bm$ gives $K^*/E_\bm = A$.
\label{restscalarsd} This gives $K^*/E_\bm\to T_\bm=T/\bar E_\bm$ (also a torus), a {\sl  pushout}, $I_\bm \to S_\bm$ given by the 2-cocycle of the sequence \eqref{cft}. \end{edesc} 

This produces a diagram, \cite[p. II-9]{Se68}, with the upper line from \eqref{cft} and the lower line  $1 \to T_\bm(\bQ) \to  S_\bm(\bQ) \to C_\bm \to 1$. Applying the class field theory identification  with  of $G^\ab_K$, \cite[\S II.3]{Se68} then uses the homomorphism $\pi_\ell: T(\bQ_\ell) \to S_\bm(\bQ_\ell)$ to define $\epsilon: G^\ab_\ell \to S_\bm(\bQ_\ell)$, a system of compatible $\ell$-adic representations with values in $S_\bm$. Using that $S_\bm$ is a torus, \cite[pgs.~II-10--II-23]{Se68} shows this gives $\phi_\ell: G^\ab_K\to \Aut(V_\ell)$, an abelian $\ell$-adic, semi-simple (completely reducible), representation of $G^\ab_K$ on $V_\ell$ fulfilling the title of the book.

Those don't, however, give all such representations. By limiting to {\sl abelian\/} $\ell$-adic representations and this characterizing rubric -- using the definition of {\sl locally algebraic\/} -- these tori $S_\bm$ go beyond the paradigm that started with abelian varieties, and \'etale cohomology of nonsingular projective varieties. There was the surprise of \cite{De72a}: K3 surfaces have \'etale cohomology in the category of abelian varieties. Then the obvious question answered by \cite{De72b} shows that even complete intersections could have \'etale cohomology outside that generated by abelian varieties. 

Despite the properties shown in \cite{De74},  how difficult is it to divine structure on the $\ell$-adic representations of the absolute Galois group of $\bQ$. \S\ref{HIT-ST} questions what we know of separating even Serre's case from abelian varieties. For example,  \cite[p.~III-11]{Se68} notes that if you don't consider abelian representations, it isn't true that any compatible set of $\ell$-adic representations of $K$ is unramified (trivial on the ramification subgroups of $\bp$) outside a finite set of places.

\subsubsection{Locating the \HIT\ and \ST\ fibers on a \MT} \label{HIT-ST}  \cite[II-\S 2.8]{Se68}  repeats the  Shimura-Taniyama  (ST) definition of a \CM\  abelian variety $A$ of dimension $d$ defined over $K$ with its "CM field" $K_A$ of degree $2d$ embedded  $i: K_A\to \End_K(A)\otimes \bQ= \End_K(A)_0$. The difference, as seen from \S\ref{cmabelvar}: \ST\ gives an actual abelian variety; but Serre shows the action on an \ST\ abelian variety, giving, for $K$ a totally complex extension of $\bQ$  an image of (not necessarily the whole) $G_K^\ab$ from its action on the corresponding $\bQ_\ell$ Tate module of the \ST\ abelian variety. Therefore, this is an example abelian $\ell$-adic representation coming from his $S_\bm$ construction.   

This gives $V_\ell$ the Tate module $\otimes \bQ_\ell$, a free $K_{A,\ell}$ rank 1 module, giving $\rho_\ell: G(\bar K/K) \to \Aut(V_\ell)$ commuting with $K_{A,\ell}$, identifying $\rho_\ell$ with a homorphism $G(\bar K/K)\to K^*_{A,\ell}= T_{K_A}(\bQ_\ell)$. Then, \cite[p. II-27 to II-29]{Se68} has Theorems 1 and 2 giving the $\ell$-adic properties of $G_K^\ab$ with values in $T_{K_A}$ corresponding to a modulus $\bm$ and a morphism $\phi: S_\bm \to T_{K_A}$ including that the restriction of $\phi \to T_\bm$ can be read off from a homomorphism $\mu: K \to \End_{K_A}(\bT)$ with $\bT$ the tangent space of $A$ at the origin. 

The Coleman-Oort Conjecture is about when a Jacobian of a $\geng$ curve is \ST, and says that we expect on compact sets in the moduli of genus $g$ curves, assuming the genus is large, that there are only finitely many \ST\ fibers. In our situation, we have a \MT\ over an absolute component $\sH'$ with moduli definition field $K$ based on a lattice $\sL$ appearing in each fiber. 
The lattice is a {\sl quotient\/} of the Tate module of the Jacobian of the curve attached to $\bp\in \sH'$. We are asking when the $G_K$ action gives the decomposition group either \HIT\ or abelian. The moduli definition field of a \MT\ is in the decomposition field of every fiber of a \MT. Therefore, say from Serre's characterization of an ST fiber, so long as the geometric monodromy of a  \MT\ is not abelian and is eventually Frattini,  if there are analogs of \ST\ fibers, then their arithmetic monodromy is distinctly different from that of an \HIT\ fiber.  

\begin{edesc} \label{coconj} \item \label{coconja} Is the decomposition group abelian only for some kind of analog of \ST\ points. 
\item \label{coconjb} Excluding \eql{coconj}{coconja}, are the  fibers \HIT\ off of the fibers described in \eql{coconj}{coconja}. 
\item \label{coconjc} What other, than \HIT\ and the fibers of \eql{coconj}{coconja}  could there be? \end{edesc}

 If it is strictly analogous to Serre's \OIT, then the response to \eql{coconj}{coconjc} would be the occurance of other fiber types would be rare, only finitely many times on compact subsets or none at all. The most astonishing aspect of Serre's \OIT\ is that, in his case, there were only these two fiber types, but he wrote several papers trying to find out just how often one could expect the $\GL_2$ in most fibers.

\begin{exmpl} \label{COvsAO} In Thm.~\ref{andres} we have two components of $\sH(A_4,\bfC_{\pm 3^2})^{\inn,\rd}$. Only $\sH_+$ supports a \MT\ (the other is obstructed by the lift invariant). In the examples that continue in \cite{FrBG} and \cite{Fr25}, we continue toward a similar goal using the precise tools of the braid action on Nielsen classes and the \sh-incidence matrix. 

The Main Problem exposed here is that we don't know of any paradigm between \HIT\  and abelian, but the \MT\ constructions provide explicit examples of towers of moduli spaces for which we can ask if such exist.  \end{exmpl}

\begin{appendix}  
\begin{center} \large \bf Appendices \end{center} 
\renewcommand{\labelenumi}{{{\rm (\teql \alph{enumi})}}} 
\S\ref{classgens} gives us the classical topological generators from which the \lq\lq dragging a cover\rq\rq \ process (\S\ref{dragbybps}) works. \S\ref{fibGalClos} gives the Galois closure process that is at the heart of relating the Hurwitz space pairs $\sH(G,\bfC)^\abs$ and $\sH(G,\bfC)^\inn$ on which we base Thm.~\ref{cosetbr}.  

\section{Classical generators of $\pi(U_\bz, z_0)$} \label{classgens} 
\begin{figure}[h]
\caption{Example classical generators based
at $z_0$}\label{Fig1-221b}  
\vskip3.2in
\begin{picture}(100,0)
\put(-150,-20){\vbox{ \includegraphics[width=.9\linewidth]{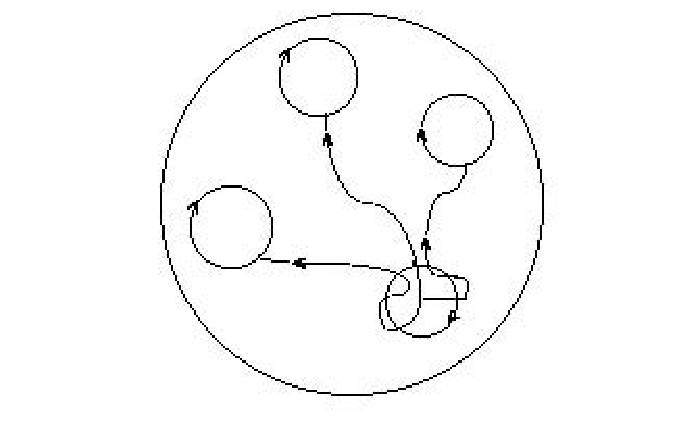}}}

\put(30, 119){$\scriptscriptstyle\bullet$}
\put(22, 111){$\scriptscriptstyle\bullet$}
\put(25, 116.5){$\scriptscriptstyle\bullet$}
\put(85, 121){$\scriptscriptstyle\bullet$}
\put(79, 124) {$\scriptscriptstyle\bullet$}
\put(72, 126){$\scriptscriptstyle\bullet$}

\put(94,48){$z_0$}
\put(-5, 79){$z_1'$}
\put(48, 170){$z_i'$}
\put(118, 138){$z_r'$}
\put(102,48){$\scriptscriptstyle\bullet$}
\put(2, 82){$\scriptscriptstyle\bullet$}
\put(56, 170){$\scriptscriptstyle\bullet$}
\put(126, 138){$\scriptscriptstyle\bullet$}

\put(4, 175){$\bar\gamma_i\nearrow$}
\setbox0=\hbox{$\searrow$} \put(86, 154){\raise \ht0\hbox
{$\bar\gamma_r$}$\searrow$ }

\setbox0=\hbox{$\nearrow$} \put(80, 20.5){\lower \ht0\hbox
{$\bar\gamma_0$}$\nearrow$ }

\setbox0=\hbox{$\searrow$} \put(-35, 108.5){\raise \ht0\hbox
{$\bar\gamma_1$}$\searrow$ }

\setbox0=\hbox{$\nearrow$} \put(27, 64){\lower \ht0\hbox
{$\delta_1$}$\nearrow$ }
\setbox0=\hbox{$\nearrow$} \put(61, 93){\lower \ht0\hbox
{$\delta_i$}$\nearrow$}
\setbox0=\hbox{$\leftarrow$} \put(115, 97){$\leftarrow$\raise \ht0\hbox
{$\delta_r$} }

\setbox0=\hbox{$\nearrow$} \put(-4, 63){\lower \ht0\hbox
{$b_1$}$\nearrow$ }
\setbox0=\hbox{$\nearrow$} \put(37, 142.5){\lower \ht0\hbox
{$b_i$}$\nearrow$ }
\setbox0=\hbox{$\leftarrow$} \put(133.5, 120){$\leftarrow$\raise \ht0\hbox
{$b_r$} }
\put(36, 69){$\scriptscriptstyle\bullet$}
\put(61, 120.5){$\scriptscriptstyle\bullet$}
\put(111.5, 103){$\scriptscriptstyle\bullet$}

\setbox0=\hbox{$\nearrow$} \put(70.5, 36.5){\lower \ht0\hbox
{$a_1$}$\nearrow$ }
\setbox0=\hbox{$\searrow$} \put(77.5, 70){\raise \ht0\hbox
{$a_i$}$\searrow$ }
\setbox0=\hbox{$\leftarrow$} \put(108, 52){$\leftarrow$\raise \ht0\hbox
{$a_r$} }
\put(86.5, 43.5){$\scriptscriptstyle\bullet$}
\put(92, 66){$\scriptscriptstyle\bullet$}
\put(105, 54){$\scriptscriptstyle\bullet$}
\end{picture}
\end{figure}

Let $z_0$ be a point
on $U_\bz$. Let $D_i$ be a disc with center $z_i$,  $i = 1,\dots ,r$.  Assume
these discs are disjoint and each excludes $z_0$.  Let
$b_i$ be a point on the boundary of $D_i$.  Regard this boundary,
oriented clockwise, as a path ${\bar \gamma}_i$ with initial and end
point $b_i$.  Finally, let $\delta_i$ be a simple {\sl simplicial} path with initial point $z_0$ and end point $b_i$. Assume, also,
that
$\delta_i$ meets none of  ${\bar \gamma}_1,\dots ,{\bar
\gamma}_{i-1},{\bar \gamma}_{i+1},\dots ,{\bar \gamma}_r$, and it meets
${\bar \gamma}_i$ only at its endpoint.
 
With $D_0$ a disc with center $z_0$ and disjoint from each of
the discs $D_1,\dots ,D_r$, consider the first point of
intersection of $\delta_i$ and the boundary ${\bar \gamma}_0$ of
$D_0$.  Call this point $a_i$.  Suppose $\delta_1,\dots
,\delta_r$ satisfy two further conditions:  
\begin{edesc} \item they are pairwise
nonintersecting, excluding their initial point $z_0$; and
\item $a_1,\dots, a_r$ appear in order clockwise around ${\bar
\gamma}_0$.  \end{edesc} Since the paths are simplicial this last condition is
independent of the choice of $D_0$, at least for $D_0$ sufficiently
small. 

With these conditions, the ordered collection of closed
paths $\delta_i^\sph{\bar \gamma}_i^\sph\delta_i^{-1} = \gamma_i$,
$i = 1,\dots ,r$, in Fig.~\ref{Fig1-221b} are  {\sl classical generators\/}
(for $\bz$)  based at $z_0$.  We say $\gamma_i$ is a {\sl
classical loop around\/} $z_i$. In our case this has a precise
meaning.

\end{appendix}

\providecommand{\bysame}{\leavevmode\hbox to3em{\hrulefill}\thinspace}
\providecommand{\MR}{\relax\ifhmode\unskip\space\fi MR}
\providecommand{\MRhref}[2]{%
\href{http://www.ams.org/mathscinet-getitem?mr=#1}{#2}}
\providecommand{\href}[2]{#2}

\end{document}